\newtheorem{Thm}{Theorem}[section]
\newtheorem{Fac}[Thm]{Fact}
\newtheorem{Lem}[Thm]{Lemma}
\newtheorem{Def}[Thm]{Definition}
\newtheorem{Cor}[Thm]{Corollary}
\newtheorem{Prop}[Thm]{Proposition}
\newtheorem{Ex}[Thm]{Example}
\newtheorem{Rem}[Thm]{Remark}
\title[On Structure of cluster algebras of geometric type I]
{ On Structure of cluster algebras of geometric type I:\\ In view of sub-seeds and seed homomorphisms}
\author{Min Huang $\;\;\;\;\;\;$ Fang Li $\;\;\;\;\;\;$ Yichao Yang}
\address{Min huang
\newline Department
of Mathematics, Zhejiang University (Yuquan Campus), Hangzhou, Zhejiang
310027,  P.R.China}
\email{minhuang1989@hotmail.com}
\address{Fang Li
\newline Department
of Mathematics, Zhejiang University (Yuquan Campus), Hangzhou, Zhejiang
310027, P.R.China}
\email{fangli@zju.edu.cn}
\address{Yichao Yang
\newline D\'{e}partement de Math\'{e}matiques, Universit\'{e} de Sherbrooke, Sherbrooke, Qu\'{e}bec, Canada}
\email{yichao.yang@usherbrooke.ca}
\date{version of \today}
\newcommand{\lra}{\longrightarrow}
\newcommand{\ra}{\rightarrow}
\newcommand{\sdp}{\times\kern-.2em\vrule height1.1ex depth-.05ex}
\newcommand{\epi}{\lra \kern-.8em\ra}
\begin{document}

\renewcommand{\thefootnote}{\alph{footnote}}
\setcounter{footnote}{-1} \footnote{\emph{ Mathematics Subject
Classification(2010)}:  13F60, 05E15, 20M10.}
\renewcommand{\thefootnote}{\alph{footnote}}
\setcounter{footnote}{-1} \footnote{ \emph{Keywords}: seed homomorphism, mixing-type sub-seed, rooted cluster morphism, sub-rooted cluster algebra, rooted cluster quotient algebra.}

\begin{abstract} Our motivation is to build a systematic method in order to investigate the structure of cluster algebras of geometric type.
  The method is given through the notion of mixing-type sub-seeds, the theory of seed homomorphisms and the view-point of gluing of seeds. As an application, for (rooted) cluster algebras,
   we completely classify rooted cluster subalgebras and characterize rooted cluster quotient algebras in detail. Also, we build the relationship between the categorification of a rooted cluster algebra and that of its rooted cluster subalgebras.

  Note that cluster algebras of geometric type studied here are of the sign-skew-symmetric case.
\end{abstract}

\maketitle
\bigskip
\tableofcontents
\section{Introduction and preliminaries}

Cluster algebras are commutative algebras that were introduced
by Fomin and Zelevinsky \cite{fz1} in order to give a
combinatorial characterization of total positivity and canonical
bases in algebraic groups.  The theory of cluster
algebras is related to numerous other fields. Since its introduction, the study on cluster algebras mainly involves intersection with Lie theory, representation theory of algebras, its combinatorial method (e.g. the periodicity issue) and categorification and the sub-class constructed from Riemannian surfaces and its topological setting, including the Teichm$\ddot{u}$ller theory.

The algebraic structure and properties of cluster algebras were originally studied in a series of articles \cite{fz1,fz2,[1],fz4} involving bases and the positivity conjecture. The positive conjecture has been proved by Lee and Schiffler in \cite{LS} in the skew-symmetric case and moreover, was claimed to be true by Kontsevich etc. in \cite{GHKK} in the skew-symmetrizable case.

In this work, we characterize cluster algebras through their internal structure. The categorical framework for cluster algebras is provided in \cite{ADS}. More
precisely, the so-called {\em rooted cluster
morphism} is introduced to characterize the relations among {\em rooted cluster algebras} with ``rooted" meaning fixed initial seeds. In particular, injective morphisms and surjective morphisms and isomorphisms are investigated for rooted cluster algebras in some special cases, including those from Riemanian surfaces in
\cite{ADS}.

In \cite{ADS}, the structure of a cluster algebras is discussed through its rooted cluster subalgebras and quotient algebras. However, as shown in the sequel, we find that its structure is determined in general by its sub-seeds, correspondingly by the so-called {\em sub-rooted cluster algebras}.  Thus, our view is different from that in \cite{ADS}.

In this article, we mainly focus to study the structure of rooted cluster algebras, including all rooted
cluster subalgebras and rooted cluster quotient algebras, via sub-seeds and seed homomorphisms. It partly was studied in
\cite{ADS, CZ} in some special cases. For this aim,  we propose a
systematic method to characterize rooted cluster subalgebras and rooted cluster quotient algebras.
In addition to the methods of sub-seeds and seed homomorphisms, the method of gluing of seeds for rooted cluster quotient algebras is also an important topic in our discussion. As an incidental result,  the partial answer of one problem on the rooted cluster morphism $\sigma_{x,1}$ in \cite{ADS} is given in our way.

As a new idea in this article, we introduce the so-called {\em (mixing-type) sub-seeds} and {\em seed homomorphisms}.

The concept of mixing-type subseeds is basic for us to discuss the structure of cluster algebras. Our motivation is to unify two extremes: freeze exchange variables or delete exchange/frozen variables, into a concept. It supports us a possibility to characterize the structure of a rooted cluster algebra. In fact, we have proved in (page 18, Thm 4.4) that any rooted cluster algebras can be expressed as such form.

In \cite{ADS} and \cite{Sa}, seed (anti-)isomorphisms and $\sigma$-similarity of seeds are defined, that are indeed consistent with isomorphisms of seeds, which are special cases of seed homomorphisms. We set up the corresponding structure of a sub-seed in a cluster algebra, which is called a {\em sub-rooted cluster algebra}.
The interesting fact is that seed homomorphisms are compatible with graph
homomorphisms in graph theory, which gives a possibility to establish a connection
between the cluster algebras theory and the graph theory, because a seed can be presented as a cluster quiver when the exchange matrix is skew-symmetric.

Our original motivation for introducing the concept of seed homomorphisms is to understand the structure of totally sign-skew-symmetric cluster algebras. In the sign-skew-symmetric case, many problems, e.g. positivity conjecture and $F$-polynomials, will become very difficult. So, such research in our paper is necessary. This new concept will be used in our further work \cite{HL2} on the positivity and $F$-polynomials for sign-skew-symmetric cluster algebras,
building on \cite{LS} and this present work.

As a preliminary application of our conclusions, we give a relation between the finite type/finite mutation
type of a rooted cluster algebra and that of its sub-rooted cluster algebras and  establish a
connection between rooted cluster sub-algebras and their monoidal categorification.

In summary, we list our main contributions in this article as follows:
\\
$\bullet$\; Build the theory of seed homomorphisms (Definition \ref{seedhom}) for its importance as a tool in this work.
\\
$\bullet$\; Introduce the notion of mixing-type sub-rooted cluster algebras via mixing-type sub-seeds and using it as the main tool, for a given rooted cluster algebra, we give the characterization  of rooted cluster subalgebras (Theorem \ref{rooted cluster subalgebra})
 and that of pure sub-cluster algebras as a class of cluster quotient algebras in the acyclic case obtained via specialisation (Theorem \ref{cluster quo.}).
 \\
$\bullet$\;  The method of gluing frozen variables is used effectively to characterize general surjective rooted cluster morphisms. Concretely, any surjective rooted cluster morphism determines uniquely a noncontractible surjective rooted cluster morphism, which can be written as a composition of a rooted cluster isomorphism and a series of surjective canonical morphisms via gluing pairs of frozen variables step-by-step (Proposition \ref{inducedhom}, Theorem \ref{maindecomp}).
  \\
$\bullet$\;  As another application of the method of mixing-type sub-seeds, we build the relationship between the categorification of a rooted cluster algebra and that of its rooted cluster subalgebra (Theorem \ref{subcat}).

The organization of this article still contains the following further contents.

In the next part of this section, we explain the notions and notations about cluster algebras.

It is proved in Section 3 that a rooted cluster isomorphism is equivalent to an initial seed isomorphism (Proposition \ref{basiclem}).

In Section 4, the characterization  of rooted cluster subalgebras (Theorem \ref{rooted cluster subalgebra}) means that all rooted cluster subalgebras of a rooted cluster algebra is a sub-class of its mixing-type sub-rooted cluster algebras. It is worth to mention that this result has been independently found in \cite{CZ} in the other form of characterization.
In this section, we also show that a pure cluster subalgebra is always a rooted cluster subalgebra (Proposition \ref{Lem rooted}) and then give the  characterization of  proper rooted cluster subalgebras (Corollary \ref{0 sub matrix}).

As an application of Section 4, in Section 5, we calculate the number of non-trivial proper rooted cluster subalgebras in a rooted cluster algebra.

In Section 6, as a corollary of Theorem \ref{cluster quo.}, it is proved that a sub-rooted cluster algebra of a rooted cluster algebra of finite type (respectively, finite mutation type) is also of finite type (respectively, finite mutation type) (Corollary \ref{cor6.8}).

\bigskip

The successive works \cite{HL} and \cite{HL2} are based on this paper.

\bigskip

In this paper, we always consider totally sign-skew-symmetric cluster algebras of geometric type introduced in \cite{fz1}\cite{fz2}, as mentioned as follows.
\bigskip

The original definition of cluster algebra given in \cite{fz1} is in terms of exchange pattern. We recall the equivalent definition in terms of seed mutation in \cite{fz2}; for more details, refer to \cite{GSV, fz1, fz2}.

An $n\times n$ integer matrix $A=(a_{ij})$ is called {\bf
sign-skew-symmetric} if either $a_{ij}=a_{ji}=0$ or $a_{ij}a_{ji}<0$ for any $1\leq i,j\leq
n$.

An $n\times n$ integer matrix $A=(a_{ij})$ is called {\bf
skew-symmetric} if $a_{ij}=-a_{ji}$ for all $1\leq i,j\leq n$.

An $n\times n$ integer matrix $A=(a_{ij})$ is called {\bf
$D$-skew-symmetrizable} if $d_ia_{ij}=-d_ja_{ji}$ for all $1\leq
i,j\leq n$, where $D$=diag$(d_i)$ is a diagonal matrix with all
$d_i\in \mathbb{Z}_{\geq 1}$.

Let $\widetilde{A}$ be an $n\times (n+m)$ integer matrix whose
principal part, denoted as $A$, is the $n\times n$ submatrix formed
by the first $n$-rows and the first $n$-columns. The entries of
$\widetilde A$ are written by $a_{xy}$, $x\in X$ and $y\in
\widetilde{X}$. We say $\widetilde A$ to be {\bf
sign-skew-symmetric} (respectively, {\bf skew-symmetric}, $D$-{\bf
skew-symmetrizable}) whenever $A$ possesses this property.

For two $n\times (n+m)$ integer
 matrices $A=(a_{ij})$ and
$A'=(a'_{ij})$, we say that $A'$ is
obtained from $A$ by a {\bf matrix mutation $\mu_i$} in direction $i, 1\leq i\leq n$, represented as $A'=\mu_i(A)$, or say that $A$ and $A'$ are {\bf mutation
equivalent}, represented as $A\simeq A'$, if
\begin{equation}\label{matrixmutation}
a'_{jk}=\left\{\begin{array}{lll} -a_{jk},& \text{if}
~~j=i \ \text{or}\;
k=i;\\a_{jk}+\frac{|a_{ji}|a_{ik}+a_{ji}|a_{ik}|}{2},&
\text{otherwise}.\end{array}\right.\end{equation}

It is easy to verify that $\mu_i(\mu_i(A))=A$. The
skew-symmetric/symmetrizable property of matrices is invariant under
mutations. However, the sign-skew-symmetric property is not so. For
this reason, a sign-skew-symmetric matrix $A$ is called {\bf totally
sign-skew-symmetric} if any matrix, that is mutation equivalent to
$A$, is sign-skew-symmetric.

 Give a field $\mathds F$ as an extension of the rational number field $\mathds Q$, assume that $u_1,\cdots,u_n,x_{n+1},\cdots,x_{n+m}\in \mathds F$ are $n+m$ algebraically independent over $\mathds Q$ for a positive integer $n$ and a non-negative integer $m$ such that $\mathds F=\mathds{Q}(u_1,\cdots,u_n,x_{n+1},\cdots,x_{n+m})$, the field of rational functions in the  set $\widetilde X=\{u_1,\cdots,u_n,x_{n+1},\cdots,x_{n+m}\}$ with coefficients in $\mathds{Q}$.

A {\bf seed} in $\mathds F$ is a triple $\Sigma=(X, X_{fr},\widetilde B)$, where\\
(a)~$X=\{x_1,\cdots x_n\}$ is a transcendence basis of $\mathds F$ over the fraction field of $\mathds Z[x_{n+1},\cdots,x_{n+m}]$, which is called a {\bf cluster}, whose each $x\in X$ is called a {\bf cluster variable} (see \cite{fz2});\\
(b)~$X_{fr}=\{x_{n+1},\cdots,x_{n+m}\}$ are called the {\bf frozen cluster} or, say, the {\bf frozen part} of $\Sigma$ in $\mathds F$, where all $x\in X_{fr}$ are called  {\bf stable (cluster) variables} or {\bf frozen (cluster) variables};\\
(c)~$\widetilde{X}=X\cup X_{fr}$ is called a {\bf extended cluster};\\
(d)~$\widetilde{B}=(b_{xy})_{x\in X,y\in \widetilde{X}}=(B\;B_1)$ is a $n\times (n+m)$ matrix over $\mathds{Z}$ with rows and columns indexed
by $X$ and $\widetilde{X}$, which is totally sign-skew-symmetric. The $n\times n$ matrix $B$ is called the {\bf exchange matrix} and
$\widetilde B$ the {\bf extended exchange matrix} corresponding to the seed $\Sigma$.

 In a seed $\Sigma=(X,X_{fr},\widetilde B)$, if $X=\emptyset$, that is,
$\widetilde{X}=X_{fr}$, we call the seed a {\bf trivial seed}.

Given a seed $\Sigma=(X,X_{fr},\widetilde{B})$ and $x,y\in
\widetilde{X}$, we say $(x,y)$ is a {\bf connected pair} if
$b_{xy}\neq 0$ or $b_{yx}\neq 0$ with $\{x,y\}\cap X\neq \emptyset$. A seed $\Sigma$ is defined to be {\bf connected} if for any $x,y\in \widetilde{X}$,
there exists a sequence of variables
$(x=z_0,z_1,\cdots,z_s=y)\subseteq \widetilde{X}$ such that
$(z_i,z_{i+1})$ are connected pairs for all $0\leq i\leq s-1$.

Let $\Sigma=(X,X_{fr},\widetilde B)$ be a seed in $\mathds F$ with $x\in X$, the {\bf mutation} $\mu_x$ of $\Sigma$ at $x$ is defined satisfying $\mu_x(\Sigma)=(\mu_x(X), X_{fr}, \mu_{x}(\widetilde B))$ such that

(a)~ The {\bf adjacent cluster} $\mu_{x}(X)=\{\mu_{x}(y)\mid y\in X\} $, where  $\mu_{x}(y)$ is given by the {\bf exchange relation}
\begin{equation}\label{exchangerelation}
\mu_{x}(y)=\left\{\begin{array}{lll} \frac{\prod\limits_{t\in \widetilde{X}, b_{xt}>0}t^{b_{xt}}+\prod\limits_{t\in \widetilde{X}, b_{xt}<0}t^{-b_{xt}}}{x},& \text{if}
~~y=x;\\y,& \text{if}~~ y\neq x. \end{array}\right.\end{equation}
This new variable $\mu_{x}(x)$ is also called a {\em new} {\bf cluster
variable}.

(b)~ $\mu_{x}(\widetilde B)$ is obtained from B by applying the matrix mutation in direction
$x$ and then relabeling one row and one column by replacing $x$ with $\mu_x(x)$.

It is easy to see that the mutation $\mu_x$ is an involution, i.e., $\mu_{\mu_x(x)}(\mu_x(\Sigma))=\Sigma$.

Two seeds $\Sigma'$ and $\Sigma''$ in $\mathds F$ are called {\bf mutation equivalent} if there exists a sequence of mutations $\mu_{y_1},\cdots,\mu_{y_s}$ such that $\Sigma''=\mu_{y_s}\cdots\mu_{y_1}(\Sigma')$.
 Trivially, the mutation equivalence gives an equivalence relation on the set of seeds in $\mathds F$.

Let $\Sigma$ be a seed in $\mathds F$. Denote by $\mathcal S$ the set of all seeds mutation equivalent to $\Sigma$. In particular, $\Sigma\in \mathcal S$. For any $\bar \Sigma\in\mathcal S$, we have $\bar \Sigma=(\bar X,X_{fr},\widetilde {\bar B})$. Denote $\mathcal X=\cup_{\bar\Sigma\in\mathcal S}\bar X$.

\begin{Def}\label{clusteralgebra}
 Let $\Sigma$ be a seed in $\mathds F$. The {\bf cluster algebra} $\mathcal A=\mathcal A(\Sigma)$, associated with $\Sigma$, is defined to be the $\mathds Z[x_{n+1},\cdots,x_{n+m}]$-subalgebra of $\mathds F$ generated by $\mathcal X$. $\Sigma$ is called the {\bf initial seed} of $\mathcal A$.
\end{Def}

This notion of cluster algebra was given in \cite{fz1}\cite{fz2}, where it is called the {\bf cluster algebra of geometric type} as a special case of general cluster algebras.

Such kind of cluster algebras is considered as the most important one, with respect to additive categorification, etc., in the theory of cluster algebras based on the views of references \cite{fz2} \cite{CK} \cite{GSV} \cite{GLS}.
It accords with many important examples, such as those given in \cite{fz1}\cite{GSV}\cite{K1}\cite{Sc}, including those constructed from the coordinate rings of many varieties, e.g. from Grassmannians \cite{GSV} and algebraic groups \cite{fz1}. They supply with a close connection between the theory of cluster algebras and representation theory, see \cite{BMRRT}\cite{K1}\cite{GLS}, etc.

\section{Seed homomorphisms and some elementary properties}

By definition, cluster algebras are determined by original seeds and their mutations. We find in the next section that the relations between cluster algebras can be restricted to their seeds so as to obtain the relations between them. Motivated by this fact and our discussion in the sequel, we now introduce the so-called {\em seed
 homomorphism}.

 For the initial seed $\Sigma=(X,X_{fr},\widetilde{B})$ of a cluster algebra $\mathcal A$ and two pairs $(x,y)$ and $(z,w)$ with $x,z\in X$ and
 $y,w\in\widetilde{X}$, we say that $(x,y)$ and $(z,w)$ are {\bf adjacent
pairs} if $b_{xz}\neq 0$ or $x=z$.

\begin{Def}\label{seedhom}
Let $\Sigma=(X,X_{fr},\widetilde{B})$ and $\Sigma'=(X',X'_{fr},\widetilde{B'})$ be two
seeds.

(i)~ A map $f$ from $\widetilde{X}$ to $\widetilde{X'}$ is called a {\bf seed homomorphism} from the seed $\Sigma$ to the seed $\Sigma'$ if
 (a) ~ $f(X)\subseteq X'$ and
 (b) ~ for any adjacent pairs $(x,y)$ and $(z,w)$ with $x,z\in X$,
 $y,w\in\widetilde{X}$, it holds that
 \begin{equation}
(b'_{f(x)f(y)}b_{xy})(b'_{f(z)f(w)}b_{zw})\geq 0 \;\;\;\;\;\;\;\text{and}\;\;\;\;\;\;\; |b'_{f(x)f(y)}|\geq |b_{xy}|.
\end{equation}

(ii)~ A seed homomorphism
$f:\Sigma\rightarrow\Sigma'$  is called a {\bf positive seed
homomorphism} if $b'_{f(x)f(y)}b_{xy}\geq 0$ for all $x\in X$ and $y\in
\widetilde{X}$. In contrast, a seed homomorphism $f$ is called a {\bf negative seed
homomorphism} if $b'_{f(x)f(y)}b_{xy}\leq 0$ for all $x\in X$ and $y\in
\widetilde{X}$.
\end{Def}

For seed homomorphisms $f:\Sigma\rightarrow\Sigma'$ and
$g:\Sigma'\rightarrow\Sigma''$, define their composition $gf:\Sigma\rightarrow\Sigma''$
satisfying that $gf(x)=g(f(x))$ for all $x\in \widetilde{X}$. Then we can define the {\bf seed category}, denoted as
\textbf{Seed}, to be the category whose objects are all seeds and
whose morphisms are all seed homomorphisms with composition defined as
above.

A quiver $\Gamma$ is called a {\bf cluster quiver} if it is a finite quiver  with no loops and no cycles of lengths 2 (see \cite{fosth}\cite{LLY1}\cite{LLY2}).
The meaning of this class of quivers follows the fact that cluster quivers can be corresponding one-to-one with the skew-symmetric integer square matrices, generating cluster algebras without frozen variables.

In fact, our idea of seed homomorphisms is original from homomorphisms of direct graphs.

As given in \cite{HN}, recall that for two quivers (said as digraphs
in graph theory) $Q$ and $P$ with the vertex sets $Q_0$ and $P_0$, a {\bf
quiver homomorphism} $f$ from $Q$ to $P$, written as $f: Q\rightarrow P$,  is
a mapping $f: Q_{0}\rightarrow P_{0}$ such that there is an arrow
from $f(u)$ to $f(v)$ in $P$ whenever there is an arrow from $u$ to
$v$ in $Q$ for $u,v\in Q_0$.

Following this quiver homomorphism, a homomorphism $f$ of quivers
from $Q$ to $P$ is called a {\bf cluster quiver homomorphism} if
$f(Q_{0,ex})\subseteq P_{0,ex}.$

In the case for {\em skew-symmetric} seeds, we know the one-to-one
correspondence between seeds and cluster quivers. For two connected
cluster quivers $Q$ and $Q'$ and their seeds $\Sigma=\Sigma(Q)$ and
$\Sigma'=\Sigma(Q')$,  a positive (respectively, negative) seed
homomorphism $f_s: \Sigma\rightarrow \Sigma'$ corresponds to a
cluster quiver homomorphism (respectively, anti-homomorphism) $f_c:
Q\rightarrow Q'$.

 In fact, the condition (b) of the definition of seed homomorphism
means that $b'_{f_s(x)f_s(y)}b_{xy}\geq 0$ for all $x\in X$ and $y\in
\widetilde{X}$ or $b'_{f_s(x)f_s(y)}b_{xy}\leq 0$ for all $x\in X$ and
$y\in \widetilde{X}$. Also, from (b),
$|b'_{f_s(x)f_s(y)}|\geq|b_{xy}|$, which follows that we can define
a cluster quiver homomorphism or anti-homomorphism $f_c$ from the
corresponding cluster quiver $Q$ to the other one $Q'$ with
$f_c(x)=f_s(x)$ for any vertex $x$ in $Q$ such that for any vertices
$x, y$ of $Q$ if there is an arrow from $x$ to $y$, then there is an
arrow from $f_c(x)$ to $f_c(y)$ for all $x\in X$ and $y\in \widetilde{X}$
or from $f_c(y)$ to $f_c(x)$ for all $x\in X$ and $y\in \widetilde{X}$.
It implies that $f_c$ is a cluster quiver homomorphism or
anti-homomorphism.

According to the fact above, we can see seed homomorphisms as an improvement of quiver homomorphisms in the theory of graphs, in order to be useful for
  skew-symmetrizable seeds or more generally, totally sign-skew-symmetrizable seeds.
  In this view, the homomorphism method in graph theory will have a
natural influence on our study in this article and more further work.

 \begin{Def}\label{mixingseed}
 Let $\Sigma=(X,X_{fr},\widetilde{B})$ be a seed with $\widetilde{B}$
an $n\times (n+m)$ totally sign-skew-symmetric  integer matrix. Assume $I_{0}$
is a subset of $X$, $I_{1}$ a subset of $\widetilde{X}$ with
$I_{0}\cap I_{1}=\emptyset$ and $I_{1}=I_{1}'\cup I_{1}''$ for
$I_{1}'=X\cap I_{1}$ and $I_{1}''=X_{fr}\cap I_{1}$. Denoting
$X'=X\backslash (I_{0}\cup I_{1}')$,
$\widetilde{X'}=\widetilde{X}\backslash I_{1}$ and $\widetilde{B'}$
as a $\sharp X' \times \sharp \widetilde{X'}$-matrix with
$b_{xy}'=b_{xy}$ for any $x\in X'$ and $y\in \widetilde{X'}$, one can
define the new seed $\Sigma_{I_{0},I_{1}}=(X',(X_{fr}\cup I_0)\setminus I_1,\widetilde{B'})$, which
is called a {\bf mixing-type sub-seed} or, say, {\bf $(I_{0},I_{1})$-type sub-seed}, of the seed
$\Sigma=(X,X_{fr},\widetilde{B})$.
\end{Def}

\begin{Fac} An $(I_{0}',I'_{1})$-type
 sub-seed of an $(I_{0},I_{1})$-type sub-seed
of a seed $\Sigma$ is a mixing-type sub-seed of
$\Sigma$. That is, for a seed $\Sigma$,
$(\Sigma_{I_0,I_1})_{I'_0,I'_1}=\Sigma_{I'_0\cup (I_0\setminus
I'_1),I_1\cap I'_1}$.
\end{Fac}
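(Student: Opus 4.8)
The plan is to prove the identity by the most direct route: a mixing-type sub-seed of $\Sigma$ is completely determined by its three data — its cluster, its frozen part, and its extended exchange matrix, the last being simply the restriction of $\widetilde B$ to the surviving rows and columns. Hence it suffices to show that the iterated seed $(\Sigma_{I_0,I_1})_{I'_0,I'_1}$ and the claimed single sub-seed $\Sigma_{K_0,K_1}$ have the same cluster, the same frozen part and the same extended cluster; once the extended clusters agree the two matrices coincide automatically, since both are the restriction of $\widetilde B$ to the same index set. So the whole argument reduces to three set-theoretic identities together with the admissibility check $K_0\cap K_1=\emptyset$.

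First I would unwind the two applications of Definition \ref{mixingseed}. Writing $\Sigma_1=\Sigma_{I_0,I_1}$ and splitting $I_1=(I_1\cap X)\cup(I_1\cap X_{fr})$, the first step yields cluster $X_1=X\setminus(I_0\cup(I_1\cap X))$, frozen part $F_1=(X_{fr}\setminus I_1)\cup I_0$ and extended cluster $\widetilde{X_1}=\widetilde X\setminus I_1$. Applying the second step with the fresh parameters $I'_0\subseteq X_1$ and $I'_1\subseteq\widetilde{X_1}$ (so $I'_0\cap I'_1=\emptyset$), I would record the cluster $X_2=X_1\setminus(I'_0\cup(I'_1\cap X_1))$, the frozen part $F_2=(F_1\cup I'_0)\setminus I'_1$ and the extended cluster $\widetilde{X_2}=\widetilde{X_1}\setminus I'_1=\widetilde X\setminus(I_1\cup I'_1)$. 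The extended-cluster computation is immediate: nested deletions compose as the union of the deleted sets, so the total set of variables removed from $\widetilde X$ is $I_1\cup I'_1$, and in particular $K_1=I_1\cup I'_1$.

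The substance lies in the bookkeeping for the frozen part and the cluster, whose one genuine subtlety is the following. The variables of $I_0$ are cluster variables of $\Sigma$ that are frozen at the first step, hence sit in $F_1\subseteq\widetilde{X_1}$; at the second step some of them may be \emph{deleted}, namely exactly those lying in $I'_1$. Such a variable must be counted among the deleted variables of the composite rather than among its newly frozen ones, which is precisely why the first parameter is $I'_0\cup(I_0\setminus I'_1)$ rather than $I'_0\cup I_0$. Concretely, I would substitute $F_1$ and $X_1$ into the formulas for $F_2$ and $X_2$, expand, and simplify using the disjointness relations $I_0\cap I_1=\emptyset$, $I'_0\cap I'_1=\emptyset$, $I'_0\subseteq X$ and $I'_1\cap I_1=\emptyset$ (the last because $I'_1\subseteq\widetilde X\setminus I_1$). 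The load-bearing simplification is $(I_0\setminus I'_1)\cup(I'_1\cap X)=I_0\cup(I'_1\cap X)$, valid because $I_0\cap I'_1\subseteq I'_1\cap X$; this is what reconciles the $I_0\setminus I'_1$ correction appearing in the cluster with the full $I_0$ that must be removed once $I_1\cup I'_1$ is deleted.

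Carrying this out, the composite acquires cluster $X\setminus\big(K_0\cup(K_1\cap X)\big)$ and frozen part $(X_{fr}\cup K_0)\setminus K_1$ with $K_0=I'_0\cup(I_0\setminus I'_1)$ and $K_1=I_1\cup I'_1$, and I would verify $K_0\cap K_1=\emptyset$ directly from the four relations above, so that $(K_0,K_1)$ is admissible for Definition \ref{mixingseed}; hence the composite equals $\Sigma_{I'_0\cup(I_0\setminus I'_1),\,I_1\cup I'_1}$. I should flag that the second parameter is the total deleted set $I_1\cup I'_1$, a disjoint union since $I'_1\cap I_1=\emptyset$; the printed $I_1\cap I'_1$ cannot be meant, as $I_1\cap I'_1=\emptyset$ would force the composite to retain all of $\widetilde X$. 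The main obstacle is therefore not any conceptual difficulty but the scrupulous disjointness accounting around the $I_0$-variables that are frozen and then re-deleted; everything else is formal set algebra.
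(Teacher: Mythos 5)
Your proof is correct, and in fact the paper offers no proof at all of this Fact --- it is asserted immediately after Definition \ref{mixingseed} as self-evident --- so your direct unwinding of the definition (matching clusters, frozen parts, and extended clusters, with the matrix agreeing automatically as the common restriction of $\widetilde B$) is precisely the argument the authors leave implicit. Your bookkeeping is sound: the admissibility check $K_0\cap K_1=\emptyset$, the identity $(I_0\setminus I'_1)\cup(I'_1\cap X)=I_0\cup(I'_1\cap X)$, and the explanation of why the first parameter must be $I'_0\cup(I_0\setminus I'_1)$ rather than $I'_0\cup I_0$ are all exactly what is needed. You are also right that the printed second parameter $I_1\cap I'_1$ is a misprint for $I_1\cup I'_1$: since $I'_1\subseteq\widetilde{X}\setminus I_1$ forces $I_1\cap I'_1=\emptyset$, the formula as printed would collapse to $\Sigma_{K_0,\emptyset}$ and retain all of $\widetilde X$, which is plainly false (e.g.\ delete $x_1$ first and $x_2$ second in a rank-two seed); catching and justifying this correction is a genuine improvement on the statement.
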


Furthermore, we have $X_{fr}'=I_{0}\cup(X_{fr}\backslash I_{1}'')$.

To obtain an $(I_{0},I_{1})$-sub-seed from the initial seed is equivalent to saying freeze the cluster variables in $I_{0}$ and delete the cluster variables in $I_{1}$.

First, we discuss two special cases of mixing-type sub-seeds of a seed $\Sigma$.

{\bf Case $1$: $I_{1}=\emptyset$.} \; That is, we only freeze the variables in $I_{0}$ that are original mutable and do not delete any variables.

In this case, we have the sub-seed $\Sigma_{I_{0},\emptyset}=(X',X_{fr}\cup I_0,\widetilde{B_0})$ with cluster $X'=X\backslash I_0$ consisting of mutable variables and extended cluster
$\widetilde{X'}=\widetilde{X}$.
Since $\widetilde{B}$ is sign-skew-symmetric, it follows that $\widetilde{B_0}$ is also sign-skew-symmetric.
The frozen variables of the sub-seed $\Sigma_{I_{0},\emptyset}$ form the set
$X_{fr}'=\widetilde{X}\backslash X'=(X\backslash X')\cup X_{fr}=I_{0}\cup X_{fr}$.

We call this sub-seed $\Sigma_{I_{0},\emptyset}=(X',X_{fr}\cup I_0,\widetilde{B_0})$  a {\bf pure sub-seed} of the seed $\Sigma=(X,X_{fr},\widetilde{B})$.

{\bf Case $2$: $I_{0}=\emptyset$.}\; That is, we only delete the variables in $I_1$ while the remaining variables remain unchanged and do not freeze any exchangeable variables.

In this case, we have the sub-seed $\Sigma_{\emptyset,I_{1}}=(X'',X''_{fr},\widetilde{B_1})$ with $X''=X\backslash I_1$, $X''_{fr}=X_{fr}\backslash I_1$ and $\widetilde{X''}=X''\cup X''_{fr}$.
$\widetilde{B_1}$ is sign-skew-symmetric since $\widetilde{B}$ is so.

We call this sub-seed $\Sigma_{\emptyset,I_{1}}=(X'',X''_{fr},\widetilde{B_1})$ a {\bf partial sub-seed} of the seed $\Sigma=(X,X_{fr},\widetilde{B})$.

For two seeds $\Sigma_{1}=(X_{1},(X_1)_{fr},\widetilde{B^{1}})$ and
$\Sigma_{2}=(X_{2},(X_2)_{fr},\widetilde{B^{2}})$, if there exists $($possibly
empty$)$ $\Delta_{1}\subseteq (X_{1})_{fr}$ and $\Delta_{2}\subseteq
(X_{2})_{fr}$ such that $|\Delta_{1}|= |\Delta_{2}|$, then
$\Sigma_{1}$ and $\Sigma_{2}$ are said to be {\bf glueable} along
$\Delta_{1}$ and $\Delta_{2}$. Let $\Delta$ be a family of
undeterminates in bijection with $\Delta_{1}$ and $\Delta_{2}$.

Recall in \cite{ADS} that the {\bf amalgamated sum} of $\Sigma_1$
and $\Sigma_2$ along $\Delta_{1}$ and $\Delta_{2}$ is defined as
$\Sigma=(X,X_{fr},\widetilde{B})$, where
$\widetilde{X}=(\widetilde{X_{1}}\backslash \Delta_{1})\cup
(\widetilde{X_{2}}\backslash \Delta_{2})\cup \Delta$, $X=X_{1}\cup
X_{2}$ and the matrix $\widetilde{B}$ is defined as:
\begin{equation}\label{union} \widetilde{B}=\left(\begin{array}{ccccc}
B_{11}^{1} & 0 & B_{12}^{1} & 0 & B_{13}^{1} \\
0 & B_{11}^{2} & 0 & B_{12}^{2} & B_{13}^{2}
\end{array}\right)~~~~~~~~~\text{with}\; \widetilde{B^{i}}=\left(\begin{array}{ccccc}
B_{11}^{i} &  B_{12}^{i} & B_{13}^{i}
\end{array}\right), i=1,2.
\end{equation}

We denote the amalgamated sum as the notations
$\Sigma=\Sigma_{1}\amalg
_{\Delta_{1},\Delta_{2}} \Sigma_{2}$.

In particular, when $\Delta_1$ and $\Delta_2$ are empty sets, we call this amalgamated sum $\Sigma=\Sigma_{1}\amalg
_{\Delta_{1},\Delta_{2}} \Sigma_{2}$ the {\bf union seed} of $\Sigma_1$ and $\Sigma_2$, denoted as $\Sigma=\Sigma_1\sqcup\Sigma_2$.

For a given seed $\Sigma$, if $\Sigma_1$ and $\Sigma_2$ are partial
sub-seeds of type $(\emptyset,I_1)$ of $\Sigma$ such that $X_1\cap X_2=\emptyset$, we replace $\Sigma_1\amalg_{(X_{1})_{fr}\cap (X_{2})_{fr},(X_{1})_{fr}\cap
(X_{2})_{fr}}\Sigma_2$ by the
notation $\Sigma_1\amalg\Sigma_2$. For subseeds $\Sigma_1,\Sigma_2,$ and $\Sigma_3$ of type $(\emptyset,I_1)$  of
$\Sigma$, we have $(\Sigma_1\amalg
\Sigma_2)\amalg\Sigma_3=\Sigma_1\amalg(\Sigma_2\amalg\Sigma_3)$, that is, the associative law holds for the amalgamated sum.

In fact, $(\Sigma_1\amalg
\Sigma_2)\amalg\Sigma_3$ has the set of exchangeable cluster variables $(X_1\cup X_2)\cup X_3$ and
the set of frozen cluster variables $((X_{1})_f\cup (X_{2})_f)\cup (X_{3})_f$ and
$\Sigma_1\amalg(\Sigma_2\amalg\Sigma_3)$ has the set of exchangeable cluster variables $X_1\cup (X_2\cup
X_3)$ and the set of frozen cluster variables $(X_{1})_f\cup ((X_{2})_f\cup
(X_{3})_f)$.
Following the associative law of the sets, we get the associative law of the amalgamated sum, since subseeds
are uniquely determined by their cluster variables.

\begin{Ex}
Let $Q:\xymatrix{x_1\ar[r]^{}&*+[F]{x_2}\ar[r]^{}&x_3\ar[r] &x_4}$, $Q_1:\xymatrix{x_1\ar[r]^{}&*+[F]{x_2}}$ and $Q_2:\xymatrix{*+[F]{x_2}\ar[r]^{}&x_3}$ be quivers with $x_1,x_3,x_4$ exchangeable variables, $x_2$ frozen. Since $\{x_1\}\cap \{x_3\}=\emptyset$, we get $\Sigma(Q_1)\amalg\Sigma(Q_2)=\Sigma(Q')$, where $Q':\xymatrix{x_1\ar[r]^{}&*+[F]{x_2}\ar[r]^{}&x_3}$.
\end{Ex}

\begin{Def}
A seed $\Sigma=(X,X_{fr},\widetilde{B})$ is called {\bf indecomposable} if it is connected and for any decomposition $\Sigma_1\amalg
\Sigma_2$, either $\Sigma_1$ or $\Sigma_2$ is a trivial seed, equivalently, either $\Sigma=\Sigma_2$ or $\Sigma=\Sigma_1$.
\end{Def}

Note that if $X_{fr}=\emptyset$, then the meaning of {\em indecomposable} and {\em connected} coincide.

\begin{Ex}
Let $Q:\xymatrix{x_1\ar[r]^{}&*+[F]{x_2}\ar[r]^{}&x_3}$, $Q_1:\xymatrix{x_1\ar[r]^{}&*+[F]{x_2}}$ and $Q_2:\xymatrix{*+[F]{x_2}\ar[r]^{}&x_3}$ be quivers with $x_1,x_3$ exchangeable variables, $x_3$ frozen. Then according to the definition, $\Sigma(Q)$ is connected, but it is not indecomposable, since $\Sigma(Q)=\Sigma(Q_1)\amalg\Sigma(Q_2)$.
\end{Ex}

\begin{Rem}
In case $\Sigma$ is skew-symmetrizable, the
indecomposability of $\Sigma$ is defined in \cite{CZ} in terms of
valued ice quiver.
\end{Rem}
Due to the definitions of indecomposability and positive seed
homomorphism, we have the following lemmas, which are easy to see.

\begin{Lem}\label{indecom}
A seed $\Sigma=(X,X_{fr},\widetilde{B})$ is indecomposable if and only if
for any $x, y\in \widetilde{X}$ with $x\not=y$,  there exists a sequence of
exchangeable cluster variables $(x_1,\cdots,x_s)$ in $X$ such that
$b_{x_1x}\neq 0$, $b_{x_ix_{i+1}}\neq 0$ and $b_{x_sy}\neq 0$ for
$i=1,\cdots,s$.
\end{Lem}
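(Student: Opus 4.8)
The plan is to reduce both sides of the equivalence to a single, more transparent condition phrased in terms of the auxiliary graph $G$ on the mutable variables $X$: declare $x_i,x_j\in X$ adjacent exactly when $b_{x_ix_j}\neq 0$. (This relation is symmetric on $X$ because the principal part of $\widetilde B$ is sign-skew-symmetric, so $b_{x_ix_j}\neq 0$ iff $b_{x_jx_i}\neq 0$.) I would first show that the stated path condition is equivalent to the conjunction: $\Sigma$ is connected and $G$ is connected. Indeed, a mutable path between two mutable variables exhibits them as joined in $G$, while a path to or from a frozen endpoint records that this frozen variable is attached to a mutable variable of the (connected) graph $G$; conversely, given that $G$ is connected and every frozen variable is attached to $X$, one produces the required sequence by taking a $G$-path between suitable mutable attachments of the two endpoints (allowing a short backtrack to accommodate the case of an adjacency edge).

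For the direction $(\Leftarrow)$, I would note that the path condition immediately gives connectedness of $\Sigma$, since each consecutive pair along a path, as well as $(x_1,x)$ and $(x_s,y)$, is a connected pair. To exclude a nontrivial decomposition, suppose $\Sigma=\Sigma_1\amalg\Sigma_2$ with $X=X_1\sqcup X_2$ and both $X_i$ nonempty, and pick $x\in X_1$, $y\in X_2$. Taking the mutable path $(x_1,\dots,x_s)$ supplied by the hypothesis and using the block form (\ref{union}) --- which forces $b_{ab}=0$ whenever $a,b$ are mutable variables in different parts --- I would argue inductively that $b_{x_1x}\neq 0$ puts $x_1\in X_1$, then $x_2\in X_1$, and so on up to $x_s\in X_1$; but then $b_{x_sy}\neq 0$ with $x_s\in X_1$ and $y\in X_2$ contradicts the vanishing of the cross block. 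Hence $\Sigma$ admits no nontrivial decomposition and is indecomposable.

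For the direction $(\Rightarrow)$ I would argue contrapositively: assuming the path condition fails, I want to show $\Sigma$ is not indecomposable. If $\Sigma$ is already disconnected we are done, so assume it is connected; then by the reduction of the first paragraph the failure forces $G$ to be disconnected. Let $X_1$ be one connected component of $G$ and $X_2=X\setminus X_1$, both nonempty, and let $\Sigma_1,\Sigma_2$ be the partial sub-seeds carried by $X_1$ and $X_2$, each frozen variable being placed on whichever side(s) it is attached to and treated as a shared (glued) frozen variable whenever it is attached to both parts. I would then verify that the resulting extended exchange matrices fit the pattern (\ref{union}): the only surviving cross terms are those involving the shared frozen variables, precisely as allowed by $B_{13}^1$ and $B_{13}^2$. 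This displays $\Sigma=\Sigma_1\amalg\Sigma_2$ as a nontrivial decomposition, so $\Sigma$ is not indecomposable.

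The step I expect to be the main obstacle is the bookkeeping in $(\Rightarrow)$: checking that the frozen variables can be distributed so that $\Sigma_1$ and $\Sigma_2$ are genuine partial sub-seeds whose amalgamated sum reconstructs $\Sigma$, the delicate point being that a frozen variable adjacent to both components must be glued as a shared variable rather than assigned to one side. I would also spell out the degenerate endpoints of the path condition --- for instance, when an endpoint is itself mutable, the sequence should be permitted to begin or terminate at that endpoint --- so that the argument matches the definition of connected pairs; these cases are harmless but must be stated explicitly.
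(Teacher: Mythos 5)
Your proposal is correct and follows essentially the same route as the paper: the forward direction is obtained by splitting $\widetilde{X}$ according to reachability through mutable variables (your connected component of $G$ is the paper's reachable set $I_1$), and the converse uses the block form (\ref{union}) to show a mutable path cannot cross a nontrivial decomposition. If anything, your bookkeeping of frozen variables attached to both components (gluing them as shared variables in the amalgamated sum) is spelled out more carefully than in the paper's own argument, which simply partitions $\widetilde{X}$ into $I_1$ and its complement.
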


\begin{proof}
``Only if'':\; Otherwise,  there exist $x,y\in \widetilde{X}$, $x\not=y$,  which
do not satisfy the condition.
Set
 $$I_1=\{x\}\cup\{z\in \widetilde{X}|\;\exists\;
(x_1,\cdots,x_s)\subseteq X\;\text{such that}\;b_{x_1x}\neq 0,
b_{x_ix_{i+1}}\neq 0\;\text{and}\;b_{x_sz}\neq
0\;\text{for}\;i=1,\cdots,s\}$$ and
$I'_1=\widetilde{X}\setminus I_1$. Then $x\in I_1$ and $y\in I'_1$, that is, $\Sigma_{\emptyset,
I_1}$ and $\Sigma_{\emptyset,I'_1}$ are non-trivial.

Since $\Sigma$ is connected, we have $\Sigma=\Sigma_{\emptyset,
I_1}\amalg\Sigma_{\emptyset,I'_1}$,
which contradicts to the indecomposability of $\Sigma$.

``If'':\; Clearly, $\Sigma$ is connected. If $Q$ is
decomposable, then we have $\Sigma=\Sigma_1\amalg\Sigma_2$ with
 non-trivial $\Sigma_{1}=(X_{1},(X_1)_{fr},\widetilde{B^{1}})$ and $\Sigma_{2}=(X_{2},(X_2)_{fr},\widetilde{B^{2}})$. Then we can find $x\in X_1$, $y\in X_2$ such that there exists a sequence of
exchangeable cluster variables $(x_1,\cdots,x_s)$ in $X=X_1\cup X_2$ satisfying that
$b_{x_1x}\neq 0$, $b_{x_ix_{i+1}}\neq 0$ and $b_{x_sy}\neq 0$ for
$i=1,\cdots,s$, which is impossible according to (\ref{union}), the form of $\widetilde{B}$.
\end{proof}

\begin{Lem}
If a non-trivial seed $\Sigma=(X,X_{fr},\widetilde{B})$ is indecomposable,
then any seed homomorphism $f:\Sigma\rightarrow\Sigma'$ is either
positive or negative.
\end{Lem}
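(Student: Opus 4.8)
The plan is to extract from condition (b) of Definition \ref{seedhom} a well-defined sign attached to each exchangeable index, and then to use the indecomposability criterion of Lemma \ref{indecom} to force this sign to be globally constant; its single value will then decide whether $f$ is positive or negative.

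First I would record the basic consequence of the inequality $|b'_{f(x)f(y)}|\geq|b_{xy}|$: whenever $b_{xy}\neq 0$ we also have $b'_{f(x)f(y)}\neq 0$, so the product $b'_{f(x)f(y)}b_{xy}$ is a nonzero integer carrying a definite sign $\varepsilon(x,y)\in\{+1,-1\}$. When $b_{xy}=0$ the product is $0$, which is harmless since it is both $\geq 0$ and $\leq 0$. Thus, by Definition \ref{seedhom}(ii), $f$ is positive precisely when $\varepsilon\equiv +1$ on all pairs with $b_{xy}\neq 0$ and negative precisely when $\varepsilon\equiv -1$ there, so it suffices to prove that $\varepsilon$ takes a single value.

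Next I would observe that the adjacency relation of condition (b) depends only on the first coordinates: $(x,y)$ and $(z,w)$ are adjacent iff $x=z$ or $b_{xz}\neq 0$. Taking $x=z$ shows that for fixed $x\in X$ the sign $\varepsilon(x,y)$ is independent of the choice of $y$ among those with $b_{xy}\neq 0$, since $(b'_{f(x)f(y)}b_{xy})(b'_{f(x)f(w)}b_{xw})\geq 0$ with both factors nonzero forces equal signs; denote this common value $\varepsilon(x)$. Its existence requires each $x\in X$ to admit some $b_{xy}\neq 0$, which indecomposability and non-triviality guarantee unless $\widetilde X$ consists of a single variable, in which case there are no nonzero pairs and the statement holds vacuously. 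Taking instead $b_{xz}\neq 0$ with $x,z\in X$ gives $\varepsilon(x)=\varepsilon(z)$. Hence $\varepsilon(\cdot)$ is constant on each connected component of the graph on $X$ with an edge $\{x,z\}$ whenever $b_{xz}\neq 0$.

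Finally I would invoke Lemma \ref{indecom}: for any $x,y\in X$ it supplies a chain $x_1,\dots,x_s\in X$ with $b_{x_1x}\neq 0$, $b_{x_ix_{i+1}}\neq 0$ and $b_{x_sy}\neq 0$; since the principal part of $\widetilde B$ is sign-skew-symmetric, $b_{x_1x}\neq 0$ is equivalent to $b_{xx_1}\neq 0$, so this chain is a path in the above graph joining $x$ to $y$. Therefore $\varepsilon(x)=\varepsilon(x_1)=\cdots=\varepsilon(x_s)=\varepsilon(y)$, so $\varepsilon$ is constant on $X$, say equal to $\epsilon\in\{+1,-1\}$. If $\epsilon=+1$ then $b'_{f(x)f(y)}b_{xy}\geq 0$ for all $x\in X,\ y\in\widetilde X$ and $f$ is positive; if $\epsilon=-1$ then $f$ is negative. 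The only genuinely delicate point is the passage from the chain of Lemma \ref{indecom}, whose endpoints may be frozen, to a path with both endpoints in $X$; this is precisely why I restrict the endpoints to $X$ and use sign-skew-symmetry of the principal part to convert $b_{x_1x}$ into $b_{xx_1}$.
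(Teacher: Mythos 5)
Your proposal is correct and follows essentially the same route as the paper's proof: both extract sign-coherence of the products $b'_{f(x)f(y)}b_{xy}$ on adjacent pairs from condition (b) of Definition \ref{seedhom} together with the inequality $|b'_{f(x)f(y)}|\geq|b_{xy}|$, and then propagate a single sign along the chains of exchangeable variables supplied by Lemma \ref{indecom}. Your packaging of the argument as a locally constant sign function $\varepsilon$ on the exchange graph, and your explicit handling of the vacuous case in which all relevant entries vanish, are minor presentational refinements of the paper's induction along the chain $z=z_0,z_1,\dots,z_s=x,\ z_{s+1}=y$.
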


\begin{proof}
Assume there exist $z\in X$ and $w\in\widetilde{X}$ such that
$b'_{f(z)f(w)}b_{zw}>0$. For any $x\in X$ and $y\in \widetilde{X}$ with $b_{xy}\neq0$, by Lemma \ref{indecom}, there exists a sequence
$(z=z_0,z_1\cdots,z_{s-1},z_s=x)$ in $X$ such that $b_{z_kz_{k+1}}\neq 0$ for
$0\leq k\leq s-1$. Set $z_{s+1}=y$. Since $f$ is a seed homomorphism, we have\\
\centerline{$(b'_{f(z)f(w)}b_{zw})(b'_{f(z)f(z_1)}b_{zz_1})\geq 0,
\;\;\;\text{and}\;\;\;|b'_{f(z)f(w)}|\geq|b_{zw}|>0,
|b'_{f(z)f(z_1)}|\geq|b_{zz_1}|>0,$} Thus,
$b'_{f(z_0)f(z_1)}b_{z_0z_1}>0$. Similarly, for $0\leq i\leq s-1$, we
have\\
\centerline{$(b'_{f(z_i)f(z_{i+1})}b_{z_iz_{i+1}})(b'_{f(z_{i+1})f(z_{i+2})}b_{z_iz_{i+1}})\geq
0$}\\
\;\;and\;\;\;\centerline{$|b'_{f(z_i)f(z_{i+1})}|\geq|b_{z_iz_{i+1}}|>0,
|b'_{f(z_{i+1})f(z_{i+2})}|\geq|b_{z_{i+1}z_{i+2}}|>0$.} Therefore, using induction,
we have $b'_{f(z_i)f(z_{i+1})}b_{z_iz_{i+1}}>0$ for $0\leq i\leq s-1$. In particular, for $i=s$, it follows that for any $x\in X$ and $y\in \widetilde{X}$,
$$b'_{f(x)f(y)}b_{xy}=b'_{f(z_s)f(z_{s+1})}b_{z_sz_{s+1}}>0,$$ which means that $f$ is a positive seed homomorphism.

Similarly, if there exist $z\in X$ and $w\in\widetilde{X}$ with
$b'_{f(z)f(w)}b_{zw}<0$, then $f$ is negative.
\end{proof}

\begin{Lem}\label{decom}
Let $\Sigma=(X,X_{fr},\widetilde{B})$ be a connected seed of a cluster
algebra $\mathcal A$, then there uniquely exist indecomposable
subseeds
$\Sigma_1=(X_1,(X_1)_{fr},\widetilde{B_1}),\cdots,\Sigma_t=(X_t,(X_t)_{fr},\widetilde{B_t})$
 of type $(\emptyset, I_1)$ of $\Sigma$ for some integer $t$ such that

(a)~ $X_i\cap X_j=\emptyset$ if $i\neq j$ and

(b)~ $\Sigma=\coprod\limits_{i=1}^{t}\Sigma_i$ is the decomposition of the
amalgamated sum.

\end{Lem}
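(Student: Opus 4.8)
The plan is to build the decomposition from the connected components of the exchangeable part of the seed, and then to read off each frozen part by adjacency. First I would define a relation $\approx$ on $X$ by declaring $x\approx y$ whenever there is a sequence $x=x_0,x_1,\dots,x_s=y$ in $X$ with $b_{x_kx_{k+1}}\neq 0$ for all $k$; since $\widetilde B$ is sign-skew-symmetric we have $b_{x_kx_{k+1}}\neq 0$ if and only if $b_{x_{k+1}x_k}\neq 0$, so $\approx$ is an equivalence relation, and I let $X=X_1\sqcup\cdots\sqcup X_t$ be its classes. For each $i$ I set $F_i=\{y\in X_{fr}\mid b_{xy}\neq 0\text{ for some }x\in X_i\}$, put $\widetilde{X_i}=X_i\cup F_i$, and take $\Sigma_i=\Sigma_{\emptyset,\widetilde X\setminus\widetilde{X_i}}$, the partial sub-seed whose matrix $\widetilde{B_i}$ is the restriction of $\widetilde B$ to $X_i\times\widetilde{X_i}$. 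Because $\Sigma$ is connected and $X\neq\emptyset$, every frozen variable is joined to some exchangeable variable by a chain of connected pairs whose first step must land in $X$; hence every $y\in X_{fr}$ lies in at least one $F_i$, so $\bigcup_i\widetilde{X_i}=\widetilde X$.

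Next I would verify via Lemma \ref{indecom} that each $\Sigma_i$ is indecomposable. Given distinct $x,y\in\widetilde{X_i}$, I connect them by an exchangeable path inside $X_i$: a frozen endpoint is replaced by an adjacent variable of $X_i$ (which exists by the definition of $F_i$), and two exchangeable endpoints lie in the same $\approx$-class. Since the matrix entries needed in the criterion are exactly those of $\widetilde B$ restricted to $X_i\times\widetilde{X_i}$, the criterion of Lemma \ref{indecom} holds verbatim for $\Sigma_i$. To identify $\Sigma$ with the amalgamated sum $\coprod_{i=1}^t\Sigma_i$, I then check the block shape (\ref{union}): for $x\in X_i$ and an exchangeable $y\in X_j$ with $i\neq j$ we have $b_{xy}=0$ because $x,y$ lie in different $\approx$-classes, and for $x\in X_i$ and $y\in F_j\setminus F_i$ we have $b_{xy}=0$ by the very definition of $F_i$; the only overlaps among the extended clusters $\widetilde{X_i}$ are the shared frozen sets $F_i\cap F_j$, which are precisely the glued families. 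The associativity of $\amalg$ already established above lets me read the $t$-fold amalgamated sum unambiguously.

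For uniqueness, suppose $\Sigma=\coprod_{j}\Sigma'_j$ is any decomposition into indecomposable partial sub-seeds of type $(\emptyset,I_1)$ with pairwise disjoint exchangeable parts $X'_j$. The block form (\ref{union}) forces $b_{xy}=0$ for exchangeable $x,y$ in distinct $X'_j$, so $\approx$ never crosses between different $X'_j$ and each class $X_i$ is contained in a single $X'_j$; conversely, indecomposability of $\Sigma'_j$ with Lemma \ref{indecom} shows $X'_j$ is $\approx$-connected, so $X'_j$ lies in a single class. Hence $\{X'_j\}=\{X_1,\dots,X_t\}$ as partitions of $X$. Finally, if $X'_j=X_i$, then indecomposability of $\Sigma'_j$ rules out any frozen variable not adjacent to $X_i$ (such a variable would be an isolated point violating Lemma \ref{indecom}), giving one inclusion, while any frozen variable adjacent to $X_i$ must be kept in $\widetilde{X'_j}$ to reproduce that nonzero entry in the amalgamated sum, giving the reverse inclusion; thus the frozen part of $\Sigma'_j$ equals $F_i$ and $\Sigma'_j=\Sigma_i$.

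The equivalence-relation bookkeeping and the verification of the block shape are routine. The hard part will be the uniqueness of the frozen assignment: because the amalgamated sum permits frozen variables to be shared and glued, one must argue that indecomposability leaves no freedom in which component a frozen variable is placed. This is exactly where the ``no isolated frozen variable'' consequence of Lemma \ref{indecom} is indispensable, and it is the step I would write out most carefully.
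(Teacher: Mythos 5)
Your proposal is correct and is essentially the paper's own argument in graph-theoretic language: the $\approx$-classes of $X$ are exactly the row-index sets of the indecomposable diagonal blocks into which the paper permutes the principal part $B$, and the assignment of frozen variables by adjacency (your $F_i$) coincides with the paper's ``columns of $B_i'$ which are non-zero.'' The only difference is that you spell out the uniqueness step (via Lemma \ref{indecom} and the block form (\ref{union})) where the paper simply appeals to uniqueness of the block-diagonal decomposition, which is a welcome elaboration but not a different route.
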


\begin{proof}
In the $n\times m$ totally sign-skew-symmetric matrix
$\widetilde{B}$, by an appropriate permutation of the $n$ row
indices and the first $n$ column indices of $\widetilde{B}$
simultaneously, the principal part $B$ of $\widetilde{B}$ can be
decomposed into a block diagonal matrix diag$(B_1\; B_2\; \cdots\;
B_t)$. Then
$\widetilde{B}$ can be written through an appropriate permutation of row indices and column indices as follows:
\begin{equation}\label{matrixdecom}
\left(\begin{array}{ccccc}
B_1 & 0 & \cdots & 0 & B'_1 \\
0 & B_2 & \cdots & 0 & B'_2 \\
  & \cdots &    & \cdots &  \\
0 & 0 & \cdots & B_t & B'_{t}
\end{array}\right),
\end{equation}
satisfying that (i)~ all $B_i$ are indecomposable (i.e. $B_i$ can not be decomposed as block diagonal matrices with smaller ranks via the above operation), (ii)~ all $B_i$ are totally sign-skew-symmetric.

For $1\leq i\leq t$, let $X_i$ be the subset of the exchangeable
variables of $X$ corresponding to the row indexes of $B_i$. By the
uniqueness (up to the permutation of $\{B_i\}_i$) of the block diagonal decomposition of the principal part
$B$, we have $X_i\cap X_j=\emptyset$ for $i\not=j$.

The set $(X_i)_{fr}$ of frozen variables adjacent to $X_i$ is just the
subset of $X_{fr}$ corresponding to the column indexes of $B'_i$ which are non-zero. Then $X_{fr}=\cup_{i=1}^{t}(X_i)_{fr}$.

Let $I_i=\widetilde{X}\setminus(X_i\cup (X_i)_{fr})$ and
$\Sigma_i=\Sigma_{\emptyset,I_i}$ for $1\leq i\leq t$. By the
definition of decomposition of amalgamated sum and comparing with
the form of matrix in (\ref{matrixdecom}), we have
$\Sigma=\coprod\limits_{i=1}^{t}\Sigma_i$.
\end{proof}

\begin{Def}\label{seediso}
Let $\Sigma$ and $\Sigma'$ be two seeds and $f:\Sigma\rightarrow\Sigma'$
be a seed homomorphism.

 (a)~ $f$ is called a {\bf seed isomorphism} if $f$
induces bijections $X\rightarrow X'$ and $\widetilde{X}\rightarrow
\widetilde{X'}$ and $|b_{xy}|=|b'_{f(x)f(y)}|$ for all $x\in X$ and
$y\in \widetilde{X}$;

 (b)~ A seed isomorphism $f$ is called {\bf positive } (respectively, {\bf negative})
if $f$ is positive (respectively, negative) as a seed homomorphism.
\end{Def}

Trivially, we have the following lemmas by the definitions of
(positive) seed homomorphisms and seed isomorphisms:

\begin{Lem}\label{isomorphism}
A seed homomorphism $f:\Sigma\rightarrow\Sigma'$ is an isomorphism if
and only if there exists a unique seed homomorphism
$f^{-1}:\Sigma'\rightarrow \Sigma$ such that $f^{-1}f=id_{\Sigma}$
and $ff^{-1}=id_{\Sigma'}$.
\end{Lem}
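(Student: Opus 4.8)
The plan is to prove the two implications separately, treating this as the concrete instance of the categorical fact that a morphism is an isomorphism precisely when it admits a two-sided inverse; the real content lies in reconciling the inverse-morphism condition with the explicit Definition \ref{seediso} of a seed isomorphism.

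For the forward direction, suppose $f$ is a seed isomorphism. Since $f$ induces a bijection $\widetilde{X}\to\widetilde{X'}$ restricting to a bijection $X\to X'$, I would take $g:=f^{-1}$ as a map of sets and verify that $g$ is a seed homomorphism $\Sigma'\to\Sigma$ in the sense of Definition \ref{seedhom}. Condition (a) is immediate, because $f^{-1}$ restricts to a bijection $X'\to X$. For condition (b), given adjacent pairs $(x',y')$, $(z',w')$ in $\Sigma'$, I would set $x=g(x')$, $y=g(y')$, $z=g(z')$, $w=g(w')$, and first observe that the identity $|b'_{f(x)f(z)}|=|b_{xz}|$ coming from the isomorphism property, together with bijectivity, shows that $(x,y)$ and $(z,w)$ are adjacent in $\Sigma$. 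Then the absolute-value requirement $|b_{g(x')g(y')}|\geq|b'_{x'y'}|$ holds with equality, again by the isomorphism property, while the sign requirement for $g$ is nothing but the sign requirement (b) for $f$ applied to $(x,y),(z,w)$ after commuting the two factors of the product. This makes $g$ a seed homomorphism, and since $g$ is the set-theoretic inverse of $f$ we get $gf=\mathrm{id}_\Sigma$ and $fg=\mathrm{id}_{\Sigma'}$. Uniqueness is automatic: any seed homomorphism $h$ with $hf=\mathrm{id}$ and $fh=\mathrm{id}$ must equal $f^{-1}=g$ as a map of sets, hence as a seed homomorphism.

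For the converse, suppose there is a seed homomorphism $g:\Sigma'\to\Sigma$ with $gf=\mathrm{id}_\Sigma$ and $fg=\mathrm{id}_{\Sigma'}$. Then $f$ and $g$ are mutually inverse bijections on the extended clusters; combined with $f(X)\subseteq X'$ and $g(X')\subseteq X$ (condition (a) for each homomorphism), $f$ restricts to a bijection $X\to X'$, since $g$ supplies a set-theoretic inverse landing in $X$. It remains to prove $|b_{xy}|=|b'_{f(x)f(y)}|$ for all $x\in X$, $y\in\widetilde{X}$. Applying condition (b) to $f$ on the adjacent pair $(x,y)$ with itself yields $|b'_{f(x)f(y)}|\geq|b_{xy}|$; applying condition (b) to $g$ on the pair $(f(x),f(y))$ with itself, and using $g(f(x))=x$ and $g(f(y))=y$, yields $|b_{xy}|\geq|b'_{f(x)f(y)}|$. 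The two inequalities give equality, so $f$ is a seed isomorphism.

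The main obstacle, though only a mild one, is the careful bookkeeping in the forward direction when checking condition (b) for the inverse: one must confirm that adjacency of pairs is preserved under $f^{-1}$, which relies precisely on the \emph{exact} equality $|b_{xy}|=|b'_{f(x)f(y)}|$ built into the notion of seed isomorphism rather than the mere inequality available for a general seed homomorphism, and that the nonnegativity of the relevant product is symmetric in its two factors so that (b) for $f$ transfers verbatim to (b) for $g$.
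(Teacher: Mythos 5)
Your proof is correct and is exactly the routine verification that the paper leaves implicit: the paper states this lemma without proof, declaring it to follow "trivially" from the definitions of seed homomorphism and seed isomorphism. Your two directions (the set-theoretic inverse of a seed isomorphism satisfies (a) and (b) because adjacency and the sign product are preserved and the absolute-value inequality becomes an equality; conversely, a two-sided inverse forces the bijections on clusters and the two opposite inequalities $|b'_{f(x)f(y)}|\geq|b_{xy}|$ and $|b_{xy}|\geq|b'_{f(x)f(y)}|$) are precisely the intended argument.
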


\begin{Lem}
 A seed homomorphism $f$ is a positive (respectively, negative) seed isomorphism if and only if $f$ is isomorphic and $b_{xy}=b'_{f(x)f(y)}$ (respectively,  $b_{xy}=-b'_{f(x)f(y)}$) for all $x\in X$ and $y\in \widetilde{X}$.
\end{Lem}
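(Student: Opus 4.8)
The plan is to split the scalar identity $b_{xy}=b'_{f(x)f(y)}$ into a magnitude part and a sign part, and to recognize the magnitude part as exactly the defining condition of a seed isomorphism (Definition \ref{seediso}) and the sign part as exactly the defining condition of positivity (Definition \ref{seedhom}). The whole argument is then a short case analysis on whether $b_{xy}$ vanishes, and no tool beyond the relevant definitions is required.

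For the forward implication I would assume $f$ is a positive seed isomorphism. By Definition \ref{seediso} it is in particular isomorphic and satisfies $|b_{xy}|=|b'_{f(x)f(y)}|$ for all $x\in X$ and $y\in\widetilde{X}$, while positivity gives $b'_{f(x)f(y)}b_{xy}\geq 0$. Fixing such a pair $(x,y)$, if $b_{xy}=0$ then $|b'_{f(x)f(y)}|=|b_{xy}|=0$ forces $b'_{f(x)f(y)}=0=b_{xy}$; and if $b_{xy}\neq 0$ then $|b'_{f(x)f(y)}|=|b_{xy}|\neq 0$, so both entries are nonzero, the inequality $b'_{f(x)f(y)}b_{xy}\geq 0$ is then strict, forcing the two entries to share the same sign, and equal magnitudes yield $b_{xy}=b'_{f(x)f(y)}$.

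For the converse I would assume $f$ is isomorphic and $b_{xy}=b'_{f(x)f(y)}$ for all $x\in X$ and $y\in\widetilde{X}$. Being isomorphic supplies the bijections $X\to X'$ and $\widetilde{X}\to\widetilde{X'}$, and passing to absolute values in $b_{xy}=b'_{f(x)f(y)}$ gives $|b_{xy}|=|b'_{f(x)f(y)}|$; together these are exactly the conditions of Definition \ref{seediso}, so $f$ is a seed isomorphism. Since $b'_{f(x)f(y)}b_{xy}=b_{xy}^2\geq 0$, the homomorphism $f$ is positive by Definition \ref{seedhom}, whence $f$ is a positive seed isomorphism. The negative case is entirely symmetric: one replaces the product inequality $\geq 0$ by $\leq 0$, replaces ``same sign'' by ``opposite sign'' in the forward direction, and uses $-b'_{f(x)f(y)}b_{xy}=b_{xy}^2\geq 0$ in the converse, which produces $b_{xy}=-b'_{f(x)f(y)}$.

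The only point needing care, and hence the sole potential obstacle, is the degenerate case $b_{xy}=0$: there the sign condition is vacuous, so equality cannot be concluded from the product inequality alone, and one must instead invoke the equal-magnitude condition to force $b'_{f(x)f(y)}=0$. Once this case is separated out, the remaining argument is immediate from matching the two pieces of $b_{xy}=b'_{f(x)f(y)}$ against the two defining conditions.
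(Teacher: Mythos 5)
Your proof is correct and follows essentially the same route as the paper's: combine the equal-magnitude condition from Definition \ref{seediso} with the sign condition from Definition \ref{seedhom} to obtain the scalar equality in the forward direction, and reverse those two observations for the converse. You are in fact more careful than the paper, which glosses over the degenerate case $b_{xy}=0$ and contains a sign typo (writing $b'_{f(x)f(y)}b_{xy}\leq 0$ where $\geq 0$ is meant) in its ``only if'' step.
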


\begin{proof}
``Only If'': The condition (b) in Definition \ref{seedhom}
means that $b'_{f(x)f(y)}b_{xy}\geq 0$ for all $x\in X$ and $y\in
\widetilde{X}$ or $b'_{f(x)f(y)}b_{xy}\leq 0$ for all $x\in X$ and
$y\in \widetilde{X}$. It always holds that
$|b'_{f(x)f(y)}|\geq|b_{xy}|$. Since $f$ is positive, we have $b'_{f(x)f(y)}b_{xy}\leq 0$. Then it follows $b_{xy}=b'_{f(x)f(y)}$ for all $x\in X$ and $y\in \widetilde{X}$.

``If'': It follows immediately by Definition \ref{seediso}.
\end{proof}

\begin{Rem}\label{isoantiiso}
In \cite{ADS}, two seeds $\Sigma=(X,X_{fr},\widetilde{B})$ and
$\Sigma'=(X',X'_{fr},\widetilde{B'})$ are called {\bf isomorphic} (respectively, {\bf anti-isomorphic}) if there
is a bijection $\varphi: \widetilde{X}\rightarrow \widetilde{X'}$,
including a bijection $\varphi: X\rightarrow X'$, such that
$b'_{\varphi(x)\varphi(y)}=b_{xy}$ (respectively, $b'_{\varphi(x)\varphi(y)}=-b_{xy}$) for $x\in X$ and $y\in \widetilde{X}$.
Obviously, their isomorphism (respectively, anti-isomorphism) given in \cite{ADS} is just our positive (respectively, negative) seed isomorphism defined here, which are both only the special cases of seed isomorphisms given by us. For example, let $Q:\xymatrix{x_1\ar[r]^{}&*+[F]{x_2}\ar[r]^{}&x_3}$ and $Q':\xymatrix{x_1\ar[r]^{}&*+[F]{x_2}&x_3\ar[l]^{}}$. Then we have a seed isomorphism $f:\Sigma(Q)\rightarrow \Sigma(Q'), x_i\mapsto x_i$, which is nighter positive nor negative.
\end{Rem}

\begin{Prop} \label{isosubseed}  Applying the notations in Definition \ref{mixingseed}, for a seed $\Sigma$ and its mixing-type sub-seeds $\Sigma_{I_0,I_1}$ and
$\Sigma_{J_0,J_1}$, if $\Sigma_{I_0,I_1}\cong \Sigma_{J_0,J_1}$ in
{\bf Seed}, then it holds that $\#I_1=\#J_1$ and  $\#(I_0\cup
I'_1)=\#(J_0\cup J'_1)$.
\end{Prop}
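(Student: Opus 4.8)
The plan is to reduce the statement to an elementary cardinality count, exploiting that a seed isomorphism is far more rigid than an arbitrary seed homomorphism: by Definition \ref{seediso} it induces \emph{bijections} both on the clusters and on the extended clusters of the two sub-seeds. Since $\Sigma_{I_0,I_1}$ and $\Sigma_{J_0,J_1}$ are sub-seeds of the same ambient seed $\Sigma$ with $\#\widetilde{X}=n+m$ and $\#X=n$, all the relevant sizes can be read off directly from Definition \ref{mixingseed}.

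First I would record the two size formulas for a general mixing-type sub-seed. By construction the extended cluster of $\Sigma_{I_0,I_1}$ is $\widetilde{X'}=\widetilde{X}\setminus I_1$, so $\#\widetilde{X'}=(n+m)-\#I_1$; and its cluster is $X'=X\setminus(I_0\cup I_1')$, so $\#X'=n-\#(I_0\cup I_1')$. Here the only point requiring a moment's care is that the union $I_0\cup I_1'$ is taken inside $X$ and is counted correctly: since $I_0\cap I_1=\emptyset$ and $I_1'\subseteq I_1$, we have $I_0\cap I_1'=\emptyset$, so $\#(I_0\cup I_1')=\#I_0+\#I_1'$ without overcounting. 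The identical formulas hold for $\Sigma_{J_0,J_1}$ with $J$ in place of $I$.

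Next, from $\Sigma_{I_0,I_1}\cong\Sigma_{J_0,J_1}$ in \textbf{Seed}, Definition \ref{seediso} supplies a bijection between the two extended clusters and a bijection between the two clusters, whence their cardinalities agree. Comparing extended clusters gives $(n+m)-\#I_1=(n+m)-\#J_1$, so $\#I_1=\#J_1$, which is the first assertion. Comparing clusters gives $n-\#(I_0\cup I_1')=n-\#(J_0\cup J_1')$, so $\#(I_0\cup I_1')=\#(J_0\cup J_1')$, which is the second assertion.

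I do not expect a genuine obstacle here: the proposition is a pure consequence of the definitions, and the entire argument is a size comparison forced by the bijectivity built into the notion of seed isomorphism. The subtlety worth flagging explicitly in the write-up is the disjointness $I_0\cap I_1'=\emptyset$ used to justify the additivity of cardinalities in the second step; beyond that bookkeeping, nothing about the matrix entries or the sign conditions of the isomorphism is needed, only that $f$ restricts to bijections on $X$ and on $\widetilde{X}$.
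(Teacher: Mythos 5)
Your proof is correct and follows essentially the same route as the paper: both arguments read off $\#\widetilde{X'}=\#\widetilde{X}-\#I_1$ and $\#X'=\#X-\#(I_0\cup I_1')$ from Definition \ref{mixingseed}, then use the bijections on clusters and extended clusters supplied by the seed isomorphism to equate the cardinalities. The only cosmetic difference is your extra remark on $I_0\cap I_1'=\emptyset$, which is harmless but not needed for the subtraction step.
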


\begin{proof}
Since $\Sigma_{I_0,I_1}\cong \Sigma_{J_0,J_1}$,  we have
$\#(\widetilde{X}\setminus I_1)=\#(\widetilde{X}\setminus J_1)$ and
$\#(X\setminus (I_0\cup I'_1))=\#(X\setminus (J_0\cup J'_1))$. As
$I_1,J_1\subseteq \widetilde{X}$, $I_0\cup I'_1\subseteq X$ and $J_0\cup J'_1\subseteq X$, therefore, we get $\# I_1=\#J_1$ and
$\#(I_0\cup I'_1)=\#(J_0\cup J'_1)$.
\end{proof}

\begin{Cor} Following Proposition \ref{isosubseed}, it holds that

 (i)~ if $\Sigma_{I_0,I_1}\cong \Sigma_{J_0,J_1}$ in {\bf Seed} and $I'_1=J'_1=\emptyset$, then
$\#I_1=\#J_1$ and $\#I_0=\#J_0$;

(ii)~ if $\Sigma_{I_0,\emptyset}\cong
\Sigma_{J_0,\emptyset}$ in {\bf Seed}, then $\#I_0=\#J_0$;

(iii)~ if $\Sigma_{\emptyset,I_1}\cong \Sigma_{\emptyset,J_1}$ in {\bf Seed}, then
$\#I_1=\#J_1$.
\end{Cor}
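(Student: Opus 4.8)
The plan is to derive each of the three statements directly from Proposition \ref{isosubseed} by specializing the hypotheses and reading off the corresponding simplifications of the two cardinality identities $\#I_1=\#J_1$ and $\#(I_0\cup I'_1)=\#(J_0\cup J'_1)$ already supplied by that proposition. No new mutation-theoretic input is needed; everything is an exercise in cardinality bookkeeping.

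First I would record the disjointness that makes the union cardinalities additive. Since $I_0\subseteq X$ and $I'_1=X\cap I_1\subseteq I_1$, the standing hypothesis $I_0\cap I_1=\emptyset$ from Definition \ref{mixingseed} forces $I_0\cap I'_1=\emptyset$, whence $\#(I_0\cup I'_1)=\#I_0+\#I'_1$; the analogous identity holds on the $J$-side. This is the only structural observation the argument requires.

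For part (i) I would impose $I'_1=J'_1=\emptyset$. Proposition \ref{isosubseed} gives $\#I_1=\#J_1$ verbatim, while its second identity, by the additivity just noted, reads $\#I_0+0=\#J_0+0$, that is, $\#I_0=\#J_0$. Part (ii) is then the further specialization $I_1=J_1=\emptyset$, which automatically yields $I'_1=J'_1=\emptyset$; thus the desired conclusion $\#I_0=\#J_0$ is an instance of part (i). For part (iii) I would set $I_0=J_0=\emptyset$, whereupon $\#I_1=\#J_1$ is again immediate from the proposition, and no further argument is needed.

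There is essentially no obstacle here: each assertion is a direct restriction of the conclusions of Proposition \ref{isosubseed}. The only point deserving a moment's care is the additivity of the union cardinality $\#(I_0\cup I'_1)=\#I_0+\#I'_1$, which rests on the built-in disjointness $I_0\cap I_1=\emptyset$ of a mixing-type sub-seed; once that is noted, all three parts follow by inspection.
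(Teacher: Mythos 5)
Your argument is correct and is exactly the intended derivation: the paper states this corollary without proof as an immediate consequence of Proposition \ref{isosubseed}, and your cardinality bookkeeping (using the disjointness $I_0\cap I_1=\emptyset$ to split $\#(I_0\cup I'_1)=\#I_0+\#I'_1$ and then specializing) fills in precisely the omitted steps.
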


Note that

 (1)~  The converse of the above proposition is
not true. For example, for a cluster quiver
$Q:\xymatrix{x_1\ar@{=>}[r]^{}&*+[F]{x_2}\ar[r]^{}&x_3}$ with the
exchangeable variables $1$ and $3$ and frozen variable $x_2$, it is
clear that
$\Sigma(Q)_{\emptyset,\{1\}}\not\cong\Sigma(Q)_{\emptyset,\{3\}}$.

(2)~ In general, $\#I_0=\#J_0$ does not hold even if
$\Sigma_{I_0,I_1}\cong \Sigma_{J_0,J_1}$ in {\bf Seed} and
$\#I_1=\#J_1$. For example, for the same quiver as in (1),
$\Sigma(Q)_{\{1\},\{2,3\}}\cong\Sigma(Q)_{\emptyset,\{1,3\}}$.

\begin{Prop}\label{submu}
Let $\Sigma=(X,X_{fr},\widetilde{B})$ be a seed and $\Sigma_{I_0,I_1}$
be a mixing-type subseed of $\Sigma$. Then there is a positive seed
isomorphism $\mu_{x}(\Sigma_{I_0,I_1})\cong
(\mu_{x}(\Sigma))_{I_0,I_1}$ for any $x\in X\setminus(I_0\cup
I_1)$.
\end{Prop}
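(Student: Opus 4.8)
The plan is to exhibit an explicit bijection $f$ and to reduce the whole statement to the fact that matrix mutation in direction $x$ commutes with passage to the $(I_0,I_1)$-submatrix, which holds precisely because $x\in X\setminus(I_0\cup I_1)$ forces $x$ to index a retained row and a retained column. First I would write both seeds out concretely. With $X'=X\setminus(I_0\cup I_1')$ and $\widetilde{X'}=\widetilde X\setminus I_1$ as in Definition \ref{mixingseed}, the subseed is $\Sigma_{I_0,I_1}=(X',(X_{fr}\cup I_0)\setminus I_1,\widetilde{B'})$ with $b'_{yz}=b_{yz}$ for $y\in X'$, $z\in\widetilde{X'}$. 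Since $x\notin I_0$ and $x\notin I_1\supseteq I_1'$, we have $x\in X'$, so $\mu_x$ is defined on $\Sigma_{I_0,I_1}$; denote by $p$ the new cluster variable it produces via (\ref{exchangerelation}) and by $q$ the new cluster variable produced by mutating $\Sigma$ at $x$. In general $p\neq q$ as elements of $\mathds F$, because the exchange product for $p$ ranges only over $t\in\widetilde{X'}$ while that for $q$ ranges over all $t\in\widetilde X$; this is exactly why the assertion is an isomorphism and not an equality. I define $f$ to be the identity on every shared variable and to send $p\mapsto q$.

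Next I would check that $f$ matches the exchange matrices in the sense of Definition \ref{seediso}, that is, that the $(y,z)$-entry of $\mu_x(\widetilde{B'})$ equals the $(y,z)$-entry of $\mu_x(\widetilde B)$ for all $y\in X'$, $z\in\widetilde{X'}$ (with the row and column indexed by $x$ relabelled by $p$ and $q$ respectively). This is immediate from (\ref{matrixmutation}): when $y=x$ or $z=x$ both entries equal $-b_{yz}$ since $b'_{yz}=b_{yz}$; when $y\neq x\neq z$ the mutation of $\widetilde{B'}$ at the $(y,z)$ slot uses only $b'_{yz},b'_{yx},b'_{xz}$, and since $x\in X'$ is a retained row index and $x\in\widetilde{X'}$ a retained column index, these equal $b_{yz},b_{yx},b_{xz}$, yielding exactly the $(y,z)$-entry of $\mu_x(\widetilde B)$. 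Thus $\mu_x(\widetilde{B'})$ is literally the $X'\times\widetilde{X'}$ submatrix of $\mu_x(\widetilde B)$, so in particular $b_{yz}=b'_{f(y)f(z)}$ throughout, which is the positivity condition.

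It then remains to confirm the bijection bookkeeping of Definition \ref{seediso}. Mutation fixes the frozen part, so both frozen parts equal $(X_{fr}\cup I_0)\setminus I_1$; the extended cluster of $\mu_x(\Sigma_{I_0,I_1})$ is $(\widetilde{X'}\setminus\{x\})\cup\{p\}$, while that of $(\mu_x(\Sigma))_{I_0,I_1}$ is $\mu_x(\widetilde X)\setminus I_1=(\widetilde{X'}\setminus\{x\})\cup\{q\}$ (using $x,q\notin I_1$), and similarly the two clusters are $(X'\setminus\{x\})\cup\{p\}$ and $(X'\setminus\{x\})\cup\{q\}$. Hence $f$ is a bijection of extended clusters restricting to a bijection of clusters and to the identity on frozen parts, and together with the matrix identity above it is a positive seed isomorphism. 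The matrix check is entirely routine; the only point demanding care is the index bookkeeping, and in particular recognising that the hypothesis $x\notin I_0\cup I_1$ (and not merely $x\notin I_0\cup I_1'$) is what keeps column $x$ inside $\widetilde{X'}$, so that the mutation formula never references a deleted entry. That observation is the crux on which the commutation rests.
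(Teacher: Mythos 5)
Your proof is correct and follows essentially the same route as the paper: identify the two seeds variable-by-variable (identity on shared variables, new cluster variable to new cluster variable) and verify via the mutation formula (\ref{matrixmutation}) that the two mutated matrices agree entrywise, which is exactly the computation in the paper's proof of Proposition \ref{submu}. Your closing remark about $x\notin I_0\cup I_1$ versus $x\notin I_0\cup I_1'$ is vacuous since $x\in X$ forces $x\in I_1\Leftrightarrow x\in I_1'$, but this does not affect the argument.
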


\begin{proof}
Denote $\mu_{x}(\Sigma_{I_0,I_1})$ by $(X',X'_{fr},\widetilde{B'})$ and
$(\mu_{x}(\Sigma))_{I_0,I_1}$ by $(X'',X''_{fr},\widetilde{B''})$.
By definition, we have $$X'=X\setminus(I_0\cup I_1
\cup\{x\})\cup\{\mu_{x}^{\Sigma_{I_0,I_1}}(x)\},\;\;\;
\widetilde{X'}=\widetilde{X}\setminus (I_1\cup
\{x\})\cup\{\mu_{x}^{\Sigma_{I_0,I_1}}(x)\} ,$$
$$X''=X\setminus(I_0\cup I_1
\cup\{x\})\cup\{\mu_{x}^{\Sigma}(x)\},\;\;\;
\widetilde{X''}=\widetilde{X}\setminus (I_1\cup
\{x\})\cup\{\mu_{x}^{\Sigma}(x)\} ,$$

To compare the set $\widetilde{X'}$ with $\widetilde{X''}$,
their elements can be correspondent one-to-one with the identity map
but the correspondence between $\mu_{x}^{\Sigma_{I_0,I_1}}(x)$ and
$\mu_{x}^{\Sigma}(x)$.

According to the definition of mutation of matrices, for all $y\in X'$ and
$z\in\widetilde{X'}$, we have
\begin{eqnarray*}
b'_{yz}= \left\{\begin{array}{lll} b_{yz}+\frac{|b_{yx}|b_{xz}+b_{yx}|b_{xz}|}{2}, & \;\text{if}\; y\neq \mu_{x}^{\Sigma_{I_0,I_1}}(x), z\neq \mu_{x}^{\Sigma_{I_0,I_1}}(x); \\
    -b_{yz},& \;\text{otherwise,}\\
\end{array}\right.
\end{eqnarray*}
and for all $y\in X''$ and $z\in\widetilde{X''}$, we have
\begin{eqnarray*}
b''_{yz}= \left\{\begin{array}{lll} b_{yz}+\frac{|b_{yx}|b_{xz}+b_{yx}|b_{xz}|}{2}, & \;\text{if}\; y\neq \mu_{x}^{\Sigma}(x), z\neq \mu_{x}^{\Sigma}(x); \\
    -b_{yz},& \;\text{otherwise}.\\
\end{array}\right.
\end{eqnarray*}
After comparing $b'_{yz}$ with $b''_{yz}$, it is obvious that
$\widetilde{B'}=\widetilde{B''}$. Therefore, the result holds.
\end{proof}

\begin{Def} \label{imageseed}(\cite{ADS})
 Let $f:\Sigma\rightarrow\Sigma'$ be a seed homomorphism. The {\bf image seed} of $\Sigma$ under $f$ is defined to be $f(\Sigma)=(f(X),f(\widetilde X)\setminus f(X),B'')$, where
$B''$ is a $\#(f(X)) \times \#(f(\widetilde{X}))$-matrix with $b''_{xy}=b'_{xy}$ for any $x\in
 f(X)$ and $y\in f(\widetilde{X})$.
\end{Def}

It is easy to see that for $I'_1=\widetilde{X'}\backslash (\widetilde{X'}\cap
f(\widetilde{X}))$ and $I'_0=X'\backslash (f(X)\cup I'_1)$, we have
\begin{equation}\label{twoimagesubseed}
f(\Sigma)=\Sigma'_{I_0',I'_1}.
\end{equation}

Using the image seed $f(\Sigma)$ and by (\ref{twoimagesubseed}), we can introduce the
notions of injective/surjective seed homomorphisms as follows.
\begin{Def}
(i)~ A seed homomorphism $f:\Sigma\rightarrow\Sigma'$ is called {\bf
injective} if $\Sigma\overset{f}{\cong}f(\Sigma)$ in {\bf
Seed}.
(ii)~ A seed homomorphism $f:\Sigma\rightarrow\Sigma'$ is called
{\bf surjective} if $f(\Sigma)=\Sigma'$.
\end{Def}

Following this definition and that of seed isomorphism, trivially, we have the following.
\begin{Prop}
A seed homomorphism $f:\Sigma\rightarrow\Sigma'$ is isomorphic if and only if $f$ is injective and surjective.
\end{Prop}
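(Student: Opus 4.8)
The plan is to prove both implications purely by unwinding the three definitions involved --- that of \emph{seed isomorphism} (Definition \ref{seediso}), of \emph{injective}, and of \emph{surjective} seed homomorphism --- and to exploit the identification $f(\Sigma)=\Sigma'_{I'_0,I'_1}$ from \eqref{twoimagesubseed}, where $I'_1=\widetilde{X'}\setminus(\widetilde{X'}\cap f(\widetilde X))$ and $I'_0=X'\setminus(f(X)\cup I'_1)$. The whole argument reduces to showing that $f$ being a seed isomorphism is equivalent to having $I'_0=I'_1=\emptyset$ together with $f$ inducing a seed isomorphism onto its image; no mutation or combinatorial machinery is needed.

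For the ``only if'' direction, I would suppose $f$ is isomorphic. By Definition \ref{seediso}, $f$ induces bijections $X\to X'$ and $\widetilde X\to\widetilde{X'}$, so as sets $f(X)=X'$ and $f(\widetilde X)=\widetilde{X'}$. Substituting into the formulas for $I'_0$ and $I'_1$ gives $I'_1=\widetilde{X'}\setminus\widetilde{X'}=\emptyset$ and then $I'_0=X'\setminus X'=\emptyset$. Hence, by \eqref{twoimagesubseed}, $f(\Sigma)=\Sigma'_{\emptyset,\emptyset}=\Sigma'$, which is exactly surjectivity. For injectivity I must check $\Sigma\overset{f}{\cong}f(\Sigma)$; but since $f(\Sigma)=\Sigma'$ and the defining matrix $B''$ of the image seed (Definition \ref{imageseed}) is just the restriction of $\widetilde{B'}$ to $f(X)\times f(\widetilde X)=X'\times\widetilde{X'}$, it coincides with $\widetilde{B'}$, so the isomorphism condition $|b_{xy}|=|b''_{f(x)f(y)}|$ is literally the condition $|b_{xy}|=|b'_{f(x)f(y)}|$ guaranteed by the hypothesis. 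Thus $f:\Sigma\to f(\Sigma)$ is a seed isomorphism and $f$ is injective.

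For the ``if'' direction, I would suppose $f$ is injective and surjective. Injectivity says precisely that $f$ is a seed isomorphism $\Sigma\to f(\Sigma)$, while surjectivity says $f(\Sigma)=\Sigma'$. Combining these two facts, $f:\Sigma\to\Sigma'$ is a seed isomorphism, i.e. $f$ is isomorphic.

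The argument is essentially bookkeeping, so there is no genuine obstacle; the only point demanding care is the set-theoretic step confirming that bijectivity of $f$ on $X$ and on $\widetilde X$ forces both index sets $I'_0$ and $I'_1$ to be empty, together with the accompanying check that the image-seed matrix $B''$ really equals $\widetilde{B'}$ on the full index range, so that ``isomorphic onto the image'' and ``isomorphic onto $\Sigma'$'' say exactly the same thing.
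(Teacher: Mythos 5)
Your proof is correct and is exactly the definition-unwinding the paper has in mind: the paper states this proposition without proof (prefacing it with ``trivially''), and your careful check that bijectivity on $X$ and $\widetilde X$ forces $I'_0=I'_1=\emptyset$, hence $f(\Sigma)=\Sigma'$ with $B''=\widetilde{B'}$, is the intended argument spelled out.
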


 \section{Rooted cluster morphisms and the relationship with seed homomorphisms}

In \cite{ADS}, a {\bf rooted cluster algebra} is defined as a
cluster algebra $\mathcal{A}$ together with its initial seed $\Sigma$, denoted by
$\mathcal{A}(\Sigma)$. Moreover,  given a rooted cluster algebra
$\mathcal{A}(\Sigma)$, a sequence of cluster variables
$(y_{1},y_{2},\cdots,y_{l})$ is called {\bf $\Sigma$-admissible} if
$y_{1}$ is exchangeable in $\Sigma$ and $y_{i}$ is exchangeable in
$\mu_{y_{i-1}}\cdots\mu_{y_{1}}(\Sigma)$ for every $i\geq 2$. Let
$\mathcal{A}(\Sigma')$ be another rooted cluster algebra and $f:
\mathds F(\Sigma)\rightarrow \mathds F(\Sigma')$ a map as sets. A sequence of cluster variables
$\{y_{1},y_{2},\cdots,y_{l}\}\subseteq \mathcal{A}(\Sigma)$ is
called {\bf $(f,\Sigma,\Sigma')$-biadmissible} if it is
$\Sigma$-admissible and $(f(y_{1}),f(y_{2}),\cdots,f(y_{l}))$ is
$\Sigma'$-admissible.

\begin{Def}\label{rootmorph}(Definition 2.2, \cite{ADS})
A {\bf rooted cluster morphism}  $f$ from $\mathcal{A}(\Sigma)$
to $\mathcal{A}(\Sigma')$ is a ring morphism which sends
1 to 1 satisfying:\\
CM1. $f(\widetilde{X})\subseteq \widetilde{X'}\sqcup \mathbb{Z}$;\\
CM2. $f(X)\subseteq X'\sqcup \mathbb{Z}$;\\
CM3. For every $(f,\Sigma,\Sigma')$-biadmissible sequence
$(y_{1},y_{2},\cdots,y_{s})$ and for any $y\in\widetilde{X}$, we have
 $$f(\mu_{y_{s}}\cdots \mu_{y_{1}}(y))=\mu_{f(y_{s})}\cdots\mu_{f(y_{1})}(f(y)).$$
\end{Def}

 By Definition \ref{rootmorph}, a rooted cluster morphism is first a ring morphism. Following this, we say  a rooted cluster morphism $f$ to be {\bf surjective} (respectively, {\bf injective}) if $f$ is surjective (respectively, injective) as a ring morphism. A rooted cluster morphism  $f:\;\mathcal{A}(\Sigma)\rightarrow\mathcal{A}(\Sigma')$ in {\bf Clus} is called an {\bf isomorphism} in {\bf Clus} if it is both injective and also surjective and is written as $\mathcal{A}(\Sigma)\overset{f}{\cong}\mathcal{A}(\Sigma')$.

If there is an injective rooted cluster morphism $f$ from
$\mathcal{A}(\Sigma)$ to $\mathcal{A}(\Sigma')$, then the rooted
cluster algebra $\mathcal{A}(\Sigma)$ is called a {\bf rooted
cluster subalgebra} of $\mathcal{A}(\Sigma')$ (see \cite{CZ}).

Dually, if there is a surjective rooted cluster morphism $f$ from
$\mathcal{A}(\Sigma)$ to $\mathcal{A}(\Sigma')$, then the rooted
cluster algebra $\mathcal{A}(\Sigma')$ is called a {\bf rooted
cluster quotient algebra} of $\mathcal{A}(\Sigma)$. Note that, in the category {\bf Clus}, the surjective morphism and epimorphism are not coincide. See \cite{ADS}.

The {\bf category of rooted cluster algebras} is defined as the
category \textbf{Clus} whose objects are all rooted cluster algebras and whose morphisms between two rooted cluster algebras are all rooted
cluster morphisms.

It is easy to see that the conditions CM1 and CM2 in Definition \ref{rootmorph} give the restriction of the rooted cluster morphism $f$ on the seed $\Sigma$, which shows the relation between the seeds $\Sigma$ and $\Sigma'$. Motivated by this fact and our discussion, we have introduced the theory of seed homomorphisms in the last section, which will be used to unify and understand the research on rooted cluster algebras via the relations among seeds.

\begin{Def}\label{reducedseed}
Let $f:\mathcal{A}(\Sigma)\rightarrow \mathcal{A}(\Sigma')$ be a
rooted cluster morphism and
\begin{equation}\label{setI_1}
I_1=\{x\in
\widetilde{X}|f(x)\in\mathbb{Z}\}.
\end{equation}
From $f$,  define a
new seed $\Sigma^{(f)}=(X^{(f)},X^{(f)}_{fr},\widetilde{B^{(f)}})$ satisfying that:

 (I). $X^{(f)}=X\setminus I_1=\{x\in X|f(x)\notin \mathbb{Z}\}$;

(II). $\widetilde{X^{(f)}}=\widetilde{X}\setminus I_1=\{x\in
\widetilde{X}|f(x)\notin \mathbb{Z}\}$;

(III). $\widetilde{B^{(f)}}=(b^{(f)}_{xy})$ is a $\#(X^{(f)})\times
\#(\widetilde{X^{(f)}})$ matrix with
$$b^{(f)}_{xy}= \left\{\begin{array}{lll}b_{xy}, & \;\text{if}\; f(z)\neq 0 \;\forall z\in I_1 \;\text{adjacent to}\; x \;\text{or}\; y;\\
0,& \text{otherwise}.\\
\end{array}\right.$$

We call this seed $\Sigma^{(f)}=(X^{(f)},X^{(f)}_{fr},\widetilde{B^{(f)}})$ the {\bf contraction} of $\Sigma$ under $f$.
\end{Def}

 For a rooted cluster morphism $f:\mathcal{A}(\Sigma)\rightarrow \mathcal{A}(\Sigma')$, if $I_1=\{x\in
\widetilde{X}|f(x)\in\mathbb{Z}\}=\emptyset$, then $\Sigma^{(f)}=\Sigma$; in this case, we say $f$ to be a {\bf noncontractible morphism } and the seed $\Sigma$ to be a {\bf noncontractible seed} under $f$.

\begin{Rem}\label{c}
Using the definitions of $I_1$ in (\ref{setI_1}) and of the new seed $\Sigma^{(f)}$, we have
$\Sigma^{(f)}=\Sigma_{\emptyset,I_1}$ if $f(x)\not=0$ for any $x\in
\widetilde{X}$, since $b^{(f)}_{xy}=b_{xy}$ in (III) for any $x,y$ in this case.
\end{Rem}

\begin{Prop} \label{induceseed}
 A rooted cluster morphism $f: \mathcal
A(\Sigma)\rightarrow\mathcal A(\Sigma')$ determines uniquely a seed
homomorphism $(f^S, \Sigma^{(f)}, \Sigma')$ from $\Sigma^{(f)}$
to $\Sigma'$ via $f^S(x)=f(x)$ for $x\in \widetilde{X^{(f)}}$.
\end{Prop}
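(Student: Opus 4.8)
The plan is to verify directly that the restriction $f^S:=f|_{\widetilde{X^{(f)}}}$ is a seed homomorphism $\Sigma^{(f)}\to\Sigma'$ in the sense of Definition \ref{seedhom}; uniqueness is then automatic, since both the source seed $\Sigma^{(f)}$ and the prescription $f^S(x)=f(x)$ are read off from $f$, leaving nothing to choose. For well-definedness and condition (a), note that if $x\in\widetilde{X^{(f)}}=\{x\in\widetilde X\mid f(x)\notin\mathbb Z\}$ then CM1 forces $f(x)\in\widetilde{X'}$, so $f^S$ is a genuine map $\widetilde{X^{(f)}}\to\widetilde{X'}$; and if in addition $x\in X^{(f)}=X\setminus I_1$, then $x\in X$ with $f(x)\notin\mathbb Z$, so $f(x)\in X'$ by CM2. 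Hence $f^S(X^{(f)})\subseteq X'$, which is condition (a).

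The real work is condition (b), and I would first reduce it to a pointwise statement. By the definition of $\widetilde{B^{(f)}}$ in Definition \ref{reducedseed}, the entry $b^{(f)}_{xy}$ vanishes as soon as some $z\in I_1$ adjacent to $x$ or to $y$ satisfies $f(z)=0$; in particular whenever $b^{(f)}_{xy}=0$ both inequalities in (b) hold trivially (the relevant factor is $0$, and $|b'_{f(x)f(y)}|\ge 0$). Thus it suffices to establish the claim: if $b^{(f)}_{xy}\neq 0$ — so that $b^{(f)}_{xy}=b_{xy}$ and no $I_1$-neighbour of $x$ or $y$ is sent to $0$ — then $b'_{f(x)f(y)}b_{xy}>0$ and $|b'_{f(x)f(y)}|\ge|b_{xy}|$. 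Granting this, each nonzero factor $b'_{f(x)f(y)}b^{(f)}_{xy}$ is strictly positive, so the product over the two adjacent pairs is $\ge 0$ and the magnitude inequality follows, yielding (b).

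To prove the pointwise claim I would invoke CM3. Since $x\in X^{(f)}$ gives $f(x)\in X'$, both $x$ and $f(x)$ are exchangeable, so the length-one sequence $(x)$ is $(f,\Sigma,\Sigma')$-biadmissible and CM3 gives $f(\mu_x(x))=\mu_{f(x)}(f(x))$. Applying the ring morphism $f$ to the exchange relation \eqref{exchangerelation} and clearing the common denominator $f(x)$, this becomes the polynomial identity
\begin{equation*}
\prod_{t\in\widetilde X,\,b_{xt}>0}f(t)^{b_{xt}}+\prod_{t\in\widetilde X,\,b_{xt}<0}f(t)^{-b_{xt}}=\prod_{s\in\widetilde{X'},\,b'_{f(x)s}>0}s^{b'_{f(x)s}}+\prod_{s\in\widetilde{X'},\,b'_{f(x)s}<0}s^{-b'_{f(x)s}}
\end{equation*}
over the algebraically independent set $\widetilde{X'}$. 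Because $x$ has no neighbour sent to $0$, each $f(t)$ with $b_{xt}\neq 0$ is either a nonzero integer (when $t\in I_1$) or a variable of $\widetilde{X'}$ (when $t\in\widetilde{X^{(f)}}$); collecting the integer factors into coefficients, the left side is a sum of two monomials. Matching the two sides monomial by monomial, I would read off, for each $y\in\widetilde{X^{(f)}}$ with $b_{xy}\neq 0$, that $b'_{f(x)f(y)}=\sum_{t} b_{xt}$ summed over the neighbours $t$ of $x$ with $f(t)=f(y)$, all of which share the sign of $b_{xy}$. This sum has the same sign as $b_{xy}$ and absolute value at least $|b_{xy}|$, which is exactly the pointwise claim.

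The step I expect to be the main obstacle is the monomial bookkeeping in this final matching. Two points must be controlled: the nonzero integer coefficients contributed by $I_1$-neighbours, which must be forced to equal $1$ so that the right-hand monomials genuinely carry the exponents $b'_{f(x)s}$; and, more delicately, the exclusion of the possibility that $f$ identifies a positive neighbour and a negative neighbour of $x$ into the same variable of $\widetilde{X'}$ — such a merging would place one variable in both monomials of the left side, contradicting the coprimality of the exchange binomial on the right. Securing this sign-coherence of the neighbourhood of $x$ under $f$, together with the degenerate cases in which $x$ has only positive (or only negative) neighbours and one monomial is the empty product $1$, makes the exponent comparison valid and completes the verification of (b).
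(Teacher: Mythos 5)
Your reduction of condition (b) to the pointwise claim that $b'_{f(x)f(y)}b_{xy}>0$ whenever $b^{(f)}_{xy}\neq 0$ is a genuine gap: that claim is false. Definition \ref{seedhom} deliberately permits negative seed homomorphisms, and these do arise as restrictions of rooted cluster morphisms. For instance, let $\Sigma$ have one exchangeable variable $x_1$, one frozen variable $x_2$ and $b_{x_1x_2}=1$, and let $\Sigma'$ be the same data with $b'_{x_1x_2}=-1$; the identity on variables is a rooted cluster morphism (CM3 holds because $(x_2+1)/x_1=(1+x_2)/x_1$), yet $b'_{f(x_1)f(x_2)}b_{x_1x_2}=-1<0$. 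The source of the error is in your ``monomial by monomial'' matching: after applying $f$ to the exchange relation at $x$ and clearing $f(x)$, you obtain an equality of two binomials over the algebraically independent set $\widetilde{X'}$, and there are two possible matchings --- the image of the $b_{xt}>0$ monomial may equal either the $b'_{f(x)s}>0$ monomial or the $b'_{f(x)s}<0$ monomial. You silently choose the first; the second gives $b'_{f(x)f(y)}=-\sum_{f(t)=f(y)}b_{xt}$, whose sign is opposite to that of $b_{xy}$.

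What CM3 actually yields is the weaker, and correct, statement that for each fixed exchangeable $x$ the sign of $b'_{f(x)f(y)}b_{xy}$ is the same for all $y$ with $b^{(f)}_{xy}\neq 0$ (one sign per choice of matching at $x$), together with $|b'_{f(x)f(y)}|\geq|b_{xy}|$. This already settles condition (b) for two adjacent pairs sharing the same first entry $x=z$. But for adjacent pairs $(x,y)$ and $(z,w)$ with $x\neq z$ and $b_{xz}\neq 0$ you must still show that the matchings chosen at $x$ and at $z$ are coherent; the paper's proof does this by applying the one-vertex statement to the pairs $(x,y),(x,z)$ and then to $(z,x),(z,w)$, and chaining the resulting inequalities through the common quantity $b_{xz}b'_{f(x)f(z)}$, whose sign agrees with that of $b_{zx}b'_{f(z)f(x)}$ by sign-skew-symmetry. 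Your proposal omits this propagation step entirely, precisely because the (false) pointwise positivity would have made it unnecessary. The remaining bookkeeping you flag --- the integer coefficients contributed by $I_1$-neighbours and the impossibility of merging a positive and a negative neighbour of $x$ into one variable --- is handled correctly by the coprimality and algebraic-independence argument you sketch, so the repair is localized: restore the two-case sign analysis at each vertex and add the $x\neq z$ propagation.
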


\begin{proof}
 By the definition of $\Sigma^{(f)}$, $\varphi$ satisfies the condition (a) of Definition \ref{seedhom}. For any two adjacent pairs $(x,y)$ and $(z,w)$ with $x,z\in X^{(f)}$ and $y,w\in \widetilde{X^{(f)}}$, if either $b^{(f)}_{xy}=
0$ or $b^{(f)}_{zw}=0$, then it always holds that
$(b^{(f)}_{xy}b'_{f(x)f(y)})(b^{(f)}_{zw}b'_{f(z)f(w)})=0$. So, now we assume $b^{(f)}_{xy}\neq
0$ and $b^{(f)}_{zw}\neq 0$.

Under this condition, by the definition of $\widetilde{B^{(f)}}$, we have $b^{(f)}_{xy}=b_{xy}$ and $b^{(f)}_{zw}=b_{zw}$. Then it follows that $f(u)\neq 0$ for all $u$
adjacent to $x$ or $z$.

The following discussion is only made in the case where $b_{xy},b_{zw}>0$. For the other cases, the same conclusion can be obtained in a similar way.

By CM3, we have $f(\mu_{x}(x))=\mu_{f(x)}(f(x))$, which means that
$$\frac{f(y^{b_{xy}}\prod\limits_{b_{xu}\geq 0,u\neq y}u^{b_{xu}}+\prod\limits_{b_{xu}\leq 0}u^{-b_{xu}})}{f(x)}=\frac{\prod\limits_{b'_{f(x)v}\geq 0,v\in\widetilde{X'}}v^{b'_{f(x)v}}+\prod\limits_{b'_{f(x)v}\leq 0,v\in\widetilde{X'}}v^{-b'_{f(x)v}}}{f(x)},$$
By the algebraic independence of variables in $\widetilde{X'}$, we have
\begin{equation}\label{eq:14}
f(y^{b_{xy}}\prod\limits_{b_{xu}\geq 0,u\neq
y}u^{b_{xu}})=\prod\limits_{b'_{f(x)v}\geq
0,v\in\widetilde{X'}}v^{b'_{f(x)v}}
\;\;\;\text{and}\;\;\;f(\prod\limits_{b_{xu}\leq
0}u^{-b_{xu}})=\prod\limits_{b'_{f(x)v}\leq
0,v\in\widetilde{X'}}v^{-b'_{f(x)v}},
\end{equation}
or
\begin{equation}\label{eq:15}
f(y^{b_{xy}}\prod\limits_{b_{xu}\geq 0,u\neq
y}u^{b_{xu}})=\prod\limits_{b'_{f(x)v}\leq
0,v\in\widetilde{X'}}v^{-b'_{f(x)v}}
\;\;\;\text{and}\;\;\;f(\prod\limits_{b_{xu}\leq
0}u^{-b_{xu}})=\prod\limits_{b'_{f(x)v}\geq
0,v\in\widetilde{X'}}v^{b'_{f(x)v}}.
\end{equation}

{\bf Case 1:}\; Assume $x=z$.

If (\ref{eq:14}) holds, then we get
$f(y)|\prod\limits_{b'_{f(x)v}\geq
0,v\in\widetilde{X'}}v^{b'_{f(x)v}}$. For the pair $(x,y)$, comparing the exponents of $f(y)$ in the two sides of the first expression in (\ref{eq:14}), we have
$b'_{f(x)f(y)}=\sum\limits_{f(u)=f(y),b_{xu}>0}b_{xu}>0$; similarly, for the pair $(z,w)$,
we have $b'_{f(z)f(w)}>0$. If $x=z$, then it follows
\begin{equation}\label{eq:16}
(b_{xy}b'_{f(x)f(y)})(b_{zw}b'_{f(z)f(w)})=(b_{xy}b'_{f(x)f(y)})(b_{xw}b'_{f(x)f(w)})>0.
\end{equation}

If (\ref{eq:15}) holds, then we get
$b'_{f(x)f(y)}=\sum\limits_{f(u)=f(y),b_{xu}>0}(-b_{xu})<0$,
$b'_{f(z)f(w)}<0$, and similarly, (\ref{eq:16}) also follows.

{\bf Case 2:}\; Assume $x\neq z$.

Applying the result in Case 1 on the adjacent pairs $(x,y)$ and
$(x,z)$, we have
\begin{equation}\label{eq:17}
(b_{xy}b'_{f(x)f(y)})(b_{xz}b'_{f(x)f(z)})>0.
\end{equation}
On the other hand, applying the result in case 1 on the adjacent pairs
$(z,x)$ and $(z,w)$, we get
\begin{equation}\label{eq:18}
(b_{zx}b'_{f(z)f(x)})(b_{zw}b'_{f(z)f(w)})>0.
\end{equation} Combining (\ref{eq:17}) and (\ref{eq:18}), therefore, we have
$(b_{xy}b'_{f(x)f(y)})(b_{zw}b'_{f(z)f(w)})>0.$

In summary, it follows
$(b^{(f)}_{xy}b'_{f(x)f(y)})(b^{(f)}_{zw}b'_{f(z)f(w)})=(b_{xy}b'_{f(x)f(y)})(b_{zw}b'_{f(z)f(w)})>0.$

 Moreover, no matter whether (\ref{eq:14}) or (\ref{eq:15}) holds,
we have $|b'_{f(x)f(y)}|=\sum\limits_{f(u)=f(y)}|b_{xy}|\geq
|b_{xy}|=|b^{(f)}_{xy}|$. Therefore, $f^S$ is a seed homomorphism
from $\Sigma^{(f)}$ to $\Sigma'$.
\end{proof}

Following this proposition, we call $f^S$ the {\bf restricted seed
homomorphism} of the rooted cluster morphism $f$.

Conversely, for any seed homomorphism $g: \Sigma\rightarrow\Sigma'$,
we can induce a ring homomorphism $G:\mathbb{Q}[X_{fr}][X^{\pm
1}]\rightarrow \mathbb{Q}[X'_{fr}][X'^{\pm 1}]$ by defining $G(x)=g(x)$
for $x\in\widetilde{X}$ and $G(x^{-1})=g(x)^{-1}$ for all $x\in X$.
The restriction $G|_{\mathcal{A}(\Sigma)}$ of $G$ is also a ring homomorphism from $\mathcal{A}(\Sigma)$ to $\mathbb{Q}[X'_{fr}][X'^{\pm 1}]$. If $Im(G|_{\mathcal{A}(\Sigma)})\subseteq \mathcal{A}(\Sigma')$ and $G|_{\mathcal{A}(\Sigma)}$ is a rooted cluster morphism from $\mathcal{A}(\Sigma)$ to
$\mathcal{A}(\Sigma')$, we call $G|_{\mathcal{A}(\Sigma)}$ the {\bf induced rooted cluster morphism} of $g$. In this case, denote $G|_{\mathcal{A}(\Sigma)}$ as $g^E$.

Motivated by the above discussion, the natural questions one has to consider are
that for a rooted cluster morphism $f: \mathcal{A}(\Sigma)\rightarrow\mathcal{A}(\Sigma')$ and a seed homomorphism $g:\Sigma\rightarrow\Sigma'$.

{\bf Question} (I).~ When $(f^S)^E$ and $g^E$ exist, under what conditions $f=(f^S)^E$ and  $g=(g^E)^S$ hold?

First, we have the following easy observation:
\begin{Lem}\label{equel}
Let $f,g:\mathcal{A}(\Sigma)\rightarrow\mathcal{A}(\Sigma')$
be rooted cluster morphisms. If $f(x)=g(x)\neq 0$ for all $x\in
\widetilde{X}$ of $\Sigma$, then $f=g$.
\end{Lem}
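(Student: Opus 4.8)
The plan is to exploit that a rooted cluster morphism is, before anything else, a ring morphism, together with the Laurent phenomenon. Since $\mathcal{A}(\Sigma)$ is by Definition \ref{clusteralgebra} the $\mathds{Z}[x_{n+1},\cdots,x_{n+m}]$-subalgebra of $\mathds F(\Sigma)$ generated by the set $\mathcal X$ of all cluster variables, it is generated as a ring by $\mathcal X\cup X_{fr}$. As $f$ and $g$ are ring morphisms that already agree on $X_{fr}\subseteq\widetilde{X}$ by hypothesis, it suffices to show $f(y)=g(y)$ for every cluster variable $y\in\mathcal X$; the equality $f=g$ on all of $\mathcal A(\Sigma)$ then follows at once.

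To handle an arbitrary $y\in\mathcal X$, I would invoke the Laurent phenomenon of \cite{fz1}: $y$ is a Laurent polynomial in the initial cluster $X=\{x_1,\cdots,x_n\}$ with coefficients in $\mathds Z[X_{fr}]$, so one may write
\begin{equation*}
y\cdot x_1^{a_1}\cdots x_n^{a_n}=P
\end{equation*}
for suitable integers $a_i\geq 0$ and a polynomial $P\in\mathds Z[\widetilde{X}]$. This is an honest identity between elements of $\mathcal A(\Sigma)$, since $y$, the $x_i$ and $P$ all lie in $\mathcal A(\Sigma)$ (the algebra being a subring of the field $\mathds F(\Sigma)$). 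Applying the ring morphisms $f$ and $g$, and using that they coincide on the polynomial $P$ (a $\mathds Z$-polynomial expression in the variables of $\widetilde X$, on which $f$ and $g$ agree by assumption) as well as on each $x_i$, I obtain
\begin{equation*}
f(y)\prod_{i=1}^{n}f(x_i)^{a_i}=f(P)=g(P)=g(y)\prod_{i=1}^{n}f(x_i)^{a_i}.
\end{equation*}

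Finally I would cancel the common factor. Each $f(x_i)$ is nonzero by hypothesis, and $\mathcal A(\Sigma')$ sits inside the field $\mathds F(\Sigma')$, which is an integral domain; hence $\prod_{i=1}^{n}f(x_i)^{a_i}\neq 0$ and is cancellable. This yields $f(y)=g(y)$, completing the argument. The one delicate point — and exactly the place where the assumption $f(x)\neq 0$ is indispensable — is this cancellation step: were some $f(x_i)$ to vanish, the product could vanish and no conclusion about $f(y)$ could be drawn, so the hypothesis cannot be dropped.
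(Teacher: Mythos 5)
Your proof is correct and follows essentially the same route as the paper's: both reduce to cluster variables, invoke the Laurent phenomenon to write $y$ as a polynomial in $\widetilde{X}$ divided by a monomial in $X$, apply the ring morphisms, and use the hypothesis $f(x)\neq 0$ to cancel in the integral domain $\mathds F(\Sigma')$. Your version merely makes explicit the cancellation step that the paper's one-line computation $f(y)=f(h)/f(m)$ leaves implicit.
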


\begin{proof}
It suffices to prove $f(y)=g(y)$ for all cluster
variable $y\in \mathcal{A}(\Sigma)$. By Laurent phenomenon, $y\in
\mathbb{Q}[X_{fr}][X^{\pm 1}]$, so $y=\frac{h}{m}$ for a monomial
$m$ and a polynomial $h$. Thus,
$f(y)=\frac{f(h)}{f(m)}=\frac{g(h)}{g(m)}=g(y)$.
\end{proof}

In particular, this proposition is satisfied if
$f,g:\mathcal{A}(\Sigma)\rightarrow\mathcal{A}(\Sigma')$ are
noncontractible.

Now, we use this lemma to answer the question.

In fact, for a seed homomorphism $g: \Sigma\rightarrow\Sigma'$, if its induced rooted cluster morphism $g^E$ exists, then we always have $g=(g^E)^{S}:
\Sigma\rightarrow\Sigma'$, since $\Sigma^{(g^E)}=\Sigma$ and
for all $x\in \widetilde{X}$, $(g^E)^{S}(x)=g^E(x)=g(x)$.

Moreover, for a rooted cluster morphism $f:
\mathcal{A}(\Sigma)\rightarrow\mathcal{A}(\Sigma')$, if the induced rooted cluster morphism $(f^{S})^E: \mathcal{A}(\Sigma^{(f)})\rightarrow\mathcal{A}(\Sigma')$  exists,
then $f=(f^{S})^{E}$ if and only if $f$ is noncontractible. Indeed, $\Sigma^{(f)}=\Sigma$ if and only if $f$ is
noncontractible, and in this case, $(f^{S})^{E}(x)=f^{S}(x)=f(x)$ for all $x\in
\widetilde{X}$. Then by Lemma \ref{equel}, $f=(f^{S})^{E}$ if and only if $f$ is noncontractible.

In summary, we obtain the following result as the answer of Question (I).
\begin{Prop} \label{gobackresult}
For a rooted cluster morphism $f: \mathcal{A}(\Sigma)\rightarrow\mathcal{A}(\Sigma')$ and a seed homomorphism $g:\Sigma\rightarrow\Sigma'$, if
the induced rooted cluster morphisms $(f^{S})^E$ and $g^E$ exist, then (1)~ $g=(g^E)^{S}:
\Sigma\rightarrow\Sigma'$ and (2)~ $f=(f^{S})^{E}: \mathcal{A}(\Sigma^{(f)})\rightarrow\mathcal{A}(\Sigma')$ if and only if $f$ is noncontractible.  \end{Prop}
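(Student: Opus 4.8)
The plan is to prove both parts purely by unwinding the definitions of the two passage operations $(\cdot)^S$ and $(\cdot)^E$, the key observation being that each of them acts as the identity on the \emph{values} of the variables, so that the only real content is bookkeeping about which seed serves as the domain. Throughout I would use Proposition \ref{induceseed} (which produces the restricted seed homomorphism $f^S$ out of the contraction $\Sigma^{(f)}$), Definition \ref{reducedseed} (the contraction $\Sigma^{(f)}$ and the notion of noncontractibility), the construction of the induced rooted cluster morphism $g^E$, and Lemma \ref{equel}.

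For part (1), I would first compute the contraction $\Sigma^{(g^E)}$. Since $g^E$ is defined by $g^E(x)=g(x)$ for $x\in\widetilde{X}$ and $g:\Sigma\rightarrow\Sigma'$ is a seed homomorphism, every value $g(x)$ lies in $\widetilde{X'}$ and hence is not an integer; thus the set $I_1=\{x\in\widetilde{X}\mid g^E(x)\in\mathbb{Z}\}$ of Definition \ref{reducedseed} is empty. Consequently $\Sigma^{(g^E)}=\Sigma$, i.e. $g^E$ is noncontractible. By Proposition \ref{induceseed}, the restricted seed homomorphism $(g^E)^S$ is then a seed homomorphism from $\Sigma^{(g^E)}=\Sigma$ to $\Sigma'$ given by $(g^E)^S(x)=g^E(x)=g(x)$ for all $x\in\widetilde{X}$. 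Since $(g^E)^S$ and $g$ are maps $\widetilde{X}\rightarrow\widetilde{X'}$ with the same domain seed $\Sigma$ agreeing on every element, they coincide, which is (1).

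For part (2), I would first record the elementary equivalence $\Sigma^{(f)}=\Sigma\iff I_1=\emptyset\iff f\text{ is noncontractible}$: indeed $\widetilde{X^{(f)}}=\widetilde{X}\setminus I_1$, so $\widetilde{X^{(f)}}=\widetilde{X}$ forces $I_1=\emptyset$, and conversely $I_1=\emptyset$ renders the matrix condition in Definition \ref{reducedseed}(III) vacuous so that $\widetilde{B^{(f)}}=\widetilde{B}$, giving $\Sigma^{(f)}=\Sigma$. For the ``if'' direction, assume $f$ is noncontractible; then $\Sigma^{(f)}=\Sigma$, so the induced morphism $(f^S)^E$ has domain $\mathcal{A}(\Sigma^{(f)})=\mathcal{A}(\Sigma)$, matching that of $f$. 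Moreover $(f^S)^E(x)=f^S(x)=f(x)$ for all $x\in\widetilde{X}=\widetilde{X^{(f)}}$, and noncontractibility gives $f(x)\notin\mathbb{Z}$, in particular $f(x)\neq 0$; hence Lemma \ref{equel} applies and yields $f=(f^S)^E$. For the ``only if'' direction, if $f=(f^S)^E$ then the two morphisms share a domain, so $\mathcal{A}(\Sigma)=\mathcal{A}(\Sigma^{(f)})$; as a rooted cluster algebra determines its initial seed, this forces $\Sigma^{(f)}=\Sigma$, i.e. $f$ is noncontractible.

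Since most of the argument is formal definition-chasing, I expect the only genuinely delicate points to sit in part (2): verifying that the equality $f=(f^S)^E$ really forces the domains to agree (so that contractible $f$ is excluded), and checking that the hypotheses of Lemma \ref{equel}---in particular the nonvanishing $f(x)\neq 0$ on all of $\widetilde{X}$---are met, which is exactly where noncontractibility is used. These are precisely the places where the ``if and only if'' hinges, so I would state the domain-matching step explicitly rather than treat it as automatic.
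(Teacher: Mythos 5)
Your proof is correct and takes essentially the same route as the paper's: part (1) is the observation that $\Sigma^{(g^E)}=\Sigma$ because seed homomorphisms take values in $\widetilde{X'}$ rather than $\mathbb{Z}$, and part (2) combines the equivalence $\Sigma^{(f)}=\Sigma\iff f$ noncontractible with Lemma \ref{equel} applied to the variable-wise equality $(f^S)^E(x)=f(x)\neq 0$. You are in fact slightly more explicit than the paper on the ``only if'' direction of (2), spelling out that equality of morphisms in \textbf{Clus} forces the domain objects, hence the initial seeds, to coincide --- a worthwhile clarification rather than a deviation.
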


A further question is that: under what condition, do there exist $(f^S)^E$ and $g^E$? This question seems difficult for us now. Maybe we would study it in the future work.

\begin{Prop}\label{generate}
For a rooted cluster morphism
$g:\mathcal{A}(\Sigma)\rightarrow\mathcal{A}(\Sigma')$ and any
$(g,\Sigma,\Sigma')$-biadmissible sequence $(y_1,\cdots,y_t)$, the
morphism
\begin{equation}\label{morphinvariant}
g:\; \mathcal{A}(\mu_{y_t}\cdots\mu_{y_1}(\Sigma))\rightarrow\mathcal{A}(\mu_{g(y_t)}\cdots\mu_{g(y_1)}(\Sigma'))
\end{equation}
is still a rooted cluster morphism on the seed $\mu_{y_t}\cdots\mu_{y_1}(\Sigma)$.
\end{Prop}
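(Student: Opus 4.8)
The plan is to verify that $g$, now equipped with the mutated initial seeds, still satisfies the three axioms CM1, CM2, CM3 of Definition \ref{rootmorph}. First I would record that mutation-equivalent seeds generate the same cluster algebra, so $\mathcal{A}(\mu_{y_t}\cdots\mu_{y_1}(\Sigma))=\mathcal{A}(\Sigma)$ and $\mathcal{A}(\mu_{g(y_t)}\cdots\mu_{g(y_1)}(\Sigma'))=\mathcal{A}(\Sigma')$ as rings; hence the map $g$ is automatically a ring morphism sending $1$ to $1$, and only the seed-compatibility conditions remain to be checked. Write $\bar\Sigma=\mu_{y_t}\cdots\mu_{y_1}(\Sigma)$ with extended cluster $\widetilde{\bar X}$ and $\bar\Sigma'=\mu_{g(y_t)}\cdots\mu_{g(y_1)}(\Sigma')$ with extended cluster $\widetilde{\bar X'}$. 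The element-level operations $w\mapsto \mu_{y_t}\cdots\mu_{y_1}(w)$ and $v\mapsto \mu_{g(y_t)}\cdots\mu_{g(y_1)}(v)$ are bijections $\widetilde{X}\to\widetilde{\bar X}$ and $\widetilde{X'}\to\widetilde{\bar X'}$ that carry mutable variables to mutable variables and frozen variables to frozen variables (the latter being well defined because biadmissibility guarantees each $g(y_i)$ is exchangeable at the appropriate step).

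The technical heart is the single identity, valid for every $w\in\widetilde{X}$,
\[
g(\mu_{y_t}\cdots\mu_{y_1}(w))=\mu_{g(y_t)}\cdots\mu_{g(y_1)}(g(w)),
\]
which is exactly CM3 for the original morphism $g$ applied to the $(g,\Sigma,\Sigma')$-biadmissible sequence $(y_1,\ldots,y_t)$ and the element $w$ (where a mutation applied to a constant of $\mathbb{Z}$ fixes it). Granting this, CM1 and CM2 are immediate: any $z\in\widetilde{\bar X}$ equals $\mu_{y_t}\cdots\mu_{y_1}(w)$ for a unique $w\in\widetilde{X}$, so $g(z)=\mu_{g(y_t)}\cdots\mu_{g(y_1)}(g(w))$; since $g(w)\in\widetilde{X'}\sqcup\mathbb{Z}$ by CM1 for $g$, either $g(w)\in\mathbb{Z}$ and $g(z)=g(w)\in\mathbb{Z}$, or $g(w)\in\widetilde{X'}$ and $g(z)\in\widetilde{\bar X'}$ by the bijection above. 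Restricting to $w\in X$ and invoking CM2 for $g$ together with the fact that this bijection preserves mutability yields $g(\bar X)\subseteq\bar X'\sqcup\mathbb{Z}$.

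For CM3 I would exploit concatenation of admissible sequences. Given a $(g,\bar\Sigma,\bar\Sigma')$-biadmissible sequence $(z_1,\ldots,z_s)$, the concatenation $(y_1,\ldots,y_t,z_1,\ldots,z_s)$ is $(g,\Sigma,\Sigma')$-biadmissible: the first block is $\Sigma$-admissible and brings $\Sigma$ to $\bar\Sigma$, after which the second block is admissible by hypothesis, and the same bookkeeping on the image side (using that $(g(y_1),\ldots,g(y_t))$ brings $\Sigma'$ to $\bar\Sigma'$) gives $\Sigma'$-admissibility. Writing any $z\in\widetilde{\bar X}$ as $z=\mu_{y_t}\cdots\mu_{y_1}(w)$ and applying CM3 for $g$ to this long biadmissible sequence and the element $w$ produces
\[
g(\mu_{z_s}\cdots\mu_{z_1}(z))=\mu_{g(z_s)}\cdots\mu_{g(z_1)}\big(\mu_{g(y_t)}\cdots\mu_{g(y_1)}(g(w))\big);
\]
replacing the inner factor by $g(z)$ via the heart identity gives precisely $g(\mu_{z_s}\cdots\mu_{z_1}(z))=\mu_{g(z_s)}\cdots\mu_{g(z_1)}(g(z))$, which is CM3 for the new morphism.

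The main obstacle, and the place needing the most care, is the interaction with the copy of $\mathbb{Z}$ in CM1 and CM2: one must check that mutations genuinely fix the integer values $g(w)$, so that the heart identity and its consequences remain meaningful when $g(w)\in\mathbb{Z}$, and that the mutation bijections on the extended clusters respect the partition into mutable and frozen variables. A secondary point to verify carefully is the admissibility of the concatenated sequence on both the source and image sides, since this is what licenses the single application of CM3 that drives the whole argument. Alternatively, the statement reduces to the one-step case $t=1$ by induction, after checking that $(y_2,\ldots,y_t)$ is $(g,\mu_{y_1}(\Sigma),\mu_{g(y_1)}(\Sigma'))$-biadmissible; but the direct concatenation argument above avoids iterating the verification.
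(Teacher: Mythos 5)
Your proposal is correct and follows essentially the same route as the paper: both arguments rest on applying CM3 of the original morphism $g$ to the sequence obtained by prepending $(y_1,\dots,y_t)$ to a biadmissible sequence for the mutated seeds, and on observing that the mutation maps identify the old and new extended clusters (so that the needed fact $g(x)\neq g(y_1)$ for $x\neq y_1$, which the paper checks explicitly for CM1/CM2, is absorbed into your ``heart identity''). The only organizational difference is that the paper reduces to $t=1$ by induction while you run the concatenation argument in one pass, a variant you yourself note is equivalent.
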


\begin{proof}
 By induction, it suffices to prove the result for the case $t=1$. First, note that for any $x\in \widetilde{X}$ and $x\not=y_1$, we have
$g(x)\neq g(y_1)$ since $g(\mu_{y_1}(x))=g(x)=\mu_{g(y_1)}(g(x))$ by CM3 on $\mathcal{A}(\Sigma)$.
 Hence, $g(\widetilde{X}\setminus \{y_1\})\subseteq(\widetilde{X'}\setminus \{g(y_1)\})\cup \mathds{Z}$ and $g(X\setminus \{y_1\})\subseteq(X'\setminus \{g(y_1)\})\cup \mathds{Z}$. Following this, CM1 and CM2 for $g$ on $\mathcal{A}(\mu_{y_1}(\Sigma))$ are obtained directly.  For any
$(g,\mu_{y_1}(\Sigma),\mu_{g(y_1)}(\Sigma'))$-biadmissible sequence
$(z_2,\cdots,z_s)$, by definition, $(y_1,z_2,\cdots,z_s)$ is a
$(g,\Sigma,\Sigma')$-biadmissible sequence. Then by CM3 for $g$ on $\mathcal A(\Sigma)$, we have
$g(\mu_{z_s}\cdots\mu_{z_2}\mu_{y_1}(y))=\mu_{g(z_s)}\cdots\mu_{g(z_2)}\mu_{g(y_1)}(g(y))$ for $y\in \widetilde{X}$. Combining the fact that $\mu_{y_1}(\widetilde{X})$ is the extended cluster of $\mu_{y_1}(\Sigma)$, CM3 holds for $g$ on $\mathcal{A}(\mu_{y_1}(\Sigma))$.
\end{proof}

Note that as algebras, we have
$\mathcal{A}(\Sigma)=
\mathcal{A}(\mu_{y_t}\cdots\mu_{y_1}(\Sigma))$ and
$\mathcal{A}(\Sigma')=
\mathcal{A}(\mu_{g(y_t)}\cdots\mu_{g(y_1)}(\Sigma'))$. From the fact of (\ref{morphinvariant}), we know that the morphism $g$ is still of rooted
cluster on the seed $\mu_{y_t}\cdots\mu_{y_1}(\Sigma)$ for any $(g,\Sigma,\Sigma')$-biadmissible sequence $(y_1,\cdots,y_t)$. For this reason, we say $g$ to be  a {\bf rooted
cluster morphism generated by $(y_1,\cdots,y_t)$}.

 For a seed homomorphism $g_0: \Sigma\rightarrow\Sigma'$, assume that its induced rooted cluster morphism $g_0^E: \mathcal A(\Sigma)\rightarrow\mathcal A(\Sigma')$ exists. Then by  Proposition \ref{generate},  for any
$((g_0)^E,\Sigma,\Sigma')$-biadmissible sequence $(y_1,\cdots,y_t)$, the
morphism
$g_0^E: \mathcal{A}(\mu_{y_t}\cdots\mu_{y_1}(\Sigma))\rightarrow\mathcal{A}(\mu_{g(y_t)}\cdots\mu_{g(y_1)}(\Sigma'))$ is still a rooted cluster morphism on the initial seed $\mu_{y_t}\cdots\mu_{y_1}(\Sigma)$. By Proposition \ref{gobackresult}, for $\Sigma$, we have $(g_0^E)^S=g_0: \Sigma\rightarrow\Sigma'$; for this reason, for the seed $\mu_{y_t}\cdots\mu_{y_1}(\Sigma)$ and the rooted
cluster morphism $g_0^E$ generated by $(y_1,\cdots,y_t)$, we denote
\begin{equation}\label{invariant}
\mu_{y_t}\cdots\mu_{y_1}(g_0)=(g_0^E)^S: \mu_{y_t}\cdots\mu_{y_1}(\Sigma)\rightarrow \mu_{g_0(y_t)}\cdots\mu_{g_0(y_1)}(\Sigma').
   \end{equation}
   where we say $\mu_{y_t}\cdots\mu_{y_1}(g_0)$ to be obtained from $g_0$ by the {\bf $t$-mutations of seed homomorphisms} at the exchangeable variables $y_1,\cdots,y_t$ for any positive integer $t$.

   Of course, $g_0^E$ is noncontractible; by Proposition \ref{gobackresult}, $((g_0^E)^S)^E=g_0^E$. Using (\ref{invariant}), we obtain  $(\mu_{y_t}\cdots\mu_{y_1}(g_0))^E=g_0^E$ as algebra morphisms.
Therefore, Proposition \ref{generate} indeed tells us that

 {\em For a seed homomorphism $g_0$, the operation for giving the induced rooted cluster morphisms $g_0^E$ is invariant under mutations of seed homomorphisms. }

The following result illustrates the relation between seed isomorphism and rooted cluster isomorphism.

\begin{Prop}\label{basiclem}
$\mathcal{A}(\Sigma)\cong\mathcal{A}(\Sigma')$ in \textbf{Clus} if
and only if $\Sigma\cong\Sigma'$ in \textbf{Seed}.
\end{Prop}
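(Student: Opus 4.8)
The strategy is to pass between a rooted cluster morphism and its restricted seed homomorphism (Proposition \ref{induceseed}) in one direction, and between a seed isomorphism and its induced rooted cluster morphism in the other.

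\emph{Necessity.} Suppose $\Phi\colon\mathcal A(\Sigma)\to\mathcal A(\Sigma')$ is an isomorphism in \textbf{Clus}, with inverse $\Psi=\Phi^{-1}$. I first claim $\Phi$ and $\Psi$ are noncontractible. Indeed, if $\Phi(x)=c\in\mathbb Z$ for some $x\in\widetilde X$, then since $\Phi$ fixes $\mathbb Z$ we would have $\Phi(x)=\Phi(c)$ with $x\neq c$ (as $x$ is transcendental over $\mathbb Q$ while $c$ is constant), contradicting injectivity; hence $I_1=\emptyset$ and $\Sigma^{(\Phi)}=\Sigma$. By Proposition \ref{induceseed} we obtain seed homomorphisms $\Phi^S\colon\Sigma\to\Sigma'$ and $\Psi^S\colon\Sigma'\to\Sigma$, given on extended clusters by $\Phi$ and $\Psi$. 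From $\Psi\Phi=\mathrm{id}$ and $\Phi\Psi=\mathrm{id}$ I read off $\Psi^S\Phi^S=\mathrm{id}_{\widetilde X}$ and $\Phi^S\Psi^S=\mathrm{id}_{\widetilde{X'}}$; thus $\Phi^S$ is a bijection $\widetilde X\to\widetilde{X'}$ which by CM2 carries $X$ into $X'$ and whose inverse carries $X'$ into $X$, so it also restricts to a bijection $X\to X'$. Finally, condition (b) of Definition \ref{seedhom} applied to $\Phi^S$ gives $|b'_{\Phi(x)\Phi(y)}|\geq|b_{xy}|$ for $x\in X,\ y\in\widetilde X$, and the same condition applied to $\Psi^S$ at the pair $(\Phi(x),\Phi(y))$ gives the reverse inequality; hence $|b_{xy}|=|b'_{\Phi(x)\Phi(y)}|$, and $\Phi^S$ is a seed isomorphism by Definition \ref{seediso}.

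\emph{Sufficiency.} Conversely, let $f\colon\Sigma\to\Sigma'$ be a seed isomorphism; I aim to show its induced map $f^E$, defined by $f^E(x)=f(x)$ and $f^E(x^{-1})=f(x)^{-1}$, is an isomorphism in \textbf{Clus}. Conditions CM1 and CM2 are immediate since $f$ is a bijection $\widetilde X\to\widetilde{X'}$ restricting to $X\to X'$ and taking no integer value. The essential point, and the main obstacle, is CM3, that $f$ intertwines iterated mutation; this is delicate precisely because a seed isomorphism need be neither positive nor negative (Remark \ref{isoantiiso}). I would first note that $f$ is \emph{rowwise} signed: for each $x\in X$ there is $\varepsilon_x\in\{\pm1\}$ with $b'_{f(x)f(y)}=\varepsilon_x b_{xy}$ for all $y\in\widetilde X$, since $|b'_{f(x)f(y)}|=|b_{xy}|$ and condition (b) for the adjacent pairs $(x,y),(x,w)$ forces the sign to be constant along the row. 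A direct check of the matrix mutation rule then shows this rowwise structure is preserved under simultaneous mutation at any $x\in X$ and $f(x)$: for two rows carrying equal signs the new entries simply rescale by that sign, while for two rows carrying opposite signs condition (b) forces the entry linking them to vanish, so its transpose vanishes by sign-skew-symmetry and the mutation cross term drops out. Thus $\mu_x(\Sigma)\cong\mu_{f(x)}(\Sigma')$ is again a seed isomorphism, and since the two monomials of an exchange relation are merely interchanged when $\varepsilon_x=-1$, one gets $f(\mu_x(x))=\mu_{f(x)}(f(x))$ at each step. By induction $f$ commutes with every admissible mutation sequence, and as $f$ is bijective these coincide with the biadmissible sequences, yielding CM3.

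\emph{Conclusion of sufficiency.} It remains to check $\mathrm{Im}(f^E)\subseteq\mathcal A(\Sigma')$ and surjectivity: every cluster variable of $\mathcal A(\Sigma)$ has the form $\mu_{y_s}\cdots\mu_{y_1}(z)$ for an admissible sequence, so by CM3 its image is the corresponding cluster variable of $\mathcal A(\Sigma')$, and together with $f(X_{fr})=X'_{fr}$ this shows $f^E$ is a well-defined rooted cluster morphism mapping onto $\mathcal A(\Sigma')$. Applying the same construction to the seed isomorphism $f^{-1}$ yields $(f^{-1})^E$, a two-sided inverse that is again a rooted cluster morphism; hence $f^E$ is an isomorphism in \textbf{Clus} and $\mathcal A(\Sigma)\cong\mathcal A(\Sigma')$.
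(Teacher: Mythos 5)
Your proposal is correct and follows essentially the same route as the paper: for necessity, compose the restricted seed homomorphisms of $f$ and $f^{-1}$ and use the two inequalities $|b'_{f(x)f(y)}|\geq|b_{xy}|$ and its reverse; for sufficiency, extend the seed isomorphism to the Laurent rings and verify CM3 by induction on the length of an admissible sequence. The only difference is that you spell out, via the rowwise-sign analysis and the vanishing of the cross term when adjacent rows carry opposite signs, why a seed isomorphism is preserved under simultaneous mutation --- a step the paper's proof merely asserts as the observation (\ref{equ:iso}) --- which is a welcome addition rather than a deviation.
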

\begin{proof}
``Only if'':\;\;Let
$f:\mathcal{A}(\Sigma)\rightarrow\mathcal{A}(\Sigma')$ be a rooted
cluster isomorphism with inverse $g$. According to Proposition
\ref{induceseed}, we have $f^S$ and $g^S$ as the restricted seed
homomorphisms of $f$ and $g$, respectively. As the restrictions of $f$ and $g$, it is clear that  $g^Sf^S|_{\widetilde{X}}=id_{\widetilde{X}}$ and $f^Sg^S|_{\widetilde{X'}}=id_{\widetilde{X'}}$. Note that $f$ and $g$ cannot map an exchangeable variable to a frozen variable. We also have $g^Sf^S|_{X}=id_{X}$ and $f^Sg^S|_{X'}=id_{X'}$.

Moreover, since $f^S$ is a seed homomorphism,  $|b_{xy}|\leq
|b'_{f^S(x)f^S(y)}|$ for all $x,y\in \widetilde{X}$. On the other hand, for $g^S$ as a seed
homomorphism, we have $|b'_{f^S(x)f^S(y)}|\leq
|b_{g^Sf^S(x)g^Sf^S(x)}|=|b_{xy}|$ for all $x,y\in \widetilde{X}$.
Therefore, $f^S:\Sigma\rightarrow\Sigma'$ is a seed
isomorphism.

``If'':\;\;For a seed isomorphism  $\Sigma\overset{F}{\cong}\Sigma'$, we have its prolongation $\bar f:
\mathbb{Q}[X_{fr}][X^{\pm 1}]\rightarrow \mathbb{Q}[X'_{fr}][X'^{\pm
1}]$ as an algebra isomorphism of Laurent polynomials, which satisfies that $\bar f(x)=F(x)$ for all
$x\in \widetilde{X}$ and $\bar f(x)^{-1}=(F(x))^{-1}$ for $x\in X$. Now
we prove that $\bar f$ can induce a rooted cluster isomorphism
$F^E:\mathcal{A}(\Sigma)\rightarrow\mathcal{A}(\Sigma')$ for $F^E=\bar f|_{\mathcal{A}(\Sigma)}$.

First, we prove that $F^E=\bar f|_{\mathcal{A}(\Sigma)}$ satisfies the conditions CM1,CM2 and CM3 such that $F^E(\mathcal{A}(\Sigma))\subseteq \mathcal{A}(\Sigma')$.

In fact, CM1 and CM2 for $F^E$ hold clearly since $F$ is a seed homomorphism. Now we prove that any $\Sigma$-admissible
sequence $(z_1,\cdots,z_s)$ is $(F^E,\Sigma,\Sigma')$-biadmissible and then that CM3 holds for $F^E$.

For $s=1$,  $(z_1)$ is
$(F^E,\Sigma,\Sigma')$-biadmissible trivially due to $F$ as a seed
isomorphism.

For $\widetilde{X}\ni x\not=z_1$, it is clear that $F^E(\mu_{z_1}(x))=\mu_{F^E(z_1)}(F^E(x))$ by the definition of mutation and the injection of $F^E$.

 Now consider the case for $x=z_1$.
    Since $F$ is a seed isomorphism, we have $b_{z_1y}=b'_{F(z_1)F(y)}=b'_{F^E(z_1)F^E(y)}$ for all $b_{z_1y}\neq 0$ or $b_{z_1y}=-b'_{F(z_1)F(y)}=-b'_{F^E(z_1)F^E(y)}$ for all $b_{z_1y}\neq 0$. In the both cases, it holds $$\prod\limits_{b_{z_1y}>0}F^E(y^{b_{z_1y}})+\prod\limits_{b_{z_1y}<0}F^E(y^{-b_{z_1y}})=\prod\limits_{b'_{F^E(z_1)F^E(y)}>0}F^E(y)^{b'_{F^E(z_1)F^E(y)}}+\prod\limits_{b'_{F^E(z_1)F^E(y)}<0 }F^E(y)^{-b'_{F^E(z_1)F^E(y)}}.$$
    Thus, $F^E(\mu_{z_1}(z_1))=\mu_{F^E(z_1)}(F^E(z_1))$, since $F^E|_{\widetilde{X}}=F|_{\widetilde{X}}$ is a bijection.

Assume that $(z_1,\cdots,z_s)$ is
$(F^E,\Sigma,\Sigma')$-biadmissible and $F^E$ satisfies CM3 for $s<t$.

Now consider the case for $s=t$. By the definition of seed isomorphisms, we have the observation:
\begin{equation}\label{equ:iso}
\mu_{z_{t-1}}\cdots\mu_{z_1}(F):\mu_{z_{t-1}}\cdots\mu_{z_1}(\Sigma)\rightarrow\mu_{F(z_{t-1})}\cdots\mu_{F(z_1)}(\Sigma')
\end{equation} as a seed isomorphism. Note that $F(z_i)=F^E(z_i)$ for any $i$.

Since $(z_1,\cdots,z_{t})$ is $\Sigma$-admissible,
  $(z_t)$ is $\mu_{z_{t-1}}\cdots\mu_{z_1}(\Sigma)$-admissible. Using the isomorphism in (\ref{equ:iso}), we know that $F^E(z_t)$ is $\mu_{F(z_{t-1})}\cdots\mu_{F(z_1)}(\Sigma')$-admissible; thus, $(z_t)$ is \\ \centerline{$(F^E,\mu_{z_{t-1}}\cdots\mu_{z_1}(\Sigma), \mu_{F(z_{t-1})}\cdots\mu_{F(z_1)}(\Sigma'))$-biadmissible.} Therefore,
$(z_1,\cdots,z_t)$ is also $(F^E,\Sigma,\Sigma')$-biadmissible. Moreover,
from (\ref{equ:iso}), it follows that
\begin{equation}\label{a}
F^E(\mu_{z_t}(z))=\mu_{F(z_t)}(F^E(z))
\end{equation}
for all cluster variables  $z$ in the seed $\mu_{z_{t-1}}\cdots\mu_{z_1}(\Sigma)$, where $\mu_{z_t}$ and $\mu_{F(z_t)}$ mean the mutations at $z_t$ and $F(z_t)$ in the seeds $\mu_{z_{t-1}}\cdots\mu_{z_1}(\Sigma)$ and $\mu_{F(z_{t-1})}\cdots\mu_{F(z_1)}(\Sigma')$, respectively. Hence,
$$F^E(\mu_{z_{t}}\cdots\mu_{z_1}(x))=F^E(\mu_{z_{t}}(\mu_{z_{t-1}}\cdots\mu_{z_1}(x)))=\mu_{F(z_{t})}(F^E(\mu_{z_{t-1}}\cdots\mu_{z_1}(x)))=\mu_{F(z_{t})}
\cdots\mu_{F(z_1)}(F^E(x))$$ for all $x\in\widetilde X$, where the second equality is by (\ref{a}) and the third one is by the induction assumption. Thus, CM3 follows.

Since $\mathcal{A}(\Sigma')$ is generated by all its cluster variables, we have $F^E(\mathcal{A}(\Sigma))=\bar f(\mathcal{A}(\Sigma))\subseteq \mathcal{A}(\Sigma')$ due to CM1, CM2 and CM3 shown above.

Second, the above discussion on $\bar f$ is also suitable for $\bar f^{-1}$; hence, similarly, we have $(\bar f)^{-1}(\mathcal{A}(\Sigma'))\subseteq \mathcal{A}(\Sigma)$. It follows that $\mathcal{A}(\Sigma')\subseteq \bar f( \mathcal{A}(\Sigma))$. Hence, $F^E(\mathcal{A}(\Sigma))=\bar f(\mathcal{A}(\Sigma))= \mathcal{A}(\Sigma')$.

Note that $F^E$ is injective. Therefore, $F^E:\mathcal{A}(\Sigma)\rightarrow \mathcal{A}(\Sigma')$ is a rooted cluster isomorphism.
\end{proof}

\begin{Rem}
Note that Theorem 3.9 in \cite{ADS} states that
$\mathcal{A}(\Sigma)\cong\mathcal{A}(\Sigma')$ in
\textbf{Clus} if and only if either $\Sigma\cong\Sigma'$ or $\Sigma\cong\Sigma'^{op}$. In fact, this result has dealt only the rooted cluster algebras with indecomposable seeds. For example, let $Q:\xymatrix{x_1\ar[r]^{}&*+[F]{x_2}\ar[r]^{}&x_3}$ and $Q':\xymatrix{x_1\ar[r]^{}&*+[F]{x_2}&x_3\ar[l]^{}}$, it is clear that $f:\mathcal A(\Sigma(Q))\rightarrow \mathcal A(\Sigma(Q')), x_i\mapsto x_i$ for $i=1,2,3$, is a rooted cluster isomorphism, while $\Sigma(Q)\not\cong \Sigma(Q')$ or $\Sigma(Q)\not\cong \Sigma(Q')^{op}$ in sense of \cite{ADS}.
\end{Rem}

For a rooted cluster morphism
$f:\mathcal{A}(\Sigma)\rightarrow\mathcal{A}(\Sigma')$, in \cite{ADS}, the authors defined the {\bf
image seed} of $f$ to be the image
seed of $f^S:\Sigma^{(f)}\rightarrow\Sigma'$ by Proposition \ref{induceseed}, that is, $f^S(\Sigma^{(f)})$ by Definition \ref{imageseed}.

 In \cite{ADS}, a rooted cluster morphism
$f:\;\mathcal{A}(\Sigma)\rightarrow\mathcal{A}(\Sigma')$ is called
 {\bf ideal} if $\mathcal{A}(f^S(\Sigma^{(f)}))=f(\mathcal{A}(\Sigma))$.

\begin{Lem}\label{idealmor.}(Proposition 2.36(2), \cite{CZ})
Let $f:\;\mathcal{A}(\Sigma)\rightarrow\mathcal{A}(\Sigma')$
be an ideal rooted cluster morphism. Then $f=\tau f_1$ with a surjective rooted cluster morphism $f_1$ and an injective rooted cluster morphism $\tau$, that is,
$f:\mathcal{A}(\Sigma)\overset{f_1}{\twoheadrightarrow}\mathcal{A}(f^S(\Sigma^{(f)}))\overset{\tau}{\hookrightarrow}\mathcal{A}(\Sigma').$
\end{Lem}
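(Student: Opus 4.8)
The plan is to realise the factorisation by the two obvious candidate maps and then verify that each is a rooted cluster morphism of the required type. Write $\mathcal{B}:=\mathcal{A}(f^S(\Sigma^{(f)}))$ for the cluster algebra of the image seed, and recall from (\ref{twoimagesubseed}) that $f^S(\Sigma^{(f)})=\Sigma'_{I'_0,I'_1}$ is a mixing-type sub-seed of $\Sigma'$, so that $\mathcal{B}$ sits inside $\mathcal{A}(\Sigma')$ as a sub-rooted cluster algebra. I would take $\tau:\mathcal{B}\to\mathcal{A}(\Sigma')$ to be the canonical embedding sending each extended cluster variable of $f^S(\Sigma^{(f)})$ to itself as an element of $\widetilde{X'}$, and $f_1:\mathcal{A}(\Sigma)\to\mathcal{B}$ to be $f$ with its codomain co-restricted to its image. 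On generators one has $\tau(f_1(a))=f(a)$, so $f=\tau f_1$ is automatic once both maps are known to be well-defined rooted cluster morphisms.

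The surjectivity and injectivity are where the ideal hypothesis enters, and where the formal factorisation becomes substantive. By the definition of ideal, $f(\mathcal{A}(\Sigma))=\mathcal{A}(f^S(\Sigma^{(f)}))=\mathcal{B}$; hence $f_1$ is surjective as a ring map, i.e. a surjective rooted cluster morphism, and this is exactly the step that fails for a general (non-ideal) $f$. The map $\tau$ is an inclusion of subalgebras, so it is injective as a ring map. It then remains to check the axioms CM1--CM3 of Definition \ref{rootmorph} for both maps. For $f_1$, CM1 and CM2 are inherited from $f$ together with the description of $\widetilde{X^{(f)}}$ given in Definition \ref{reducedseed} and Proposition \ref{induceseed} (the non-integer images of $f$ are precisely the extended cluster of the image seed). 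For CM3 of $f_1$, I would first establish CM3 for $\tau$ (below) and then deduce CM3 for $f_1$ by a formal argument: applying the injective ring map $\tau$ to the identity to be proved and using $f=\tau f_1$ reduces the claim for $f_1$ to CM3 for $f$ combined with CM3 for $\tau$, after which injectivity of $\tau$ cancels it off.

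The main work, and the step I expect to be the real obstacle, is verifying CM3 for the embedding $\tau$: for every $(\tau,f^S(\Sigma^{(f)}),\Sigma')$-biadmissible sequence $(y_1,\dots,y_s)$ and every $v$ in the extended cluster of $f^S(\Sigma^{(f)})$, one must show that mutating $v$ inside the sub-seed and then including it into $\mathcal{A}(\Sigma')$ produces the same element as mutating $v$ directly inside $\Sigma'$. The tool for this is Proposition \ref{submu}, which gives a positive seed isomorphism $\mu_{x}(\Sigma'_{I'_0,I'_1})\cong(\mu_{x}(\Sigma'))_{I'_0,I'_1}$ for each admissible direction $x\in X'\setminus(I'_0\cup I'_1)$; iterating it along the sequence identifies the mutated sub-seed with the corresponding mixing-type sub-seed of the mutated full seed, so that the exchange relations computed at each $y_i$ on the two sides agree and the embedding intertwines the two mutation processes. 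The delicate point is that freezing the variables of $I'_0$ and deleting those of $I'_1$ could a priori change an exchange relation; the resolution is that biadmissibility forces each $y_i$ to stay exchangeable on both sides, while Proposition \ref{submu} shows the frozen columns are transported unchanged, which is exactly what makes the sub-seed exchange relations coincide with those of $\Sigma'$ along biadmissible directions. Once CM3 for $\tau$ is secured, $\tau$ is an injective rooted cluster morphism and $f_1$ a surjective one, yielding the desired factorisation $f:\mathcal{A}(\Sigma)\overset{f_1}{\twoheadrightarrow}\mathcal{A}(f^S(\Sigma^{(f)}))\overset{\tau}{\hookrightarrow}\mathcal{A}(\Sigma')$.
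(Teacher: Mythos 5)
First, note that the paper does not prove this statement itself; it is quoted from \cite{CZ} (Proposition 2.36(2)), so your proposal can only be judged on its own merits. Your skeleton is the right one: take $f_1$ to be $f$ corestricted to its image, $\tau$ the inclusion, use idealness to get $f(\mathcal{A}(\Sigma))=\mathcal{A}(f^S(\Sigma^{(f)}))$ and hence surjectivity of $f_1$ and injectivity of $\tau$ as ring maps. The factorisation $f=\tau f_1$ is then formal.

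However, your verification of CM3 for $\tau$ has a genuine gap. Proposition \ref{submu} only provides an abstract seed isomorphism $\mu_{x}(\Sigma'_{I'_0,I'_1})\cong(\mu_{x}(\Sigma'))_{I'_0,I'_1}$; it does \emph{not} assert that $\mu_{x}^{\Sigma'_{I'_0,I'_1}}(x)$ and $\mu_{x}^{\Sigma'}(x)$ are equal as elements of $\mathds F(\Sigma')$. Indeed they differ whenever some $y\in I'_1$ has $b'_{xy}\neq 0$, since the exchange polynomial of the partial sub-seed simply omits the factor $y^{\pm b'_{xy}}$ — this is exactly the phenomenon the paper highlights in Section 4.1 (Case 2), where it is stressed that $\mathcal A(\Sigma_{\emptyset,I_1})$ is in general not even a subalgebra of $\mathcal A(\Sigma)$. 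Biadmissibility does not rescue this: it only forces each $y_i$ to be exchangeable on both sides, not that the deleted variables are non-adjacent to $y_i$. The missing ingredient is that CM3 for $f$, applied to the length-one sequences $(x)$ with $x\in X^{(f)}$, forces every $v\in\widetilde{X'}$ with $b'_{f(x)v}\neq 0$ to lie in $f(\widetilde{X})$ (compare the two coprime monomials in $f(\mu_x(x))=\mu_{f(x)}(f(x))$ and use the algebraic independence of $\widetilde{X'}$, as in the proof of Proposition \ref{induceseed}); equivalently, $b'_{zy}=0$ for all $z\in X'\setminus(I'_0\cup I'_1)$ and $y\in I'_1$, which is precisely the hypothesis of Theorem \ref{rooted cluster subalgebra}. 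Only after establishing this vanishing do the exchange relations of the image seed coincide with those of $\Sigma'$ in all biadmissible directions, making $\tau$ a rooted cluster morphism (and, in turn, making your reduction of CM3 for $f_1$ to CM3 for $f$ and $\tau$ legitimate). You should insert this step explicitly; as written, the argument for CM3 of $\tau$ does not go through.
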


\begin{Lem}\label{ideal} (\cite{ADS})
Any injective rooted cluster morphism $f:\;\mathcal{A}(\Sigma)\rightarrow\mathcal{A}(\Sigma')$ is ideal.
\end{Lem}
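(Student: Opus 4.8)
The plan is to reduce the statement to Proposition \ref{basiclem} by showing that, for an injective $f$, the restricted seed homomorphism $f^S$ is in fact a seed \emph{isomorphism} onto the image seed $f^S(\Sigma^{(f)})$, and then to identify the induced rooted cluster isomorphism with $f$ itself.

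First I would record that an injective $f$ is noncontractible, i.e. $\Sigma^{(f)}=\Sigma$. Indeed, if $f(x)=n\in\mathbb{Z}$ for some $x\in\widetilde{X}$, then $x-n$ is a nonzero element of $\mathcal{A}(\Sigma)$ with $f(x-n)=0$, contradicting injectivity; hence the set $I_1$ of (\ref{setI_1}) is empty and $\Sigma^{(f)}=\Sigma$ by Definition \ref{reducedseed}. In particular $f^S$ is defined on all of $\widetilde{X}$, and by CM1, CM2 together with injectivity one has $f^S(X)\subseteq X'$ and $f^S(X_{fr})=f^S(\widetilde{X})\setminus f^S(X)$. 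Thus $f^S$ carries $X$ bijectively onto $f(X)$ and $\widetilde{X}$ bijectively onto $f(\widetilde{X})$, which are precisely the cluster and extended cluster of the image seed $f^S(\Sigma)$ (Definition \ref{imageseed}); by (\ref{twoimagesubseed}) this image seed is a genuine mixing-type sub-seed of $\Sigma'$, so $\mathcal{A}(f^S(\Sigma))$ is well defined.

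The heart of the argument is to upgrade the inequality $|b'_{f(x)f(y)}|\ge |b_{xy}|$ coming from $f^S$ being a seed homomorphism (Proposition \ref{induceseed}) to an equality. For each $x\in X$ the one-term sequence $(x)$ is $(f,\Sigma,\Sigma')$-biadmissible, since $f(x)\in X'$ is again exchangeable; hence CM3 gives $f(\mu_x(x))=\mu_{f(x)}(f(x))$. Multiplying the two exchange relations through by $f(x)$ and using the algebraic independence of the elements of $\widetilde{X'}$, I obtain, exactly as in the derivation of (\ref{eq:14})--(\ref{eq:15}), an identification of the two exchange monomials. Because $f$ is injective, each target variable $f(y)$ has the single preimage $y$, so the exponent of $f(y)$ on the $\Sigma'$-side equals $\pm b_{xy}$; comparing exponents yields $|b'_{f(x)f(y)}|=|b_{xy}|$ for every $y\in\widetilde{X}$, the sign being the uniform one determined by which of (\ref{eq:14}), (\ref{eq:15}) occurs. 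Since the exchange matrix of $f^S(\Sigma)$ is the corresponding submatrix $B''$ of $\widetilde{B'}$, this is exactly the condition $|b_{xy}|=|b''_{f^S(x)f^S(y)}|$ of Definition \ref{seediso}, so $f^S:\Sigma\to f^S(\Sigma)$ is a seed isomorphism.

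Finally I would invoke Proposition \ref{basiclem}: the seed isomorphism $f^S:\Sigma\to f^S(\Sigma)$ induces a rooted cluster isomorphism $F^E:\mathcal{A}(\Sigma)\to\mathcal{A}(f^S(\Sigma))$ with $F^E(x)=f^S(x)=f(x)$ for all $x\in\widetilde{X}$. Both $F^E$ and $f$ are ring morphisms out of $\mathcal{A}(\Sigma)$ agreeing, with nonzero values, on $\widetilde{X}$; by the Laurent phenomenon every cluster variable is a Laurent polynomial in $\widetilde{X}$, so the argument of Lemma \ref{equel} forces $F^E=f$ as maps into the common ambient field. Consequently $f(\mathcal{A}(\Sigma))=F^E(\mathcal{A}(\Sigma))=\mathcal{A}(f^S(\Sigma))=\mathcal{A}(f^S(\Sigma^{(f)}))$, which is precisely the definition of $f$ being ideal. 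I expect the main obstacle to be the middle step, namely turning the seed-homomorphism inequality into the seed-isomorphism equality: one must run the exchange-relation and algebraic-independence bookkeeping of Proposition \ref{induceseed} and check that injectivity collapses each sum $\sum_{f(u)=f(y)}|b_{xu}|$ to the single term $|b_{xy}|$, including the case $b_{xy}=0$, where one argues that $f(y)$ cannot occur in either exchange monomial. The remaining steps are formal, resting on Propositions \ref{induceseed} and \ref{basiclem} and on Lemma \ref{equel}.
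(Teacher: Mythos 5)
Your proposal is correct and follows essentially the same route as the paper's proof: establish that injectivity forces $\Sigma^{(f)}=\Sigma$, use CM3 and the algebraic independence of $\widetilde{X'}$ to compare exponents and upgrade $|b'_{f(x)f(y)}|\geq|b_{xy}|$ to an equality (so that $f^S:\Sigma\to f^S(\Sigma)$ is a seed isomorphism), and then apply Proposition \ref{basiclem} to identify $f$ with the induced rooted cluster isomorphism onto $\mathcal{A}(f^S(\Sigma))$. Your explicit justification of noncontractibility via $f(x-n)=0$ and your appeal to Lemma \ref{equel} to conclude $F^E=f$ are details the paper leaves implicit, but the argument is the same.
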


\begin{proof}
Since $f$ is injective, we know clearly $\Sigma^{(f)}=\Sigma$. Then by Proposition \ref{induceseed}, $f^S: \Sigma\rightarrow \Sigma'$ is a seed homomorphism.  By definition,  $(f^{S})_1:\Sigma\rightarrow f^{S}(\Sigma)$ satisfies $(f^{S})_1(x)=f^{S}(x)$ for all $x\in \widetilde{X}$. We will prove that $(f^{S})_1$ is a seed isomorphism as follows.

Denote $f^{S}(\Sigma)=(Y,Y_{fr},\widetilde{C})$. Owing to the definition of $f^{S}(\Sigma)$, $(f^{S})_1|_X$ and $(f^{S})_1|_{\widetilde{X}}$ are bijections by injection of $f$. For any $x\in X$ and $y\in \widetilde{X}$, by CM3 for $f$, we have $f(\mu_x(x))=\mu_{f(x)}(f(x))$; thus, $$f(\prod\limits_{b_{xz}>0,z\in\widetilde{X}}z^{b_{xz}}+\prod\limits_{b_{xz}<0,z\in\widetilde{X}}z^{-b_{xz}})=\prod\limits_{b'_{f(x)w}>0,w\in\widetilde{X'}}w^{b'_{f(x)w}}+\prod\limits_{b'_{f(x)w}<0,w\in\widetilde{X'}}w^{-b'_{f(x)w}},$$ Comparing the exponent of $f(y)$ in the two sides of the above equation, we get either $b_{xy}=b'_{f(x)f(y)}$ or $b_{xy}=-b'_{f(x)f(y)}$. Thus, $|b_{xy}|=|b'_{f(x)f(y)}|=|c_{(f^{S})_1(x)(f^{S})_1(y)}|$. So, $(f^{S})_1$ is a seed isomorphism.

According to Proposition \ref{basiclem}, $\mathcal{A}(\Sigma)\overset{((f^{S})_1)^E}{\cong}\mathcal{A}(f^{S}(\Sigma))$ is a rooted cluster isomorphism. By definition, clearly $((f^{S})_1)^E=f$ on $\mathcal{A}(\Sigma)$. Moreover, since $f$ is injective, we have $\mathcal{A}(\Sigma)\overset{f}{\cong} f(\mathcal{A}(\Sigma))$.
 It follows that  $f(\mathcal{A}(\Sigma))=\mathcal{A}(f^S(\Sigma)).$
\end{proof}

 Using Lemma \ref{ideal} and (\cite{ADS}, Lemma 3.1), we have the following:
\begin{Prop}\label{Cor rooted}
(1)~If $\mathcal{A}(\Sigma)$ is a rooted cluster subalgebra of
$\mathcal{A}(\Sigma')$ with an injective rooted cluster morphism
$f$, then
 $\mathcal{A}(\Sigma)\cong \mathcal{A}(f^S(\Sigma))$ in {\bf Clus}. Moreover, $\Sigma\cong f^S(\Sigma)$ in {\bf Seed}.

(2)~If $\mathcal{A}(\Sigma')$ is a rooted cluster quotient algebra
of $\mathcal{A}(\Sigma)$ with a surjective rooted cluster morphism
$f$, then $f^S(\Sigma)=\Sigma'$. Moreover,
 $\mathcal{A}(f^S(\Sigma))=\mathcal{A}(\Sigma')$ as rooted cluster
algebras.

\begin{proof}
(1)~
  Since $f$ is injective, we have $\mathcal{A}(\Sigma)\cong f(\mathcal{A}(\Sigma))$ in {\bf Clus}. Then by Lemma \ref{ideal}, it follows that $\mathcal{A}(\Sigma)\cong \mathcal{A}(f^S(\Sigma))$ in {\bf Clus}. By Proposition \ref{basiclem}, $\Sigma\cong f^S(\Sigma)$ in {\bf Seed}.

(2)~  By (\cite{ADS}, Lemma 3.1), since $f$ is surjective, we have
$f(X)\supseteq X'$ and $f(\widetilde{X})\supseteq \widetilde{X'}$. Then
by the definition of image seed, we obtain
$f^S(\Sigma)=\Sigma'$. Therefore, $\mathcal
A(f^S(\Sigma))=\mathcal A(\Sigma')$.
\end{proof}

\end{Prop}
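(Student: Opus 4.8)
The plan is to prove the two parts of Proposition~\ref{Cor rooted} by directly combining the structural results already established, namely Lemma~\ref{ideal}, Proposition~\ref{basiclem}, and the cited Lemma~3.1 of \cite{ADS}. For part~(1), I would start from the hypothesis that $f:\mathcal{A}(\Sigma)\rightarrow\mathcal{A}(\Sigma')$ is injective. The first step is to observe that injectivity of $f$ as a ring morphism gives immediately $\mathcal{A}(\Sigma)\overset{f}{\cong}f(\mathcal{A}(\Sigma))$ in \textbf{Clus}, since $f$ is both injective and surjective onto its image. The second step is to invoke Lemma~\ref{ideal}, which asserts that every injective rooted cluster morphism is ideal; by the definition of ideal this means $f(\mathcal{A}(\Sigma))=\mathcal{A}(f^S(\Sigma^{(f)}))$. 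Since $f$ is injective we have $\Sigma^{(f)}=\Sigma$ (the set $I_1$ of variables mapped into $\mathbb{Z}$ is empty, as a nonzero integer image would destroy injectivity on the subalgebra generated by a single exchangeable variable). Hence $f(\mathcal{A}(\Sigma))=\mathcal{A}(f^S(\Sigma))$, and combining with the isomorphism above yields $\mathcal{A}(\Sigma)\cong\mathcal{A}(f^S(\Sigma))$ in \textbf{Clus}. The final step is to transport this back to the seed level via Proposition~\ref{basiclem}, which states that a rooted cluster isomorphism is equivalent to a seed isomorphism; this gives $\Sigma\cong f^S(\Sigma)$ in \textbf{Seed}.

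For part~(2), where $f$ is surjective, the strategy is to show that the image seed $f^S(\Sigma)$ is the full seed $\Sigma'$. The key input is (\cite{ADS}, Lemma~3.1), which guarantees that surjectivity of $f$ as a ring morphism forces $f(X)\supseteq X'$ and $f(\widetilde{X})\supseteq\widetilde{X'}$. The plan is then to recall the definition of the image seed $f^S(\Sigma)=f^S(\Sigma^{(f)})$: by Definition~\ref{imageseed} it is built on the cluster $f(X)$ and extended cluster $f(\widetilde{X})$ together with the restriction of $\widetilde{B'}$ to these index sets. When $f(X)\supseteq X'$ and $f(\widetilde{X})\supseteq\widetilde{X'}$, the restriction in Definition~\ref{imageseed} picks out exactly all of $\widetilde{B'}$, so $f^S(\Sigma)=\Sigma'$, and therefore $\mathcal{A}(f^S(\Sigma))=\mathcal{A}(\Sigma')$ as rooted cluster algebras.

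I expect both parts to be essentially bookkeeping once the cited results are in hand, so the genuine content has already been discharged in the earlier lemmas. The one point that deserves care, and which I regard as the main obstacle, is verifying cleanly that $\Sigma^{(f)}=\Sigma$ under injectivity, i.e.\ that no extended cluster variable can be sent into $\mathbb{Z}$ by an injective morphism; this underlies the identification $f^S(\Sigma^{(f)})=f^S(\Sigma)$ that makes Lemma~\ref{ideal} directly applicable. A secondary subtlety is making sure that the reverse inclusions of Lemma~3.1 of \cite{ADS} are stated for the extended cluster and not merely the mutable part, so that the restriction of $\widetilde{B'}$ in the image seed really recovers all of $\Sigma'$ rather than only its principal part. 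Both of these are quick to check, so the proof should be short.
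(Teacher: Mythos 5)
Your proposal follows the paper's proof essentially verbatim: part (1) chains the isomorphism onto the image through Lemma \ref{ideal} and descends to seeds via Proposition \ref{basiclem}, and part (2) combines Lemma 3.1 of \cite{ADS} with the definition of the image seed. The extra care you take in checking that $\Sigma^{(f)}=\Sigma$ for injective $f$ is exactly the observation that opens the paper's proof of Lemma \ref{ideal}, so nothing is missing.
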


Owing to this proposition, the corresponding injective
(respectively, surjective) seed morphisms are deduced from injective
(respectively, surjective) rooted cluster morphisms as the below
observation:

\begin{Cor}\label{sur.}
 (1)~ The restricted seed morphism from $\Sigma$ to $\Sigma'$ of an injective rooted cluster morphism $f:\mathcal{A}(\Sigma)\rightarrow\mathcal{A}(\Sigma')$ is
injective.

(2)~ The restricted seed morphism from $\Sigma^{(f)}$ to $\Sigma'$
of a surjective rooted cluster morphism
$f:\mathcal{A}(\Sigma)\rightarrow\mathcal{A}(\Sigma')$ is
surjective.
\end{Cor}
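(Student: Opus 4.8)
The plan is to deduce both statements directly from Proposition \ref{Cor rooted}, after unwinding the definitions of injective and surjective seed homomorphisms. Recall that the restricted seed homomorphism attached to a rooted cluster morphism $f$ is the map $f^S$ produced in Proposition \ref{induceseed}, which goes from the contraction $\Sigma^{(f)}$ to $\Sigma'$; so in each case the real work is only to match the conclusion of Proposition \ref{Cor rooted} with the relevant notion of injectivity or surjectivity in \textbf{Seed}.

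For part (1), I would first observe that when $f$ is injective one has $\Sigma^{(f)}=\Sigma$, since no initial variable is sent into $\mathbb{Z}$, exactly as noted in the proof of Lemma \ref{ideal}. Hence the restricted seed homomorphism is $f^S:\Sigma\rightarrow\Sigma'$, as the statement claims. By definition, $f^S$ is an injective seed homomorphism precisely when $\Sigma\overset{f^S}{\cong}f^S(\Sigma)$ in \textbf{Seed}. But this is exactly the conclusion $\Sigma\cong f^S(\Sigma)$ furnished by Proposition \ref{Cor rooted}(1), where the isomorphism is realized by $f^S$ itself; hence $f^S$ is injective.

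For part (2), the restricted seed homomorphism is $f^S:\Sigma^{(f)}\rightarrow\Sigma'$, and by definition it is surjective exactly when its image seed equals the target, i.e. $f^S(\Sigma^{(f)})=\Sigma'$. This is the content of Proposition \ref{Cor rooted}(2), which asserts that the image seed of $f^S$ coincides with $\Sigma'$ whenever $f$ is surjective. Thus $f^S$ is a surjective seed homomorphism.

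Since both parts reduce to quoting Proposition \ref{Cor rooted}, no step is genuinely hard; the only point demanding care is the bookkeeping of the domain of $f^S$ --- namely keeping track that the contraction $\Sigma^{(f)}$ collapses to $\Sigma$ in the injective case but must be retained in the surjective case --- together with checking that the seed isomorphism asserted in Proposition \ref{Cor rooted}(1) is implemented by $f^S$ itself and not merely by some abstract isomorphism, so that it genuinely witnesses injectivity of $f^S$ as a seed homomorphism.
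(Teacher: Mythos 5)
Your proposal is correct and follows essentially the same route as the paper: both parts are read off from Proposition \ref{Cor rooted} together with the definitions of injective/surjective seed homomorphisms, and you correctly identify the two points needing care (that $\Sigma^{(f)}=\Sigma$ when $f$ is injective, and that the isomorphism $\Sigma\cong f^S(\Sigma)$ is witnessed by $f^S$ itself, which is what the proof of Lemma \ref{ideal} actually establishes). The only cosmetic difference is that for part (2) the paper re-verifies $f^S(\Sigma^{(f)})=\Sigma'$ directly from Lemma 3.1 of \cite{ADS} inside the corollary's proof rather than quoting Proposition \ref{Cor rooted}(2), but the underlying argument is identical.
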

\begin{proof}
(i)~ By Proposition \ref{Cor rooted}, $\Sigma\cong f^S(\Sigma)$,
and by (\ref{twoimagesubseed}),
$f^S(\Sigma)=\Sigma'_{I_0',I'_1}$. Then an injective seed
homomorphism is given.

(ii)~ By Proposition \ref{induceseed}(ii),
$\varphi:\Sigma^{(f)}\rightarrow\Sigma'$ is a seed homomorphism via
$\varphi(x)=f(x)$ for all $x\in \widetilde{X}^{(f)}$. By (\cite{ADS},Lemma 3.1),  $f(X)\supseteq X'$ and
$f(\widetilde{X})\supseteq \widetilde{X'}$. Then,
$\varphi(\widetilde{X}^{(f)})\cap \widetilde{X'}=f(\widetilde{X}^{(f)})\cap \widetilde{X'}=f(\widetilde{X})\cap
\widetilde{X'}=\widetilde{X'}$ and $\varphi(X^{(f)})\cap X'=f(X^{(f)})\cap X'=f(X)\cap X'=X'$. Thus, $\varphi(\widetilde{X}^{(f)})\supseteq \widetilde{X'}$ and $\varphi(X^{(f)})\supseteq X'$.  Therefore, we
have $\varphi(\Sigma^{(f)})=\Sigma'$. It follows that $\varphi$ is a
surjective seed homomorphism.
\end{proof}

\section{Sub-rooted cluster algebras and rooted cluster subalgebras  }

\subsection{Sub-rooted cluster algebras and two special cases}.

The following notion on the sub-structure of a rooted cluster algebra is a key in this research, which will be used to supply a unified view-point for the internal structure of a rooted cluster algebra.

\begin{Def}
 A rooted cluster algebra $\mathcal {A'}=\mathcal
{A}(\Sigma')$ is called a {\bf (mixing-type) sub-rooted cluster algebra
of type $(I_{0},I_{1})$} of the rooted cluster algebra $\mathcal
A=\mathcal A(\Sigma)$ if $\mathcal{A}(\Sigma')\cong\mathcal{A}(\Sigma_{I_0,I_1})$ in the category {\bf Clus}.
\end{Def}

From the definition of rooted cluster algebras, we can recognize the initial seed of a rooted cluster algebra as its ``{\em root}". So, it is natural for us to say the name of {\em sub-rooted cluster algebra} in the above definition since $\mathcal A$ is obtained from mixing-type sub-seed as the ``{\em  sub-root}".

By Proposition \ref{basiclem},  a rooted cluster algebra $\mathcal {A'}=\mathcal
{A}(\Sigma')$ is a  mixing-type sub-rooted cluster algebra
of type $(I_{0},I_{1})$ of  $\mathcal
A=\mathcal A(\Sigma)$ if and only if $\Sigma'\cong\Sigma_{I_0,I_1}$ in {\bf Seed}.

Now, we discuss two special cases of mixing-type sub-rooted cluster algebras of $\mathcal A(\Sigma)$.

{\bf Case $1$: $I_{1}=\emptyset$.}\; That is, the sub-seed is a pure sub-seed $\Sigma_{I_{0},\emptyset}=(X',\widetilde{B_0})$ of the seed $\Sigma$.

Since the extended clusters of $\Sigma$ and $\Sigma_{I_{0},\emptyset}$ are the same by $\widetilde{X'}=\widetilde{X}$, their fields of rational functions in the independent extended cluster variables are $\mathds F$ with coefficients in the rational field $\mathds Q$. But, the ground ring $\mathcal P$ of $\Sigma$ becomes a sub-ring of the ground ring $\mathcal P'$ of $\Sigma_{I_{0},\emptyset}$ generated by all frozen variables of $\Sigma_{I_{0},\emptyset}$ with unit, since the frozen variables of $\Sigma$ are only a part of the frozen variables of $\Sigma_{I_{0},\emptyset}$.

By Definition \ref{clusteralgebra}, from the seed $\Sigma_{I_{0},\emptyset}=(X',\widetilde{B_0})$, we get its associated cluster algebra $\mathcal A'=\mathcal A(\widetilde{B_0})$ over $\mathcal P'$ as the $\mathcal P'$-subalgebra of $\mathds F$ generated by all cluster variables in all seeds mutation equivalent to $\Sigma_{I_{0},\emptyset}$. An elementary fact is the following:

\begin{Prop}\label{clustersub}
{\rm $\mathcal A'=\mathcal A(\Sigma_{I_{0},\emptyset})$ is a subalgebra of the rooted cluster algebra $\mathcal A(\Sigma)$ over $\mathds{Q}$ as associative algebras.}
\end{Prop}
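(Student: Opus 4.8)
The plan is to establish the containment $\mathcal A' \subseteq \mathcal A$ directly at the level of subrings of $\mathds F$, by comparing the generating sets of the two algebras. Both are concretely realized inside $\mathds F$: by Definition \ref{clusteralgebra}, $\mathcal A = \mathcal A(\Sigma)$ is the $\mathbb Z[X_{fr}]$-subalgebra generated by the set $\mathcal X = \cup_{\bar\Sigma\in\mathcal S}\bar X$ of all cluster variables occurring in seeds mutation equivalent to $\Sigma$, while $\mathcal A' = \mathcal A(\Sigma_{I_0,\emptyset})$ is the $\mathbb Z[I_0\cup X_{fr}]$-subalgebra generated by the analogous set, which I denote $\mathcal X'$, attached to the pure sub-seed. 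Since $\widetilde{X'}=\widetilde X$ and each row of $\widetilde{B_0}$ is the corresponding row of $\widetilde B$, the exchange relation \eqref{exchangerelation} at any $x\in X'=X\setminus I_0$ is literally the same element of $\mathds F$ whether computed in $\Sigma$ or in $\Sigma_{I_0,\emptyset}$; thus mutating the pure sub-seed in a direction $x\in X'$ produces the same new variable as mutating $\Sigma$ at $x$.

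The key step is to establish $\mathcal X'\subseteq\mathcal X$. First I would observe that any seed in the mutation class of $\Sigma_{I_0,\emptyset}$ arises as $\mu_{y_s}\cdots\mu_{y_1}(\Sigma_{I_0,\emptyset})$, where each mutation direction $y_i$ is an (iterated) exchangeable variable and hence lies outside the frozen set $I_0$. This is precisely the situation covered by Proposition \ref{submu}, which supplies a positive seed isomorphism $\mu_x(\Sigma_{I_0,\emptyset})\cong(\mu_x(\Sigma))_{I_0,\emptyset}$ for $x\in X\setminus I_0$. Applying this repeatedly and noting that a mutation in a direction $y_i\notin I_0$ leaves the variables of $I_0$ unchanged, I would conclude by induction on $s$ that
$$\mu_{y_s}\cdots\mu_{y_1}(\Sigma_{I_0,\emptyset})\cong\bigl(\mu_{y_s}\cdots\mu_{y_1}(\Sigma)\bigr)_{I_0,\emptyset}.$$
Since a pure sub-seed carries the same extended cluster as its parent, every cluster variable of the left-hand seed is a cluster variable of $\mu_{y_s}\cdots\mu_{y_1}(\Sigma)\in\mathcal S$, and therefore lies in $\mathcal X$. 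Hence $\mathcal X'\subseteq\mathcal X$.

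Finally I would assemble the generators. As a ring, $\mathcal A'$ is generated over $\mathbb Z$ by its frozen part $I_0\cup X_{fr}$ together with $\mathcal X'$. Now $I_0\subseteq X\subseteq\mathcal X\subseteq\mathcal A$ because $I_0$ consists of initial cluster variables of $\Sigma$; moreover $X_{fr}\subseteq\mathcal A$ as part of the ground ring, and $\mathcal X'\subseteq\mathcal X\subseteq\mathcal A$ by the previous step. Thus every generator of $\mathcal A'$ lies in $\mathcal A$, giving $\mathcal A'\subseteq\mathcal A$ as subrings of $\mathds F$; viewing both as $\mathbb Q$-subalgebras of $\mathds F$ then yields the claim. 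The only point requiring care, and the mild obstacle, is the enlargement of the ground ring from $\mathbb Z[X_{fr}]$ to $\mathbb Z[I_0\cup X_{fr}]$, which could a priori push $\mathcal A'$ outside $\mathcal A$; this is resolved exactly by the observation that the newly frozen variables $I_0$ were already honest cluster variables of $\mathcal A$.
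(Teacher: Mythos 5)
Your proposal is correct and follows essentially the same route as the paper: both rest on Proposition \ref{submu} together with the observation that $\widetilde{X'}=\widetilde{X}$ forces $\mu_x^{\Sigma_{I_0,\emptyset}}(x)=\mu_x^{\Sigma}(x)$, upgrade the positive isomorphism to the identity $\mu_{x}(\Sigma_{I_{0},\emptyset})=\mu_{x}(\Sigma)_{I_{0},\emptyset}$, and induct to conclude that every cluster variable of $\mathcal A'$ is a cluster variable of $\mathcal A$. Your explicit handling of the ground-ring enlargement (the variables of $I_0$ being initial cluster variables of $\Sigma$) is the same point the paper dispatches in its opening sentence about frozen variables.
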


\begin{proof} Trivially, any frozen variables in $\mathcal{A'}$ are always in $\mathcal A$. For any exchangeable variable $x\in X'$ of $\mathcal A'$,
by Proposition \ref{submu}, there exists a positive isomorphism $\mu_{x}(\Sigma_{I_{0},\emptyset})\cong\mu_{x}(\Sigma)_{I_{0},\emptyset}$. But since $\widetilde{X'}=\widetilde{X}$, it is easy to see
that $\mu_{x}^{\Sigma_{I_{0},\emptyset}}(x)=\mu_{x}^{\Sigma}(x)$. Hence, $\mu_{x}(\Sigma_{I_{0},\emptyset})$ and
$\mu_{x}(\Sigma)_{I_{0},\emptyset}$ have the same cluster
variables. Therefore, the above positive isomorphism is in fact an identity, that is,
\begin{equation}\label{embedding}
\mu_{x}(\Sigma_{I_{0},\emptyset})=\mu_{x}(\Sigma)_{I_{0},\emptyset}
\end{equation}
 Then by induction,  any exchange cluster variable $y_{s}=\mu_{y_{s-1}}\cdots\mu_{y_{1}}(y_{1})$ is in $\mathcal{A}$ by (\ref{embedding}).
\end{proof}

For this reason, we say this sub-rooted cluster algebra $\mathcal A'=\mathcal A(\Sigma')$ to be a {\bf pure cluster sub-algebra} of $\mathcal A=\mathcal A(\Sigma)$ if $\mathcal A(\Sigma')\cong\mathcal A(\Sigma_{I_{0},\emptyset})$ in {\bf Clus}; equivalently, $\Sigma'\cong \Sigma_{I_{0},\emptyset}$ in {\bf Seed} for $I_0\subseteq X$.

Obviously, the rank of $\mathcal A'$ is $n_{1}$.

{\bf Case $2$: $I_{0}=\emptyset$.}\; That is, the sub-seed is a partial sub-seed $\Sigma_{\emptyset,I_1}=(X'',X''_{fr},\widetilde{B_1})$ of the seed $\Sigma$.

 A sub-rooted cluster algebra $\mathcal A'=\mathcal A(\Sigma')$ is called a {\bf pure sub-cluster algebra} of $\mathcal A=\mathcal A(\Sigma)$ if $\mathcal A(\Sigma')\cong\mathcal A(\Sigma_{\emptyset,I_1})$ in {\bf Clus}; equivalently, $\Sigma'\cong \Sigma_{\emptyset,I_1}$ in {\bf Seed} for some $I_1\subseteq \widetilde{X}$.

We give an example from \cite{ADS}. For two seeds
$\Sigma_{1}=(X_{1},(X_1)_{fr},\widetilde{B^{1}})$ and
$\Sigma_{2}=(X_{2},(X_2)_{fr},\widetilde{B^{2}})$ and their cluster algebras
$\mathcal{A}(\Sigma_{1})$ and $\mathcal{A}(\Sigma_{2})$, assume that there
exists $($possibly empty$)$ $\Delta_{1}\subseteq (X_{1})_{fr}$ and
$\Delta_{2}\subseteq (X_{2})_{fr}$ such that $\Sigma_{1}$ and
$\Sigma_{2}$ are glueable along $\Delta_{1}$ and $\Delta_{2}$.

Recall in \cite{ADS} that the {\bf amalgamated sum} along
$\Delta_{1}$ and $\Delta_{2}$ is defined as the rooted cluster algebra
$\mathcal{A}(\Sigma_{1})\amalg_{\Delta_{1},\Delta_{2}}
\mathcal{A}(\Sigma_{2})=\mathcal{A}(\Sigma)$ where $\Sigma=\Sigma_{1}\amalg_{\Delta_{1},\Delta_{2}} \Sigma_{2}$.

 $\mathcal
A(\Sigma_{i})$ ($i=1,2$) can be viewed easily  as pure sub-cluster algebras of
$\mathcal{A}(\Sigma_{1})\amalg_{\Delta_{1},\Delta_{2}}
\mathcal{A}(\Sigma_{2})$.

Denote $I_1=\{x_{s_1},\cdots,x_{s_l}\}$. Then, we can get a series of sub-cluster algebras as follows:
\begin{equation}\label{steppresent}
\mathcal A(\Sigma\backslash \{x_{s_1}\})=\mathcal A(\Sigma_{\emptyset,\{x_{s_1}\}}),\; \mathcal A(\Sigma\backslash \{x_{s_1}, x_{s_2}\})=\mathcal A(\Sigma_{\emptyset,\{x_{s_1}\}}\backslash \{x_{s_2}\})=\mathcal A(\Sigma_{\emptyset,\{x_{s_1}, x_{s_2}\}}),
\end{equation}
$$\cdots\cdots, \mathcal A(\Sigma\backslash \{x_{s_1}, x_{s_2}, \cdots, x_{s_l}\})=\mathcal A(\Sigma\backslash I_1)=\mathcal A(\Sigma_{\emptyset,\{x_{s_1},x_{s_2},\cdots,x_{s_{l-1}}\}}\backslash \{x_{s_l}\})=\mathcal A(\Sigma_{\emptyset,I_{1}})=\mathcal A''.$$

It is known that the exchange relation (\ref{exchangerelation}) for the adjacent cluster of $\mathcal A$ in direction $k\in [1,n]$ can be given equivalently using the following formula:
\begin{eqnarray}\label{ldser}
x_{k}x_{k}'=p_{k}^{+}\prod_{1\leq i\leq n;\;b_{ki}>0}x_{i}^{b_{ki}}+p_{k}^{-}\prod_{1\leq i\leq n;\;b_{ki}<0}x_{i}^{-b_{ki}},
\end{eqnarray}
where
\begin{eqnarray}\label{xs}
p_{k}^{+}=\prod_{1\leq i\leq m;\;b_{kn+i}>0}x_{n+i}^{b_{kn+i}},\;\;p_{k}^{-}=\prod_{1\leq i\leq m;\;b_{kn+i}<0}x_{n+i}^{-b_{kn+i}}
\end{eqnarray}
are, respectively, the products of frozen variables and their inverses.

On one hand, the field of rational functions in $\widetilde{X''}$, written as $\mathds F''$, is a sub-field of $\mathds F$ in $\widetilde{X}$ with coefficients in the rational field $\mathds Q$.

On the other hand, the ground ring $\mathcal P''$ of $\Sigma_{\emptyset,I_{1}}$, generated by the sub-set of the frozen variables with unit, is a sub-ring $\mathcal P$ of $\Sigma$.

By the definition of cluster algebras of geometric type, from the partial sub-seed $\Sigma_{\emptyset,I_{1}}=(X'',X''_{fr},\widetilde{B_1})$, its associated cluster algebra $\mathcal A''=\mathcal A(\widetilde{B_1})$ over $\mathcal P''$ is generated as the $\mathcal P''$-subalgebra of $\mathds F''$ generated by all cluster variables in all seeds mutation equivalent to $\Sigma_{\emptyset,I_{1}}$.

Referring to the equivalent form (\ref{ldser}) of the exchange relation (\ref{exchangerelation}), we can describe the exchange relation for the adjacent cluster of $\mathcal A''$ in direction $i_{k}$ for $k\in [1,n_{1}]$ using the following formula:
\begin{eqnarray}\label{ldser2}
x_{i_{k}}x_{i_{k}}'=p_{i_{k}}^{+}\prod_{1\leq t\leq n_{1};\;b_{i_{k}i_{t}}>0}x_{i_{t}}^{b_{i_{k}i_{t}}}+p_{i_{k}}^{-}\prod_{1\leq t\leq n_{1};\;b_{i_{k}i_{t}}<0}x_{i_{t}}^{-b_{i_{k}i_{t}}},
\end{eqnarray}
where
\begin{eqnarray*}
p_{i_{k}}^{+}=\prod_{1\leq l\leq n_{2};\;b_{i_{k}j_{l}}>0}x_{j_{l}}^{b_{i_{k}j_{l}}},\;\;p_{i_{k}}^{-}=\prod_{1\leq l\leq n_{2};\;b_{i_{k}j_{l}}<0}x_{j_{l}}^{-b_{i_{k}j_{l}}}.
\end{eqnarray*}

Note that the above $p_{i_{k}}^{+}$ and $p_{i_{k}}^{-}$ are the divisors of $p_{i_{k}}^{+}$ and $p_{i_{k}}^{-}$ in (\ref{xs}), respectively, and in (\ref{ldser2}), the products
\begin{eqnarray*}
\prod_{1\leq t\leq n_{1};\;b_{i_{k}i_{t}}>0}x_{i_{t}}^{b_{i_{k}i_{t}}},\;\;\prod_{1\leq t\leq n_{1};\;b_{i_{k}i_{t}}<0}x_{i_{t}}^{-b_{i_{k}i_{t}}}
\end{eqnarray*}
are, respectively, the divisors of the corresponding products in (\ref{ldser}). Hence, in general, the $x_{i_{k}}'$ in the adjacent cluster of $\mathcal A''$ is not the $x_{i_{k}}'$ in that of $\mathcal A$. Thus,

\emph{$\mathcal A''$ is not a subalgebra of the cluster algebra $\mathcal A(\Sigma)$ even over $\mathds Q$.}

Obviously, the rank of $\mathcal A''$ is also $n_{1}$, which is similar to that of $\mathcal A'$.

In general,  analogous to the pure sub-algebra $\mathcal A''$ above, a sub-rooted cluster algebra $\mathcal A(\Sigma_{I_0,I_1})$ is NOT
a subalgebra of the cluster algebra $\mathcal A(\Sigma)$ even over $\mathds Q$.

\subsection{Rooted cluster subalgebras as sub-class of sub-rooted cluster algebras  }.

We present a combinatorial characterization of rooted cluster subalgebras as a sub-class of mixing-type sub-rooted cluster algebras.

Given a cluster algebra $\mathcal{A}(\Sigma)$, we denote by\\
$\mathbb{A}_{1}(\Sigma)$:\; the set of all pure cluster subalgebras of
$\mathcal{A}(\Sigma)$,\\ $\mathbb{A}_{2}(\Sigma)$:\; the set of all
rooted cluster subalgebras of $\mathcal{A}(\Sigma)$ and,\\
$\mathbb{A}_{3}(\Sigma)$:\; the set of all mixing-type sub-rooted cluster algebras of $\mathcal{A}(\Sigma)$.

In the following discussion, we will have the inclusion relation:
\begin{equation}\label{propercluster}
\mathbb{A}_{1}(\Sigma)\subsetneqq\mathbb{A}_{2}(\Sigma)\subsetneqq\mathbb{A}_{3}(\Sigma).
\end{equation}

First, pure cluster subalgebras are special rooted cluster subalgebras in a rooted cluster algebra.
In fact, since $\mathds F(\Sigma_{I_0,\emptyset})=\mathds F(\Sigma)$, $\mathcal{A}(\Sigma_{I_0,\emptyset})$ is a subalgebra of $\mathcal{A}(\Sigma)$.
 Hence, we have the embedding $\bar f=id_{\mathcal{A}(\Sigma_{I_0,\emptyset})}:\; \mathcal{A}(\Sigma_{I_0,\emptyset})\hookrightarrow\mathcal{A}(\Sigma)$, and trivially,  the conditions CM1 and CM2 hold.
Using the condition $\mu_{x}(\Sigma_{I_{0},\emptyset})=\mu_{x}(\Sigma)_{I_{0},\emptyset}$
 for any $x\in X'$ and using induction,  the condition CM3 is satisfied. Hence,  $\bar f$ is an injective rooted cluster morphism. So,
we have:

\begin{Prop}\label{Lem rooted}
For a seed $\Sigma=(X,\widetilde{B})$ and its pure sub-seed $\Sigma_{I_0,\emptyset}=(X',\widetilde{B'})$ with $X'=X\backslash I_0$,
the pure cluster subalgebra $\mathcal{A}(\Sigma_{I_0,\emptyset})$ of $\mathcal{A}(\Sigma)$ is always a rooted cluster subalgebra of $\mathcal{A}(\Sigma)$.
\end{Prop}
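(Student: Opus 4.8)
The plan is to realize the required injective rooted cluster morphism as the inclusion map itself. By definition, to show $\mathcal{A}(\Sigma_{I_0,\emptyset})$ is a rooted cluster subalgebra of $\mathcal{A}(\Sigma)$ it suffices to produce \emph{one} injective rooted cluster morphism $\mathcal{A}(\Sigma_{I_0,\emptyset})\rightarrow\mathcal{A}(\Sigma)$. Proposition \ref{clustersub} already guarantees that $\mathcal{A}(\Sigma_{I_0,\emptyset})$ sits inside $\mathcal{A}(\Sigma)$ as a $\mathds{Q}$-subalgebra, so the natural candidate is $\bar f=\mathrm{id}$, the inclusion. This is automatically an injective ring morphism sending $1$ to $1$; hence the whole task reduces to verifying the three axioms CM1, CM2, CM3 of Definition \ref{rootmorph} for $\bar f$, after which injectivity of the inclusion finishes the argument.

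First I would dispose of CM1 and CM2. Because $I_1=\emptyset$ no variable is deleted, so the source $\Sigma_{I_0,\emptyset}$ has the same extended cluster $\widetilde{X}$ as the target $\Sigma$; as $\bar f$ is the inclusion, $\bar f(\widetilde{X})=\widetilde{X}$ with no integer values arising, which is CM1. The exchangeable part of the source is $X'=X\setminus I_0$, and $\bar f(X')=X'\subseteq X$, which is CM2. (The variables of $I_0$, which are frozen in $\Sigma_{I_0,\emptyset}$, lie in $\widetilde{X}$ and are simply not constrained by CM2.)

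The substance is CM3, and here I would lean on equation (\ref{embedding}), namely $\mu_{x}(\Sigma_{I_0,\emptyset})=\mu_{x}(\Sigma)_{I_0,\emptyset}$ for every $x\in X'$, established in the proof of Proposition \ref{clustersub} via Proposition \ref{submu} together with the coincidence $\mu_{x}^{\Sigma_{I_0,\emptyset}}(x)=\mu_{x}^{\Sigma}(x)$ forced by $\widetilde{X'}=\widetilde{X}$. The consequence I want is that a single mutation in the sub-seed yields literally the same new cluster variable as the corresponding mutation in $\Sigma$. I would first note that this makes biadmissibility automatic: if $(y_1,\dots,y_s)$ is $\Sigma_{I_0,\emptyset}$-admissible, then each $y_i$ is exchangeable, and applying (\ref{embedding}) iteratively shows it is also exchangeable in the matching mutated seed of $\Sigma$, so $(\bar f(y_1),\dots,\bar f(y_s))=(y_1,\dots,y_s)$ is $\Sigma$-admissible. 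Thus every $\Sigma_{I_0,\emptyset}$-admissible sequence is $(\bar f,\Sigma_{I_0,\emptyset},\Sigma)$-biadmissible.

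Finally, for such a biadmissible sequence and any $y\in\widetilde{X}$, I would prove $\bar f(\mu_{y_s}\cdots\mu_{y_1}(y))=\mu_{y_s}\cdots\mu_{y_1}(\bar f(y))$ by induction on $s$, the inductive step being exactly an application of (\ref{embedding}) to pass from the $(s-1)$-fold mutated sub-seed to its next mutation, using that mutation of the sub-seed and of $\Sigma$ produce identical adjacent clusters. This gives CM3, so $\bar f$ is a rooted cluster morphism, and being the inclusion it is injective; hence $\mathcal{A}(\Sigma_{I_0,\emptyset})$ is a rooted cluster subalgebra of $\mathcal{A}(\Sigma)$. The only place demanding care is CM3, and everything there rests on (\ref{embedding}): freezing variables rather than deleting them leaves the ambient field $\mathds{F}$ and all exchange relations unchanged, so I expect no genuine obstacle beyond bookkeeping the induction cleanly.
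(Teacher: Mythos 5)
Your proof is correct and follows essentially the same route as the paper: the paper also takes $\bar f=\mathrm{id}_{\mathcal{A}(\Sigma_{I_0,\emptyset})}$ as the injective rooted cluster morphism, observes that CM1 and CM2 are immediate since $\widetilde{X'}=\widetilde{X}$, and deduces CM3 by induction from the identity $\mu_{x}(\Sigma_{I_{0},\emptyset})=\mu_{x}(\Sigma)_{I_{0},\emptyset}$ of (\ref{embedding}). Your write-up merely makes explicit the biadmissibility step that the paper leaves implicit.
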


Let $\Sigma=(X,B)$, where $X=(x_{1},x_{2})$, $B=\left(
                                                 \begin{array}{cc}
                                                   0 & 1 \\
                                                   -1 & 0 \\
                                                 \end{array}
                                               \right)
$ and $X_{fr}=\emptyset$ and $\Sigma'=(X',B')$, where $X'=\emptyset$ and
$X'_{fr}=\{x_{1}\}$. Then, $\Sigma'=\Sigma_{I_0,I_1}$ with $I_0=\{x_1\}$ and $I_1=\{x_2\}$, and $\mathcal{A}(\Sigma')$ is a rooted
cluster subalgebra but not a pure cluster subalgebra of $\mathcal{A}(\Sigma)$.

Then, the first strict inclusion relation in (\ref{propercluster}) follows.

A rooted cluster subalgebra $\mathcal
A(\Sigma')\in\mathbb{A}_{2}(\Sigma)$ is called {\bf proper} if it does not belong to $\mathbb{A}_{1}(\Sigma)$, that is, it is not a pure cluster sub-algebra.

 Note that we think $\mathcal A(\Sigma)$ as a special pure cluster sub-algebra of itself since $\Sigma=\Sigma_{\emptyset, \emptyset}$ for $I_{0}=\emptyset$ and then a proper rooted cluster subalgebra of $\mathcal A(\Sigma)$ never equals to $\mathcal A(\Sigma)$.

Now, we give a characterization of rooted cluster subalgebras as a sub-class of (mixing-type) sub-rooted cluster algebras in a
rooted cluster algebra $\mathcal{A}(\Sigma)$.

\begin{Thm}\label{rooted cluster subalgebra}
{\rm $\mathcal{A}(\Sigma')$ is a rooted cluster subalgebra of
$\mathcal{A}(\Sigma)$ if and only if there exists a mixing-type sub-seed  $\Sigma_{I_{0},I_{1}}$ of $\Sigma$ such that $\Sigma'\cong\Sigma_{I_{0},I_{1}}$ satisfies $b_{xy}=0$ for any $x\in X\setminus (I_0\cup I_1)$ and $y\in I_1$.}
\end{Thm}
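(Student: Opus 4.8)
The plan is to prove the two implications separately, in each case reducing matters to the behaviour of the exchange relation (\ref{exchangerelation}) under the natural identity-type map on variables.

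For the sufficiency direction, suppose $\Sigma'\cong\Sigma_{I_0,I_1}$ with $b_{xy}=0$ for all $x\in X\setminus(I_0\cup I_1)$ and $y\in I_1$. By Proposition \ref{basiclem} it suffices to exhibit an injective rooted cluster morphism $\iota:\mathcal A(\Sigma_{I_0,I_1})\to\mathcal A(\Sigma)$ and then compose it with the isomorphism $\mathcal A(\Sigma')\cong\mathcal A(\Sigma_{I_0,I_1})$. I would take $\iota$ to fix each variable of $\widetilde{X'}=\widetilde X\setminus I_1\subseteq\widetilde X$; since $\widetilde{X'}\subseteq\widetilde X$ and $X\setminus(I_0\cup I_1)\subseteq X$, the conditions CM1 and CM2 are immediate. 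The heart is CM3. Here the hypothesis that $b_{xy}=0$ for exchangeable $x$ and deleted $y\in I_1$ guarantees that the exchange monomials of (\ref{exchangerelation}) at any $x\in X\setminus(I_0\cup I_1)$ never involve a deleted variable, whence $\mu_x^{\Sigma_{I_0,I_1}}(x)=\mu_x^{\Sigma}(x)$, exactly as in the pure case (\ref{embedding}) of Proposition \ref{Lem rooted}. Together with Proposition \ref{submu} this lets me induct along admissible sequences; the extra point to verify is that the vanishing condition is preserved under mutation at an exchangeable $x$, which follows from (\ref{matrixmutation}) because every correction term added to $b_{zy}$ with $y\in I_1$ is built from entries $b_{\cdot\, y}$ that already vanish. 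Injectivity of $\iota$ is automatic, as it is the restriction of the field inclusion $\mathds F(\Sigma_{I_0,I_1})\hookrightarrow\mathds F$.

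For the necessity direction, let $f:\mathcal A(\Sigma')\to\mathcal A(\Sigma)$ be an injective rooted cluster morphism. Since $f$ sends $1$ to $1$, injectivity forbids any variable of $\widetilde{X'}$ from being sent to an integer (otherwise $f(x)=c=f(c\cdot 1)$ forces $x=c$), so $(\Sigma')^{(f)}=\Sigma'$ and $f^S:\Sigma'\to\Sigma$ is a seed homomorphism by Proposition \ref{induceseed}. By Lemma \ref{ideal} the morphism $f$ is ideal, and Proposition \ref{Cor rooted}(1) gives $\Sigma'\cong f^S(\Sigma')$ in \textbf{Seed}; by (\ref{twoimagesubseed}) the image seed $f^S(\Sigma')$ is precisely a mixing-type sub-seed $\Sigma_{I_0,I_1}$ of $\Sigma$, whose exchangeable cluster is $X\setminus(I_0\cup I_1)=f^S(X')$. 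It remains to produce the vanishing condition. Fixing an exchangeable $x=f(x')$ with $x'\in X'$, the one-term sequence $(x')$ is $(f,\Sigma',\Sigma)$-biadmissible, so CM3 yields $f(\mu_{x'}(x'))=\mu_{f(x')}(f(x'))$; cancelling the common denominator $f(x')$ identifies the two exchange binomials from (\ref{exchangerelation}), one written in the variables $f^S(\widetilde{X'})$ and the other in $\{\,s\in\widetilde X : b_{f(x')s}\neq 0\,\}$.

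The step I expect to treat most carefully, and the conceptual crux of necessity, is this final comparison. Because the elements of $\widetilde X$ are algebraically independent and $f^S$ is injective on $\widetilde{X'}$, each of the two products in (\ref{exchangerelation}) is a single Laurent monomial (the positive and negative index sets are disjoint, so there is no cancellation), and hence the two binomials agree only if the variable sets underlying them coincide. In particular every $s$ with $b_{f(x')s}\neq 0$ must already lie in $f^S(\widetilde{X'})=\widetilde X\setminus I_1$, that is $b_{f(x')s}=0$ whenever $s\in I_1$; this is exactly $b_{xy}=0$ for $x\in X\setminus(I_0\cup I_1)$ and $y\in I_1$. The only delicacy is to argue the monomial-matching rigorously, which is precisely where the algebraic independence of the extended cluster is used in an essential way.
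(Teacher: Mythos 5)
Your proof is correct, and while your necessity argument is essentially the paper's (same identification of $I_{0},I_{1}$ via Proposition \ref{Cor rooted} and (\ref{twoimagesubseed}), same comparison of the two exchange binomials through CM3 and the algebraic independence of $\widetilde{X}$ — the paper merely phrases it as a contradiction rather than a direct monomial-matching), your sufficiency argument takes a genuinely different route. The paper does not build the injection by hand: it observes that the vanishing condition makes $\Sigma_{I_{0},\emptyset}$ decompose as an amalgamated sum $\Sigma(I_{1}\cup\Delta)_{I_0,\emptyset}\amalg_{\Delta,\Delta}\Sigma_{I_{0},I_{1}}$ along $\Delta=I_{0}\cup X_{fr}$, invokes Lemma 4.13 of \cite{ADS} to get $\mathcal{A}(\Sigma_{I_{0},I_{1}})\hookrightarrow\mathcal{A}(\Sigma_{I_{0},\emptyset})$, and composes with the pure-sub-seed injection of Proposition \ref{Lem rooted}. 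You instead verify CM1--CM3 directly for the identity-on-variables map, the two key points being that the zero-block hypothesis forces $\mu_{x}^{\Sigma_{I_{0},I_{1}}}(x)=\mu_{x}^{\Sigma}(x)$ and that this hypothesis is stable under mutation at any $x\in X\setminus(I_{0}\cup I_{1})$ (correct: the correction term in (\ref{matrixmutation}) added to $b_{zy}$ with $y\in I_{1}$ involves $b_{xy}=0$). In effect you re-prove the relevant special case of the external lemma; your version is self-contained and more elementary, while the paper's is shorter and makes the structural reason (the amalgamated-sum decomposition) explicit. Both are valid; the mutation-stability observation you isolate is exactly the fact that makes the paper's decomposition persist under mutation, so the two arguments are two faces of the same phenomenon.
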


\begin{proof}
``Only if'':\; Let $f: \mathcal{A}(\Sigma')\rightarrow
\mathcal{A}(\Sigma)$ be the injective rooted cluster morphism. By
Proposition \ref{Cor rooted}, we have $\mathcal{A}(\Sigma')\cong
\mathcal{A}(f^S(\Sigma'))$. By Proposition \ref{Cor rooted} (1),
$f^S(\Sigma')=\Sigma_{I_0,I_1}$ for $I_1=\widetilde{X}\backslash
(\widetilde{X}\cap f(\widetilde{X'}))$ and $I_0=X\backslash (f(X')\cup
I_1)$.
  Hence, $\Sigma'\cong\Sigma_{I_{0},I_{1}}$ by Proposition \ref{basiclem}.

 Now we show that the above sets $I_{0}$ and $I_{1}$ satisfy the condition in the theorem. Otherwise, there exists $x_{0}\in X\backslash (I_{0}\cup I_{1})$ and $y_{0}\in I_{1}$ such that $b_{x_{0}y_{0}}\neq 0$. Thus, in the rooted cluster algebra $\mathcal{A}(\Sigma)$,
$$\mu_{x_{0}, \Sigma}(x_{0})=\frac{\prod\limits_{y\in
\widetilde{X}, b_{x_{0}y}>0}y^{b_{x_{0}y}}+\prod\limits_{y\in \widetilde{X},
b_{x_{0}y}<0}y^{-b_{x_{0}y}}}{x_{0}},$$
 and in its sub-rooted cluster $(I_{0},I_{1})$-algebra $\mathcal{A}(\Sigma_{I_{0},I_{1}})$, we have
$$\mu_{x_{0},\Sigma_{I_{0},I_{1}}}(x_{0})=\frac{\prod\limits_{y\in
\widetilde{X}\backslash I_{1}, b_{x_{0}y}>0}y^{b_{x_{0}y}}+\prod\limits_{y\in
\widetilde{X}\backslash I_{1}, b_{x_{0}y}<0}y^{-b_{x_{0}y}}}{x_{0}}.$$
Owing to $b_{x_{0}y_{0}}\neq 0$,  the term $y_{0}^{b_{x_{0}y_{0}}}$ appears in the first equality but not in the second one. Since $\widetilde{X}$ is a transcendence basis of $\mathds F(\Sigma)$,  it follows that $\mu_{x_{0},\Sigma_{I_{0},I_{1}}}(x_{0})\neq \mu_{x_{0}, \Sigma}(x_{0})$, which contradicts the condition (CM3) for the injective rooted cluster morphism $f$.

``If'':\;  We know that $b_{xy}=0$ for any $x\in X\setminus (I_0\cup I_1)$ and $y\in I_1$ for the given $I_0$ and $I_1$. Let $\Delta=I_0 \cup X_{fr}$. Furthermore, let $\Sigma(I_{1}\cup\Delta)$
be the sub-seed of $\Sigma$ generated by the cluster variables of
$I_{1}$ and $\Delta$.  By definition of amalgamated sum, it is easy to see that
$\Sigma_{I_{0},\emptyset}=\Sigma(I_{1}\cup\Delta)_{I_0,\emptyset}\coprod_{\Delta_{1},\Delta_{2}}\Sigma_{I_{0},I_{1}}$ where $\Delta_{1}=\Delta_{2}=\Delta$. Thus, we get
$\mathcal{A}(\Sigma_{I_{0},\emptyset})=\mathcal{A}(\Sigma(I_{1}\cup\Delta)_{I_0,\emptyset})\coprod_{\Delta_{1},\Delta_{2}}\mathcal{A}(\Sigma_{I_{0},I_{1}}).$

By Lemma 4.13 in \cite{ADS}, $\mathcal{A}(\Sigma_{I_{0},I_{1}})$ is a rooted cluster
subalgebra of $\mathcal{A}(\Sigma_{I_{0},\Phi})$. By Proposition \ref{Lem rooted},  $\mathcal{A}(\Sigma_{I_{0},\Phi})$ is a rooted cluster subalgebra of
$\mathcal{A}(\Sigma)$. As the composition of two injective rooted cluster
morphisms, it follows that $\mathcal{A}(\Sigma_{I_{0},I_{1}})$ is a rooted cluster subalgebra
of $\mathcal{A}(\Sigma)$.
\end{proof}

This theorem tells us that in a cluster algebra, all rooted cluster
subalgebras form a {\em proper} sub-set of the set of rooted
sub-cluster algebras, that is, the second strict inclusion relation in (\ref{propercluster}) follows.

\begin{Rem}
According to Theorem \ref{rooted cluster subalgebra}, the existence
of rooted cluster subalgebras is dependent on the initial seed of
the rooted cluster algebra via, more precisely, mixing-type subseeds
of the initial seed. The following is an example to illustrate that
a rooted cluster subalgebra $\mathcal{A}(\Sigma_{I_0,I_1})$ of a
rooted cluster $\mathcal{A}(\Sigma)$ may not be isomorphic to any
rooted cluster subalgebra of $\mathcal{A}(\Sigma')$ anymore, for a seed $\Sigma'$, which is mutation equivalent to $\Sigma$.

Let
$Q$ be the quiver $1 \rightarrow 2 \rightarrow 3$ and the seed $\Sigma=\Sigma(Q)$.
Then $\mathcal{A}(\Sigma_{\{2\},\emptyset})$, a rooted cluster
subalgebra of $\mathcal{A}(\Sigma)$,  has $4$ cluster variables
and $1$ frozen variable. It is easy to see that
$\mathcal{A}(\mu_{2}(\Sigma))$ has no rooted cluster subalgebra
that possesses $4$ cluster variables
and $1$ frozen variable. Hence $\mathcal{A}(\Sigma_{\{2\},\emptyset})$
 is not isomorphic to any
rooted cluster subalgebra $\mathcal{A}((\mu_{2}(\Sigma))_{I_0,I_1})$ of $\mathcal{A}(\mu_{2}(\Sigma))$.
 \end{Rem}

\begin{Rem}
For any integer $m\geq 3$, denote by $\Pi_m$ the $m$-gon whose
points are labeled cyclically from $1$ to $m$. For $m\geq 4$, let
$\mathcal A(\Pi_m)$ be the cluster algebra from the fan
triangulation $T_m$ of $\Pi_m$ in Fig. 1, which is of type
$A_{m-3}$ with coefficients associated with boundary arcs. We
construct by induction a family $\{T_m\}_{m\geq 3}$ and then obtain a
family of cluster algebras $\{\mathcal A(\Pi_m)\}_{m\geq 3}$.
\begin{figure}[h] \centering
  \includegraphics*[120,488][322,642]{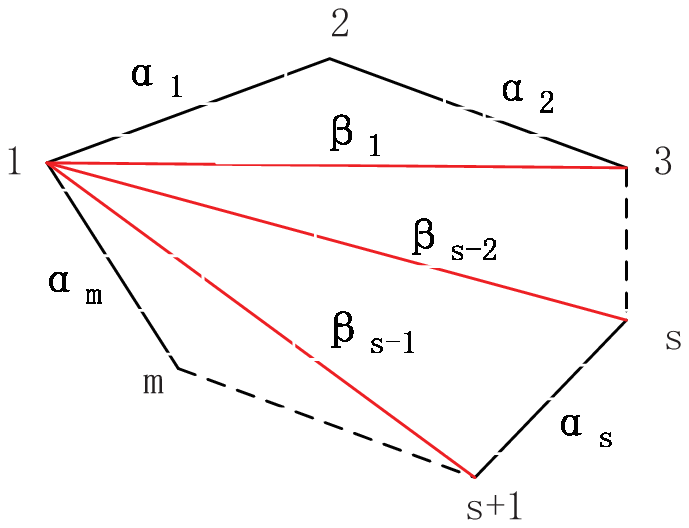}

 Figure 1
\end{figure}

For $m,m'$ satisfying $0<m<m'$, the inclusion of $T_m$ in $T_{m'}$
defines naturally the injective rooted cluster morphism $j_{m,m'}:
\mathcal{A}(\Pi_{m})\rightarrow \mathcal{A}(\Pi_{m'})$ given in
\cite{ADS}. Now we can interpret $j_{m,m'}$ by the language of a
mixing-type sub-rooted cluster algebra about some $(I_{0},I_{1})$.
Actually, as in Fig. 1, we denote $I_{0}$ to be the set
consisting of single vertex corresponding to the diagonal
$\beta_{m-2}$ and $I_{1}$ the set  consisting of frozen vertices
corresponding to the edges $\alpha_{m},\cdots, \alpha_{m'}$ and
exchangeable vertices corresponding to the diagonals
$\beta_{m-1},\cdots,\beta_{m'-3}$. Then, the rooted cluster subalgebra
$\mathcal{A}(\Pi_{m})$ of $\mathcal{A}(\Pi_{m'})$ is a mixing-type sub-rooted cluster subalgebra of $(I_{0},I_{1})$-type.

Obviously, in this example, $(I_0,I_1)$ satisfies the condition of Theorem \ref{rooted cluster
subalgebra}.
\end{Rem}

In order to use in the sequel, we introduce the so-called  {\em diagonal-unitization matrix} of an
extended exchange matrix from a cluster algebra.

\begin{Def} For an extended exchange
matrix $\widetilde{B}_{n\times(n+m)}$ of a cluster algebra,  we define its related {\bf diagonal-unitization matrix} $U\widetilde{B}$ to be a matrix $U\widetilde{B}=(c_{ij})_{n\times(n+m)}$ such that for any $i,j$,
\begin{eqnarray*}c_{ij}=\left\{\begin{array}{lll} b_{ij},& \text{if}
~~i\neq j;\\
1,& \text{if}~~i=j.\end{array}\right.\end{eqnarray*}
\end{Def}

Since all diagonal entries of $\widetilde{B}$ are zero due to its skew-symmetrizability, we have indeed  $$U\widetilde{B}=\widetilde{B}+(E_n\;O_{n\times m}),$$
where $E_n$ is an $n\times n$ identity matrix and $O_{n\times m}$ is a zero matrix. Note that all diagonal entries of $U\widetilde{B}$ are 1; in the sequel, one will see that $U\widetilde{B}$ is just a tool to judge when the row-index set and the column-index set of certain submartices are disjoint.

We need to understand a special case of cluster algebras, that is, a cluster algebra that is called {\bf trivial} if it has no exchangeable cluster variables except frozen cluster variables. All other cluster algebras are called {\bf non-trivial}.

 We will say a sub-matrix of a matrix to be an {\bf empty sub-matrix} if its either row-index set or column-index set is empty. To facilitate the statement of the conclusion, we think any empty submatrices are zero matrices.

\begin{Cor}\label{0 sub matrix}
\rm{ Using the above notations,  $\mathcal{A}(\Sigma')$ is a proper rooted cluster subalgebra  of $\mathcal{A}(\Sigma)$ if and only if there exist  $I'\subseteq X$ and $\emptyset\not=I_{1}\subseteq \widetilde{X}$ such that the $I'\times I_{1}$ sub-matrix of the diagonal-unitization matrix $U\widetilde{B}$ is a zero matrix and $\Sigma'\cong \Sigma_{I_{0},I_{1}}$ with $I_{0}=X\setminus (I'\cup I_1)$.}

In particular,
(i)~ a proper rooted cluster subalgebra $\mathcal{A}(\Sigma')$ of
$\mathcal{A}(\Sigma)$ is trivial if and only if it can be written as $\mathcal A(\Sigma_{I_0,I_1})$ with $I_{0}=X\setminus I_1$ and $I_{1}\neq \emptyset$ and

(ii)~ all proper rooted cluster subalgebras of
$\mathcal{A}(\Sigma)$ are trivial if and only if all entries of
$U\widetilde{B}$ are nonzero. In this case, there does not
exist non-trivial proper rooted cluster subalgebras.
\end{Cor}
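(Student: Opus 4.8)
The plan is to derive the entire statement from Theorem \ref{rooted cluster subalgebra} and the cardinality invariance of Proposition \ref{isosubseed}, after first checking that the diagonal-unitization matrix merely repackages the hypotheses. Setting $I'=X\setminus(I_0\cup I_1)$, I would first record the identity $X'=I'$, i.e. that $I'$ is exactly the exchangeable cluster of $\Sigma_{I_0,I_1}$; this holds because $X\setminus(I_0\cup I_1)=X\setminus(I_0\cup I_1')$ with $I_1'=X\cap I_1$. Since $I'\cap I_1=\emptyset$ by construction, the diagonal entries $c_{xx}=1$ of $U\widetilde{B}$ never lie in the $I'\times I_1$ block, so there $c_{xy}=b_{xy}$; hence the $I'\times I_1$ submatrix of $U\widetilde{B}$ vanishes if and only if $b_{xy}=0$ for all $x\in X\setminus(I_0\cup I_1)$ and $y\in I_1$, which is precisely the vanishing condition of Theorem \ref{rooted cluster subalgebra}. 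Conversely, when $I'$ and $I_1$ are prescribed and one demands that this block be zero, a diagonal $1$ would obstruct vanishing whenever $I'\cap I_1\neq\emptyset$, so this single requirement simultaneously forces disjointness and the vanishing of the relevant off-diagonal entries.

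With this dictionary in hand, the main equivalence is short. Assume first that $I'\subseteq X$ and $\emptyset\neq I_1\subseteq\widetilde{X}$ with vanishing $I'\times I_1$ block are given; then Theorem \ref{rooted cluster subalgebra} shows $\mathcal{A}(\Sigma_{I_0,I_1})$ is a rooted cluster subalgebra, and Proposition \ref{isosubseed} shows it is proper, since any seed isomorphic to $\Sigma_{I_0,I_1}$ is some $\Sigma_{J_0,J_1}$ with $\#J_1=\#I_1>0$, so it cannot be isomorphic to a pure sub-seed $\Sigma_{J_0,\emptyset}$, whence $\mathcal{A}(\Sigma_{I_0,I_1})\notin\mathbb{A}_{1}(\Sigma)$. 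Assume conversely that $\mathcal{A}(\Sigma')$ is a proper rooted cluster subalgebra; Theorem \ref{rooted cluster subalgebra} gives $\Sigma'\cong\Sigma_{I_0,I_1}$ with the vanishing condition, and $I_1\neq\emptyset$ (otherwise it would be pure, contradicting properness), so $I'=X\setminus(I_0\cup I_1)$ furnishes the required data.

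Part (i) then follows from the identity $X'=I'$: a seed isomorphism preserves the set of exchangeable variables, so $\mathcal{A}(\Sigma')$ is trivial exactly when $I'=\emptyset$, i.e. $X\subseteq I_0\cup I_1$, i.e. $I_0=X\setminus I_1$ (with $I_1\neq\emptyset$ by properness); here the $I'\times I_1$ block is empty, hence zero by our convention, so the main equivalence still applies. For part (ii), part (i) says that a proper rooted cluster subalgebra fails to be trivial precisely when it admits a representation with $I'\neq\emptyset$; therefore a non-trivial proper rooted cluster subalgebra exists if and only if there are $\emptyset\neq I'\subseteq X$ and $\emptyset\neq I_1\subseteq\widetilde{X}$ with vanishing $I'\times I_1$ block of $U\widetilde{B}$. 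If every entry of $U\widetilde{B}$ is nonzero, no such block can vanish, so all proper rooted cluster subalgebras are trivial; conversely, if some $c_{ij}=0$ then the row variable and column variable are distinct (as $c_{xx}=1$), and taking $I'=\{x_i\}$, $I_1=\{x_j\}$ produces, via the main equivalence, a non-trivial proper rooted cluster subalgebra. This also yields the closing assertion.

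The step I expect to be the crux is the equivalence between $I_1\neq\emptyset$ and being non-pure: it rests essentially on Proposition \ref{isosubseed}, the invariance of $\#I_1$ under seed isomorphism. Without it one could not exclude the possibility that a sub-seed with nonempty $I_1$ is isomorphic to a pure sub-seed $\Sigma_{J_0,\emptyset}$, and the pure/proper dichotomy underlying the whole corollary would break down.
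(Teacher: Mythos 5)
Your proposal is correct and follows essentially the same route as the paper: reduce everything to Theorem \ref{rooted cluster subalgebra}, use the diagonal $1$'s of $U\widetilde{B}$ to encode the disjointness $I'\cap I_1=\emptyset$ together with the vanishing condition $b_{xy}=0$, and read off (i) and (ii) from the identification $X'=I'$. The only difference is that you make explicit, via Proposition \ref{isosubseed}, why $I_1\neq\emptyset$ forces the subalgebra to be proper (not pure), a point the paper asserts without elaboration; this is a welcome detail rather than a divergence.
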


\begin{proof}

``If'':\; In case $I'=\emptyset$, equivalently $I_0\cup I_1
\supseteq X$, it is easy to see $\mathcal{A}(\Sigma_{I_0,I_1})$
as a trivial rooted cluster subalgebra of $\mathcal{A}(\Sigma)$.
Furthermore, since $I_1\neq \emptyset$, $\mathcal{A}(\Sigma_{I_0,I_1})$ is a proper trivial rooted cluster subalgebra of $\mathcal{A}(\Sigma)$.

 In case $I'\neq \emptyset$, since $I'\subseteq X$,
$\emptyset\not=I_{1}\subseteq \widetilde{X}$
 and $I_{0}=X\setminus (I'\cup I_1)$, we have $X=I_{0}\cup I' \cup (X\cap I_{1})$.
 Moreover, the $I'\times I_{1}$ sub-matrix of the diagonal-unitization matrix
 $U\widetilde{B}$ is a zero matrix, which implies that $I'\cap I_{1}=\emptyset$.
 Thus, $I'=X\backslash (I_{0}\cup I_{1})$. As $b_{xy}=0$ for all $x\in I'=X\setminus (I_0\cup I_1)$ and $y\in
I_1$, according to Theorem \ref{rooted cluster subalgebra},
$\mathcal{A}(\Sigma_{I_0,I_1})$ is a rooted cluster subalgebra of
$\mathcal{A}(\Sigma)$. Moreover, as $I'\neq \emptyset$,
$\mathcal{A}(\Sigma_{I_0,I_1})$ is a proper non-trivial rooted
cluster subalgebra of $\mathcal{A}(\Sigma)$.

``Only If'':\;If $\mathcal{A}(\Sigma')$ is a
proper rooted cluster subalgebra  of $\mathcal{A}(\Sigma)$, then by
Theorem \ref{rooted cluster subalgebra}, there exist $I_{0}\subseteq
X$ and $I_{1}\subseteq \widetilde{X}$ satisfying $b_{xy}=0$ for any
$x\in X\backslash (I_{0}\cup I_{1})$ and $y\in I_{1}$ such that
$\mathcal{A}(\Sigma')\cong \mathcal{A}(\Sigma_{I_{0},I_{1}})$.

Now let $I'=X\backslash (I_{0}\cup I_{1})$, then $I'\cap I_1=\emptyset$ and the $I'\times
I_{1}$ sub-matrix of the diagonal-unitization matrix
$U\widetilde{B}$ is a zero matrix. Since $I_{0}$, $I_{1}$ and $I'\cap X$ are
pairwise disjoint, we have $I_{0}=X\setminus (I'\cup (I_1\cap
X))=X\setminus (I'\cup I_1)$. Finally, $I_{1}\neq \emptyset$ follows
 due to the fact that $\mathcal{A}(\Sigma')$ is proper.

(i)~ For a proper rooted cluster subalgebra
$\mathcal{A}(\Sigma')\cong \mathcal{A}(\Sigma_{I_{0},I_{1}})$, it is
trivial if and only if $I_{1}\neq \emptyset$ and $X\subseteq (I_0\cup I_1)$ and, equivalently, if and
only if $I_{1}\neq \emptyset$ and $I_{0}=X\setminus I_1$ since
$I_{0}\cap I_{1}=\emptyset$ and $I_{0}\subseteq X$.

(ii) ``If'': For any proper rooted cluster subalgebra
$\mathcal{A}(\Sigma')=\mathcal{A}(\Sigma_{I_{0},I_{1}})$, since all
entries of $U\widetilde{B}$ are nonzero,  the $I'\times I_1$
zero submatrix in the above  must be empty. But, $I_{1}\neq \emptyset$,
we have to get $I'=\emptyset$ and $I_{0}=X\setminus I_1$, and then
$X\subseteq (I_0\cup I_1)$ due to $I_0\cap
I_1=\emptyset$, which means $\Sigma_{I_0,I_1}$ has no exchangeable
cluster variable, i.e.
$\mathcal{A}(\Sigma')=\mathcal{A}(\Sigma_{I_{0},I_{1}})$ is
trivial.

``Only if": \;Otherwise, there exist $i$ and $j$ such that
$(i,j)$-entry $(U\widetilde{B})_{i,j}=0$. By choosing $I'=\{i\}$,
$I_{1}=\{j\}$ and $I_{0}=X\backslash \{i\}$ and then by Theorem \ref{rooted cluster subalgebra},
$\mathcal{A}(\Sigma_{I_{0},I_{1}})$ is a rooted cluster subalgebra
of $\mathcal{A}(\Sigma)$. Since $I_0\cap I_1=\emptyset$, we have $j\not=i$ and thus $I_{0}\not=X\backslash I_1$, which means
that $\mathcal{A}(\Sigma_{I_{0},I_{1}})$ is non-trivial by
(i) and is proper due to $I_1\not=\emptyset$. It contradicts with the given condition.
\end{proof}

In summary, in a rooted cluster algebra $\mathcal{A}(\Sigma)$, we have:
\begin{eqnarray*}
\mathbb{A}_2(\Sigma)
& = & \{  \text{rooted cluster subalgebras}\}\\
& = & \{ \text{pure cluster subalgebras}\} \sqcup \{\text{proper rooted cluster subalgebras}\}  \\
& = & \{  \text{pure cluster subalgebras}\} \sqcup \{  \text{trivial proper rooted cluster subalgebras}\} \\
& & \sqcup  \{\text{non-trivial  proper rooted cluster subalgebras}\}
\end{eqnarray*}
with $\{\text{ pure cluster subalgebras}\}=\{\mathcal{A}(\Sigma_{I_{0},I_{1}}):\;I_{1}=\emptyset, I_0\subseteq X\}$, \\
$\{\text{ trivial  proper rooted cluster subalgebras}\}=\{\mathcal{A}(\Sigma_{I_{0},I_{1}}):\;I_{1}\neq \emptyset, X\subseteq I_{0}\cup I_{1}\}$ and\\
$\{\text{  non-trivial  proper rooted cluster subalgebras}\}=\{\mathcal{A}(\Sigma_{I_{0},I_{1}}):\;I_{1}\neq \emptyset, X\nsubseteq I_{0}\cup I_{1}\}$,\\
where $\sqcup$ means the disjoint union of sets.

\begin{Rem}
(i)~ When $I'=\emptyset$, the zero sub-matrix $O_{I'\times I_{1}}$ is indeed an empty matrix and then $\mathcal A(\Sigma')$ is trivial.

(ii)~ If we assume $I_1=\emptyset$ in this corollary, then we get indeed
a pure cluster subalgebra $\mathcal A(\Sigma_{I_0,\emptyset})$ of
$\mathcal{A}(\Sigma)$ for $I_0=X\backslash I'$. Moreover, a pure cluster subalgebra $\mathcal A(\Sigma_{I_0,\emptyset})$ is trivial if and only if $I_{0}=X$ and $I_1=\emptyset$. This type of algebras is a unique trivial pure cluster subalgebra and is a special case of general (proper) trivial rooted cluster subalgebras.

\end{Rem}

The conclusion of Corollary \ref{0 sub matrix} can be stated equivalently: {\em a rooted cluster algebra $\mathcal A(\Sigma)$ has no proper non-trivial rooted cluster
subalgebras if and only if all entries of $U\widetilde{B}$ are nonzero.} Or say,
in case all entries of $U\widetilde{B}$ are nonzero, in $\mathbb{A}_{3}(\Sigma)$, the rooted cluster algebra $\mathcal A(\Sigma)$ has no non-trivial rooted cluster subalgebras except pure cluster subalgebras.

Meantime, the purpose of Corollary \ref{0 sub matrix} is to provide a detailed program to
construct proper rooted cluster subalgebras from a given rooted cluster
subalgebra $\mathcal{A}(\Sigma)$.

\begin{Ex} \rm{  Let $Q$ be the quiver of type $A_2$, that is,
$Q:\xymatrix{1\ar[r]^{}&2}$.
 For the corresponding cluster algebra $\mathcal A(\Sigma(Q))$ of $Q$, its diagonal-unitization matrix
 $U\widetilde{B}(Q)=\left(\begin{array}{cc}
1 & -1 \\
1 & 1
\end{array}\right).$
Obviously,   all the entries of $U\widetilde{B}(Q)$ are non-zero. The only proper rooted cluster subalgebras of $\mathcal{A}(\Sigma(Q))$ are:
 $$\mathcal{A}(\Sigma_{\{1\},\{2\}})\;\;\; \text{and}\;\;\; \mathcal{A}(\Sigma_{\{2\},\{1\}})$$
 which are both  trivial.}
\end{Ex}

\section{On enumeration and monoidal categorification}
\subsection{The number of rooted cluster subalgebras of the form $\mathcal{A}(\Sigma_{I_0,I_1})$}.

Using the result in Section 4, e.g. Theorem \ref{rooted cluster subalgebra}, and the above conclusions, we numberize some sub-classes in the internal structure of a rooted cluster algebra $\mathcal{A}(\Sigma)$ in
\textbf{Clus}.

\begin{Prop}\label{fornumber}
Let $\mathcal{A}(\Sigma)$ be a rooted cluster algebra and
$\widetilde{B}_{n\times (n+m)}$ be its extended exchange matrix.
Then the numbers of some sub-structures of $\mathcal{A}(\Sigma)$ in
\textbf{Clus} are given as follows:

~(1)~ The number of pure cluster sub-algebras $\mathcal{A}(\Sigma_{I_0,\emptyset})$ of
$\mathcal{A}(\Sigma)$ for $I_0\subseteq X$ is $2^{n}$.

~(2)~ The number of rooted cluster subalgebras $\mathcal{A}(\Sigma_{I_0,I_1})$ of
$\mathcal{A}(\Sigma)$ for $I_0\subseteq X$ and $I_1\subseteq \widetilde{X}$ satisfying Theorem \ref{rooted
cluster subalgebra} is
equal to the number $W$ of zero submatrices of $U\widetilde{B}$.
These zero submatrices include the empty sub-matrices in the form
$U\widetilde{B}_{J_0\times\emptyset}$ with $J_0\subseteq X$ or
$U\widetilde{B}_{\emptyset\times J_1}$ with $J_1\subseteq
\widetilde{X}$, where the numbers in these two forms are
respectively, $2^{n}$ and $2^{n+m}$.

~(3)~ The number of proper rooted cluster subalgebras $\mathcal{A}(\Sigma_{I_0,I_1})$ of
$\mathcal{A}(\Sigma)$ satisfying the condition in Corollary \ref{0 sub matrix} is
equal to $W-2^n$.
\end{Prop}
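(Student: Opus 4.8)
The plan is to establish each count by exhibiting an explicit bijection between the indexing data and a combinatorial object (subsets, respectively zero submatrices of $U\widetilde{B}$), and then to obtain (3) from (2) by excising the pure case. For (1), I would note that a pure sub-seed $\Sigma_{I_0,\emptyset}$ is determined, as a concrete seed in $\mathds{F}$, by its partition of $\widetilde{X}$ into the exchangeable part $X\setminus I_0$ and the frozen part $X_{fr}\cup I_0$. Distinct subsets $I_0\subseteq X$ yield distinct partitions, hence distinct pure cluster subalgebras, and by definition every pure cluster subalgebra arises this way; thus $I_0\mapsto \mathcal{A}(\Sigma_{I_0,\emptyset})$ is a bijection from the power set of $X$, and the count is $2^{|X|}=2^n$.

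The heart of the argument is (2). I would set $I'=X\setminus(I_0\cup I_1)$ and claim that $(I_0,I_1)\mapsto (I',I_1)$ is a bijection from the pairs admissible in the sense of Theorem \ref{rooted cluster subalgebra} onto the zero submatrices $U\widetilde{B}_{I'\times I_1}$, where $I'\subseteq X$ indexes rows and $I_1\subseteq\widetilde{X}$ indexes columns. The admissibility condition, namely $b_{xy}=0$ for all $x\in I'$ and $y\in I_1$, says exactly that the off-diagonal entries of $U\widetilde{B}_{I'\times I_1}$ vanish; since $I'$ is disjoint from $I_1$ by construction, no diagonal entry of $U\widetilde{B}$ (each equal to $1$) lies in this submatrix, so $U\widetilde{B}_{I'\times I_1}$ is genuinely the zero matrix. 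For the inverse, given a zero submatrix with row set $I'$ and column set $I_1$, the diagonal $1$'s of $U\widetilde{B}$ force $I'\cap I_1=\emptyset$, and then $I_0:=X\setminus(I'\cup I_1)$ recovers an admissible pair; one checks directly, using $I'\cap I_1=\emptyset$, that $X\setminus(I_0\cup I_1)=I'$, so the two assignments are mutually inverse. As $\Sigma_{I_0,I_1}$ is determined as a concrete seed by $(I_0,I_1)$, distinct admissible pairs give distinct subalgebras, whence the count equals the number $W$ of zero submatrices. The empty submatrices then split into the column-empty family $U\widetilde{B}_{J_0\times\emptyset}$ with $J_0\subseteq X$, numbering $2^n$, and the row-empty family $U\widetilde{B}_{\emptyset\times J_1}$ with $J_1\subseteq\widetilde{X}$, numbering $2^{n+m}$.

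For (3), I would invoke Corollary \ref{0 sub matrix}, by which $\mathcal{A}(\Sigma_{I_0,I_1})$ is a proper rooted cluster subalgebra precisely when $I_1\neq\emptyset$. Under the bijection of (2), the admissible pairs with $I_1=\emptyset$ correspond exactly to the column-empty zero submatrices $U\widetilde{B}_{J_0\times\emptyset}$, of which there are $2^n$; these are precisely the pure cluster subalgebras counted in (1). Deleting them from the total leaves $W-2^n$ proper rooted cluster subalgebras.

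The only genuinely delicate point is the role of the diagonal-unitization in (2): passing from $\widetilde{B}$ to $U\widetilde{B}$ is exactly what fuses the two separate requirements ``$I'\cap I_1=\emptyset$'' and ``the off-diagonal block vanishes'' into the single clean condition ``the submatrix $U\widetilde{B}_{I'\times I_1}$ is zero,'' which is what makes the correspondence with zero submatrices a bijection rather than merely a surjection. Once this is recognized, the remaining verifications, that the two maps are mutually inverse and that distinct pairs yield distinct subalgebras, are routine bookkeeping with subsets of $X$ and $\widetilde{X}$.
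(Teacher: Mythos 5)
Your proposal is correct and follows essentially the same route as the paper: part (1) by counting subsets $I_0\subseteq X$, part (2) via the mutually inverse assignments $(I_0,I_1)\mapsto U\widetilde{B}_{(X\setminus(I_0\cup I_1))\times I_1}$ and $U\widetilde{B}_{J_0\times J_1}\mapsto (X\setminus(J_0\cup J_1),J_1)$, with the diagonal $1$'s of $U\widetilde{B}$ enforcing disjointness exactly as in the paper's verification that $\phi\varphi=\mathrm{id}$ and $\varphi\phi=\mathrm{id}$, and part (3) by subtracting the $2^n$ pure cases. No gaps.
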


\begin{proof}
~(1)~ All pure cluster sub-algebras $\mathcal{A}(\Sigma_{I_0,\emptyset})$ of $\mathcal{A}(\Sigma)$ are
determined by $I_0\subseteq X$; hence, the required number is the chosen number of $I_0$,
that is, $C_{n}^{0}+C_{n}^{1}+\cdots+C_{n}^{n}=2^{n}$.

~(2)~ Denoting by $S_1$ the set of rooted cluster subalgebras of
$\mathcal{A}(\Sigma)$ in the form $\mathcal{A}(\Sigma_{I_0,I_1})$ and
by $S_2$ the set of all zero submatrices of
$U(\widetilde{B}_{n\times (n+m)})$, it suffices to set up a
bijection from $S_1$ to $S_2$.

Assume that $\mathcal{A}(\Sigma_{I_0,I_1})$ is  a rooted cluster
subalgebra of $\mathcal A(\Sigma)$. Then by Theorem \ref{rooted
cluster subalgebra}, $b_{xy}=0$ for all $y\in I_1$ and $x\in X\setminus
(I_0\cup I_1)$. Now we define a map $\varphi:S_1\rightarrow S_2$
with
$\varphi(\mathcal{A}(\Sigma_{I_0,I_1}))=U\widetilde{B}_{(X\setminus
(I_0\cup I_1))\times I_1}$, where $U\widetilde{B}_{(X\setminus
(I_0\cup I_1))\times I_1}$ is the  zero submatrix of
$U(\widetilde{B}_{n\times (n+m)})$ since $I_1\cap(X\setminus
(I_0\cup I_1))=\emptyset$.

On the one hand, for any $0$-submatrix $U\widetilde{B}_{J_0\times
J_1}$,  let $I_1= J_1$ and $I_0=X\setminus (J_0\cup J_1)$, so we
have $J_0=X\setminus (I_0\cup I_1)$ as $J_0\subseteq X$ and $J_0\cap
I_1=\emptyset$. By Theorem \ref{rooted cluster subalgebra},
$\mathcal{A}(\Sigma_{I_0,I_1})$ is a rooted cluster subalgebra of
$\mathcal{A}(\Sigma)$. Hence, we can define the map
$\phi:S_2\rightarrow S_1$ with $\phi(U\widetilde{B}_{J_0\times
J_1})=\mathcal{A}(\Sigma_{X\setminus(J_0\cup J_1),J_1})$.

Since $I_0\cap I_1=\emptyset$ and $I_0\subseteq X$, so $I_0\subseteq
X\setminus I_1$ and $X\setminus(X\setminus I_0)=I_0$. Thus,
$$X\setminus((X\setminus (I_0\cup I_1))\cup I_1)=X\setminus((X\setminus I_0)\cup I_1)=(X\setminus(X\setminus I_0))\cap (X\setminus I_1)=I_0.$$
Therefore, we have $\phi\varphi=id_{S_1}$.

On the other hand, since $J_0\cap J_1=\emptyset$, $J_0\subseteq X$ and
$X\setminus((X\setminus (J_0\cup J_1))\cup J_1)=J_0 ,$  we
have $\varphi\phi=id_{S_2}$. It follows that $\varphi$ is bijective.
Thus, the number of rooted cluster sub-algebras of
$\mathcal{A}(\Sigma)$ in the form $\mathcal{A}(\Sigma_{I_0,I_1})$ is
equal to the number of zero sub-matrices of $U\widetilde{B}$.

All the empty sub-matrices in the form
$U\widetilde{B}_{J_0\times\emptyset}$ with $J_0\subseteq X$ are
corresponding to pure sub-rooted cluster algebras of
$\mathcal{A}(\Sigma)$ in the form $\mathcal{A}(\Sigma_{I_0,I_1})$.
Thus, by (1), the number of such special zero submatrices is equal to
$2^n$.

All empty submatrices of $U\widetilde{B}$ in the form
$U\widetilde{B}_{\emptyset\times J_1}$ are determined uniquely by
the choice of $J_1$; hence, the number of such special zero
submatrices is equal to $C^0_{n+m} + C^1_{n+m} + \cdots +
C^{n+m}_{n+m} = 2^{n+m}$.

(3)~ The set of sub-rooted cluster algebras of $\mathcal{A}(\Sigma)$
of the form $\mathcal{A}(\Sigma_{I_0,I_1})$ is the disjoint union of the
subset of the proper ones and that of pure sub-rooted cluster
algebras of $\mathcal{A}(\Sigma)$. Hence, this result follows
directly from (1) and (2). \end{proof}

\begin{Rem}
 In Proposition \ref{fornumber} (2), the corresponding rooted
cluster algebras of the empty sub-matrices of the form
$U\widetilde{B}_{\emptyset\times J_1}$ with $J_1\subseteq
\widetilde{X}$ are just the trivial rooted cluster subalgebras of
$\mathcal{A}(\Sigma)$ that compose a part of sub-rooted cluster
algebras of $\mathcal{A}(\Sigma)$ of the form
$\mathcal{A}(\Sigma_{I_0,I_1})$.
\end{Rem}

\subsection{Monoidal sub-categorification of a rooted cluster algebra}.

Let us recall the definition of the {\bf monoidal categorification}
of a cluster algebra (refer to \cite{KKKO}). For a field $K$, let $\mathcal{M}$ be a
$K$-linear abelian monoidal category, where {\em $K$-linearity} means that the
tensor functor $\otimes$ is $K$-linear and exact. Moreover, we assume
 $\mathcal{M}$ to satisfy that (i) any object of $\mathcal{M}$ is of
 finite length and (ii) $K \cong
\rm{Hom}_\mathcal{M}(S,S)$ for any simple object $S$ of
$\mathcal{M}$.

\begin{Def}(\cite{KKKO})
Let $\mathscr{I}=(\{M_i\}_{i=1}^{n+m},\widetilde{B})$ be a pair of a
family $\{M_i\}_{i=1}^{n+m}$ of simple objects in $\mathcal{M}$ and
an integer-valued $n\times(n+m)$-matrix
$\widetilde{B}=(b_{M_i,M_j})_{i=1,\cdots,n;j=1,\cdots,n+m}$ whose
principal part is skew-symmetric.  We call $\mathscr{I}$ a {\bf
monoidal
seed} in $\mathcal{M}$ if
(i) $M_i\otimes M_j\cong M_j\otimes M_i$ for any $1\leq i,j\leq
n+m$ and
(ii) $\bigotimes\limits_{i=1}^{n+m}M_i^{\otimes a_i}$ is simple for
any $(a_i)\in \mathbb{Z}_{\geq 0}^{n+m}$.
\end{Def}

\begin{Def}\label{monomuta}(\cite{KKKO})
For $1\leq k\leq n$, we say that a monoidal seed
$\mathscr{I}=(\{M_i\}_{i=1}^{n+m},\widetilde{B})$ admits a {\bf
mutation $\mu_{M_k}$ in direction $M_k$} if there exists a simple
object $M'_{k}\in
\mathcal{M}$ such that\\
(i)~ there exist exact sequences in $\mathcal{M}$:\\
\centerline{$0\rightarrow
\bigotimes\limits_{b_{M_kM_i}>0}M_i^{\otimes b_{M_kM_i}}\rightarrow
M_k\otimes M'_k\rightarrow
\bigotimes\limits_{b_{M_kM_i}<0}M_i^{\otimes (-b_{M_kM_i})}\rightarrow 0$,}\\
\centerline{$0\rightarrow
\bigotimes\limits_{b_{M_kM_i}<0}M_i^{\otimes
(-b_{M_kM_i})}\rightarrow M'_k\otimes M_k\rightarrow
\bigotimes\limits_{b_{M_kM_i}>0}M_i^{\otimes b_{M_kM_i}}\rightarrow 0$;}\\
(ii)~ the pair $\mu_{M_k}(\mathscr{I}):=(\{M_i\}_{i\neq
k}\cup\{M'_k\},\mu_{M_k}(\widetilde{B}))$ is a monoidal seed in
$\mathcal{M}$. \\
In this case, we denote $\mu_{M_k}(M_k)\overset{\Delta}{=}M'_k$ and
$\mu_{M_k}(M_i)\overset{\Delta}{=}M_i$ if $i\neq k$.
\end{Def}

Similarly, as in the case of cluster algebras, for any $I_0\subseteq
\{M_1,\cdots, M_n\}$ and
 $I_1 \subseteq \{M_1,\cdots, M_{n+m}\}$ with $I_0\cap I_1=\emptyset$, we
 can define $\mathscr{I}_{I_0,I_1}$ for any monoidal seed
 $\mathscr{I}$. More precisely, $\mathscr{I}_{I_0,I_1}=(\{M_i|{M_i\not\in I_1}\},\widetilde{B'})$, where $\widetilde{B'}$ is the matrix obtained from $\widetilde{B}$ by deleting the
 $I_0\cup I_1$ rows and $I_1$ columns.

\begin{Def}\label{monoidalcat}(\cite{KKKO})
Using the notations above, $\mathcal{M}$ is called a
{\bf monoidal categorification} of a cluster algebra $\mathcal{A}=\mathcal{A}(\Sigma)$ if \\
(a) there is a ring isomorphism $\varphi: K_0(\mathcal{M}) \cong\mathcal{A}$, where $K_0(\mathcal{M})$ is the Grothendieck ring of $\mathcal M$, \\
(b) there exists a monoidal seed
$\mathscr{I}=(\{M_i\}_{i=1}^{n+m},\widetilde{B})$ in $\mathcal{M}$
such that $[\mathscr{I}]:=(\{[M_i]\}_{i=1}^{n+m},\widetilde{B})$ is
the initial seed $\Sigma$ of $\mathcal{A}$ and $\mathscr{I}$ admits
successive mutations in all directions.
\end{Def}

\begin{Rem}\label{rem5.6}
If a rooted cluster algebra $\mathcal{A}(\Sigma)$ admits a monoidal
categorification $\mathcal{M}$ such that the monoidal seed
$\mathscr{I}=(\{M_i\},\widetilde{B})$ corresponds to
$\Sigma=(\{x_i\},\widetilde{B})$, we always assume that $M_i$
correspond to $x_i$ for all $i$. According to the definition of
mutation of monoidal seeds, it is easy to see that
$$\varphi([\mu_{M_i}(M)])=\mu_{x_i}(\varphi([M]))$$
for all $x_i\in X$.
\end{Rem}

Following the above preparations,  we find the relation between
the categorification of a rooted cluster algebra
$\mathcal{A}(\Sigma)$ and the categorification of a rooted cluster
subalgebra of $\mathcal{A}(\Sigma)$.

\begin{Thm}\label{subcat}
\rm{Let the abelian monoidal category $\mathcal{M}$ be a monoidal
categorification of a cluster algebra $\mathcal{A}(\Sigma)$ and
$\mathcal{A}(\Sigma_{I_0,I_1})$ be a rooted cluster subalgebra of
$\mathcal{A}(\Sigma)$. Then,

(i)~ $\mathcal{A}(\Sigma_{I_0,I_1})$ has a monoidal
categorification $\mathcal{M'}$ that is also an abelian monoidal
sub-category  of $\mathcal{M}$;

(ii)~  For the Grothendieck rings $K_{0}(\mathcal{M})$ and $K_{0}(\mathcal{M'})$,  the following
diagram commutes via ring homomorphisms:}
 \\
 \centerline{$
 \begin{array}[c]{ccc}
 K_{0}(\mathcal{M'})&\stackrel{i_{1}}{\hookrightarrow}&K_{0}(\mathcal{M})\\
 \downarrow\scriptstyle{\cong\varphi'}&&\downarrow\scriptstyle{\cong\varphi}\\
 \mathcal{A}(\Sigma_{I_0,I_1})&\stackrel{i_{2}}{\hookrightarrow}&\mathcal{A}(\Sigma)
 \end{array}
 $,}
\rm{where $i_{1}$ and $i_{2}$ mean the injective ring homomorphisms.}
\end{Thm}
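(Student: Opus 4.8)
The plan is to categorify $\mathcal{A}(\Sigma_{I_0,I_1})$ by the monoidal seed $\mathscr{I}_{I_0,I_1}=(\{M_i\mid M_i\notin I_1\},\widetilde{B'})$ introduced just before the statement, and to take $\mathcal{M'}$ to be the smallest abelian monoidal subcategory of $\mathcal{M}$ that contains the objects $\{M_i\mid M_i\notin I_1\}$ together with every simple object produced from $\mathscr{I}_{I_0,I_1}$ by successive monoidal mutations in the directions of the exchangeable variables of $\Sigma_{I_0,I_1}$, i.e. at those $M_k$ with $x_k\in X\setminus(I_0\cup I_1)$. Under the identification $M_i\leftrightarrow x_i$ of Remark \ref{rem5.6}, the class $[\mathscr{I}_{I_0,I_1}]$ is precisely the seed $\Sigma_{I_0,I_1}$, so this is the natural candidate for $\mathcal{M'}$.

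The key input I would use is Theorem \ref{rooted cluster subalgebra}: since $\mathcal{A}(\Sigma_{I_0,I_1})$ is a rooted cluster subalgebra, one has $b_{xy}=0$ for all $x\in X\setminus(I_0\cup I_1)$ and $y\in I_1$, equivalently $b_{M_kM_i}=0$ whenever $M_k$ is an exchangeable direction of $\mathscr{I}_{I_0,I_1}$ and $M_i\in I_1$. First I would observe that this makes the defining exact sequences of $\mu_{M_k}$ in Definition \ref{monomuta} involve no tensor factor $M_i$ with $M_i\in I_1$; hence the mutation $\mu_{M_k}$ of $\mathscr{I}$ (which exists because $\mathscr{I}$ admits successive mutations in all directions) restricts verbatim to a mutation of $\mathscr{I}_{I_0,I_1}$, producing the same object $M_k'$ inside $\mathcal{M'}$. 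Next I would check that this vanishing persists under such mutations so that the restriction iterates: for $i,k\in X\setminus(I_0\cup I_1)$ and $j\in I_1$ one has $b_{ij}=b_{kj}=0$, whence the mutated entry $b'_{ij}=b_{ij}+\frac{|b_{ik}|b_{kj}+b_{ik}|b_{kj}|}{2}=0$ again vanishes; on the combinatorial side Proposition \ref{submu} gives $\mu_x(\Sigma_{I_0,I_1})\cong(\mu_x(\Sigma))_{I_0,I_1}$, so mutating the sub-seed agrees with passing to the sub-seed of the mutated seed. Thus $\mathscr{I}_{I_0,I_1}$ admits successive mutations in all its exchangeable directions inside $\mathcal{M'}$.

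These steps would yield (i). I would note that $\mathscr{I}_{I_0,I_1}$ is a monoidal seed: its commutativity and simplicity axioms are inherited from $\mathscr{I}$ by restricting to a subfamily of objects, the ambient finiteness and $\mathrm{Hom}$ conditions on $\mathcal{M'}$ are inherited from $\mathcal{M}$, and the principal part of $\widetilde{B'}$, being a principal submatrix of the skew-symmetric principal part of $\widetilde{B}$, is again skew-symmetric. I would then produce $\varphi'$ by transporting $\varphi$ through the inclusion: the inclusion functor induces a ring homomorphism $i_1\colon K_0(\mathcal{M'})\to K_0(\mathcal{M})$, and I would show that $\varphi\circ i_1$ carries the generators $[M_i]$ (for $M_i\notin I_1$) and their mutates to the $i_2$-images of the extended cluster variables of $\Sigma_{I_0,I_1}$, using Remark \ref{rem5.6} to intertwine each $\mu_{M_k}$ with $\mu_{x_k}$. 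Hence the image of $\varphi\circ i_1$ is exactly $i_2(\mathcal{A}(\Sigma_{I_0,I_1}))$, and $\varphi':=i_2^{-1}\circ\varphi\circ i_1$ is a well-defined ring isomorphism $K_0(\mathcal{M'})\cong\mathcal{A}(\Sigma_{I_0,I_1})$, so that $\mathcal{M'}$ is a monoidal categorification in the sense of Definition \ref{monoidalcat}.

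For (ii), the square $\varphi\circ i_1=i_2\circ\varphi'$ holds by the very definition of $\varphi'$, so only well-definedness and injectivity remain. Injectivity of $i_1$ I would get from $\mathcal{M'}$ being a full subcategory whose simple objects stay simple and pairwise non-isomorphic in $\mathcal{M}$; injectivity of $\varphi'=i_2^{-1}\circ\varphi\circ i_1$ then follows since $\varphi$ is an isomorphism and $i_1,i_2$ are injective. The hard part, and really the only substantive point, will be showing that $\mathcal{M'}$ is closed under the mutations of Definition \ref{monomuta} and that deleting the objects indexed by $I_1$ leaves a valid monoidal seed; this is exactly where the rooted-subalgebra hypothesis is indispensable, since without the vanishing $b_{xy}=0$ of Theorem \ref{rooted cluster subalgebra} the mutated object $M_k'$ would acquire tensor factors from $I_1$ and escape $\mathcal{M'}$, so the subcategory could not categorify the sub-cluster algebra.
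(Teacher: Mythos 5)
Your proposal is correct and follows essentially the same route as the paper: both take $\mathcal{M}'$ to be the abelian monoidal subcategory of $\mathcal{M}$ generated by the simple objects corresponding to the cluster variables of $\mathcal{A}(\Sigma_{I_0,I_1})$, both use the vanishing $b_{xy}=0$ from Theorem \ref{rooted cluster subalgebra} to keep the mutation exact sequences of Definition \ref{monomuta} inside $\mathcal{M}'$ (so that $\mathscr{I}_{\overline{I}_0,\overline{I}_1}$ admits successive mutations), and both obtain $\varphi'$ by restricting $\varphi$ along the inclusion, which makes the square in (ii) commute by construction. Your explicit verification that the vanishing condition persists under mutation is a detail the paper only gestures at via induction and the analogue of Proposition \ref{submu}, but it does not alter the argument.
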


\begin{proof} (i)~
 Since $\mathcal{A}(\Sigma_{I_0,I_1})$ is a rooted cluster subalgebra of $\mathcal{A}(\Sigma)$, by (\cite{ADS}, Corollary 4.6(2)),
  the cluster variables of $\mathcal{A}(\Sigma_{I_0,I_1})$ are cluster variables of $\mathcal{A}(\Sigma)$. Let $\varphi: K_0(\mathcal{M}) \cong\mathcal{A}(\Sigma)$ be the ring isomorphism by Definition \ref{monoidalcat} and let
 $$\mathfrak{S}=\{S\in
\mathcal{M}|\; \varphi([S])\; \text{is a cluster variable of}\;
\mathcal{A}(\Sigma_{I_0,I_1})\}.$$ Denote by $\mathcal{M'}$ the
fully faithful abelian monoidal subcategory of $\mathcal{M}$
generated by $\mathfrak{S}$. Now we prove that $\mathcal{M'}$ is the
monoidal categorification of $\mathcal{A}(\Sigma_{I_0,I_1})$.

Let $W'$ be the set of cluster variables in
$\mathcal{A}(\Sigma_{I_0,I_1})$. We denote $[\mathfrak{S}]=\{[S]|S\in
\mathfrak{S}\}$. Then, $\varphi([\mathfrak{S}])=W'$.

(a)~ Since $\mathcal{M'}$ is the fully faithful abelian monoidal
subcategory of $\mathcal{M}$ generated by $\mathfrak{S}$, we have
$K_{0}(\mathcal{M'})$ as the sub-ring
 of $K_{0}(\mathcal{M})$ generated by $[\mathfrak{S}]$. Let $\varphi'=\varphi|_{K_{0}(\mathcal{M'})}$. Since $\mathcal{A}(\Sigma_{I_0,I_1})$ is generated by $W'=\varphi([\mathfrak{S}])$ as a ring,  we have $\varphi'(K_{0}(\mathcal{M'}))=
 \mathcal{A}(\Sigma_{I_0,I_1})$. Since $\varphi'$ is the restriction of the isomorphism $\varphi$, we get $\varphi': K_{0}(\mathcal{M'})\cong
 \mathcal{A}(\Sigma_{I_0,I_1})$, that is, Definition \ref{monoidalcat} (a) holds for $\mathcal M'$ and $\mathcal A(\Sigma_{I_0,I_1})$, and then the commute diagram in the proposition follows.

(b)~ Since $\mathcal{M}$ is the monoidal categorification of
$\mathcal{A}(\Sigma)$, by definition, there exists a monoidal seed
$\mathscr{I}=(\{M_i\},\widetilde{B})$ such that
$[\mathscr{I}]=\Sigma$. Set $\overline{I}_0=\{M\in
\{M_i\}|\;\varphi([M])\in I_0 \}$ and $\overline{I}_1=\{M\in
\{M_i\}|\;\varphi([M])\in I_1 \}$, so that all objects $M_i\in
\mathscr{I}_{\overline{I}_0, \overline{I}_1}$ are in one-one
correspondence to cluster variables in $\Sigma_{I_0,I_1}$.
$\mathscr{I}_{\overline I_0,\overline I_1}$ is a monoidal seed in $\mathcal{M'}$ by
definition, and furthermore,
$[\mathscr{I}_{\overline{I}_0,\overline{I}_1}]=\Sigma_{I_0,I_1}$.

For any $M_k\in \{M_1,M_2,\cdots,M_n\}\setminus
(\overline{I}_0\cup \overline{I}_1)$, since $\mathcal{M}$ is the
monoidal categorification of $\mathcal{A}(\Sigma)$, there exist $\mu_{M_k}(M_k)\in \mathcal{M}$ and two exact sequences in $\mathcal{M}$:\\
\begin{equation}\label{eq:ex1}
0\rightarrow \bigotimes\limits_{b_{M_kM_i}>0}M_i^{\otimes
b_{M_kM_i}}\rightarrow M_k\otimes \mu_{M_k}(M_k)\rightarrow
\bigotimes\limits_{b_{M_kM_i}<0}M_i^{\otimes
(-b_{M_kM_i})}\rightarrow 0,
\end{equation}
\begin{equation}\label{eq:ex2}
0\rightarrow \bigotimes\limits_{b_{M_kM_i}<0}M_i^{\otimes
(-b_{M_kM_i})}\rightarrow \mu_{M_k}(M_k)\otimes M_k\rightarrow
\bigotimes\limits_{b_{M_kM_i}>0}M_i^{\otimes b_{M_kM_i}}\rightarrow
0.
\end{equation}
Since $\mathcal{A}(\Sigma_{I_0,I_1})$ is a rooted cluster subalgebra of
$\mathcal{A}(\Sigma)$, according to Theorem \ref{rooted cluster
subalgebra}, for all $M_i\in \{M_j\}_{j=1}^{n+m}$, if $b_{M_kM_i}\neq 0$,
 we have $M_i\not\in \overline{I}_1$; thus, $M_i\in
\mathcal{M}'$ in (\ref{eq:ex1}) and
(\ref{eq:ex2}).

By Remark \ref{rem5.6},
$\varphi([\mu_{k}(M_k)])=\mu_{k}(\varphi([M_k]))$, which is a
cluster variable in $\mathcal{A}(\Sigma_{I_0,I_1})$. Hence,
$\mu_{k}(M_k)\in \mathfrak{S}\subseteq Ob(\mathcal{M'})$.

Therefore, the exact sequences (\ref{eq:ex1}) and (\ref{eq:ex2}) are
also in $\mathcal{M'}$. Thus, for $\mathcal M'$, there is a mutation
$\mu'_{M_k}$ in direct $k$ satisfying
$\mu'_{M_k}(\mathscr{I}_{\overline{I}_0,\overline{I}_1})=((\{M_i|i\not\in
\overline{I}_1\}\setminus \{M_k\})\cup
\{\mu_{M_k}(M_k)\},\mu_{M_k}(\widetilde{B'}))$ with
$\mu'_{M_k}(M_k)=\mu_{M_k}(M_k)$ and $\mu'_{M_k}(M_i)=M_i$ for $i\not =k$ and
$M_i\not\in \overline{I}_1$. Thus,
$\mathscr{I}_{\overline{I}_0,\overline{I}_1}$ admits successive
mutations in all directions by induction.

From (a), (b), by Definition \ref{monoidalcat}, $\mathcal{M'}$ is the
monoidal categorification of $\mathcal{A}(\Sigma_{I_0,I_1})$. So, (i) holds.

(ii)~ As mentioned in the proof of (i), the cluster variables of $\mathcal{A}(\Sigma_{I_0,I_1})$ are also cluster variables of $\mathcal{A}(\Sigma)$. By definition, $\varphi'=\varphi|_{K_0(\mathcal M')}$. Hence, the commutativity of the diagram follows naturally.
\end{proof}

Analog to $\Sigma_{I_0,I_1}$ in Proposition \ref{submu}, we can prove that
$\mu_{M_k}(\mathscr{I}_{\overline{I}_0,\overline{I}_1})=(\mu_{M_k}(\mathscr{I}))_{\overline{I}_0,\overline{I}_1}$.

The monoidal
category $\mathcal M'$ is called a {\bf monoidal
sub-categorification} of the cluster algebra $\mathcal A(\Sigma)$.

 By Theorem \ref{rooted cluster subalgebra}, for the rooted cluster
subalgebra $\mathcal{A}(\Sigma')$, there is a mixing-type sub-seed
$\Sigma_{I_{0},I_{1}}$ for $I_1=\widetilde{X}\backslash
(\widetilde{X}\cap f(\widetilde{X'})), I_0=X\backslash (f(X')\cup
I_1)$ satisfying $b_{xy}=0$ for any $x\in X\setminus (I_0\cup I_1)$ and
$y\in I_1$ such that $\Sigma'\cong\Sigma_{I_{0},I_{1}}$. Hence,
 the monoidal categorification $\mathcal M'$ of $\mathcal
A(\Sigma')$ can be obtained by using Theorem \ref{subcat}.

The remaining interesting question is:

 (a)~ {\em What is the relationship between the monoidal
categorification $\mathcal M'$ of an arbitrary sub-rooted cluster
algebra isomorphic to $\mathcal A(\Sigma_{I_0,I_1})$ and
$\mathcal M$ of the (rooted) cluster algebra $\mathcal A(\Sigma)$? }

 In particular, dually, when the abelian monoidal category $\mathcal M'$ is
a quotient category of $\mathcal M$ with the natural quotient functor $\pi:\mathcal{M}\rightarrow\mathcal{M'}$ preserving the
tensor product, we call $\mathcal M'$ a {\bf monoidal quotient
categorification} of the (rooted) cluster algebra $\mathcal
A(\Sigma)$. Therefore, we have the other question as follows:

 (b)~ {\em For a sub-rooted cluster algebra $\mathcal A(\Sigma')$
isomorphic to $\mathcal A(\Sigma_{I_0,I_1})$, what is the condition
of $\mathcal A(\Sigma')$ that makes its monoidal categorification
$\mathcal M'$ to be a monoidal quotient categorification of the
(rooted) cluster algebra $\mathcal A(\Sigma)$? }

As a partial answer of (b), a necessary condition is that $\mathcal{A}(\Sigma')$ is a
quotient algebra of $\mathcal{A}(\Sigma)$. Indeed, if $\mathcal M'$ is a monoidal quotient categorification  of $\mathcal A(\Sigma)$, by definition, we have  the algebra isomorphisms $K_0(\mathcal{M})\cong \mathcal{A}(\Sigma)$ and
$K_0(\mathcal{M'})\cong \mathcal{A}(\Sigma')$, and since the quotient functor $\pi:
\mathcal{M}\rightarrow\mathcal{M'}$ preserves tensor product, we obtain a surjective algebra homomorphism
$[\pi]:K_0(\mathcal{M})\rightarrow K_0(\mathcal{M'})$ naturally, which means that $\mathcal{A}(\Sigma')$ is a
quotient algebra of $\mathcal{A}(\Sigma)$.

These two questions will be discussed more in our further work.

\section{Rooted cluster quotient algebras   }

\subsection{Rooted cluster quotient algebras via pure sub-cluster algebras }.

We use the same notations in \cite{ADS}. Let $\Sigma\setminus
\{x\}=(X\setminus \{x\}, \widetilde{B}\setminus\{x\})$ denote the
seed obtained from $\Sigma$ by deleting $x\in \widetilde{X}$, where
$\widetilde{B}\setminus\{x\}$ means the matrix obtained by deleting the row
and column labeled by $x$ from $\widetilde{B}$ (when
$x\in\widetilde{X}\setminus X$, $\widetilde{B}$ has no row labeled
by $x$, so $\widetilde{B}\setminus\{x\}$ is obtained by deleting
only the column labelled by $x$ from $\widetilde{B}$). Denote
$\sigma_{x,1}$ as the unique algebra homomorphism from
$\mathcal{A}(\Sigma)$ to $\mathds F(\Sigma\setminus \{x\})$, which
sends $y$ to $y$ for $y\in \widetilde{X}\setminus \{x\}$ and $x$ to
1. Hence,
$\sigma_{x,1}:\mathcal{A}(\Sigma)\rightarrow\mathcal{A}(\Sigma\setminus\{x\})$
is an algebra homomorphism if and only if
$\sigma_{x,1}(\mathcal{A}(\Sigma))\subseteq
\mathcal{A}(\Sigma\setminus\{x\})$. We call $\sigma_{x,1}$ the {\bf simple specialisation} of $\mathcal A(\Sigma)$ at $x$. In the sequel, we will need to consider the composition of some simple specialisations for $x$ in a certain subset $I\subseteq\widetilde{X}$, i.e. $\sigma_{I,1}=\prod\limits_{x\in I}\sigma_{x,1}$, which is called the {\bf specialisation} of $\mathcal A(\Sigma)$ at $I$.

Denote by $\sigma_{x,1}(\mu_y(\Sigma)\setminus \{x\})$
the seed
$(\{\sigma_{x,1}(\mu_{y}(x_1)),\cdots,\sigma_{x,1}(\mu_{y}(x_n))\}\setminus
\{1\},\; \mu_{y}(\widetilde{B}\setminus\{x\}) )$ in case $x\neq
y\in X$.

In (\cite{ADS}, Problem 6.10), the authors propose a problem
on whether $\sigma_{x,1}$ induces a surjective ideal rooted cluster
morphism from $\mathcal{A}(\Sigma)$
 to $\mathcal{A}(\Sigma\setminus\{x\})$.

 (\cite{ADS}, Proposition 6.9) says that $\sigma_{x,1}$ is
a
 surjective ideal rooted cluster morphism from $\mathcal{A}(\Sigma)$
 to $\mathcal{A}(\Sigma\setminus\{x\})$ if and only if
 $\sigma_{x,1}$ is an algebra homomorphism and then if and only if $\sigma_{x,1}(\mathcal{A}(\Sigma))\subseteq\mathcal{A}(\Sigma\setminus\{x\})$.

 (\cite{ADS}, Corollary 6.14 and Theorem 6.15) tells us that if the seed $\Sigma$ is either acyclic or arising from a surface with certain condition on $x$,
  then the homomorphism $\sigma_{x,1}$ is surjective.

More generally, (\cite{ADS}, Theorem 6.17) shows that if
$\mathcal{A}(\Sigma)$ admits a 2-CY categorification, in particular,
if the skew-symmetric initial seed $\Sigma$ is mutation equivalent to an acyclic seed or say that $\mathcal{A}(\Sigma)$ is acyclic,
then $\sigma_{x,1}$ induces an algebra homomorphism between the
cluster character algebras.

Associated with these, in the sequel, we obtain the fact in Lemma \ref{acyclicsub1} that for a totally sign-symmetric seed $\Sigma$ if $\mathcal A(\Sigma\setminus \{x\})$ is acyclic, then $\sigma_{x,1}$ induces an algebra homomorphism between
cluster algebras. This fact is used to get Theorem \ref{cluster quo.},  the main result of this section.

According to (\cite{ADS}, Corollary 6.14)  and (\ref{steppresent}),
we have the following.

\begin{Prop}\label{acyclicsub}
For an acyclic seed $\Sigma$ and $I_1\subset \widetilde{X}$, the pure sub-cluster algebra $\mathcal A(\Sigma_{\emptyset, I_1})$ is a rooted cluster quotient algebra of the rooted cluster algebra $\mathcal A(\Sigma)$.
\end{Prop}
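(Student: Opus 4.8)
The plan is to realise the deletion of all variables in $I_1$ as a composition of one-variable simple specialisations and to invoke $(\cite{ADS}, \text{Corollary } 6.14)$ at each stage. Enumerate $I_1=\{x_{s_1},\cdots,x_{s_l}\}$ and, following the presentation (\ref{steppresent}), set $\Sigma^{(0)}=\Sigma$ and $\Sigma^{(j)}=\Sigma_{\emptyset,\{x_{s_1},\cdots,x_{s_j}\}}$ for $1\le j\le l$, so that $\Sigma^{(l)}=\Sigma_{\emptyset,I_1}$ and $\Sigma^{(j)}=\Sigma^{(j-1)}\setminus\{x_{s_j}\}$ by (\ref{steppresent}).

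First I would verify that acyclicity is inherited along this chain. Recall that $\Sigma$ being acyclic amounts to the existence of a linear order on the exchangeable indices for which the associated directed graph carries no oriented cycle; passing from $\Sigma^{(j-1)}$ to $\Sigma^{(j)}$ only deletes the row and/or column indexed by $x_{s_j}$ from the extended exchange matrix, so restricting this order to the surviving indices shows $\Sigma^{(j)}$ is again acyclic. Since $\Sigma^{(0)}=\Sigma$ is acyclic by hypothesis, induction yields that every $\Sigma^{(j)}$ is acyclic.

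Next, at each step I would apply $(\cite{ADS}, \text{Corollary } 6.14)$: because $\Sigma^{(j-1)}$ is acyclic, the simple specialisation $\sigma_{x_{s_j},1}\colon \mathcal A(\Sigma^{(j-1)})\to \mathcal A(\Sigma^{(j)})$ is a surjective (ideal) rooted cluster morphism. Composing the $l$ morphisms gives
\[
\sigma_{I_1,1}=\sigma_{x_{s_l},1}\circ\cdots\circ\sigma_{x_{s_1},1}\colon \mathcal A(\Sigma)\longrightarrow \mathcal A(\Sigma_{\emptyset,I_1}),
\]
which is a rooted cluster morphism because {\bf Clus} is a category, and which is surjective because a composite of surjective ring morphisms is surjective. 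Therefore $\mathcal A(\Sigma_{\emptyset,I_1})$ is a rooted cluster quotient algebra of $\mathcal A(\Sigma)$.

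The step I expect to carry the real content is the acyclicity bookkeeping that licenses Corollary 6.14 at every intermediate seed; everything else is formal. In particular one should note that the argument is insensitive to the chosen enumeration of $I_1$, since each intermediate extended exchange matrix is a row/column restriction of $\widetilde B$ and the acyclic order of $\Sigma$ restricts appropriately regardless of the order of deletion. A minor point worth recording is the case where some $x_{s_j}$ is a frozen variable: then only a column is deleted, the principal part is unchanged so acyclicity is automatic, and $\sigma_{x_{s_j},1}$ is the evident surjection specialising that coefficient to $1$, which is still covered by Corollary 6.14.
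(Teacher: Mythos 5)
Your proposal is correct and follows essentially the same route as the paper, which proves the proposition by citing (\cite{ADS}, Corollary 6.14) together with the step-by-step deletion chain (\ref{steppresent}); you have merely made explicit the induction and the (easy) observation that an acyclic seed remains acyclic after deleting variables, which is exactly the bookkeeping the paper leaves implicit.
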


It is known well that a (rooted) cluster algebra $\mathcal A(\Sigma)$ is called {\bf acyclic} if the initial seed $\Sigma$ is mutation equivalent to an acyclic seed.
Hence, the above proposition means that for an acyclic rooted cluster algebra $\mathcal A(\Sigma)$ with an acyclic initial seed, its pure sub-cluster algebras are
always rooted cluster quotient algebras. Now, we can discuss the general case of an acyclic rooted cluster algebra $\mathcal A(\Sigma)$, that is, its acyclic seed does not need to be the initial seed $\Sigma$.

\begin{Lem}\label{mutation2}
{\rm Using the foregoing notations, let $(y_1,\cdots,y_s)$ be
a $\Sigma$-admissible sequence with $y_i\neq x\in \widetilde{X}$ for all $1\leq i\leq
s$. Denote by $\mu$ and $\mu'$, respectively, the mutations in the
cluster algebras $\mathcal{A}(\mu_{y_s}\cdots\mu_{y_1}(\Sigma))$ and
$\mathcal{A}(\sigma_{x,1}(\mu_{y_s}\cdots\mu_{y_1}(\Sigma)\setminus
\{x\}))$. Then for any $y\in \mu_{y_s}\cdots\mu_{y_1}(X)$ and $z\in
\mu_{y_s}\cdots\mu_{y_1}(\widetilde{X})$ with $y,z\neq x$, it holds
that}

\rm (i)~
$\sigma_{x,1}(\mu_{y}(z))=\mu'_{\sigma_{x,1}(y)}(\sigma_{x,1}(z))$;

\rm (ii)~
$\sigma_{x,1}(\mu_{y}\mu_{y_s}\cdots\mu_{y_1}(\Sigma)\setminus\{x\})=\mu'_{\sigma_{x,1}(y)}(\sigma_{x,1}(\mu_{y_s}\cdots\mu_{y_1}(\Sigma)\setminus\{x\}))$ and

\rm (iii)~ Any $\Sigma\setminus\{x\}$-admissible sequence
$(z_1,\cdots,z_t)$ can be lifted to a
$(\sigma_{x,1},\Sigma,\Sigma\setminus\{x\})$-biadmissible sequence
$(w_1,\cdots,w_t)$ satisfying that $\sigma_{x,1}(w_i)=z_i$ for $i=1,\cdots,t$ and
$$\sigma_{x,1}(\mu_{w_{t}}\cdots\mu_{w_1}(\Sigma) \backslash
\{x\})=\mu'_{z_t}\cdots\mu'_{z_1}(\Sigma\setminus\{x\}).$$

\end{Lem}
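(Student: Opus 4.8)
The plan is to establish the three parts in the given order, since (ii) combines (i) with a matrix computation and (iii) is an induction whose inductive step invokes (ii). Throughout write $\Theta=\mu_{y_s}\cdots\mu_{y_1}(\Sigma)$ and let $\widetilde{B}^{\Theta}$ be its exchange matrix. For (i) I would split into the cases $z\neq y$ and $z=y$. When $z\neq y$ both sides collapse: $\mu_y(z)=z$ gives $\sigma_{x,1}(\mu_y(z))=\sigma_{x,1}(z)=z$, while $\sigma_{x,1}(z)=z\neq y=\sigma_{x,1}(y)$ forces $\mu'_{\sigma_{x,1}(y)}(\sigma_{x,1}(z))=\sigma_{x,1}(z)$. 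When $z=y$ I would write the exchange relation $\mu_y(y)=\bigl(\prod_{b_{yt}>0}t^{b_{yt}}+\prod_{b_{yt}<0}t^{-b_{yt}}\bigr)/y$ in $\Theta$ and observe that applying $\sigma_{x,1}$, i.e.\ setting $x=1$, merely deletes the factor indexed by $x$ from each monomial; no two surviving monomial factors collapse because the remaining extended-cluster variables are algebraically independent and fixed by $\sigma_{x,1}$. The resulting expression is exactly $\mu'_y(y)$ computed in $\sigma_{x,1}(\Theta\setminus\{x\})$, whose $y$-th row is the $y$-th row of $\widetilde{B}^{\Theta}$ with the $x$-column removed, and the denominators agree since $\sigma_{x,1}(y)=y$. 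This yields (i).

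For (ii), a seed identity reduces to matching clusters and matching exchange matrices. The clusters match termwise by (i): the cluster of $\sigma_{x,1}(\mu_y(\Theta)\setminus\{x\})$ is $\{\sigma_{x,1}(\mu_y(w))\}\setminus\{1\}$, that of $\mu'_{\sigma_{x,1}(y)}(\sigma_{x,1}(\Theta\setminus\{x\}))$ is $\{\mu'_{\sigma_{x,1}(y)}(\sigma_{x,1}(w)):w\neq x\}$, and (i) identifies the two (the entry $w=x$, present only when $x$ is exchangeable, specialises to $1$ and is discarded). For the matrices the key point is that deletion of $x$ commutes with matrix mutation at $y$ whenever $y\neq x$: the formula for $b'_{jk}$ with $j,k\neq y$ involves only $b_{jk},b_{jy},b_{yk}$, none of which lies in the row or column indexed by $x$ once $j,k\neq x$, while the entries in row/column $y$ are only negated; hence $\mu_y(\widetilde{B}^{\Theta}\setminus\{x\})=\mu_y(\widetilde{B}^{\Theta})\setminus\{x\}$. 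Since the matrix of the right-hand seed is $\mu_y$ applied to $\widetilde{B}^{\Theta}\setminus\{x\}$ and that of the left-hand seed is $\widetilde{B}^{\Theta}$ mutated at $y$ and then stripped of $x$, the two coincide, proving (ii).

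For (iii) I would induct on $t$, the lift being forced at each stage. Suppose $(z_1,\ldots,z_k)$ has been lifted to a $\Sigma$-admissible $(w_1,\ldots,w_k)$ with $w_i\neq x$, $\sigma_{x,1}(w_i)=z_i$, and $\sigma_{x,1}(\mu_{w_k}\cdots\mu_{w_1}(\Sigma)\setminus\{x\})=\mu'_{z_k}\cdots\mu'_{z_1}(\Sigma\setminus\{x\})$. By this identity and the construction of the specialised seed, its cluster is precisely $\{\sigma_{x,1}(w):w\neq x\}$, and because $\sigma_{x,1}$ is the identity on variables other than $x$ this correspondence is a bijection respecting the exchangeable/frozen split. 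Hence the exchangeable variable $z_{k+1}$ has a unique exchangeable preimage $w_{k+1}\neq x$ in $\mu_{w_k}\cdots\mu_{w_1}(\Sigma)$; adjoining it keeps $(w_1,\ldots,w_{k+1})$ $\Sigma$-admissible and, with image the $\Sigma\setminus\{x\}$-admissible prefix $(z_1,\ldots,z_{k+1})$, makes it $(\sigma_{x,1},\Sigma,\Sigma\setminus\{x\})$-biadmissible. Applying (ii) with $\Theta=\mu_{w_k}\cdots\mu_{w_1}(\Sigma)$ and $y=w_{k+1}$ propagates the seed identity from $k$ to $k+1$, completing the induction.

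The anticipated difficulty is organisational rather than conceptual: keeping straight the three layers — the mutation direction $y$, the previously applied sequence $y_1,\ldots,y_s$, and the deletion of $x$ — and, in (i), confirming that setting $x=1$ genuinely deletes the $x$-factor without fusing distinct monomials, which is exactly where the algebraic independence of the surviving extended-cluster variables is indispensable. The only genuine verification is the commutativity of deletion and mutation in (ii), and that is a short inspection of the mutation formula.
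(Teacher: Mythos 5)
Your proposal is correct and follows essentially the same route as the paper: part (i) by direct comparison of the exchange relation before and after specialising $x$ to $1$, part (ii) by matching clusters via (i) and checking that deleting the row/column of $x$ commutes with matrix mutation at $y\neq x$, and part (iii) by induction using the seed identity to produce the lift $w_{t}$ and then invoking (ii). No gaps; the one point you flag as delicate (that setting $x=1$ only removes the $x$-factor) is handled in the paper exactly as you describe.
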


\begin{proof}
We denote $\mu_{y_s}\cdots\mu_{y_1}(\Sigma)$ by
$\Sigma'=(X',\widetilde{B'})$.

(i) In the cluster algebra $\mathcal A(\Sigma')$, since
\begin{eqnarray*}\mu_{y}(z)=\left\{\begin{array}{lll} \frac{\prod\limits_{t\in \widetilde{X'}, b'_{yt}>0}t^{b'_{yt}}+\prod\limits_{t\in \widetilde{X'}, b'_{yt}<0}t^{-b'_{yt}}}{y},& \text{if}
~~z=y;\\z,& \text{if}~~ z\neq y, \end{array}\right.\end{eqnarray*}
we have
\[\begin{array}{ccl} \sigma_{x,1}(\mu_{y}(z))
 & = & \left\{\begin{array}{lll}\frac{\prod\limits_{t\in \widetilde{X'}, b'_{yt}>0}\sigma_{x,1}(t)^{b'_{yt}}+\prod\limits_{t\in \widetilde{X'}, b'_{yt}<0}\sigma_{x,1}(t)^{-b'_{yt}}}{\sigma_{x,1}(y)},&\text{if} ~~z=y;\\ \sigma_{x,1}(z),& \text{if}~~ z\neq
  y.\end{array}\right.
\end{array}\]

On the other hand, in the seed
$\sigma_{x,1}(\Sigma'\backslash\{x\})$,

\begin{eqnarray*}\mu'_{\sigma_{x,1}(y)}(\sigma_{x,1}(z))=
\left\{\begin{array}{lll} \frac{\prod\limits_{t\in \widetilde{X'},
b'_{yt}>0, t\neq x}\sigma_{x,1}(t)^{b'_{yt}}+\prod\limits_{t\in
\widetilde{X'}, b'_{yt}<0, t\neq
x}\sigma_{x,1}(t)^{-b'_{yt}}}{\sigma_{x,1}(y)},&
\text{if} ~~z=y;\\
\sigma_{x,1}(z),& \text{if}~~ z\neq y.\end{array}\right.
\end{eqnarray*}
Therefore, the result follows.

(ii)   Denote
\begin{equation}\label{twosubseeds}
\sigma_{x,1}(\mu_{y}(\Sigma')\setminus\{x\})=(X'',
\widetilde{B''})\;\;\;\;\text{and} \;\;\;\;
\mu'_{\sigma_{x,1}(y)}(\sigma_{x,1}(\Sigma\setminus\{x\}))=(X''',
\widetilde{B'''}).
\end{equation}
 Then by definition, we have
$$X''=  \{\sigma_{x,1}(\mu_{y}(x'_i)):\; i=1,\cdots,n, \mu_{y}(x'_i)\not=x \}  =(\sigma_{x,1}(X')\setminus\{x,\sigma_{x,1}(y)\})\cup \{\sigma_{x,1}(\mu_{y}(y))\}$$
and by
the definitions of $\sigma_{x,1}$  and the mutation $\mu'$, we have
$$X'''=((\sigma_{x,1}(X')\backslash \{x\})\backslash \{\sigma_{x,1}(y)\})\cup
\{\mu'_{\sigma_{x,1}(y)}(\sigma_{x,1}(y))\}=(\sigma_{x,1}(X')\backslash
\{x,\sigma_{x,1}(y)\})\cup
\{\mu'_{\sigma_{x,1}(y)}(\sigma_{x,1}(y))\}.$$ Thus by (i), it
follows that $X''=X'''$.

 Owing to the first formula of (\ref{twosubseeds}),
$\widetilde{B''}$ is the matrix obtained by applying the mutation in
the direction $y$ on $\widetilde{B'}$ and then deleting the row and
column labeled $x$; on the other hand, due to the second formula of
(\ref{twosubseeds}), since $y\neq x$, $\widetilde{B'''}$ is
identified with the matrix by first deleting the row and column
labeled $x$ in $\widetilde{B'}$ and then applying the mutation in
the direction $y$.

By calculation, since $\widetilde{X'}\ni x\neq y\in X'$, for
$s\in X'\setminus \{x\}$ and $t\in \widetilde{X'}\setminus\{x\}$, we have
\begin{eqnarray*} b''_{st}=b'''_{st}=
\left\{\begin{array}{lll} b'_{st}+\frac{\mid b'_{sy}\mid
b'_{yt}+b'_{sy}\mid b'_{yt}\mid}{2},&
\text{if} ~~s\neq y \neq t;\\
-b'_{st},& \text{if}~~ s=y \; \text{or}~~ t=y.\end{array}\right.
\end{eqnarray*} Hence, $\widetilde{B''}=\widetilde{B'''}$, which completes the
proof.

(iii)   Let $w_1=z_1$. As $z_1\neq x$, we have $\sigma_{x,1}(w_1)=z_1$. By (ii),
$\sigma_{x,1}(\mu_{w_1}(\Sigma) \backslash \{x\}
)=\mu'_{z_1}(\Sigma\setminus\{x\})$. Assume that for $t-1$,
$(z_1,\cdots,z_{t-1})$ can be lifted to a
$(\sigma_{x,1},\Sigma,\Sigma\setminus\{x\})$-biadmissible sequence
$(w_1,\cdots,w_{t-1})$ and
$\sigma_{x,1}(\mu_{w_{t-1}}\cdots\mu_{w_1}(\Sigma) \backslash
\{x\})=\mu'_{z_{t-1}}\cdots\mu'_{z_1}(\Sigma\setminus\{x\})$. Then we consider the case for
$t$. Since $z_t$ is exchangeable in
$\mu'_{z_{t-1}}\cdots\mu'_{z_1}(\Sigma\setminus\{x\})$ and
$\sigma_{x,1}(\mu_{w_{t-1}}\cdots\mu_{w_1}(\Sigma) \backslash \{x\}
)=\mu'_{z_{t-1}}\cdots\mu'_{z_1}(\Sigma\setminus\{x\})$,
there exists $w_t$ that is exchangeable in
$\mu_{w_{t-1}}\cdots\mu_{w_1}(\Sigma)$ such that
$\sigma_{x,1}(w_t)=z_t$. Using (ii) and
$\Sigma'=\mu_{w_{t-1}}\cdots\mu_{w_1}(\Sigma)$, by the induction assumption, we have
$$\sigma_{x,1}(\mu_{w_{t}}\mu_{w_{t-1}}\cdots\mu_{w_1}(\Sigma) \backslash
\{x\})=\mu'_{z_t}(\sigma_{x,1}(\mu_{w_{t-1}}\cdots\mu_{w_1}(\Sigma)\setminus\{x\}))=\mu'_{z_t}\mu'_{z_{t-1}}\cdots\mu'_{z_1}(\Sigma\setminus\{x\}).$$
\end{proof}

\begin{Lem}\label{acyclic}
If $\mathcal A(\Sigma)$ is an acyclic rooted cluster algebra, then
$\mathcal A(\Sigma\setminus \{x\})$ is acyclic for any $x\in \widetilde{X}$.
\end{Lem}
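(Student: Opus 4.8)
The plan is to split on whether the deleted variable $x$ is frozen or exchangeable, and throughout to exploit the fact, already contained in Lemma \ref{mutation2}, that deleting $x$ commutes with any mutation performed in a direction $y\neq x$. The frozen case is immediate: if $x\in X_{fr}$, then deleting $x$ removes only a column of $\widetilde B$ and leaves the principal part $B$ untouched, so the exchange matrix of $\Sigma\setminus\{x\}$ agrees with that of $\Sigma$ on the mutable vertices. Since a frozen variable is never a mutation direction and the matrix mutation formula in a mutable direction never reads off the deleted column, deletion of a frozen column commutes with every mutation. Hence, writing $\bar\Sigma=\mu_{y_s}\cdots\mu_{y_1}(\Sigma)$ for an acyclic seed in the class of $\Sigma$, I get that $\Sigma\setminus\{x\}$ is mutation equivalent to $\bar\Sigma\setminus\{x\}$, whose principal part coincides with that of $\bar\Sigma$ and is therefore acyclic; so $\mathcal A(\Sigma\setminus\{x\})$ is acyclic.

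Now suppose $x\in X$ is exchangeable. First I would use Lemma \ref{mutation2}(ii)--(iii): every $\Sigma\setminus\{x\}$-admissible sequence lifts to a sequence $(w_1,\dots,w_t)$ in $\Sigma$ with $w_i\neq x$ for all $i$ (as $\sigma_{x,1}(w_i)=z_i$ is a genuine variable while $\sigma_{x,1}(x)=1$), and $\sigma_{x,1}(\mu_{w_t}\cdots\mu_{w_1}(\Sigma)\setminus\{x\})=\mu'_{z_t}\cdots\mu'_{z_1}(\Sigma\setminus\{x\})$. Consequently the exchange matrices occurring in the mutation class of $\Sigma\setminus\{x\}$ are \emph{exactly} the induced submatrices, on the vertices other than $x$, of the seeds reachable from $\Sigma$ by mutation sequences that never mutate $x$. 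It therefore suffices to produce a single seed $\Sigma^{*}$, reachable from $\Sigma$ without ever mutating $x$, whose induced subquiver on the non-$x$ vertices is acyclic; equivalently, to show that the cluster algebra obtained from $\mathcal A(\Sigma)$ by freezing $x$ is again acyclic. I would attack this by induction on the mutation distance $d(\Sigma)$ from $\Sigma$ to an acyclic seed: whenever some geodesic to an acyclic seed begins with a mutation $\mu_y$ with $y\neq x$, I commute that mutation past the deletion (Lemma \ref{mutation2}) and apply the inductive hypothesis to $\mu_y(\Sigma)$.

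The hard part will be the case in which every geodesic from $\Sigma$ to an acyclic seed is forced to begin by mutating $x$. Mutating at $x$ creates ``shortcut arrows'' among the remaining vertices, governed by the term $\tfrac{|b_{jx}|b_{xk}+b_{jx}|b_{xk}|}{2}$, so deletion of $x$ does \emph{not} commute with mutation at $x$ in general, and the mutation class of $\Sigma\setminus\{x\}$ need not coincide with that of $\mu_x(\Sigma)\setminus\{\mu_x(x)\}$. The one favourable feature, which I would build the argument around, is that this shortcut term vanishes precisely when $x$ is a source or a sink, in which case deletion \emph{does} commute with $\mu_x$. My plan is then to route from $\Sigma$ to an acyclic target so that each unavoidable mutation at $x$ is performed while $x$ is a source or sink --- scheduling the mutations using a topological ordering of the target acyclic seed --- which is exactly the assertion that mutation-acyclicity is inherited by induced sub-seeds (freezings). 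Making this scheduling rigorous, namely controlling the shortcut arrows through $x$ along the entire path, is the main technical obstacle; once it is in place the induction closes, the exchangeable case follows, and together with the frozen case it completes the proof.
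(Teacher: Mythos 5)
Your treatment of the frozen case and your reduction of the exchangeable case via Lemma \ref{mutation2} are both sound: you correctly observe that the mutation class of $\Sigma\setminus\{x\}$ consists exactly of the restrictions to $\widetilde{X}\setminus\{x\}$ of the seeds reachable from $\Sigma$ by sequences that never mutate at $x$, so it suffices to reach, without mutating $x$, a seed whose principal part away from $x$ is acyclic. The genuine gap is the step you yourself flag as ``the main technical obstacle'': you never establish that such a seed exists when every mutation path from $\Sigma$ to an acyclic seed is forced to mutate at $x$. The proposed remedy --- scheduling the mutations by a topological order of the target so that $x$ is only ever mutated while it is a source or a sink --- is not substantiated, and it is not clear that it can be: the intermediate seeds along a geodesic need not admit such a reordering, and controlling the shortcut arrows $\frac{|b_{jx}|b_{xk}+b_{jx}|b_{xk}|}{2}$ created by an unavoidable mutation at a non-source, non-sink $x$ is precisely the content of the assertion that mutation-acyclicity is inherited by freezing. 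As written, the argument assumes the conclusion in the only case where it is not immediate.

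For what it is worth, your diagnosis of where the difficulty lies is sharper than the paper's own proof, which simply deletes every occurrence of $x$ from the mutation sequence and asserts ``by calculation'' that $\mu_{x_{i_s}}\cdots\mu_{x_{i_1}}(\Sigma)\setminus\{\mu_{x_{i_s}}\cdots\mu_{x_{i_1}}(x)\}=\mu_{x_{j_t}}\cdots\mu_{x_{j_1}}(\Sigma\setminus\{x\})$. That identity is exactly the commutation of deletion with mutation \emph{at} $x$, which you correctly note fails in general (already for the quiver $1\rightarrow 2\rightarrow 3$ with $x=2$ and the sequence $(2)$: the left-hand side acquires an arrow $1\rightarrow 3$ while the right-hand side has none). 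So the paper's argument for the exchangeable case is defective at the same point; a complete proof would have to establish directly that full sub-seeds of mutation-acyclic seeds are mutation-acyclic, which neither your proposal nor the paper actually does.
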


\begin{proof}
Since $\mathcal A(\Sigma)$ is acyclic, there exists a sequence $(x_{i_1},x_{i_2},\cdots,x_{i_s})$ with $1\leq
i_l\leq n$ for $1\leq l\leq s$ such that
$\mu_{x_{i_s}}\cdots\mu_{x_{i_1}}(\Sigma)$ is acyclic. By calculation, it is obvious that
$$\mu_{x_{i_s}}\cdots\mu_{x_{i_1}}(\Sigma)\setminus\{\mu_{x_{i_s}}\cdots\mu_{x_{i_1}}(x)\}=\mu_{x_{j_t}}\cdots\mu_{x_{j_1}}(\Sigma\setminus\{x\}),$$
where the sequence $(x_{j_1},\cdots,x_{j_t})$ is obtained by deleting $x$ from
the sequence $(x_{i_1},\cdots,x_{i_s})$. If we denote $\Sigma=(X, \widetilde{B})$, then $$\mu_{x_{i_s}}\cdots\mu_{x_{i_1}}(\Sigma)\setminus\{\mu_{x_{i_s}}\cdots\mu_{x_{i_1}}(x)\}=(\mu_{x_{i_s}}\cdots\mu_{x_{i_1}}(X)\setminus\{\mu_{x_{i_s}}\cdots\mu_{x_{i_1}}(x)\}, \mu_{x_{i_s}}\cdots\mu_{x_{i_1}}(\widetilde{B})\setminus\{\mu_{x_{i_s}}\cdots\mu_{x_{i_1}}(x)\}),$$
$$\mu_{x_{j_t}}\cdots\mu_{x_{j_1}}(\Sigma\setminus\{x\})=(\mu_{x_{j_t}}\cdots\mu_{x_{j_1}}(X\setminus\{x\}),\mu_{x_{j_t}}\cdots\mu_{x_{j_1}}(\widetilde{B}\setminus\{x\})).$$
It follows that
\begin{equation}\label{acyclicmatrix}
\mu_{x_{i_s}}\cdots\mu_{x_{i_1}}(\widetilde{B})\setminus\{\mu_{x_{i_s}}\cdots\mu_{x_{i_1}}(x)\}=\mu_{x_{j_t}}\cdots\mu_{x_{j_1}}(\widetilde{B}\setminus\{x\}).
\end{equation}

Since $\mu_{x_{i_s}}\cdots\mu_{x_{i_1}}(\widetilde{B})$ is acyclic, $\mu_{x_{i_s}}\cdots\mu_{x_{i_1}}(\widetilde{B})\setminus\{\mu_{x_{i_s}}\cdots\mu_{x_{i_1}}(x)\}$ is acyclic. Then by (\ref{acyclicmatrix}),
$\mu_{x_{j_t}}\cdots\mu_{x_{j_1}}(\widetilde{B}\setminus\{x\})$ is acyclic, which means $\mu_{x_{j_t}}\cdots\mu_{x_{j_1}}(\Sigma\setminus\{x\})$
 is acyclic.
\end{proof}

For a cluster algebra $\mathcal{A}=\mathcal{A}(\Sigma)$, recall that the {\bf
upper cluster algebra} of $\mathcal{A}$ is defined in \cite{[1]} as
$$\mathcal{U}:=\bigcap\limits_{\text{clusters} \; X\;\text{of}\;
\mathcal{A}}\mathbb{Z}\mathcal P[X^{\pm}],$$ where
$\mathbb{Z}\mathcal P$  is the coefficient ring of $\mathcal{A}$. By this definition, $\mathcal U$ is determined by all seeds of $\mathcal A$.
If a seed $\Sigma$ of $\mathcal{A}$ is given as initial, we denote the upper cluster algebra as $\mathcal{U}=\mathcal{U}(\Sigma)$. So, for any other seed $\Sigma'$, we have $\mathcal{U}(\Sigma)=\mathcal{U}(\Sigma')$.

Given any (initial) seed $\Sigma=(X, \widetilde{B})$ of $\mathcal A$, the {\bf upper
bound} of $\mathcal{A}(\Sigma)$ on $X$ is defined in \cite{[1]} as
$$\mathcal{U}_{X}(\Sigma):=\mathbb{Z}\mathcal P[X^{\pm 1}]\bigcap(\bigcap\limits_{x\in X}\mathbb{Z}\mathcal P[(\mu_{x}X)^{\pm 1}]).$$

By Laurent phenomenon and the definitions of upper cluster algebra and upper bound, clearly
$$\mathcal{A}\subseteq \mathcal{U}=\bigcap\limits_{\text{cluster}~ X~ \text{of}~\mathcal{A}} \mathcal{U}_{X}(\Sigma).$$

\begin{Lem}\label{include}
Let $\Sigma=(X,\widetilde{B})$ be an initial seed, where $x\in X$. Then
$\sigma_{x,1}(\mathcal{A}(\Sigma))\subseteq
\mathcal{U}(\Sigma\setminus\{x\})$ is the upper cluster algebra
of $\mathcal{A}(\Sigma\setminus\{x\})$ .
\end{Lem}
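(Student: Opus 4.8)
The plan is to reduce the containment to single cluster variables and then intersect over all clusters of $\mathcal{A}(\Sigma\setminus\{x\})$. Since $\mathcal{A}(\Sigma)$ is generated as a ring by its cluster variables and $\sigma_{x,1}$ is a ring homomorphism, it suffices to prove $\sigma_{x,1}(w)\in\mathcal{U}(\Sigma\setminus\{x\})$ for every cluster variable $w$ of $\mathcal{A}(\Sigma)$. Recalling $\mathcal{U}(\Sigma\setminus\{x\})=\bigcap_{X'}\mathbb{Z}\mathcal{P}[X'^{\pm}]$, where $X'$ ranges over all clusters of $\mathcal{A}(\Sigma\setminus\{x\})$ and $\mathbb{Z}\mathcal{P}$ is the (unchanged) coefficient ring, I would fix one such cluster $X'$ and show $\sigma_{x,1}(w)\in\mathbb{Z}\mathcal{P}[X'^{\pm}]$.

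The key input is the lifting statement Lemma \ref{mutation2}(iii). Let $X'$ be the cluster of $\mu'_{z_t}\cdots\mu'_{z_1}(\Sigma\setminus\{x\})$ for some $\Sigma\setminus\{x\}$-admissible sequence $(z_1,\cdots,z_t)$. By Lemma \ref{mutation2}(iii) this sequence lifts to a $(\sigma_{x,1},\Sigma,\Sigma\setminus\{x\})$-biadmissible sequence $(w_1,\cdots,w_t)$ with $\sigma_{x,1}(w_i)=z_i$ and
$$\sigma_{x,1}\big(\mu_{w_t}\cdots\mu_{w_1}(\Sigma)\setminus\{x\}\big)=\mu'_{z_t}\cdots\mu'_{z_1}(\Sigma\setminus\{x\}).$$
Since each $z_i$ is a genuine cluster variable rather than an integer, we must have $w_i\neq x$ for all $i$; hence $x$ is never mutated along $(w_1,\cdots,w_t)$ and survives as a cluster variable in the cluster $Y:=\mu_{w_t}\cdots\mu_{w_1}(X)$ of $\mathcal{A}(\Sigma)$. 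Comparing the cluster parts in the displayed equality then gives $\sigma_{x,1}(Y\setminus\{x\})=X'$.

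The conclusion now follows from the Laurent phenomenon. Applied in $\mathcal{A}(\Sigma)$ with initial cluster taken to be $Y$, it gives $w\in\mathbb{Z}\mathcal{P}[Y^{\pm}]$, so $w$ is a Laurent polynomial in $Y\setminus\{x\}$ and $x$ with coefficients in $\mathbb{Z}\mathcal{P}$. Applying $\sigma_{x,1}$ substitutes $x\mapsto 1$ and fixes the frozen variables generating $\mathcal{P}$, whence
$$\sigma_{x,1}(w)\in\mathbb{Z}\mathcal{P}\big[\sigma_{x,1}(Y\setminus\{x\})^{\pm}\big]=\mathbb{Z}\mathcal{P}[X'^{\pm}].$$
As $X'$ was an arbitrary cluster of $\mathcal{A}(\Sigma\setminus\{x\})$, the element $\sigma_{x,1}(w)$ lies in the intersection $\mathcal{U}(\Sigma\setminus\{x\})$, and the lemma follows.

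I expect the main obstacle to be the bookkeeping in the second paragraph: one must verify that the lifted sequence never mutates $x$, so that $x$ remains a cluster variable and the specialisation $x\mapsto 1$ does not interfere with the other mutated variables, and that the seed equality of Lemma \ref{mutation2}(iii) really identifies the specialised cluster $\sigma_{x,1}(Y\setminus\{x\})$ with the target cluster $X'$. The remaining steps are direct, once one uses that deleting the mutable variable $x$ leaves the coefficient ring $\mathbb{Z}\mathcal{P}$ unchanged so that the Laurent phenomenon can be applied cluster by cluster.
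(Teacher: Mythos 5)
Your proof is correct and follows essentially the same route as the paper's: both arguments fix an arbitrary cluster $X'$ of $\mathcal{A}(\Sigma\setminus\{x\})$, lift its defining admissible sequence via Lemma \ref{mutation2}(iii) to a biadmissible sequence avoiding $x$, identify the specialised cluster with $X'$, and then invoke the Laurent phenomenon in the lifted cluster before intersecting over all $X'$. The only cosmetic difference is that the paper additionally cites (\cite{ADS}, Proposition 6.13) to handle the initial cluster separately, a step your general argument subsumes.
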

\begin{proof}
By (\cite{ADS}, Proposition 6.13),
$\sigma_{x,1}(\mathcal{U}_{X}(\Sigma))\subseteq
\mathcal{U}_{X\setminus\{x\}}(\Sigma\setminus\{x\})$. Since
$\mathcal{A}\subseteq \mathcal{U}_{X}(\Sigma)$ and
$\mathcal{U}_{X\setminus\{x\}}(\Sigma\setminus\{x\})\subseteq
\mathbb{Z}\mathcal P[X^{\pm 1}\setminus \{x^{\pm 1}\}]$, we have
\begin{equation}\label{huangeq}
\sigma_{x,1}(\mathcal{A}(\Sigma))\subseteq \mathbb{Z}\mathcal
P[X^{\pm 1}\setminus\{x^{\pm 1}\}].
\end{equation}

For any cluster $X'$ of $\mathcal{A}(\Sigma\setminus\{x\})$, there
exists a $\Sigma\setminus\{x\}$-admissible sequence
$(z_1,\cdots,z_s)$ such that
$\mu'_{z_s}\cdots\mu'_{z_1}(X\setminus\{x\})=X'$. By Lemma
\ref{mutation2}(iii), $(z_1,\cdots,z_s)$ can be lifted to a
$(\sigma_{x,1},\Sigma,\Sigma\setminus\{x\})$-biadmissible sequence
$(y_1,\cdots,y_s)$ and
$\sigma_{x,1}(\mu_{y_s}\cdots\mu_{y_1}(\Sigma)\setminus\{x\})=\mu_{z_s}\cdots\mu_{z_1}(\Sigma\setminus\{x\})$.
Hence,
$\sigma_{x,1}(\mu_{y_s}\cdots\mu_{y_1}(X)\setminus\{x\})=\sigma_{x,1}(\mu_{y_s}\cdots\mu_{y_1}(X))\setminus\{1\}=X'$,
and then,
\begin{equation}\label{Laurent1}
\sigma_{x,1}(\mathbb{Z}\mathcal
P[(\mu_{y_s}\cdots\mu_{y_1}(X))^{\pm1}])\subseteq \mathbb{Z}\mathcal
P[X'^{\pm 1}].
\end{equation}
By Laurent phenomenon,
 \begin{equation}\label{Laurent2}
 \mathcal{A}(\Sigma)=\mathcal{A}(\mu_{y_s}\cdots\mu_{y_1}(\Sigma))\subseteq
\mathbb{Z}\mathcal P [(\mu_{y_s}\cdots\mu_{y_1}(X))^{\pm1}].
\end{equation}
From (\ref{Laurent1}) and (\ref{Laurent2}), we have
$\sigma_{x,1}(\mathcal{A}(\Sigma))\subseteq
\mathbb{Z}\mathcal P[X'^{\pm 1}].$
Hence by the definition of upper
cluster algebra and the arbitrariness of $X'$ as cluster in
$\mathcal{A}(\Sigma\setminus\{x\})$, it follows that
$\sigma_{x,1}(\mathcal{A}(\Sigma))\subseteq
\mathcal{U}(\Sigma\setminus\{x\})$.
\end{proof}

\begin{Lem}\label{acyclicsub1}
 Given an initial seed $\Sigma=(X,\widetilde{B})$, in case either $x\in X_{fr}$ or $\mathcal A(\Sigma\setminus \{x\})$ is acyclic, then (i)~ $\sigma_{x,1}(\mathcal{A}(\Sigma))=\mathcal{A}(\Sigma\setminus\{x\})$ and (ii)~ $\sigma_{x,1}$ is surjective. \end{Lem}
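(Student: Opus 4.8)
The plan is to prove (i) by establishing the two inclusions $\mathcal{A}(\Sigma\setminus\{x\})\subseteq\sigma_{x,1}(\mathcal{A}(\Sigma))$ and $\sigma_{x,1}(\mathcal{A}(\Sigma))\subseteq\mathcal{A}(\Sigma\setminus\{x\})$ separately; once (i) is in hand, (ii) is immediate, since surjectivity of $\sigma_{x,1}$ onto $\mathcal{A}(\Sigma\setminus\{x\})$ is literally the statement that the image equals the target. Both inclusions will be reduced to tracking what $\sigma_{x,1}$ does to cluster variables along admissible mutation sequences, for which Lemma \ref{mutation2} is the main engine. Note that the hypothesis ``$x\in X_{fr}$ or $\mathcal{A}(\Sigma\setminus\{x\})$ acyclic'' will only be needed for one of the two inclusions.

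For the inclusion $\mathcal{A}(\Sigma\setminus\{x\})\subseteq\sigma_{x,1}(\mathcal{A}(\Sigma))$, I would argue that every generator of $\mathcal{A}(\Sigma\setminus\{x\})$ lies in the image. The frozen variables of $\Sigma\setminus\{x\}$ are frozen variables of $\Sigma$ and hence are $\sigma_{x,1}$-images of themselves. For a cluster variable, write it as $\mu'_{z_s}\cdots\mu'_{z_1}(z)$ for some $\Sigma\setminus\{x\}$-admissible sequence $(z_1,\dots,z_s)$; by Lemma \ref{mutation2}(iii) this sequence lifts to a $(\sigma_{x,1},\Sigma,\Sigma\setminus\{x\})$-biadmissible sequence $(w_1,\dots,w_s)$ with $\sigma_{x,1}(\mu_{w_s}\cdots\mu_{w_1}(\Sigma)\setminus\{x\})=\mu'_{z_s}\cdots\mu'_{z_1}(\Sigma\setminus\{x\})$. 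Thus each cluster variable of the right-hand seed is the $\sigma_{x,1}$-image of a cluster variable of $\mu_{w_s}\cdots\mu_{w_1}(\Sigma)$, which lies in $\mathcal{A}(\Sigma)$. Since $\sigma_{x,1}(\mathcal{A}(\Sigma))$ is a subring containing all these generators, the inclusion follows, and this argument is valid in both cases of the hypothesis.

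For the reverse inclusion $\sigma_{x,1}(\mathcal{A}(\Sigma))\subseteq\mathcal{A}(\Sigma\setminus\{x\})$ I would split into the two cases. If $x\in X_{fr}$, then every $\Sigma$-admissible sequence $(y_1,\dots,y_s)$ automatically has $y_i\neq x$, so Lemma \ref{mutation2}(i)--(ii) applies and, by induction on $s$, $\sigma_{x,1}$ commutes with mutation; hence the $\sigma_{x,1}$-image of every cluster variable of $\mathcal{A}(\Sigma)$ is a cluster variable of $\mathcal{A}(\Sigma\setminus\{x\})$, while the coefficient ring of $\mathcal{A}(\Sigma)$ maps onto that of $\mathcal{A}(\Sigma\setminus\{x\})$, giving the inclusion. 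If instead $\mathcal{A}(\Sigma\setminus\{x\})$ is acyclic, I would invoke Lemma \ref{include}, which already yields $\sigma_{x,1}(\mathcal{A}(\Sigma))\subseteq\mathcal{U}(\Sigma\setminus\{x\})$, and then use the theorem of \cite{[1]} that an acyclic cluster algebra coincides with its upper cluster algebra, so that $\mathcal{U}(\Sigma\setminus\{x\})=\mathcal{A}(\Sigma\setminus\{x\})$.

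The main obstacle is precisely this last step: the acyclic-case inclusion rests entirely on the nontrivial equality $\mathcal{A}=\mathcal{U}$ for acyclic cluster algebras (\cite{[1]}), which is exactly why acyclicity of $\mathcal{A}(\Sigma\setminus\{x\})$ is assumed and cannot be dropped in general; the frozen case, by contrast, is a routine induction via the commutation of $\sigma_{x,1}$ with mutation recorded in Lemma \ref{mutation2}. Finally, combining the two inclusions yields (i), and (ii) follows because $\sigma_{x,1}$ then maps $\mathcal{A}(\Sigma)$ onto $\mathcal{A}(\Sigma\setminus\{x\})$.
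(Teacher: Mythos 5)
Your proposal follows essentially the same route as the paper's own proof: the inclusion $\mathcal{A}(\Sigma\setminus\{x\})\subseteq\sigma_{x,1}(\mathcal{A}(\Sigma))$ via lifting admissible sequences through Lemma \ref{mutation2}(iii), and the reverse inclusion split into the frozen case (induction using Lemma \ref{mutation2}(i)--(ii)) and the acyclic case (Lemma \ref{include} combined with $\mathcal{A}=\mathcal{U}$). The only discrepancy is the source you cite for $\mathcal{A}=\mathcal{U}$: the paper invokes Muller's theorem \cite{M. G}, which applies to (locally) acyclic cluster algebras without extra hypotheses, whereas the version in \cite{[1]} requires coprimality of the seed in addition to acyclicity, so you should cite Muller here.
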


\begin{proof} (ii) is trivial from (i). So, we only need to prove (i).

For any cluster variable
$z\in\mathcal{A}(\Sigma\setminus\{x\})$,
$z=\mu_{z_t}\cdots\mu_{z_1}(z_0)$ for some
$\Sigma\setminus\{x\}$-admissible sequence $(z_1,\cdots,z_t)$ and
$z_0\in \widetilde{X}\setminus\{x\}$. According to Lemma
\ref{mutation2} (iii), $(z_1,\cdots,z_t)$ can lift to a
$(\sigma_{x,1},\Sigma,\Sigma\setminus\{x\})$-biadmissible sequence
$(y_1,\cdots, y_t)$. Thus,
$\sigma_{x,1}(\mu_{y_t}\cdots\mu_{y_1}(y_0))=\mu_{z_t}\cdots\mu_{z_1}(z_0)=z$,
where $y_0=z_0\in\widetilde{X}\setminus\{x\}$. Therefore,
$\sigma_{x,1}(\mathcal{A}(\Sigma))\supseteq
\mathcal{A}(\Sigma\setminus \{x\})$ since $\mathcal{A}(\Sigma\setminus
\{x\})$ is generated by all cluster variables.

In order to prove $\sigma_{x,1}(\mathcal{A}(\Sigma))\subseteq
\mathcal{A}(\Sigma\setminus \{x\})$, it is enough to claim
$\sigma_{x,1}(\mu_{y_{n}}\cdots\mu_{y_{2}}\mu_{y_{1}}(y))\in
\mathcal{A}(\Sigma\setminus \{x\})$ for any composition of mutations
$\mu_{y_{n}}\cdots\mu_{y_{2}}\mu_{y_{1}}$ and $y\in \widetilde{X}$
since $\mathcal{A}(\Sigma)$ is generated by all cluster variables.

Suppose $x\in X_{fr}$. Since $x$ is frozen, $y_{i}\neq x$ for any $i=1,\cdots,n$.

When $n=1$, it is clear by Lemma
\ref{mutation2} (i). Using induction on $n$ and by Lemma \ref{mutation2} (ii), in the case for $n\geq 2$, we have
$$\sigma_{x,1}(\mu_{y_{n}}\cdots\mu_{y_{2}}\mu_{y_{1}}(y))=\left\{\begin{array}{lll} \mu'_{y_{n}}\cdots\mu'_{y_{2}}\mu'_{y_{1}}(y),&
\text{if}\; y \neq x;\\
1,& \text{if}~ y=x.\end{array}\right.$$
which is certainly in $\mathcal{A}(\Sigma\setminus \{x\})$. Thus, the result
follows.

Suppose $\mathcal A(\Sigma\setminus \{x\})$ is acyclic. Then by (\cite{M. G}, Theorem 2),
$\mathcal{A}(\Sigma\setminus\{x\})=\mathcal{U}(\Sigma\setminus\{x\})$.
Thus, the result follows from Lemma \ref{include}.
\end{proof}

This lemma improves the results in
\cite{ADS} in some cases mentioned before Proposition \ref{acyclicsub} as the partial answer of the
problem in \cite{ADS} that whether $\sigma_{x,1}$ is surjective.

\begin{Rem}
(1) The case $x\in X_{fr}$ has been mentioned in (\cite{CZ},
2.39).

(2) Following this lemma and Lemma \ref{acyclic}, $\sigma_{x,1}$ is
surjective if $\mathcal{A}(\Sigma\setminus\{x\})$ is acyclic.
\end{Rem}

Since $I_1=I'_1\cup I''_1$ for $I'_1\subseteq X$ and
$I''_1\subseteq X_{fr}$, using Lemma \ref{acyclicsub1} step-by-step
at $x \in I_1$ and according to (\cite{ADS},Proposition
6.9), $\sigma_{x,1}$ is an ideal surjective rooted cluster morphism.
Since $\sigma_{x,1}$ is an algebra homomorphism, we have the following.

\begin{Thm}\label{cluster quo.}
For a seed $\Sigma$, if the rooted cluster algebra $\mathcal A(\Sigma)$ is acyclic, then for any $I_1\subseteq \widetilde{X}$, the pure sub-cluster algebra
$\mathcal{A}(\Sigma_{\emptyset, I_1})$ is a rooted cluster quotient algebra of $\mathcal{A}(\Sigma)$ via $\sigma_{I_1,1}=\prod\limits_{x\in I_1}\sigma_{x,1}:\mathcal{A}(\Sigma)\rightarrow \mathcal{A}(\Sigma_{\emptyset,I_1})$, i.e. the specialisation of $\mathcal A(\Sigma)$ at $I_1$.
\end{Thm}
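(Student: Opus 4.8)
The plan is to realise the specialisation $\sigma_{I_1,1}$ as a composition of single simple specialisations, removing the elements of $I_1$ one at a time, and to verify at each stage that the hypotheses of Lemma \ref{acyclicsub1} remain satisfied. Writing $I_1=\{x_{s_1},\cdots,x_{s_l}\}$, I would set $\Sigma^{(0)}=\Sigma$ and $\Sigma^{(k)}=\Sigma^{(k-1)}\setminus\{x_{s_k}\}$ for $1\leq k\leq l$; by (\ref{steppresent}) this gives $\Sigma^{(k)}=\Sigma_{\emptyset,\{x_{s_1},\cdots,x_{s_k}\}}$, and in particular $\Sigma^{(l)}=\Sigma_{\emptyset,I_1}$.

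The first step is to propagate acyclicity along this chain. Since $\mathcal{A}(\Sigma^{(0)})=\mathcal{A}(\Sigma)$ is acyclic by hypothesis, Lemma \ref{acyclic} shows that $\mathcal{A}(\Sigma^{(1)})=\mathcal{A}(\Sigma^{(0)}\setminus\{x_{s_1}\})$ is acyclic; an easy induction on $k$, applying Lemma \ref{acyclic} at each step, yields that every $\mathcal{A}(\Sigma^{(k)})$ is acyclic. This is precisely the input required by Lemma \ref{acyclicsub1} at the next stage.

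Next, for each $k$ I would apply Lemma \ref{acyclicsub1} to the seed $\Sigma^{(k-1)}$ and the variable $x_{s_k}$: as $\mathcal{A}(\Sigma^{(k-1)}\setminus\{x_{s_k}\})=\mathcal{A}(\Sigma^{(k)})$ is acyclic, the lemma gives $\sigma_{x_{s_k},1}(\mathcal{A}(\Sigma^{(k-1)}))=\mathcal{A}(\Sigma^{(k)})$, so $\sigma_{x_{s_k},1}$ is a surjective algebra homomorphism onto $\mathcal{A}(\Sigma^{(k)})$. By (\cite{ADS}, Proposition 6.9), an algebra homomorphism of this shape is automatically a surjective ideal rooted cluster morphism $\mathcal{A}(\Sigma^{(k-1)})\rightarrow\mathcal{A}(\Sigma^{(k)})$ in \textbf{Clus}.

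Finally, since $\sigma_{I_1,1}=\sigma_{x_{s_l},1}\cdots\sigma_{x_{s_1},1}$ is a composition of surjective rooted cluster morphisms, it is itself a surjective rooted cluster morphism from $\mathcal{A}(\Sigma)$ to $\mathcal{A}(\Sigma_{\emptyset,I_1})$, which is exactly the assertion that $\mathcal{A}(\Sigma_{\emptyset,I_1})$ is a rooted cluster quotient algebra of $\mathcal{A}(\Sigma)$. I expect no deep obstacle here: the load-bearing point is the acyclicity-propagation of Lemma \ref{acyclic}, which lets the induction close, while the remaining content is bookkeeping, namely identifying the iterated deletions with $\Sigma_{\emptyset,\{x_{s_1},\cdots,x_{s_k}\}}$ and noting that surjectivity survives composition of ring maps.
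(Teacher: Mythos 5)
Your proposal is correct and follows essentially the same route as the paper, which likewise obtains the result by applying Lemma \ref{acyclicsub1} step-by-step at each $x\in I_1$ together with (\cite{ADS}, Proposition 6.9) and then composing the resulting surjective rooted cluster morphisms. Your explicit use of Lemma \ref{acyclic} to propagate acyclicity along the chain of deletions is exactly the (implicit) load-bearing step in the paper's one-sentence argument, so nothing is missing.
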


Trivially, Proposition \ref{acyclicsub} is a special case of this
theorem.

By Theorem \ref{cluster quo.}, we know that a part of the set of rooted
cluster quotient algebras consists of
some pure sub-cluster algebras of rooted cluster algebras.

 Then the following remaining question is still open: \\
{\bf Question}:\; In the case that the
cluster algebra $\mathcal A(\Sigma\setminus\{x\})$ is cyclic, is $\sigma_{x,1}$ surjective or, equivalently,
Im$\sigma_{x,1}\subseteq \mathcal A(\Sigma\setminus\{x\})$?

As a corollary of Theorem \ref{cluster quo.}, we show the finite type and finite mutation type of mixing-type sub-rooted cluster algebras of a rooted cluster algebra as follows.

A (rooted) cluster algebra $\mathcal{A}=\mathcal{A}(\Sigma)$ is said to be of
{\bf finite type} if it has finite
cluster variables, and to be of {\bf finite mutation type} if it has finite number of exchange matrices up to similar permutation of
rows and columns, under all mutations (see \cite{fosth}\cite{fz1}\cite{fz2}). It is easy to see that a cluster algebra of finite type is always of finite mutation type.

\begin{Cor}\label{cor6.8}
\rm{ All mixing-type sub-rooted cluster algebras of a rooted cluster algebra of finite type (respectively, finite mutation type) are of finite type (respectively, finite mutation type).}
\end{Cor}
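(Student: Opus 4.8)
The plan is to derive both assertions from the single fact that mutation commutes with the mixing-type sub-seed operation, namely Proposition \ref{submu}. Since finite type and finite mutation type are invariants of the isomorphism class in \textbf{Clus} (by Proposition \ref{basiclem} a seed isomorphism $\Sigma'\cong\Sigma_{I_0,I_1}$ lifts to a rooted cluster isomorphism $\mathcal{A}(\Sigma')\cong\mathcal{A}(\Sigma_{I_0,I_1})$, which carries cluster variables to cluster variables and exchange matrices to matrices agreeing up to a permutation of indices), it suffices to treat the concrete sub-seed $\Sigma_{I_0,I_1}$ itself. The exchangeable variables of $\Sigma_{I_0,I_1}$ are precisely those of $X\setminus(I_0\cup I_1)$ together with their mutation descendants, while the members of $I_0$ remain frozen and those of $I_1$ remain deleted; hence every $\Sigma_{I_0,I_1}$-admissible sequence $(y_1,\dots,y_s)$ mutates only in directions to which Proposition \ref{submu} applies at each stage. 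Inducting on $s$ therefore yields the positive seed isomorphism
\[
\mu_{y_s}\cdots\mu_{y_1}(\Sigma_{I_0,I_1})\cong\bigl(\mu_{y_s}\cdots\mu_{y_1}(\Sigma)\bigr)_{I_0,I_1},
\]
which realizes each seed of the mutation class of $\Sigma_{I_0,I_1}$ as the $(I_0,I_1)$-sub-seed of a corresponding seed of the mutation class of $\Sigma$, and in particular identifies its exchange matrix with the principal submatrix of the latter's exchange matrix indexed by the surviving exchangeable variables.

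For the finite mutation type case I would argue as follows. If $\mathcal{A}(\Sigma)$ is of finite mutation type, then $\{\widetilde B(\Sigma'')\mid\Sigma''\simeq\Sigma\}$ is finite up to permutation of rows and columns. Any fixed square matrix has only finitely many principal submatrices, so the collection of all principal submatrices arising from this finite set is again finite up to permutation. By the displayed isomorphism the exchange matrices of the $\Sigma_{I_0,I_1}$-mutation class form a subset of this collection, whence $\mathcal{A}(\Sigma_{I_0,I_1})$ has finitely many exchange matrices up to permutation, i.e. it is of finite mutation type.

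For the finite type case, recall finite type means finitely many cluster variables, equivalently a finite exchange graph and hence finitely many seeds. The displayed isomorphism gives a surjection from the set of seeds of the $\Sigma$-mutation class reachable by mutating only in directions outside $I_0\cup I_1$ onto the set of seeds of the $\Sigma_{I_0,I_1}$-mutation class (sending a seed to its $(I_0,I_1)$-sub-seed). Consequently the number of seeds of the $\Sigma_{I_0,I_1}$-mutation class is bounded above by the number of seeds of $\mathcal{A}(\Sigma)$, which is finite; a finite set of seeds contains only finitely many cluster variables, so $\mathcal{A}(\Sigma_{I_0,I_1})$ is of finite type. Note that for both counts only an upper bound is required, so the correspondence of seeds (respectively matrices) need not be injective.

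The step needing the most care is the legitimacy of the iterated use of Proposition \ref{submu}: one must check that the frozen set $I_0$ and the deleted set $I_1$ stay inert along every admissible sequence, so that each successive mutation direction truly lies in the complement $X\setminus(I_0\cup I_1)$ demanded by that proposition, after which the induction is routine. I note that the finite-type half admits an alternative route through Theorem \ref{cluster quo.}: finite type forces acyclicity, the freezing passage $\Sigma\to\Sigma_{I_0,\emptyset}$ keeps cluster variables among those of $\mathcal{A}(\Sigma)$ via the identity $\mu_x(\Sigma_{I_0,\emptyset})=\mu_x(\Sigma)_{I_0,\emptyset}$ of Proposition \ref{clustersub}, and the deletion passage is the surjective specialisation $\sigma_{I_1,1}$ whose target is generated by images of cluster variables; however this detour does not address the finite-mutation-type statement, which genuinely requires the principal-submatrix argument above, so I would present the uniform proof via Proposition \ref{submu}.
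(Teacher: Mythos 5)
Your finite-mutation-type argument is correct and is essentially the paper's argument in a lightly different dress: the iterated form of Proposition \ref{submu} identifies every exchange matrix occurring in the mutation class of $\Sigma_{I_0,I_1}$, up to permutation of indices, with a submatrix of one of the finitely many exchange matrices of the mutation class of $\Sigma$, and that is all finite mutation type requires.

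The finite-type half as you present it has a genuine gap. Proposition \ref{submu} and its iteration give an isomorphism in $\textbf{Seed}$, not an equality of seeds: the variable $\mu_x^{\Sigma_{I_0,I_1}}(x)$ omits the factors indexed by $I_1$ and is therefore a different rational function from $\mu_x^{\Sigma}(x)$, so $(\mu_{y_s}\cdots\mu_{y_1}(\Sigma))_{I_0,I_1}$ is not a seed of the mutation class of $\Sigma_{I_0,I_1}$ at all --- it merely has the same combinatorial type. Hence the map ``seed of the $\Sigma$-class $\mapsto$ its $(I_0,I_1)$-sub-seed'' does not surject onto the seeds of the $\Sigma_{I_0,I_1}$-class, and your count only bounds the number of seeds up to isomorphism in $\textbf{Seed}$, i.e.\ the number of exchange matrices. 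That cannot suffice: finitely many exchange matrices up to permutation is precisely finite mutation type, and finite mutation type does not imply finite type (rank two with $b_{12}b_{21}=-4$ has a single exchange matrix under mutation but infinitely many cluster variables), so no argument that tracks only $\textbf{Seed}$-isomorphism classes can yield the finite-type assertion. The ``alternative route'' you relegate to a closing remark is in fact the paper's proof and is the one that works: finite type forces acyclicity; the freezing step uses the genuine equality $\mu_{x}(\Sigma_{I_{0},\emptyset})=\mu_{x}(\Sigma)_{I_{0},\emptyset}$ of (\ref{embedding}) (valid because the extended clusters coincide), which places the cluster variables of $\mathcal{A}(\Sigma_{I_0,\emptyset})$ literally inside the finite set of cluster variables of $\mathcal{A}(\Sigma)$; the deletion step uses the surjective specialisation $\sigma_{I_1,1}$ of Theorem \ref{cluster quo.}, under which every cluster variable of $\mathcal{A}(\Sigma_{I_0,I_1})$ is the image of a cluster variable of $\mathcal{A}(\Sigma_{I_0,\emptyset})$. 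You should promote that remark to the actual proof of the finite-type half in place of the uniform argument via Proposition \ref{submu}.
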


\begin{proof}
 Let $\mathcal A(\Sigma)$ be a rooted cluster algebra of finite type.
According to (\cite{fz2}, Theorem 1.4), since it is of finite
type, the corresponding Cartan matrix of $\mathcal A(\Sigma)$ is of Dynkin type, which means that $\mathcal A(\Sigma)$ is acyclic. Then, $\mathcal A(\Sigma_{I_0,I_1})$ is also acyclic. Note that $\Sigma_{I_0,I_1}=(\Sigma_{I_0, \emptyset})_{\emptyset,I_1}$.
 By Theorem  \ref{cluster quo.}, there is a surjective morphism from $\mathcal{A}(\Sigma_{I_0, \emptyset})$ to
$\mathcal{A}(\Sigma_{I_0, I_1})$ in {\bf Clus}. Hence, in order to show the finite type of
$\mathcal{A}(\Sigma_{I_0, I_1})$, it is enough to prove by (\cite{ADS}, Corollary 6.3)
 that $\mathcal{A}(\Sigma_{I_0, \emptyset})$ is of
finite type. By Theorem \ref{rooted cluster
subalgebra}, $\mathcal{A}(\Sigma_{I_0,\emptyset})$ is a rooted
cluster subalgebra of $\mathcal{A}(\Sigma)$. So, by (\cite{ADS},
Corollary 4.6(2)),  all cluster variables of
$\mathcal{A}(\Sigma_{I_0,\emptyset})$ are also in $\mathcal{A}(\Sigma)$.  Then, $\mathcal{A}(\Sigma_{I_0,\emptyset})$
is of finite type since $\mathcal{A}(\Sigma)$  is so.

It is easy to see that
$(\mu_{x}(\Sigma_{I_0,\emptyset}))_{\emptyset,I_1}=\mu_{x}(\Sigma_{I_0,I_1})$
for any $x\in X\setminus (I_0\cup I_1)$. Therefore, to show that
$\mathcal{A}(\Sigma_{I_0,I_1})$ is of finite mutation type, it suffices to
prove that $\mathcal{A}(\Sigma_{I_0,\emptyset})$ is of finite mutation type.
  By (\ref{embedding}),   all mutations of $\Sigma_{I_{0},\emptyset}$ are sub-seeds of some mutations of $\Sigma$. Hence,  $\mathcal{A}(\Sigma_{I_0,\emptyset})$
is of finite mutation type since $\mathcal{A}(\Sigma)$ is of finite mutation type.
\end{proof}

Furthermore, in the next sub-section, we give the
characterization of rooted cluster quotient algebras of rooted
cluster algebras, which are not sub-cluster algebras, through the method
of gluing frozen variables.

\subsection{Rooted cluster quotient algebras via gluing method}.

In this section, we give another class of rooted cluster quotient algebras
via gluing frozen variables.

For a surjective rooted cluster morphism $g:\mathcal{A}(\Sigma)\rightarrow \mathcal{A}(\Sigma')$ and
$I_1=\{x\in \widetilde{X}|g(x)\in\mathbb{Z}\}$, by Definition \ref{reducedseed}, we can define $\Sigma^{(g)}=(X^{(g)},\widetilde{B^{(g)}})$, the contraction of $\Sigma$ under $g$.
We will show that there is a surjective rooted cluster
morphism $f:\mathcal{A}(\Sigma^{(g)})\rightarrow
\mathcal{A}(\Sigma')$
 satisfying  $f(X^{(g)})= X'$ and
$f(\widetilde{X^{(g)}})=\widetilde{X'}$.

First, we have a unique algebra
homomorphism
\begin{equation}\label{morphismf}
f:\mathbb{Q}[X^{(g)}_{fr}][X^{(g)\pm 1}]\rightarrow
\mathbb{Q}[X'_{fr}][X'^{\pm 1}]
\end{equation}
 such that $f(x)=g(x)$ for $x\in
\widetilde{X^{(g)}}$ and $f(x^{-1})=g(x)^{-1}$ for $x\in X^{(g)}$.
By Laurent phenomenon, $\mathcal{A}(\Sigma^{(g)})\subseteq
\mathbb{Q}[X^{(g)}_{fr}][X^{(g)\pm 1}]$.

\begin{Prop}\label{inducedhom}  Let $g:\mathcal{A}(\Sigma)\rightarrow \mathcal{A}(\Sigma')$
be a surjective rooted cluster morphism. Define $f$ as above in (\ref{morphismf}) and restrict $f$ to $\mathcal{A}(\Sigma^{(g)})$. Then $f:\mathcal{A}(\Sigma^{(g)})\rightarrow
\mathcal{A}(\Sigma')$ is a surjective
rooted cluster morphism satisfying  $f(X^{(g)})= X'$ and
$f(\widetilde{X^{(g)}})=\widetilde{X'}$.

This surjective rooted cluster morphism $f$ is called the {\bf
contraction} of $g$.
\end{Prop}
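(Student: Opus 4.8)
The plan is to verify the three defining conditions CM1--CM3 for $f$, together with surjectivity and the asserted equalities $f(X^{(g)})=X'$ and $f(\widetilde{X^{(g)}})=\widetilde{X'}$, drawing on the seed homomorphism $g^{S}\colon\Sigma^{(g)}\to\Sigma'$ already produced in Proposition \ref{induceseed} and shown to be surjective in Corollary \ref{sur.}(2). The guiding principle is that $f$ is nothing but the realisation of $g^{S}$ on cluster variables, so most of the work is transporting the conditions known for $g$ through the contraction.

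First I would settle the image equalities and CM1, CM2 at once. For $x\in\widetilde{X^{(g)}}$ we have $f(x)=g(x)$ and, by definition of $\Sigma^{(g)}$, $g(x)\notin\mathbb Z$; hence CM1 for $g$ forces $g(x)\in\widetilde{X'}$, and CM2 for $g$ forces $g(x)\in X'$ whenever $x\in X^{(g)}$. This gives $f(\widetilde{X^{(g)}})\subseteq\widetilde{X'}$ and $f(X^{(g)})\subseteq X'$, hence CM1 and CM2. The reverse inclusions are exactly the surjectivity of $g^{S}$ from Corollary \ref{sur.}(2), since $g^{S}$ and $f$ agree on $\widetilde{X^{(g)}}$; thus $f(\widetilde{X^{(g)}})=\widetilde{X'}$ and $f(X^{(g)})=X'$. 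In particular $f$ is noncontractible: no generator of $\mathcal A(\Sigma^{(g)})$ is sent into $\mathbb Z$.

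The core is CM3, which I would prove by the mutation bookkeeping of Lemma \ref{mutation2}. Because $f(X^{(g)})=X'$, every $\Sigma^{(g)}$-admissible sequence should be $(f,\Sigma^{(g)},\Sigma')$-biadmissible, once one checks that the property ``$f$ sends exchangeable variables onto exchangeable variables'' is preserved under a single mutation; this preservation is the mutation-compatibility of the contraction, analogous to Proposition \ref{submu}, and it reduces CM3 to the base identity $f(\mu^{\Sigma^{(g)}}_{x}(x))=\mu^{\Sigma'}_{f(x)}(f(x))$ for $x\in X^{(g)}$, followed by induction. For the base identity I would expand $\mu^{\Sigma^{(g)}}_{x}(x)$ using the contracted matrix $\widetilde{B^{(g)}}$, apply $f=g$, and compare with $g(\mu^{\Sigma}_{x}(x))$, which by CM3 for $g$ equals $\mu^{\Sigma'}_{g(x)}(g(x))$. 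Splitting $\widetilde X=\widetilde{X^{(g)}}\sqcup I_{1}$, the variables of $I_{1}$ contribute only integer constants $P^{+},P^{-}$ to the two monomials of $g(\mu^{\Sigma}_{x}(x))$; since $\mu^{\Sigma'}_{g(x)}(g(x))$ is a pure binomial in the algebraically independent set $\widetilde{X'}$, matching terms forces $P^{+}=P^{-}=1$, so the contracted exchange relation reproduces exactly $\mu^{\Sigma'}_{f(x)}(f(x))$. This is precisely the role of the clause of $\widetilde{B^{(g)}}$ that retains $b_{xy}$ when no adjacent $I_{1}$-variable is killed.

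The main obstacle is the case in which some $z\in I_{1}$ has $g(z)=0$, where a whole row of $\widetilde{B^{(g)}}$ is forced to zero and the naive comparison breaks, since an exchange monomial of $g(\mu^{\Sigma}_{x}(x))$ collapses. To isolate this difficulty I would first record, via Remark \ref{c}, that if $g(x)\neq 0$ for every $x\in\widetilde X$ then $\Sigma^{(g)}=\Sigma_{\emptyset,I_{1}}$ and the clean computation above applies verbatim; so the substance is to control the zero-specialisations, and I expect to dispose of them by showing their effect is already compatible, namely that wherever an $I_{1}$-neighbour of $x$ is sent to $0$, CM3 for $g$ constrains $\mu^{\Sigma'}_{g(x)}(g(x))$ to the surviving monomial, while the correspondingly zeroed row of $\widetilde{B^{(g)}}$ is defined so that $f(\mu^{\Sigma^{(g)}}_{x}(x))$ matches it. Finally surjectivity follows formally: $f(\widetilde{X^{(g)}})=\widetilde{X'}$ together with CM3 and the admissibility lifting of Lemma \ref{mutation2}(iii) places every cluster variable of $\mathcal A(\Sigma')$ in $\mathrm{Im}\,f$, while the same CM3 confines $f(\mathcal A(\Sigma^{(g)}))$ to $\mathcal A(\Sigma')$; hence $\mathrm{Im}\,f=\mathcal A(\Sigma')$ and $f$ is a surjective rooted cluster morphism.
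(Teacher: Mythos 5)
Your overall architecture (CM1/CM2 and the image equalities via Corollary \ref{sur.}(2), then CM3 by a base exchange-relation identity plus induction through a mutation-compatibility statement for the contraction, then surjectivity by lifting admissible sequences) is the same as the paper's, and the CM1/CM2 part is fine. But two of your load-bearing steps are not actually available as stated. First, the compatibility $(\mu_{y}^{\Sigma}(\Sigma))^{(g)}\cong \mu_{y}^{\Sigma^{(g)}}(\Sigma^{(g)})$ is not merely ``analogous to Proposition \ref{submu}'': unlike a mixing-type sub-seed, $\widetilde{B^{(g)}}$ has entries forcibly set to $0$ whenever some $I_1$-neighbour is sent to $0$, and verifying that matrix mutation commutes with this zeroing requires the sign constraints that the paper extracts from CM3 for $g$ in the degenerate case (e.g.\ that when $g(w)=0$ for $w$ adjacent to $y$ one has $b_{yw}b_{xy}<0$ and $b_{yz}b_{yw}>0$, so the correction term $\frac{|b_{xy}|b_{yz}+b_{xy}|b_{yz}|}{2}$ vanishes exactly where it must). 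This is the hardest computation in the paper's proof (its Lemma \ref{basicfact}) and your plan asserts it rather than proving it; without it both your induction for CM3 and your biadmissibility claim collapse. Relatedly, your base-case assertion that matching against the pure binomial ``forces $P^{+}=P^{-}=1$'' is false in general: the paper's case analysis shows the alternatives $g(A_1)=0,\ g(A_2)=2,\ g(\prod_2)=1$ (which is precisely why $\widetilde{B^{(g)}}$ must zero the whole row and the contracted exchange yields $\frac{2}{y_1}$) and $g(A_1)+g(A_2)=2$ when both products are empty. You do flag the zero case, but the non-zero case with constants summing to $2$ also needs to be ruled in.

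Second, your surjectivity argument cites Lemma \ref{mutation2}(iii), which is a lifting statement specifically for the simple specialisation $\sigma_{x,1}:\mathcal{A}(\Sigma)\rightarrow\mathcal{A}(\Sigma\setminus\{x\})$ and says nothing about the contraction of an arbitrary surjective $g$; it cannot be applied here. What is actually needed is the lifting of $\Sigma'$-admissible sequences along the surjective morphism $g$ (the paper invokes \cite{ADS}, Proposition 6.2 for this) together with a correspondence between $(g,\Sigma,\Sigma')$-biadmissible sequences and $(f,\Sigma^{(g)},\Sigma')$-biadmissible sequences carrying equal images (the paper's Claim 2, itself proved by induction using the mutation-compatibility above and Proposition \ref{generate}). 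Your proposal contains no substitute for this correspondence, yet it is indispensable both to transfer CM3 from $g$ to $f$ on arbitrary biadmissible sequences and to conclude that every cluster variable of $\mathcal{A}(\Sigma')$ lies in the image of $f$. As written, the proof would not close.
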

\begin{proof}
First, we show that $f$ is a rooted cluster morphism. Obviously, $f$ satisfies CM1 and CM2. Now we prove that CM3 holds for $f$. For this, we need the following two claims.

\textbf{Claim 1:}\; If $(y_1,\cdots,y_t)$ is
$\Sigma^{(g)}$-admissible, then it is $(f, \Sigma^{(g)},
\Sigma')$-biadmissible.

 For the case $t=1$, since $y_1\in
X^{(g)}=X\setminus I_1$, we have $f(y_1)=g(y_1)\in X'$. Hence, the sequence $(y_1)$ of length 1 is
$(f,\Sigma^{(g)},\Sigma')$-biadmissible.

Assume the claim holds for the case $t-1$. Now using induction, we prove the claim for case $t$.

For any $y\in
\widetilde{X^{(g)}}\subseteq \widetilde{X}$, we have
$\mu_{y_1}^{\Sigma}(y)=\left\{\begin{array}{lll} y,& \text{if}
~~y\neq y_1;\\
\frac{\prod_1 A_1+\prod_2 A_2}{y_1},&
\text{if}~~y=y_1,\end{array}\right.$ where $A_i$ are
monomials over $I_1$ and $\prod_i$ are monomials over
$\widetilde{X}\setminus I_1$. It follows that
\begin{equation}\label{formp19-1}
g(\mu_{y_1}^{\Sigma}(y))=\left\{\begin{array}{lll} g(y),& \text{if}
~~y\neq y_1;\\
\frac{g(\prod_1)g(A_1)+g(\prod_2)g(A_2)}{g(y_1)},&
\text{if}~~y=y_1.\end{array}\right.\end{equation}

 By the definition of $\Sigma^{(g)}$,
we have, for any $y\in \widetilde{X^{(g)}}\subseteq \widetilde{X}$,
\begin{eqnarray*}
\mu_{y_1}^{\Sigma^{(g)}}(y)=\left\{\begin{array}{lll} y,& \text{if}
~~y\neq y_1;\\
\frac{\prod_1+\prod_2}{y_1},& \text{if}~~y=y_1, g(A_1)\neq 0
\neq g(A_2);\\
\frac{2}{y_1},& \text{if}~~y=y_1,
g(A_1)g(A_2)=0.\end{array}\right.\end{eqnarray*}
Therefore,
\begin{equation}\label{formp19-2}
f(\mu_{y_1}^{\Sigma^{(g)}}(y))=g(\mu_{y_1}^{\Sigma^{(g)}}(y))=\left\{\begin{array}{lll}
g(y),& \text{if}
~~y\neq y_1;\\
\frac{g(\prod_1)+g(\prod_2)}{g(y_1)},& \text{if}~~y=y_1, g(A_1)g(A_2)\neq 0;\\
\frac{2}{g(y_1)},& \text{if}~~y=y_1,
g(A_1)g(A_2)=0.\end{array}\right.\end{equation}

On the other
hand, $\mu_{g(y_1)}(g(y_1))=\frac{B_1+B_2}{g(y_1)}$, where $B_i$ are
monomials over $\widetilde{X'}$. By CM3, we have
$$\mu_{g(y_1)}(g(y_1))=g(\mu_{y_1}^{\Sigma}(y_1))=\frac{g(\prod_1)
g(A_1)+g(\prod_2) g(A_2)}{g(y_1)}$$
 so,  $\frac{g(\prod_1)
g(A_1)+g(\prod_2) g(A_2)}{g(y_1)}=\frac{B_1+B_2}{g(y_1)}$. Thus,
$g(\prod_1) g(A_1)+g(\prod_2) g(A_2)=B_1+B_2$. Comparing the two sides of this formula, since $g(\prod_i)$
are monomials over $\widetilde{X'}$ and $g(A_i)\in \mathbb{Z}$, by
the algebraic independence of $\widetilde{X'}$, we discuss the
following cases:

{\bf Case 1.}\; $g(A_1)g(A_2)=0$. Without loss of generality, assume
$g(A_1)=0$. Then $g(\prod_2)=1$ and $g(A_2)=B_1+B_2=2$.

{\bf Case 2.}\; $g(A_1)g(A_2)\neq 0$. Then, we have:

 (1) if $g(\prod_1)g(\prod_2)\neq 1$, then $g(A_1)=g(A_2)=1$ and

(2) if $g(\prod_1)=g(\prod_2)=1$, then $g(A_2)+g(A_2)=B_1+B_2=2$.\\
 Both (1)
and (2) mean $g(\prod_1) g(A_1)+g(\prod_2)
g(A_2)=g(\prod_1)+g(\prod_2)$.

Following this discussion, comparing (\ref{formp19-1}) and (\ref{formp19-2}), we get that
\begin{equation}\label{basic1}
f(\mu_{y_1}^{\Sigma^{(g)}}(y))=g(\mu_{y_1}^{\Sigma}(y))=\mu_{g(y_1)}(g(y))=\mu_{f(y_1)}(f(y)).
\end{equation}

Now we need to prove the important relation on the new seed $\Sigma^{(g)}$, which is as follows.
\begin{Lem}\label{basicfact}
There exists a positive isomorphism $(\mu_{y}^{\Sigma}(\Sigma))^{(g)}\cong
\mu_{y}^{\Sigma^{(g)}}(\Sigma^{(g)})$,
 where $y\in X\setminus I_1$.
\end{Lem}
\begin{proof}
Denote $(\mu_{y}^{\Sigma}(\Sigma))$ by $(Y,\widetilde{C})$,
$(\mu_{y}^{\Sigma}(\Sigma))^{(g)}$ by $(Y',\widetilde{C}')$ and
$\mu_{y}^{\Sigma^{(g)}}(\Sigma^{(g)})$ by $(Y'',\widetilde{C}'')$.

By definition, we have $$Y'=(X\setminus (I_1\cup
\{y\}))\cup\{\mu_{y}^{\Sigma}(y)\},\;\;\;
\widetilde{Y}'=(\widetilde{X}\setminus (I_1\cup
\{y\}))\cup\{\mu_{y}^{\Sigma}(y)\},$$ $$Y''=(X\setminus (I_1\cup
\{y\}))\cup\{\mu_{y}^{\Sigma^{(g)}}(y)\},\;\;\;
\widetilde{Y}''=(\widetilde{X}\setminus (I_1))\cup
\{y\})\cup\{\mu_{y}^{\Sigma^{(g)}}(y)\}.$$ Comparing the sets
$\widetilde{Y}'$ and $\widetilde{Y}''$, their elements can be
in one-to-one correspondence with the identity map but the
correspondence between $\mu_{y}^{\Sigma}(y)$ and
$\mu_{y}^{\Sigma^{(g)}}(y)$.

According to definition, we have
\begin{eqnarray*}
c_{xz}= \left\{\begin{array}{lll} b_{xz}+\frac{|b_{xy}|b_{yz}+b_{xy}|b_{yz}|}{2}, & \;\text{if}\; x\neq \mu_y^{\Sigma}(y)\neq z; \\
    -b_{xz},& \;\text{if}\; x=\mu_y^{\Sigma}(y) \;\text{or}\; z=\mu_y^{\Sigma}(y).\\
\end{array}\right.
\end{eqnarray*}
\begin{eqnarray*}
c'_{xz}= \left\{\begin{array}{lll}c_{xz}, & \;\text{if}\; g(w)\neq 0 \;\forall w\in I_1\;\text{adjacent to}\; x \;\text{or}\; z;\\
    0,& \text{otherwise}.\\
\end{array}\right.
\end{eqnarray*}
\begin{eqnarray*}
c''_{xz}= \left\{\begin{array}{lll} b^{(g)}_{xz}+\frac{|b^{(g)}_{xy}|b^{(g)}_{yz}+b^{(g)}_{xy}|b^{(g)}_{yz}|}{2}, & \;\text{if}\; x\neq \mu_y^{\Sigma^{(g)}}(y)\neq z; \\
    -b^{(g)}_{yz} \;\text{or}\; -b^{(g)}_{xy},& \;\text{if}\; x=\mu_y^{\Sigma^{(g)}}(y) \;\text{or}\; z=\mu_y^{\Sigma^{(g)}}(y).\\
\end{array}\right.
\end{eqnarray*}

We will show $\widetilde{C}'=\widetilde{C}''$ in three cases, which are as follows:
\\
(i)~ In case $x\in X\setminus (I_1\cup\{y\})$ and $z\in \widetilde{X}\setminus (I_1\cup\{y\})$.

(1) If there exists no $w\in \widetilde{X}$ adjacent to $y$ with $g(w)=0$,
 then
\begin{eqnarray*}
c'_{xz}=c''_{xz}= \left\{\begin{array}{lll}b_{xz}+\frac{|b_{xy}|b_{yz}+b_{xy}|b_{yz}|}{2}, & \;\text{if}\; g(w)\neq 0 \; \forall w\in I_1 \;\text{adjacent to}\; x \;\text{or}\; z;\;\\
    0,& \text{otherwise}.\\
\end{array}\right.
\end{eqnarray*}

(2) If there exists a $w\in \widetilde{X}$ adjacent to $y$ with
$g(w)=0$, then from the formula (\ref{basic1}),
$b_{yw}b_{xy}<0$, $b_{yz}b_{yw}>0$ and $b_{xy}b_{yz}<0$, we
have
\begin{eqnarray*}
c'_{xz}=c''_{xz}= \left\{\begin{array}{lll}b_{xz}, & \;\text{if}\; g(w)\neq 0 \;\forall\in I_1 \;\text{ adjacent to}\; x \;\text{or}\; z;\;\\
    0,& \text{otherwise}.\\
\end{array}\right.
\end{eqnarray*}
 \\
(ii)~ In case $z\in \widetilde{X}\setminus (I_1\cup\{y\})$.

(1) If there exists no $w\in \widetilde{X}$ adjacent to $y$ with $g(w)=0$, then
\begin{eqnarray*}
c'_{(\mu_{y}^{\Sigma}(y))z}=c''_{(\mu_{y}^{\Sigma^{(g)}}(y))z}= \left\{\begin{array}{lll}-b_{yz}, & \;\text{if}\; g(w)\neq 0\; \forall w\in I_1 \text{ adjacent to}\; z;\;\\
    0,& \text{otherwise}.\\
\end{array}\right.
\end{eqnarray*}

(2) If there exists a $w\in \widetilde{X}$ adjacent to $y$ with $g(w)=0$, then
$c'_{(\mu_{y}^{\Sigma}(y))z}=c''_{(\mu_{y}^{\Sigma^{(g)}}(y))z}=0$.
\\
(iii)~ In case $x\in X\setminus (I_1\cup\{y\})$, we have similarly
$c'_{x(\mu_{y}^{\Sigma}(y))}=c''_{x(\mu_{y}^{\Sigma^{(g)}}(y))}$.

According to the above discussion on $c'_{xy}$ and
$c''_{xy}$, under the correspondence between the sets
$\widetilde{Y}'$ and $\widetilde{Y}''$, we get that
$\widetilde{C}'=\widetilde{C}''$.

Hence,
$(\mu_{y}^{\Sigma}(\Sigma))^{(g)}\cong
\mu_{y}^{\Sigma^{(g)}}(\Sigma^{(g)})$.
\end{proof}

Let us return to prove the proposition.

By Lemma \ref{basicfact}, we have
$(\mu_{y_1}^{\Sigma}(\Sigma))^{(g)}\cong
\mu_{y_1}^{\Sigma^{(g)}}(\Sigma^{(g)})$. Since $(y_1,\cdots,y_{t})$
is $\Sigma^{(g)}$-admissible, it means $(y_2,\cdots,y_{t})$ is
$(\mu_{y_1}^{\Sigma}(\Sigma))^{(g)}$-admissible.

By Proposition \ref{generate},
$g:\mathcal{A}(\mu_{y_1}^{\Sigma}(\Sigma))\rightarrow
\mathcal{A}(\mu_{g(y_1)}(\Sigma'))$ is a surjective rooted cluster
morphism.  For this $g$, using the induction assumption,
$(y_2,\cdots,y_{t})$ is
$(f,\mu_{y_1}^{\Sigma^{(g)}}(\Sigma^{(g)}),\mu_{f(y_1)}(\Sigma'))$-biadmissible.
Therefore,  $(y_1,\cdots,y_t)$ is
$(f,\Sigma^{(g)},\Sigma')$-biadmissible. Hence, the claim holds.

\textbf{Claim 2:}\; For a $(f, \Sigma^{(g)}, \Sigma')$-biadmissible
$(y_1,\cdots,y_t)$, there exists uniquely a
$(g,\Sigma,\Sigma')$-biadmissible sequence $(x_1,\cdots,x_t)$ such
that
\begin{equation}\label{tworelations}
 f(\mu_{y_t}^{\Sigma^{(g)}}\cdots\mu_{y_1}^{\Sigma^{(g)}}(y))=g(\mu_{x_t}^{\Sigma}\cdots\mu_{x_1}^{\Sigma}(y))\;\;\;\;\text{and}\;\;\;\; g(x_i)=f(y_i)
\end{equation}
for all $1\leq i\leq t$ and any $y\in \widetilde{X^{(g)}}\subseteq\widetilde{X}$.
Conversely, for a $(g,\Sigma,\Sigma')$-biadmissible sequence
$(x_1,\cdots,x_t)$, there exists uniquely an $(f, \Sigma^{(g)},
\Sigma')$-biadmissible $(y_1,\cdots,y_t)$ such that the relations in (\ref{tworelations}) are satisfied.

In the case $t=1$, let $x_1=y_1$. Then we have $g(x_1)=f(y_1)$, and
by the formula (\ref{basic1}),
$f(\mu_{y_1}^{\Sigma^{(g)}}(z))=g(\mu_{x_1}^{\Sigma}(z))=\mu_{f(y_1)}(f(z))$
for any $z\in \widetilde{X^{(g)}}$. So, Claim 2 holds for $t=1$.

Assume that Claim 2 holds in the case for $t-1$. In the case for
$t$, by Proposition \ref{generate},
$g:\mathcal{A}(\mu_{x_1}^{\Sigma}(\Sigma))\rightarrow\mathcal{A}(\mu_{g(x_1)}(\Sigma'))$
is a surjective rooted cluster morphism. By (\ref{basicfact}),
$(\mu_{x_1}^{\Sigma}(\Sigma))^{(g)}\cong\mu_{y_1}^{\Sigma^{(g)}}(\Sigma^{(g)})$.
Hence, by induction assumption, there exits a
$(g,\mu_{x_1}(\Sigma),\mu_{g(x_1)(\Sigma')})$-biadmissible sequence
$(x_2,\cdots,x_t)$ such that $g(x_i)=f(x_i)$ for $2\leq i\leq t$ and
$g(\mu_{x_t}^{\Sigma}\cdots\mu_{x_2}^{\Sigma}(y))=f(\mu_{y_t}^{\Sigma^{(g)}}\cdots\mu_{y_2}^{\Sigma^{(g)}}(y))$
for all cluster variables $y$ in
$\mu_{y_1}^{\Sigma^{(g)}}(\Sigma^{(g)})$. Therefore,
$(x_1,\cdots,x_t)$ is the $(g,\Sigma,\Sigma')$-biadmissible sequence
such that $g(x_i)=f(y_i)$ for all $1\leq i \leq k$ and
$f(\mu_{y_t}^{\Sigma^{(g)}}\cdots\mu_{y_1}^{\Sigma^{(g)}}(z))=g(\mu_{x_t}^{\Sigma}\cdots\mu_{x_1}^{\Sigma}(z))$
for all $z\in \widetilde{X^{(g)}}$.

Conversely, we can prove similarly for a $(g,\Sigma,\Sigma')$-biadmissible sequence
$(x_1,\cdots,x_t)$ that there exists uniquely a $(f, \Sigma^{(g)},
\Sigma')$-biadmissible sequence $(y_1,\cdots,y_t)$ such that the relations in (\ref{tworelations}) are satisfied.

Hence, the claim follows.

Now, we prove the CM3 condition for $f$. For any
$(f,\Sigma^{(g)},\Sigma')$-biadmissible sequence $(y_1,\cdots,y_t)$,
by Claim 2, there exists uniquely $(g,\Sigma,\Sigma')$-biadmissible
sequence $(x_1,\cdots,x_t)$ such that $g(x_i)=f(y_i)$ for all $1\leq
i\leq t$ and
$f(\mu_{y_t}^{\Sigma^{(g)}}\cdots\mu_{y_1}^{\Sigma^{(g)}}(z))=g(\mu_{x_t}^{\Sigma}\cdots\mu_{x_1}^{\Sigma}(z))$
for all $z\in \widetilde{X^{(g)}}$. By CM3 for $g$,
$g(\mu_{x_t}^{\Sigma}\cdots\mu_{x_1}^{\Sigma}(z))=\mu_{g(x_t)}\cdots\mu_{g(x_1)}(g(z))$.
Then,
$f(\mu_{y_t}^{\Sigma^{(g)}}\cdots\mu_{y_1}^{\Sigma^{(g)}}(z))=\mu_{f(y_t)}\cdots\mu_{f(y_1)}(f(z))$
for all $z\in \widetilde{X^{g}}$.  Hence, CM3 for $f$ follows.

To show $f:
\mathcal{A}(\Sigma^{(g)})\rightarrow\mathcal{A}(\Sigma')$ is a ring
homomorphism,  it suffices to prove that
$f(\mathcal{A}(\Sigma^{(g)}))\subseteq \mathcal{A}(\Sigma')$ since
$g$ is a ring homomorphism. In fact, for any cluster variable $y\in
\mathcal{A}(\Sigma^{(g)})$, there exists a $\Sigma^{(g)}$-admissible
sequence $(y_1,\cdots,y_t)$ and $y_0\in \widetilde{X^{(g)}}$ such
that
$y=\mu_{y_t}^{\Sigma^{(g)}}\cdots\mu_{y_1}^{\Sigma^{(g)}}(y_0)$. By
Claim 1, any $\Sigma^{(g)}$-admissible sequence is
$(f,\Sigma^{(g)},\Sigma')$-biadmissible. Then by CM3,
$f(y)=\mu_{f(y_t)}\cdots\mu_{f(y_1)}(f(y_0))$, and by CM1 and the
definition of $f$, $f(y)$ is a cluster variable of
$\mathcal{A}(\Sigma')$. So, $f(\mathcal{A}(\Sigma^{(g)}))\subseteq
\mathcal{A}(\Sigma')$.

Now,  we verify that $f$ is surjective.  As $\mathcal{A}(\Sigma')$ is generated by all cluster
variables, it suffices to prove that any cluster variable $z\in \mathcal{A}(\Sigma')$ can be lifted to a cluster variable in
$\mathcal{A}(\Sigma^{(g)})$.

We have $z=\mu_{z_l}\cdots\mu_{z_1}(z_0)$ for some
$\Sigma'$-admissible sequence $(z_1,\cdots,z_l)$ and $z_0\in
\widetilde{X'}$. As $g$ is surjective, by (\cite{ADS}, Proposition
6.2), there exists a $(g,\Sigma,\Sigma')$-biadmissible sequence
$(x_1,\cdots,x_l)$ and $x_0\in \widetilde{X}$ such that
$g(\mu_{x_l}^{\Sigma}\cdots\mu_{x_1}^{\Sigma}(x_0))=\mu_{z_l}\cdots\mu_{z_1}(z_0)=z$.
 It is clear to see that $x_0\in \widetilde{X}\setminus I_1$.

According to Claim 2, there exists a
$(f,\Sigma,\Sigma')$-biadmissible sequence $(y_1,\cdots,y_l)$ such
that $$f(\mu_{y_l}^{\Sigma^{(g)}}\cdots\mu_{y_1}^{\Sigma^{(g)}}(x_0))=g(\mu_{x_l}^{\Sigma}\cdots\mu_{x_1}^{\Sigma}(x_0)).$$
So, it follows that
$f(\mu_{y_l}^{\Sigma^{(g)}}\cdots\mu_{y_1}^{\Sigma^{(g)}}(x_0))=z$. It means that $f$ is surjective.

In summary, we have shown that  $f:\mathcal{A}(\Sigma^{(g)})\rightarrow
\mathcal{A}(\Sigma')$ is a surjective
rooted cluster morphism.

Last, by the definition of $f$, we have $f(X^{(g)})= X'$ and
$f(\widetilde{X^{(g)}})=\widetilde{X'}$.
\end{proof}

 In Proposition \ref{inducedhom}, $f=g$ if $g$ is noncontractible, i.e. $I_1=\emptyset$ and then $\Sigma^{(g)}=\Sigma$. By definition and Proposition \ref{inducedhom}, it is easy to see the following.
\begin{Lem}  A
surjective rooted cluster morphism $g: \mathcal A(\Sigma)\rightarrow \mathcal A(\Sigma')$ is noncontractible if and only if $g(X)=X'$
and $g(\widetilde{X})=\widetilde{X'}$.
\end{Lem}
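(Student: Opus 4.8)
The plan is to prove the two implications separately, reducing the forward direction (``only if'') almost immediately to Proposition \ref{inducedhom}, and obtaining the reverse direction (``if'') by unwinding the definition of the set $I_1$ in Definition \ref{reducedseed}. Throughout I will use that $g$ is, by hypothesis, a surjective rooted cluster morphism, so that Proposition \ref{inducedhom} applies to it.

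For the ``only if'' direction, I would start from the assumption that $g$ is noncontractible, i.e. $I_1=\{x\in\widetilde{X}\mid g(x)\in\mathbb{Z}\}=\emptyset$. By Definition \ref{reducedseed} this forces $\Sigma^{(g)}=\Sigma$, and moreover $f(x)=g(x)$ for every $x\in\widetilde{X}$, so the contraction $f$ of $g$ coincides with $g$ itself, exactly as recorded in the remark preceding the statement. Proposition \ref{inducedhom} asserts that this contraction satisfies $f(X^{(g)})=X'$ and $f(\widetilde{X^{(g)}})=\widetilde{X'}$; substituting $\Sigma^{(g)}=\Sigma$ and $f=g$ then yields $g(X)=X'$ and $g(\widetilde{X})=\widetilde{X'}$, as desired. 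Thus this direction is a direct corollary of Proposition \ref{inducedhom}.

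For the ``if'' direction, I would assume $g(X)=X'$ and $g(\widetilde{X})=\widetilde{X'}$ and show $I_1=\emptyset$. Suppose for contradiction that some $x\in\widetilde{X}$ lies in $I_1$, i.e. $g(x)\in\mathbb{Z}$. Since the extended cluster $\widetilde{X'}$ consists of variables algebraically independent over $\mathbb{Q}$, no element of $\widetilde{X'}$ is an integer, so $\widetilde{X'}\cap\mathbb{Z}=\emptyset$. Then $g(x)\in g(\widetilde{X})$ while $g(x)\notin\widetilde{X'}$, contradicting the set equality $g(\widetilde{X})=\widetilde{X'}$. Hence $I_1=\emptyset$, and $g$ is noncontractible. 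Note that only the hypothesis $g(\widetilde{X})=\widetilde{X'}$ is actually used here; the equality $g(X)=X'$ is recorded for symmetry and follows in the same way from CM2.

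As for the main difficulty: there is essentially none beyond correctly invoking the earlier results, and the single point requiring care is in the reverse direction, where one must check that the \emph{set} equality $g(\widetilde{X})=\widetilde{X'}$ genuinely precludes any $x$ with $g(x)\in\mathbb{Z}$. This rests entirely on the disjointness $\widetilde{X'}\cap\mathbb{Z}=\emptyset$, which itself comes from the defining algebraic independence of the extended cluster; once this observation is made, the equivalence closes at once.
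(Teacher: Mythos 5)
Your proof is correct and takes essentially the same route as the paper, which justifies the lemma only with the phrase ``by definition and Proposition \ref{inducedhom}'': the forward direction is exactly the substitution $\Sigma^{(g)}=\Sigma$ and $f=g$ into Proposition \ref{inducedhom}, and the reverse direction is the observation that $\widetilde{X'}\cap\mathbb{Z}=\emptyset$ by algebraic independence, so $g(\widetilde{X})=\widetilde{X'}$ forces $I_1=\emptyset$. Both steps are sound and nothing is missing.
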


Owing to this and Proposition \ref{inducedhom}, in the sequel, in order to characterize the quotient from a surjective rooted
cluster morphism $f$, we always assume that $f$ is noncontractible, that is,  the conditions $f(X)=X'$
and $f(\widetilde{X})=\widetilde{X'}$ are satisfied.

 For two sets $U$ and $V$, $\varphi$ is called a {\bf partial map} from $U$ to $V$ on a subset $W$ of $U$ if $\varphi: W\rightarrow V$ is a map, denoted as $\varphi_W: U\rightarrow V$.

\begin{Def}\label{gluing}

Given a seed $\Sigma=(X, X_{fr},\widetilde{B})$, a subset $S\subseteq
\widetilde{X}$ and a partial injective map $\varphi_S:\;
\widetilde{X}\rightarrow \widetilde{X}$ satisfying $\varphi_S(S\cap
X_{fr})\subseteq X_{fr}$, we define a new seed
$\Sigma_{\varphi_S}=(\overline X,\widetilde{\overline B})$ (denoted
as $\Sigma_{\widehat{S\varphi_S(S)}}$) {\bf by gluing $S$ and
$\varphi_S(S)$ under $\varphi_S$} as follows:

(i) ~For any $s\in S$, define a new variable $\overline{s}$,  called the {\bf gluing variable} of $s$ and $\varphi_S(s)$  on
$\varphi_S$. For any subset  $T\subseteq S$, let
$\overline{T}=\{\overline{s}\;|\;s\in T\}$, which is called the {\bf
gluing set} of $T$ and $\varphi_S(T)$ on $\varphi_S$.

(ii) ~ Define $\widetilde{\overline X}=(\widetilde{X}\setminus (S\cup
\varphi_S(S)))\cup \overline{S}$ and  $\overline X=(X\setminus (S\cup
\varphi_S(S)))\cup \overline{S\cap X}$
 and its extended exchange matrix $\widetilde{\overline B}$ to be a $\#\overline X\times
\#\widetilde{\overline X}$ matrix satisfying:
\begin{eqnarray*}\overline b_{z_{1}z_{2}}= \left\{\begin{array}{lll} b_{z_{1}z_{2}}, & \text{if}\; z_1\in X\setminus(S\cup \varphi_S(S)), z_2\in \widetilde{X}\setminus(S\cup \varphi_S(S)) ;\\
b_{y_{1}y_{2}},& \text{if}\; z_1=\overline{y_{1}}\in \overline{S\cap X}, z_2=\overline{y_{2}}\in \overline{S};\\
b_{z_{1}y_{2}},& \text{if}\; z_1\in X\setminus(S\cup \varphi_S(S)), z_2=\overline{y_{2}}\in \overline{S}, y_{2}= \varphi_S(y_{2});\\
b_{z_{1}y_{2}}+b_{z_{1}\varphi_S(y_{2})},& \text{if}\; z_1\in X\setminus(S\cup \varphi_S(S)), z_2=\overline{y_{2}}\in \overline{S}, y_{2}\neq \varphi_S(y_{2});\\
b_{y_{1}z_{2}},& \text{if}\; z_1=\overline{y_{1}}\in \overline{S\cap X}, z_2\in \widetilde{X}\setminus(S\cup \varphi_S S).\\
\end{array}\right.
\end{eqnarray*}
\end{Def}

Following this definition, in the new seed
$\Sigma_{\varphi_S}=(\overline X, \widetilde{\overline B})$, we have
$\overline X_{fr}=(X_{fr}\setminus (S\cup \varphi_S(S)))\cup
\overline{S\cap X_{fr}}$.
 It is easy to see that $\widetilde{\overline B}$ is
skew-symmetrizable.

\begin{Ex}
Let $Q$ be the quiver:\; $\xymatrix{x_1\ar[r]^{}&*+[F]{x_2}&x_3\ar[l]^{}}$ and let $S=\{x_1\}$. Define $\varphi_S(x_1)=x_3$. Then $\Sigma(Q)_{\varphi_S}=\Sigma(Q')$, where $Q':\xymatrix{\overline{x}_1\ar[r]^{}&*+[F]{x_2}}$.
\end{Ex}

\begin{Rem}
~ For $y_{1},y_{2}\in X_{fr}$ in $\Sigma$,  define $S=\{y_1\}$ and
$\varphi_S: \{y_1\}\rightarrow \{y_2\}$ to get $\varphi_S(S)=\{y_2\}$.
Then, we get $\Sigma_{\widehat{y_{1}y_{2}}}=(\overline
X,\widetilde{\overline B})$, where $\overline X=X$,
$\widetilde{\overline X}=\widetilde{X}\setminus\{y_{1},y_{2}\}\cup
\{\overline y\}$ and its extended exchange matrix
$\widetilde{\overline B}$ is of $n\times (n+m-1)$ satisfying
$\overline b_{xy}=b_{xy}$ and $\overline b_{x\overline
y}=b_{xy_{1}}+b_{xy_{2}}$ for all $x\in X$ and $y\in
\widetilde{\overline X}\setminus \{\overline y\}$. In this
situation, we say $\overline y$ to be the {\bf gluing variable} of
$y_1$ and $y_2$.
\end{Rem}

Let $\Sigma=(X,\widetilde{B})$ be a seed with $y_{1}$ and $y_{2}$ as two frozen
variables. We can obtain a natural ring morphism $\pi: \;
\mathds Q[X_{fr}][X^{\pm 1}]\rightarrow\mathds F(\Sigma_{\widehat{y_{1}y_{2}}})$
satisfying $$\pi(x)=x \;\; \forall\; x\in\widetilde{X},  x\neq y_{1},
x\neq y_2,\;\; \text{and} \;\;\pi(y_{1})=\pi(y_{2})=\overline y.$$
Restricting $\pi$ on $\mathcal{A}(\Sigma)$,  such morphism $\pi_0=\pi|_{\mathcal{A}(\Sigma)}$ is called the {\bf canonical morphism induced by gluing $y_{1}, y_{2}\in X_{fr}$}.

To illustrate our result, we need the following lemma.

\begin{Lem}\label{glue}
Let $y_{1}, y_{2}\in X_{fr}$ in seed $\Sigma$ such that
$b_{xy_1}b_{xy_2}\geq 0$ for all $x\in X$. Then for any exchangeable
variable $z\in X$, there is a positive isomorphism
$\mu_{z}(\Sigma_{\widehat{y_{1}y_{2}}})\overset{h}{\cong}(\mu_{z}(\Sigma))_{\widehat{y_{1}y_{2}}}.$
\end{Lem}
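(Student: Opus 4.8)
The plan is to produce an explicit bijection $h$ between the variable sets of the two seeds and then verify that their extended exchange matrices coincide \emph{entrywise} under $h$; by Definition \ref{seediso} and the criterion that $b_{xy}=b'_{h(x)h(y)}$, this exhibits $h$ as a \emph{positive} seed isomorphism. This follows the same template as Proposition \ref{submu}. Write $\Sigma=(X,\widetilde B)$ and let $\overline y$ be the gluing variable of $y_1,y_2$ (take $S=\{y_1\}$, $\varphi_S(y_1)=y_2$, which is legitimate since $y_1,y_2\in X_{fr}$). Since $z\in X$ is exchangeable it is distinct from $y_1,y_2$, and mutation at $z$ leaves $y_1,y_2$ frozen; hence both $\mu_z(\Sigma_{\widehat{y_1y_2}})$ and $(\mu_z(\Sigma))_{\widehat{y_1y_2}}$ are well defined. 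On the left the extended cluster is $(\widetilde X\setminus\{y_1,y_2,z\})\cup\{\overline y,\,z'_1\}$ with $z'_1=\mu_z^{\Sigma_{\widehat{y_1y_2}}}(z)$, and on the right it is $(\widetilde X\setminus\{y_1,y_2,z\})\cup\{\overline y,\,z'_2\}$ with $z'_2=\mu_z^{\Sigma}(z)$; the exchangeable parts agree likewise. I would define $h$ to be the identity on the common variables, $h(\overline y)=\overline y$, and $h(z'_1)=z'_2$.

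Denoting by $\overline b$ the entries of the glued matrix $\widetilde{\overline B}$ (so $\overline b_{x\overline y}=b_{xy_1}+b_{xy_2}$ and $\overline b_{xw}=b_{xw}$ otherwise, by the Remark preceding the lemma), I would compare $\widetilde C^{L}$ (glue, then mutate at $z$) with $\widetilde C^{R}$ (mutate at $z$, then sum the columns $y_1,y_2$) case by case. For an ordinary column $w\notin\{z,z',\overline y\}$ and row $x\neq z'$, both sides reduce to $b_{xw}+\tfrac{|b_{xz}|b_{zw}+b_{xz}|b_{zw}|}{2}$, since gluing never touches these columns; no hypothesis is needed. For entries in the mutation direction (row $z'$ or column $z'$), the mutation rule simply negates the corresponding row/column of $\overline B$ on the left and of $\mu_z\widetilde B$ on the right, and one checks directly that $\widetilde C^{L}_{z'_1 w}=-\overline b_{zw}=-b_{zw}=\widetilde C^{R}_{z'_2 w}$ and $\widetilde C^{L}_{z'_1\overline y}=-(b_{zy_1}+b_{zy_2})=\widetilde C^{R}_{z'_2\overline y}$, again without using the hypothesis.

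The only delicate case, and the main obstacle, is the column $\overline y$ with $x\in X\setminus\{z\}$. On the left one gets
$$\overline b_{x\overline y}+\frac{|\overline b_{xz}|\,\overline b_{z\overline y}+\overline b_{xz}\,|\overline b_{z\overline y}|}{2}=(b_{xy_1}+b_{xy_2})+\frac{|b_{xz}|(b_{zy_1}+b_{zy_2})+b_{xz}\,|b_{zy_1}+b_{zy_2}|}{2},$$
whereas summing the two mutated columns on the right gives
$$\Big(b_{xy_1}+\tfrac{|b_{xz}|b_{zy_1}+b_{xz}|b_{zy_1}|}{2}\Big)+\Big(b_{xy_2}+\tfrac{|b_{xz}|b_{zy_2}+b_{xz}|b_{zy_2}|}{2}\Big).$$
These two expressions agree precisely when $b_{xz}\big(|b_{zy_1}+b_{zy_2}|-|b_{zy_1}|-|b_{zy_2}|\big)=0$, that is, when $|b_{zy_1}+b_{zy_2}|=|b_{zy_1}|+|b_{zy_2}|$. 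This equality in the triangle inequality holds exactly when $b_{zy_1}$ and $b_{zy_2}$ have the same sign (or one vanishes), i.e. $b_{zy_1}b_{zy_2}\ge 0$ — which is exactly the hypothesis $b_{xy_1}b_{xy_2}\ge 0$ specialized to the mutation direction $x=z$. The essential point to get right is recognizing that the hypothesis is \emph{precisely} the saturation condition of the triangle inequality, and keeping the bookkeeping of the $\overline y$-column consistent between ``glue-then-mutate'' and ``mutate-then-glue''.

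Having matched all entries, $\widetilde C^{L}_{xw}=\widetilde C^{R}_{h(x)h(w)}$ holds with equal signs, so $h$ is a positive seed isomorphism $\mu_z(\Sigma_{\widehat{y_1y_2}})\cong(\mu_z(\Sigma))_{\widehat{y_1y_2}}$, as claimed. As a conceptual check one may also note that the canonical gluing morphism sends the exchange monomial $y_1^{b_{zy_1}}y_2^{b_{zy_2}}$ to $\overline y^{\,b_{zy_1}+b_{zy_2}}$ on a single side of the binomial exactly under the sign condition, which is the algebraic shadow of the matrix identity above and confirms that $z'_1$ and $z'_2$ correspond under $h$.
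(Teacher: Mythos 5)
Your proposal is correct and follows essentially the same route as the paper: identify the two extended clusters by the identity map (matching $\mu_z^{\Sigma_{\widehat{y_1y_2}}}(z)$ with $\mu_z^{\Sigma}(z)$), compare the mutated matrices entry by entry, and observe that the only case needing the hypothesis is the $\overline y$-column, where $b_{zy_1}b_{zy_2}\geq 0$ forces $|b_{zy_1}+b_{zy_2}|=|b_{zy_1}|+|b_{zy_2}|$ — exactly the identity $|\overline b_{z\overline y}|=|b_{zy_1}|+|b_{zy_2}|$ the paper invokes. Your isolation of the saturation of the triangle inequality as the precise role of the hypothesis is the same key observation, just stated more explicitly.
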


\begin{proof}
 Denote
$\mu_{z}(\Sigma_{\widehat{y_{1}y_{2}}})=(Y,\widetilde{C})$ and
$(\mu_{z}(\Sigma))_{\widehat{y_{1}y_{2}}}=(\overline
Y,\widetilde{\overline C})$. Then by definition,
$$Y=\overline
X\setminus\{z\}\cup\{\mu_{z}^{\Sigma_{\widehat{y_1y_2}}}(z)\}=X\setminus\{z\}\cup\{\mu_{z}^{\Sigma_{\widehat{y_1y_2}}}(z)\},
\widetilde{Y}=\widetilde{ \overline
X}\setminus\{z\}\cup\{\mu_{z}^{\Sigma_{\widehat{y_1y_2}}}(z)\}=(\widetilde{X}\setminus\{z,y_1,y_2\})\cup\{\mu_{z}^{\Sigma_{\widehat{y_1y_2}}}(z),\overline
y\},$$ $$\overline
Y=(X\setminus\{z\})\cup\{\mu_{z}^{\Sigma}(z)\},\widetilde{\overline
Y}=(\widetilde{X}\setminus\{z,y_1,y_2\})\cup\{\mu_{z}^{\Sigma}(z),\overline
y\}.$$ Comparing the sets $\widetilde{Y}$ and $\widetilde{\overline
Y}$, their elements can be one-to-one correspondent by a bijection
$h$ with the identity map but the correspondence between
$\mu_{z}^{\Sigma_{\widehat{y_1y_2}}}(z)$ and  $\mu_{z}^{\Sigma}(z)$.

For all $x\in Y$ and $y\in \widetilde Y$, $\overline b_{xy}=b_{xy}$, and
$\overline b_{x\overline y}=b_{xy_{1}}+b_{xy_{2}}$ for all $x\in X$ and
$y\in \widetilde{\overline X}\setminus \{\overline y\}$. Furthermore, as
$b_{xy_1}b_{xy_2}\geq 0$ for all $x\in X$, so $|\overline
b_{x\overline y}|=|b_{xy_1}|+|b_{xy_2}|$. Hence, the entries
of $\widetilde{C}$ is given:
\[\begin{array}{ccl} c_{xy} & = & \left\{\begin{array}{lll} \overline {b}_{xy}+\frac{\overline
b_{xz}\cdot|\overline b_{zy}|+|\overline b_{xz}|\cdot \overline
b_{zy}}{2},&
\text{if} ~~x\neq \mu_{z}^{\Sigma_{\widehat{y_1y_2}}}(z)\neq y;\\
-\overline b_{xy},&\text{if}
~~x=\mu_{z}^{\Sigma_{\widehat{y_1y_2}}}(z)\;\text{or}\;y=\mu_{z}^{\Sigma}(z).\end{array}\right.\\
  & = & \left\{\begin{array}{lll}
b_{xy}+\frac{b_{xz}\cdot|b_{zy}|+|b_{xz}|\cdot b_{zy}}{2},&
\text{if} ~~y \neq \overline y, x\neq \mu_{z}^{\Sigma_{\widehat{y_1y_2}}}(z)\neq y;\\
-b_{xy},&\text{if} ~~y \neq \overline y, x=\mu_{z}^{\Sigma_{\widehat{y_1y_2}}}(z)\;\text{or}\;y=\mu_{z}^{\Sigma}(z);\\
\sum\limits_{i=1}^{2}(b_{xy_i}+\frac{b_{xz}\cdot|b_{zy_i}|+|b_{xz}|\cdot
b_{zy_i}}{2}),&
\text{if} ~~y =\overline y, x\neq \mu_{z}^{\Sigma_{\widehat{y_1y_2}}}(z)\neq y;\\
-b_{xy_1}-b_{xy_2},&\text{if} ~~y=\overline y,
x=\mu_{z}^{\Sigma_{\widehat{y_1y_2}}}(z)\;\text{or}\;y=\mu_{z}^{\Sigma}(z).\end{array}\right..
\end{array}\]

On the other hand, for all $x\in \overline Y$ and $y\in \widetilde
{\overline Y}$, we have the entries of $\widetilde{\overline C}$:
\begin{eqnarray*}\overline c_{xy}=
\left\{\begin{array}{lll}
b_{xy}+\frac{b_{xz}\cdot|b_{zy}|+|b_{xz}|\cdot b_{zy}}{2},&
\text{if} ~~y \neq \overline y, x\neq \mu_{z}^{\Sigma}(z)\neq y;\\
-b_{xy},&\text{if} ~~y \neq \overline y, x=\mu_{z}^{\Sigma}(z)\;\text{or}\;y=\mu_{z}^{\Sigma}(z);\\
\sum\limits_{i=1}^{2}(b_{xy_i}+\frac{b_{xz}\cdot|b_{zy_i}|+|b_{xz}|\cdot
b_{zy_i}}{2}),&
\text{if} ~~y =\overline y, x\neq \mu_{z}^{\Sigma}(z)\neq y;\\
-b_{xy_1}-b_{xy_2},&\text{if} ~~y=\overline y,
x=\mu_{z}^{\Sigma}(z)\;\text{or}\;y=\mu_{z}^{\Sigma}(z).\end{array}\right.
\end{eqnarray*}

According to the above expressions of $c_{xy}$ and $\overline
c_{xy}$, under the correspondence between the sets $\widetilde{Y}$
and $\widetilde{\overline Y}$, we get
$\widetilde{C}=\widetilde{\overline C}$. Hence,
$\mu_{z}(\Sigma_{\widehat{y_{1}y_{2}}})\cong(\mu_{z}(\Sigma))_{\widehat{y_{1}y_{2}}}$.
\end{proof}

With the above preparations,  another class of rooted
cluster quotient algebras is given as follows.

\begin{Prop}\label{gluefrozens}
Let $y_{1}, y_{2}\in X_{fr}$ in the seed $\Sigma$ with the gluing
variable $\overline{y}$. Then,
$\mathcal{A}(\Sigma_{\widehat{y_{1}y_{2}}})$ is a rooted cluster
quotient algebra of $\mathcal{A}(\Sigma)$ under the canonical morphism
$\pi_0=\pi\mid_{\mathcal{A}(\Sigma)}$ if and only if
$b'_{xy_{1}}b'_{xy_{2}}\geq 0$ for any exchangeable variable $x$ in
any seed $\Sigma'$ mutation equivalent to $\Sigma$ and its exchange
matrix $\widetilde{B'}$.
\end{Prop}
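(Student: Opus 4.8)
The plan is to show that $\pi_0$ is a rooted cluster morphism whose image is exactly $\mathcal A(\Sigma_{\widehat{y_1y_2}})$ precisely when the sign condition holds; since $\pi_0$ is a ring morphism by construction, "$\mathcal A(\Sigma_{\widehat{y_1y_2}})$ is a rooted cluster quotient algebra under $\pi_0$" amounts to verifying CM1, CM2, CM3 together with surjectivity. First I would record that $\pi_0$ is always noncontractible: the set $I_1=\{x\in\widetilde X\mid\pi_0(x)\in\mathbb Z\}$ is empty, because $\pi_0$ sends every extended variable to a variable ($x\mapsto x$, or $y_1,y_2\mapsto\overline y$), so $\Sigma^{(\pi_0)}=\Sigma$. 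Conditions CM1 and CM2 then hold in both directions directly from the definition of $\pi$, since $\pi_0(\widetilde X)\subseteq\widetilde{\overline X}$ and, because $y_1,y_2$ are frozen, $\pi_0(X)=X=\overline X$.

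For the "if" direction, I would upgrade Lemma \ref{glue} to a statement about arbitrary mutation sequences. Assuming $b'_{xy_1}b'_{xy_2}\ge 0$ in every seed mutation equivalent to $\Sigma$, I would prove by induction on $t$, iterating Lemma \ref{glue} at each intermediate seed (which is mutation equivalent to $\Sigma$, hence satisfies the hypothesis), that there is a positive isomorphism
\[
\mu_{z_t}\cdots\mu_{z_1}(\Sigma_{\widehat{y_1y_2}})\ \cong\ (\mu_{z_t}\cdots\mu_{z_1}(\Sigma))_{\widehat{y_1y_2}}
\]
for every $\Sigma$-admissible $(z_1,\dots,z_t)$. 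This identification shows simultaneously that the exchangeable variables on the two sides correspond (so any $\Sigma$-admissible sequence is $(\pi_0,\Sigma,\Sigma_{\widehat{y_1y_2}})$-biadmissible, using $\overline X=X$) and that the glued exchange relations are the $\pi$-images of the original ones. Concretely, the sign condition forces $y_1$ and $y_2$ to occur in the same monomial of each exchange relation, so that applying $\pi$ merges them into $\overline y^{\,b_{zy_1}+b_{zy_2}}=\overline y^{\,\overline b_{z\overline y}}$; this yields the base case $\pi_0(\mu_{z_1}(y))=\mu_{\pi_0(z_1)}(\pi_0(y))$, and Proposition \ref{generate} lets me push the induction through successive seeds to obtain CM3. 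Surjectivity then follows because every cluster variable of $\mathcal A(\Sigma_{\widehat{y_1y_2}})$ is obtained from a glued-admissible sequence and a $y_0\in\widetilde{\overline X}$, which lifts along $\pi_0$ (taking any $\pi_0$-preimage of $y_0$) by CM3; moreover $\pi_0(\mathcal A(\Sigma))\subseteq\mathcal A(\Sigma_{\widehat{y_1y_2}})$ holds because CM3 and CM1 send cluster variables to cluster variables.

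For the "only if" direction I would argue by contraposition. Suppose some seed $\Sigma'=\mu_{z_s}\cdots\mu_{z_1}(\Sigma)$ has an exchangeable $x$ with $b'_{xy_1}b'_{xy_2}<0$, say $b'_{xy_1}>0>b'_{xy_2}$. Since $y_1,y_2$ stay frozen, the sequence $(z_1,\dots,z_s)$ is $(\pi_0,\Sigma,\Sigma_{\widehat{y_1y_2}})$-biadmissible, so by Proposition \ref{generate} $\pi_0$ is a rooted cluster morphism on $\Sigma'$, with target the mutated glued seed $\Sigma''=\mu_{z_s}\cdots\mu_{z_1}(\Sigma_{\widehat{y_1y_2}})$. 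Applying CM3 to the length-one sequence $(x)$ gives $\pi_0(\mu_x^{\Sigma'}(x))=\mu_x^{\Sigma''}(x)$. On the left, because $y_1$ lies in the positive monomial and $y_2$ in the negative monomial of the exchange relation, after applying $\pi$ the frozen variable $\overline y$ occurs in both numerator monomials, each with strictly positive exponent. On the right, $\overline y$ is a single frozen variable of $\Sigma''$, hence appears in at most one of the two numerator monomials of $\mu_x^{\Sigma''}(x)$, so one of those monomials has $\overline y$-degree zero. Regarding both sides as polynomials in $\overline y$ over the field generated by the remaining (algebraically independent) variables of $\widetilde{X''}$ and comparing the $\overline y$-degree-zero parts yields a nonzero monomial equal to $0$, a contradiction.

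The main obstacle is the "if" direction, and within it the bookkeeping that makes CM3 work: one must simultaneously track, through every mutation sequence, that the sign hypothesis guarantees $y_1,y_2$ share a monomial, that the iterated gluing isomorphism of Lemma \ref{glue} persists, and that admissibility transfers across $\pi_0$. The "only if" direction is comparatively short once the "two monomials versus at most one monomial" dichotomy for $\overline y$ is isolated.
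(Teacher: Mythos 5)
Your proposal is correct and, for the ``if'' direction, follows essentially the same route as the paper: iterate Lemma \ref{glue} along a mutation sequence to identify $\mu_{z_t}\cdots\mu_{z_1}(\Sigma_{\widehat{y_1y_2}})$ with $(\mu_{z_t}\cdots\mu_{z_1}(\Sigma))_{\widehat{y_1y_2}}$, use the sign condition to see that $y_1,y_2$ always land in the same numerator monomial so that $\pi$ merges them into $\overline y^{\,\overline b_{z\overline y}}$, and lift glued-admissible sequences to get surjectivity. One caveat: you cannot literally invoke Proposition \ref{generate} to ``push the induction through successive seeds,'' since that proposition presupposes the map is already a rooted cluster morphism --- which is what you are proving. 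The paper instead runs the induction on the length of the mutation sequence with the statement quantified over all seeds satisfying the sign hypothesis (each intermediate seed being mutation equivalent to $\Sigma$), and transfers the base-case computation via the seed isomorphism $h$ of Lemma \ref{glue}; your setup already contains everything needed for this, so the fix is cosmetic. In the ``only if'' direction your argument is a genuine (and arguably cleaner) variant: the paper first identifies $\mu_{\pi_0(z_s)}\cdots\mu_{\pi_0(z_1)}(\Sigma_{\widehat{y_1y_2}})$ with $\Sigma'_{\widehat{y_1y_2}}$ via Lemma \ref{glue} --- which strictly speaking requires the sign condition at the intermediate seeds and so needs a minimality choice of $\Sigma'$ to avoid circularity --- whereas you work directly with the mutated glued seed and use only that $\overline y$ is frozen there, so it occurs in at most one numerator monomial of the target exchange relation while occurring in both monomials of the $\pi_0$-image; comparing $\overline y$-degree-zero parts is the same contradiction the paper extracts from the ``no common monomial divisor'' observation, but your setup sidesteps the circularity.
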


\begin{proof}
\textbf{``Only if":}\;
Let $\Sigma'=(X',\widetilde{B} ')$ be a seed mutation equivalent to
$\Sigma$. Hence, there exists a $\Sigma$-admissible sequence
$(z_1,\cdots,z_s)$ such that
$\Sigma'=\mu_{z_s}\cdots\mu_{z_1}(\Sigma)$. According to Proposition
\ref{generate}, $$\pi_0:\mathcal{A}(\Sigma')\rightarrow
\mathcal{A}(\mu_{\pi_0(z_s)}\cdots\mu_{\pi_0(z_1)}(\Sigma_{\widehat{y_1y_2}}))$$
is a rooted cluster morphism. Moreover, using Lemma \ref{glue}, it is easy to see that
$\mu_{\pi_0(z_s)}\cdots\mu_{\pi_0(z_1)}(\Sigma_{\widehat{y_1y_2}})\cong\Sigma'_{\widehat{y_1y_2}}$
by a seed isomorphism $h$; hence, it induces a rooted cluster
isomorphism
$$h:\mathcal{A}(\mu_{\pi_0(z_s)}\cdots\mu_{\pi_0(z_1)}(\Sigma_{\widehat{y_1y_2}}))\rightarrow\mathcal{A}(\Sigma'_{\widehat{y_1y_2}}).$$
Then the fact below follows.

\begin{Fac}\label{fact1}
$h\pi_0:\mathcal{A}(\Sigma')\rightarrow\mathcal{A}(\Sigma'_{\widehat{y_1y_2}})$
is the rooted cluster morphism given by gluing variables $y_1$
and $y_2$.
\end{Fac}

In fact, when $s=1$, from the seed isomorphism defined in Lemma
\ref{glue}, for $x\in
\mu_{z_1}(\widetilde{X})=(\widetilde{X}\setminus\{z_1\})\cup\{\mu_{z_1}^{\Sigma}(z_1)\}$,
we know
\begin{eqnarray*} h\pi_0(x)=
\left\{\begin{array}{lll} h(x)=x,&
\text{if} ~~x\in \widetilde{X}\setminus\{z_1, y_1, y_2\};\\
h(\overline y)=\overline y,&\text{if} ~~x=y_1\;\text{or}\;y_2;\\
h(\mu_{z_1}^{\Sigma_{\widehat{y_1y_2}}}(z_1))=\mu_{z_1}^{\Sigma}(z_1),&\text{if}
~~x=\mu_{z_1}^{\Sigma}(z_1).\end{array}\right.
\end{eqnarray*}

Hence, $h\pi_0$ is the rooted cluster morphism given
by gluing variables $y_1$ and $y_2$. Using the above mutation step
by step, we know that $h\pi_0$ is the rooted cluster
morphism given by gluing variables $y_1$ and $y_2$ for any $s$.

In particular, when $\pi_0$ is surjective, then $h\pi_0$ is
also surjective.

Now for any $x\in X'$, it is clear that the sequence $(x)$ of length
1 is a
$(h\pi_0,\Sigma',\Sigma'_{\widehat{y_{1}y_{2}}})$-biadmissible
sequence. By CM3, we have
$h\pi(\mu_{x}(x))=\mu_{h\pi_0(x)}(h\pi_0(x))$. Equivalently, we have
\begin{equation}\label{equ:11}
h\pi_0(\prod \limits_{b'_{xy}>0,y\in
\widetilde{X}'}y^{b'_{xy}}+\prod \limits_{b'_{xy}<0,y\in
\widetilde{X}'}y^{-b'_{xy}})=\prod \limits_{\overline
b'_{g\pi_0(x)y'}>0,y'\in \widetilde{\overline X}'}y^{\overline
b'_{g\pi_0(x)y'}}+\prod \limits_{\overline b'_{g\pi_0(x)y'}<0,y'\in
\widetilde{\overline X}'}y^{-\overline b'_{g\pi_0(x)y'}},
\end{equation}

Assuming $b'_{xy_{1}} b'_{xy_{2}}<0$, without loss of generality, we
may assume that $b'_{xy_1}>0$ and $b'_{xy_2}<0$. Therefore, by the
definition of $\pi_0$ and the construction of $h$, (\ref{equ:11}) becomes the equality:
\begin{equation} \label{con}
(\prod \limits_{b'_{xy}>0,y_1 \neq y\in
         \widetilde{X}'}y^{b'_{xy}})\overline y^{b'_{xy_1}}+(\prod \limits_{b'_{xy}<0,y_2\neq
         y\in \widetilde{X}'}y^{-b'_{xy}})\overline y^{-b'_{xy_2}}=\prod
         \limits_{\overline b'_{xy'}>0,y'\in \widetilde{\overline X}'}y^{\overline b'_{xy'}}+\prod
         \limits_{\overline b'_{xy'}<0,y'\in \widetilde{\overline X}'}y^{-\overline b'_{xy'}}.
 \end{equation}
By the  algebraical independence of $\widetilde{\overline X}'$ and the
skew symmetrizablity of $\widetilde{B}'$, the right hand side of
(\ref{con}) can not include a cluster monomial divisor, but the left
hand side of (\ref{con}) has $\overline
y^{min\{b'_{xy_1},-b'_{xy_2}\}}$ as its divisor. It is a contradiction. Hence, the
assumption $b'_{xy_{1}} b'_{xy_{2}}<0$ is not true and we have $b'_{xy_{1}} b'_{xy_{2}}\geq
0$.

\textbf{``If":}
By definition, trivially $\pi_0$ satisfies the conditions CM1 and
CM2 of root cluster morphism. Now we need to prove the CM3 for
$\pi_0$ with $\pi_0(\mathcal{A}(\Sigma))\subseteq
\mathcal{A}(\Sigma_{\widehat{y_{1}y_{2}}})$. First, we have

\textbf{Claim 1:} \\(a)~Any $\Sigma$-admissible sequence
$(z_{1},\cdots,z_{s})$ is
$(\pi_0,\Sigma,\Sigma_{\widehat{y_{1}y_{2}}})$-biadmissible.\\ (b)~
$\pi_0$ satisfies CM3, that is, for any
$(\pi_0,\Sigma,\Sigma_{\widehat{y_{1}y_{2}}})$-biadmissible sequence
$(z_{s},\cdots,z_{l})$ and $y\in \widetilde{X}$,
$$\pi_0(\mu_{z_s}\cdots\mu_{z_{1}}(y))=\mu_{\pi_0(z_s)\cdots\pi_0(z_{1})}(\pi_0(y)).$$

When $s=1$, since $z_{1}\in X$, $(z_{1})$ is a $\Sigma$-admissible
sequence and $\pi_0(z_1)=z_1$ by definition of $\pi_0$.
 Then, $(\pi_0(z_{1}))$ is
$\Sigma_{\widehat{y_{1}y_{2}}}$-admissible, and thus, $(z_{1})$ is
$(\pi_0,\Sigma,\Sigma_{\widehat{y_{1}y_{2}}})$-biadmissible.
Moreover, as $b_{xy_1}b_{xy_2}\geq 0$ for all $x\in X$, without loss
of generality assume that $b_{z_1y_1}\geq 0, b_{z_1y_2}\geq 0$. For
any $y\in \widetilde{X}$, we have

\[\begin{array}{ccl}\pi_0(\mu_{z_{1}}(y)) & = &
\left\{\begin{array}{lll} \frac{\prod\limits_{t\in \widetilde{X},
b_{z_1t}>0}\pi_0 (t^{b_{z_1t}})+\prod\limits_{t\in \widetilde{X},
b_{z_1t}<0}\pi_0(t^{-b_{z_1t}})}{\pi_0(z_1)},& \text{if}
~~y=z_1;\\\pi_0(y),& \text{if}~~ y\neq z_1;
\end{array}\right.\\
 & = & \left\{\begin{array}{lll} \frac{\prod\limits_{\overline y\neq t\in
\widetilde{\overline X}, b_{z_1t}>0} t^{b_{z_1t}}\overline
{y}^{b_{z_1y_1}+b_{z_1y_2}}+\prod\limits_{t\in \widetilde{X},
b_{z_1t}<0}t^{-b_{z_1t}}}{z_1},& \text{if} ~~y=z_1;\\\pi_0(y),&
\text{if}~~ y\neq z_1.
\end{array}\right.
\end{array}\]
On the other hand, we have
\[\begin{array}{ccl}\mu_{\pi_0(z_{1})}(\pi_0(y)) & = & \left\{\begin{array}{lll} \frac{\prod\limits_{t\in \widetilde{\overline X},\overline b_{\pi_0(z_1)t}>0}t^{\overline b_{\pi_0(z_1)t}}+\prod\limits_{t\in \widetilde{\overline X},\overline b_{\pi_0(z_1)t}<0}t^{-\overline b_{\pi_0(z_1)t}}}{\pi_0(z_1)},& \text{if}
~~\pi_0 (y)=\pi_0(z_1);\\\pi_0(y),& \text{if}~~ \pi_0(y)\neq
\pi_0(z_1);
\end{array}\right.\\
 & = & \left\{\begin{array}{lll} \frac{\prod\limits_{\overline t\in
\widetilde{\overline X}, \overline b_{z_1t}>0} t^{b_{z_1t}}\overline
{y}^{\overline b_{z_1\overline y}}+\prod\limits_{t\in \widetilde{X},
b_{z_1t}<0}t^{-b_{z_1t}}}{z_1},& \text{if} ~~y=z_1;\\\pi_0(y),&
\text{if}~~ y\neq z_1.
\end{array}\right.
\end{array}\]

As $\overline b_{z_1y}=b_{z_1y}$ for $y\neq \overline y$ and
$\overline b_{z_1y}=b_{z_1y_1}+b_{z_1z_2}$ for $y=\overline y$,
therefore, we get
$\pi_0(\mu_{z_{1}}(y))=\mu_{\pi_0(z_{1})}(\pi_0(y))$ for all $y\in
\widetilde{X}$.

Assume that Claim 1 holds for $s-1$. Similar to Fact
\ref{fact1}, it can be proved straightly that
\begin{equation}\label{ringfield}
\mathcal{A}(\mu_{z_1}\Sigma)\overset{\pi}{\rightarrow}\mathds F(\mu_{\pi
(z_1)}(\Sigma_{\widehat{y_1y_2}}))\overset{h}{\rightarrow}\mathds F((\mu_{z_1}\Sigma)_{\widehat{y_1y_2}})
\end{equation}
is a ring homomorphism obtained by gluing variables $y_1$ and $y_2$
with $h$. The field isomorphism induced by the seed isomorphism
$\mu_{\pi
(z_1)}(\Sigma_{\widehat{y_1y_2}})\overset{h}{\rightarrow}(\mu_{z_1}\Sigma)_{\widehat{y_1y_2}}$
in Lemma \ref{glue}, in particular, $\mathcal{A}(\mu_{\pi
(z_1)}(\Sigma_{\widehat{y_1y_2}}))\overset{h}{\rightarrow}\mathcal{A}((\mu_{z_1}\Sigma)_{\widehat{y_1y_2}})$
is a rooted cluster isomorphism. Therefore, by the induction
assumption, $(z_2,\cdots,z_s)$ is
$(h\pi_0,\mu_{z_1}(\Sigma),(\mu_{\pi_0(z_1)}(\Sigma))_{\widehat{y_{1}y_{2}}})$-biadmissible
and
$$h\pi_0(\mu_{z_s}\cdots\mu_{z_{2}}(y))=\mu_{h\pi_0(z_s)}\cdots \mu_{h\pi_0(z_{2})}(h\pi_0(y))$$
for all $y\in \mu_{z_1}(\widetilde{X})$, and since $h$ is a
seed isomorphism,  $(z_2,\cdots,z_s)$ is
$(\pi_0,\mu_{z_1}(\Sigma),\mu_{z_1}(\Sigma_{\widehat{y_{1}y_{2}}}))$-biadmissible
and then
$\pi_0(\mu_{z_s}\cdots\mu_{z_{2}}(y))=\mu_{\pi_0(z_s)}\cdots
\mu_{\pi_0(z_{2})}(\pi_0(y))$ for all $y\in
\mu_{z_1}(\widetilde{X})$. Therefore, $(z_1,\cdots,z_s)$ is
$(\pi_0,\Sigma,\Sigma_{\widehat{y_{1}y_{2}}})$-biadmissible and
$\pi_0(\mu_{z_s}\cdots\mu_{z_{1}}(y))=\mu_{\pi_0(z_s)}\cdots
\mu_{\pi_0(z_{1})}(\pi_0(y))$ for all $y\in \widetilde{X}$, that is,
$\pi_0$ satisfies CM3. The claim is proved.

Now, we prove that $\pi_0(\mathcal{A}(\Sigma))\subseteq
\mathcal{A}(\Sigma_{\widehat{y_{1}y_{2}}})$. For any cluster
variable $z$ in $\mathcal{A}(\Sigma)$, there exists a
$\Sigma$-admissible sequence $(z_1,\cdots,z_s)$ and $z_0\in
\widetilde{X}$ such that $\mu_{z_s}\cdots\mu_{z_1}(z_0)=z$.
According to Claim 1, $(z_1,\cdots,z_s)$ is
$(\pi_0,\Sigma,\Sigma_{\widehat{y_1y_2}})$-biadmissible, and by CM3,
we have
$\pi_0(\mu_{z_l}\cdots\mu_{z_{2}}(y))=\mu_{\pi_0(z_l)\cdots\pi_0(z_{2})}(\pi_0(y))\in
\mathcal{A}(\Sigma_{\widehat{y_1y_2}}).$ It follows that
$\pi_0(z)\in \mathcal{A}(\Sigma_{\widehat{y_1y_2}})$.

In summary, we have shown $\pi_0=\pi|_{\mathcal A(\Sigma)}$ to be a rooted cluster morphism.

Last, we need to prove that $\pi_0$ is surjective on
$\mathcal{A}(\Sigma_{\widehat{y_{1}y_{2}}})$. For this, we only need
to show that $\pi_0(\mathcal{A}(\Sigma))\supseteq
\mathcal{A}(\Sigma_{\widehat{y_{1}y_{2}}})$.

\textbf{Claim 2:}\; Any $\Sigma_{\widehat{y_1y_2}}$-admissible
sequence $(w_1,\cdots,w_t)$ can be lifted to a
$(\pi_0,\Sigma,\Sigma_{\widehat{y_{1}y_{2}}})$-biadmissible sequence
$(z_1,\cdots,z_s)$.

For $t=1$, since $w_1\in \overline{X}=X$, let $z_1=w_1$. Then,
$\pi_0(z_1)=w_1$ by definition of $\pi$ and $(z_1)$ is
$(\pi_0,\Sigma,\Sigma_{\widehat{y_{1}y_{2}}})$-biadmissible.

Assume that Claim 2 holds for $t-1$.

 By (\ref{ringfield}), $h\pi_0:
\mathcal{A}(\mu_{z_1}(\Sigma))\rightarrow\mathcal{A}(\mu_{z_1}\Sigma)_{\widehat{y_{1}y_{2}}}$
is the rooted cluster morphism given by gluing variables $y_1$ and
$y_2$, where $h:
\mathcal{A}(\mu_{w_1}(\Sigma_{\widehat{y_{1}y_{2}}}))\rightarrow\mathcal{A}((\mu_{z_1}\Sigma)_{\widehat{y_1y_2}})$
is the rooted cluster isomorphism induced by the seed isomorphism
$h$.

Therefore, by induction assumption, $(h(w_2),\cdots,h(w_s))$ can be
lifted to a
$(h\pi_0,\mu_{z_1}\Sigma,(\mu_{z_1}\Sigma)_{\widehat{y_{1}y_{2}}})$-biadmissible
sequence $(z_2,\cdots,z_s)$, as $h$ is a rooted cluster isomorphism.
Hence, we have $\pi_0(z_i)=w_i$ for $2\leq i\leq s$. Therefore,
$(w_1,\cdots,w_s)$ can be lifted to a
$(\pi_0,\Sigma,\Sigma_{\widehat{y_{1}y_{2}}})$-biadmissible sequence
$(z_1,\cdots,z_s)$. Hence, the claim follows.

For any cluster variable $w\in
\mathcal{A}(\Sigma_{\widehat{y_1y_2}})$, there exists a
$\Sigma_{\widehat{y_1y_2}}$-admissible sequence $(w_1,\cdots,w_s)$
and $w_0\in \widetilde{\overline{X}}$ such that
$w=\mu_{w_s}\cdots\mu_{w_1}(w_0)$. According to Claim 2,
$(w_1,\cdots,w_s)$ can be lifted to a
$(\pi_0,\Sigma,(\Sigma)_{\widehat{y_{1}y_{2}}})$-biadmissible
$(z_1,\cdots,z_s)$. It is clear that there exits $z_0\in
\widetilde{X}$ such that $\pi_0(z_0)=w_0$. Thus, by CM3, we
have
$w=\mu_{w_s}\cdots \mu_{w_{2}}(w_0)=\pi_0(\mu_{z_s}\cdots\mu_{z_{2}}(z_0)).$
Hence, $\pi_0$ is surjective follows.
\end{proof}

This proposition tells us the condition for two frozen variables $y_{1}$ and $y_{2}$ to be glued so as to make the canonical morphism $\pi_0$ to be surjective. So, we define two frozen variables $y_{1}$ and $y_{2}$ of a rooted cluster algebra $\mathcal A(\Sigma)$ to be {\bf glueable} if the condition of Proposition \ref{gluefrozens} is satisfied.  The following Lemma \ref{ind} (i) can be thought as another characterization for two frozen variables $y_{1}$ and $y_{2}$ in $\Sigma$  to be glueable via noncontractible rooted cluster morphisms.

\begin{Lem}\label{gluingfact}
 For $y_1\not=y_2\in \widetilde{X}$, if $f(y_1)=f(y_2)\in
\widetilde{X'}$ for a rooted cluster morphism
$f:\mathcal{A}(\Sigma)\rightarrow\mathcal{A}(\Sigma')$, then $y_1,
y_2\in X_{fr}$.
\end{Lem}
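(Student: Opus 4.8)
The plan is to argue by contradiction, assuming that at least one of $y_1,y_2$ is exchangeable, and then to exploit that the hypothesis $f(y_1)=f(y_2)\in\widetilde{X'}$ is symmetric in $y_1$ and $y_2$. So I would first suppose $y_1\in X$ and derive a contradiction; the same reasoning applied after interchanging $y_1$ and $y_2$ then forces both to lie in $X_{fr}$.

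First I would sharpen the information on the common image. Since $f(y_1)=f(y_2)$ lies in $\widetilde{X'}$, and the extended cluster $\widetilde{X'}$ consists of elements algebraically independent over $\mathbb{Q}$ (hence none of them is an integer), the common value is a genuine cluster variable, not an element of $\mathbb{Z}$. Under the assumption $y_1\in X$, condition CM2 of Definition \ref{rootmorph} gives $f(y_1)\in X'\sqcup\mathbb{Z}$, and combining this with $f(y_1)\in\widetilde{X'}$ forces $f(y_1)\in X'$; that is, $f(y_1)$ is exchangeable in $\Sigma'$. Consequently the one-term sequence $(y_1)$ is $\Sigma$-admissible (because $y_1$ is exchangeable) and $(f(y_1))$ is $\Sigma'$-admissible, so $(y_1)$ is $(f,\Sigma,\Sigma')$-biadmissible and CM3 applies to it.

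Next I would apply CM3 to the biadmissible sequence $(y_1)$ and the element $y_2\in\widetilde{X}$, obtaining $f(\mu_{y_1}(y_2))=\mu_{f(y_1)}(f(y_2))$. On the left, since $y_2\neq y_1$, the exchange relation \eqref{exchangerelation} gives $\mu_{y_1}(y_2)=y_2$, so the left side equals $f(y_2)=f(y_1)$. On the right, using $f(y_2)=f(y_1)$, the mutation is carried out at the variable $f(y_1)$ itself, so the right side is the new cluster variable $\mu_{f(y_1)}(f(y_1))$. Equating the two sides yields the identity $f(y_1)=\mu_{f(y_1)}(f(y_1))$.

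Finally I would rule this identity out. By the exchange relation, $\mu_{f(y_1)}(f(y_1))\cdot f(y_1)$ is a binomial in the extended cluster variables other than $f(y_1)$, so if $\mu_{f(y_1)}(f(y_1))$ equaled $f(y_1)$ then $f(y_1)^2$ would equal that binomial, contradicting the algebraic independence of $\widetilde{X'}$ over $\mathbb{Q}$. Hence the assumption $y_1\in X$ is impossible and $y_1\in X_{fr}$; interchanging the roles of $y_1$ and $y_2$ gives $y_2\in X_{fr}$ as well, which is the assertion. The only mildly delicate point is the verification that $f(y_1)$ is exchangeable (this is exactly what makes $(y_1)$ biadmissible and thus lets me invoke CM3); the rest reduces to a one-line computation with the exchange relation together with the algebraic independence of the extended cluster.
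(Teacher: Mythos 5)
Your proof is correct and follows essentially the same route as the paper's: assume $y_1\in X$, apply CM3 to the length-one biadmissible sequence $(y_1)$ with $y=y_2$ to obtain $f(y_2)=\mu_{f(y_2)}(f(y_2))$, and contradict the exchange relation. You merely make explicit two details the paper leaves implicit, namely the verification via CM2 that $(y_1)$ is $(f,\Sigma,\Sigma')$-biadmissible and the algebraic-independence argument ruling out $x=\mu_x(x)$.
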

\begin{proof}
 If $y_1\not\in X_{fr}$, then $y_1\in X$. According to CM3, we
have $\mu_{f(y_1)}(f(y_2))=f(\mu_{y_1}(y_2))=f(y_2)$. Then,
$f(y_2)=\mu_{f(y_2)}(f(y_2))$. However, it is impossible due to the
definition of mutation.
\end{proof}

\begin{Lem}\label{twof}
For a surjective noncontractible rooted cluster morphism $g:\mathcal{A}(\Sigma)\rightarrow\mathcal{A}(\Sigma')$,

 (i)~ $g(X_{fr})\subseteq X'_{fr}$;

 (ii)~ $\#\widetilde{X}\geq\#\widetilde{X'}$;

  (iii)~
 if $\#\widetilde{X}\gneqq\#\widetilde{X'}$, then there exist $y_1, y_2\in X_{fr}$ with $y_1\not=y_2$ such that $g(y_1)=g(y_2)$ and

(iv)~ $\#\widetilde{X}=\#\widetilde{X'}$ if and only if $\#X=\#X'$ and $\#X_{fr}=\#X'_{fr}$.
\end{Lem}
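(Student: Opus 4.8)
The plan is to reduce every part to the preceding characterization of noncontractible surjective morphisms, which says that $g(X)=X'$ and $g(\widetilde{X})=\widetilde{X'}$, together with Lemma \ref{gluingfact}. These two facts carry all the content; the rest is bookkeeping about cardinalities of finite sets. Note first that since $g$ is noncontractible, the set $I_1=\{x\in\widetilde{X}\mid g(x)\in\mathbb{Z}\}$ is empty, so by CM1 every $g(x)$ with $x\in\widetilde{X}$ in fact lies in $\widetilde{X'}$.

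For (i) I would argue by contradiction. Fix $y\in X_{fr}$, so $g(y)\in\widetilde{X'}$, and suppose $g(y)\in X'$. Because $g(X)=X'$, there is some $x\in X$ with $g(x)=g(y)$; as $x\in X$ and $y\in X_{fr}=\widetilde{X}\setminus X$ we have $x\neq y$. Then $g(x)=g(y)\in\widetilde{X'}$ with $x\neq y$, so Lemma \ref{gluingfact} forces $x,y\in X_{fr}$, contradicting $x\in X$. Hence $g(y)\in\widetilde{X'}\setminus X'=X'_{fr}$, which proves $g(X_{fr})\subseteq X'_{fr}$.

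For (ii), the restriction $g|_{\widetilde{X}}\colon\widetilde{X}\to\widetilde{X'}$ is surjective by the preceding lemma, and a surjection between finite sets gives $\#\widetilde{X}\geq\#\widetilde{X'}$. For (iii), if the inequality is strict then $g|_{\widetilde{X}}$ cannot be injective, so there exist $y_1\neq y_2$ in $\widetilde{X}$ with $g(y_1)=g(y_2)$; this common value lies in $\widetilde{X'}$ since $g$ is noncontractible, and Lemma \ref{gluingfact} then places both $y_1$ and $y_2$ in $X_{fr}$, as required.

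For (iv), the ``if'' direction is immediate from the disjoint decompositions $\widetilde{X}=X\sqcup X_{fr}$ and $\widetilde{X'}=X'\sqcup X'_{fr}$, since then $\#\widetilde{X}=\#X+\#X_{fr}=\#X'+\#X'_{fr}=\#\widetilde{X'}$. For the ``only if'' direction, equality of the finite cardinalities together with surjectivity of $g|_{\widetilde{X}}$ makes $g|_{\widetilde{X}}$ a bijection, hence injective; restricting, $g|_X\colon X\to X'$ is injective, and since $g(X)=X'$ it is in fact a bijection, giving $\#X=\#X'$; subtracting then yields $\#X_{fr}=\#\widetilde{X}-\#X=\#\widetilde{X'}-\#X'=\#X'_{fr}$. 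I do not expect a genuine obstacle: the only step that requires real care is (i), where ruling out a frozen variable landing in $X'$ genuinely needs both the surjectivity $g(X)=X'$ and Lemma \ref{gluingfact}, rather than CM1/CM2 alone.
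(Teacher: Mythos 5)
Your proof is correct and follows essentially the same route as the paper: both rest on the facts that a noncontractible surjective morphism satisfies $g(\widetilde{X})=\widetilde{X'}$ and $g(X)=X'$, combined with Lemma \ref{gluingfact} and counting. The only cosmetic differences are that the paper derives the contradiction in (i) by writing out the CM3 computation directly (which is exactly the content of Lemma \ref{gluingfact} that you cite instead), and in (iii) it compares $\#X_{fr}$ with $\#X'_{fr}$ before applying the pigeonhole, whereas you apply it on all of $\widetilde{X}$ first.
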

\begin{proof}
 (i)~ Otherwise, there exists a $y\in X_{fr}$ such that $g(y)\in X'$. Then by (Lemma 3.1, \cite{ADS}), we have an $x\in X$ such that $g(y)=g(x)\in X'$. By CM3, we have $g(\mu_{x}(y))=\mu_{g(x)}(g(y))$, however, which is impossible since $g(\mu_{x}(y))=g(y)\neq \mu_{g(y)}(g(y))=\mu_{g(x)}(g(y))$.

 (ii)~  Since $g$ is noncontractible, we have $g(\widetilde X)\subseteq\widetilde{X'}$. Because $g$ is surjective, $g(\widetilde X)=\widetilde{X'}$.  So,  we have $\#\widetilde{X}\geq\#g(\widetilde X)=\#\widetilde{X'}$.

 (iii)~  By Lemma \ref{gluingfact}, the images of any two various exchangeable cluster variables under $g$ are always various. Thus, $\# X=\#g(X)\leq \# X'$. Moreover, since
 $\# X+\# X_{fr}=\# \widetilde{X}\gvertneqq \# \widetilde{X'}=\# X'+\# X'_{fr}$, so we have $\# X_{fr} \gvertneqq \#X'_{fr}$. Thus, there always exist $y_1, y_2\in X_{fr}$ such that $g(y_1)=g(y_2)$.

 (iv)~ By (Lemma 3.1, \cite{ADS}), $X'\subseteq g(X)$. Moreover, since $g$ is noncontractible, we have $g(X)\subseteq X'$. Thus, $g(X)=X'$. By Lemma \ref{gluingfact}, $g|_X$ is injective. Thus, $\# X=\#X'$.
\end{proof}

\begin{Lem}\label{ind} (i)~ For a seed $\Sigma$, two frozen variables $y_1$ and $y_2$ in $\Sigma$ are glueable if and only if there is another seed $\Sigma'$ and  a   noncontractible surjective rooted cluster morphism $f:\mathcal{A}(\Sigma)\rightarrow\mathcal{A}(\Sigma')$ such that $f(y_1)=f(y_2)$.

 (ii)~ In the situation of (i), the noncontractible
surjective rooted cluster morphism $f:\mathcal{A}(\Sigma)\rightarrow\mathcal{A}(\Sigma')$ can be decomposed into $f=h_1f_1$ for $f_1=\pi_0$, a surjective canonical morphism, and another surjective rooted cluster morphism $h_1:\mathcal{A}(\Sigma_{\widehat{y_1y_2}})\twoheadrightarrow\mathcal{A}(\Sigma')$.
\end{Lem}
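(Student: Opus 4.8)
The plan is to exploit the equivalence, furnished by Proposition \ref{gluefrozens} and the definition of glueability, that two frozen variables $y_1,y_2\in X_{fr}$ are glueable precisely when the canonical morphism $\pi_0=\pi|_{\mathcal A(\Sigma)}:\mathcal A(\Sigma)\to\mathcal A(\Sigma_{\widehat{y_1y_2}})$ is a surjective rooted cluster morphism. For the forward implication of (i) I would simply take $\Sigma'=\Sigma_{\widehat{y_1y_2}}$ and $f=\pi_0$: this $f$ is surjective by Proposition \ref{gluefrozens}, it satisfies $f(y_1)=f(y_2)=\overline y$ by the definition of $\pi$, and it is noncontractible since $\pi_0$ sends each $x\in\widetilde X$ to $x$ or to $\overline y$, never to an integer, so that $I_1=\{x:\pi_0(x)\in\mathbb Z\}=\emptyset$. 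For the converse I start from a noncontractible surjective $f:\mathcal A(\Sigma)\to\mathcal A(\Sigma')$ with $f(y_1)=f(y_2)$ and must verify $b''_{xy_1}b''_{xy_2}\geq 0$ for every exchangeable $x$ in every seed $\Sigma''$ mutation equivalent to $\Sigma$. Here I use that, because $f$ is noncontractible and surjective, every $\Sigma$-admissible sequence is $(f,\Sigma,\Sigma')$-biadmissible and $f$ stays noncontractible and surjective on each mutated seed (an induction built on Proposition \ref{generate} and CM3); in particular $f(x)$ is exchangeable in the target and CM3 applied to the length-one sequence $(x)$ gives $f(\mu_x(x))=\mu_{f(x)}(f(x))$.

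The heart of the converse is then a monomial comparison. If one had $b''_{xy_1}>0$ and $b''_{xy_2}<0$, then $y_1$ occurs in the positive product and $y_2$ in the negative product forming the numerator of $\mu_x(x)$, so after applying $f$ the common value $w:=f(y_1)=f(y_2)$ divides both monomials of $f(\mu_x(x))$, whence $w$ divides that numerator. On the other hand the numerator of $\mu_{f(x)}(f(x))$ is a sum of two coprime monomials, because its positive and negative products run over disjoint sets of variables of $\widetilde{X'}$; by algebraic independence such a sum admits no nontrivial monomial divisor. Matching the two monomials forces one of the coprime target monomials to be divisible by $w$, a contradiction. Hence $b''_{xy_1}b''_{xy_2}\geq 0$ always, so $y_1,y_2$ are glueable, proving (i).

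For part (ii), since we are in the situation of (i) the morphism $\pi_0:\mathcal A(\Sigma)\twoheadrightarrow\mathcal A(\Sigma_{\widehat{y_1y_2}})$ is a noncontractible surjective rooted cluster morphism. Using $\overline X=X$ and $\widetilde{\overline X}=(\widetilde X\setminus\{y_1,y_2\})\cup\{\overline y\}$, I define a ring homomorphism $h_1$ on $\mathds F(\Sigma_{\widehat{y_1y_2}})$ by $h_1(z)=f(z)$ for $z\in\widetilde{\overline X}\setminus\{\overline y\}$ and $h_1(\overline y)=f(y_1)$. Then $h_1\pi_0$ and $f$ agree on all generators $\widetilde X$ (directly for $z\neq y_1,y_2$, and $h_1\pi_0(y_i)=h_1(\overline y)=f(y_1)=f(y_i)$), so $h_1\pi_0=f$ as ring homomorphisms. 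Surjectivity of $\pi_0$ is then exactly what makes $h_1$ harmless to track: any element of $\mathcal A(\Sigma_{\widehat{y_1y_2}})$ equals $\pi_0(b)$, hence $h_1$ maps it to $f(b)\in\mathcal A(\Sigma')$, giving at once $\mathrm{Im}(h_1)=f(\mathcal A(\Sigma))=\mathcal A(\Sigma')$, i.e.\ both well-definedness into the target and surjectivity of $h_1$.

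It remains to check $h_1$ is a rooted cluster morphism. CM1 and CM2 are immediate: for $x\in\overline X=X$ we get $h_1(x)=f(x)\in X'$ since $f$ is noncontractible surjective, while $h_1(\overline y)=f(y_1)\in X'_{fr}$ by Lemma \ref{twof}(i). For CM3 I would lift admissible sequences through $\pi_0$: by the argument of Claim 2 in the proof of Proposition \ref{gluefrozens}, any $\Sigma_{\widehat{y_1y_2}}$-admissible sequence $(w_1,\dots,w_t)$ lifts to a $(\pi_0,\Sigma,\Sigma_{\widehat{y_1y_2}})$-biadmissible sequence $(z_1,\dots,z_t)$ with $\pi_0(z_i)=w_i$; if $(w_1,\dots,w_t)$ is in addition $(h_1,\Sigma_{\widehat{y_1y_2}},\Sigma')$-biadmissible then $(z_1,\dots,z_t)$ is $(f,\Sigma,\Sigma')$-biadmissible, and writing $w=\pi_0(y)$ and combining CM3 for $\pi_0$, the identity $h_1\pi_0=f$, and CM3 for $f$ yields $h_1(\mu_{w_t}\cdots\mu_{w_1}(w))=\mu_{h_1(w_t)}\cdots\mu_{h_1(w_1)}(h_1(w))$, which is CM3 for $h_1$. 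Thus $f=h_1\pi_0$ with $f_1=\pi_0$ canonical surjective and $h_1$ surjective, as claimed. The two genuinely technical points, and hence the expected obstacles, are the coprimality-of-exchange-monomials argument in the converse of (i) and the transfer of CM3 in (ii) via lifting of admissible sequences along $\pi_0$; everything else, including the surjectivity of $h_1$, is essentially free from the preceding results.
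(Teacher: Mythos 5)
Your proposal is correct and follows essentially the same route as the paper: part (i) is handled by taking $f=\pi_0$ for the forward direction and, for the converse, by propagating CM3 to every mutated seed and running the same coprime-exchange-monomial divisibility argument the paper uses to force $c_{xy_1}c_{xy_2}\geq 0$; part (ii) uses the identical definition of $h_1$ and the same lifting of admissible sequences along $\pi_0$ to verify CM3. The one (welcome) economy is that you obtain well-definedness into $\mathcal{A}(\Sigma')$ and surjectivity of $h_1$ in a single stroke from the identity $h_1\pi_0=f$ together with the surjectivity of $\pi_0$ and $f$, where the paper re-proves these by lifting cluster variables; just introduce $h_1$ as a homomorphism on the Laurent polynomial ring $\mathbb{Q}[\overline{X}_{fr}][\overline{X}^{\pm 1}]$ (legitimate because each $h_1(x)$ with $x\in\overline{X}$ is a nonzero cluster variable) rather than on all of $\mathds{F}(\Sigma_{\widehat{y_1y_2}})$, since the images of $\widetilde{\overline{X}}$ need not be algebraically independent.
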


\begin{proof}

(i)~ \;``Only If'':\; By the definition of ``glueable'' and Proposition \ref{gluefrozens}, it follows immediately that $\Sigma'=\Sigma_{\widehat{y_1y_2}}$ and $f=\pi_0$.

\;``If'':\; To show $\pi_0$ is a surjective rooted cluster morphism,
by Proposition \ref{gluefrozens}, it suffices to prove
that $c_{xy_1}c_{xy_2}\geq 0$ for any exchangeable variable $x$ in
any seed $\mu_{z_s}\cdots\mu_{z_1}(\Sigma)=(Y,\widetilde{C})$ obtained through mutations.

According to Claim 1 in the proof of Proposition \ref{inducedhom},
$(z_1,\cdots,z_s)$ is $(f,\Sigma,\Sigma')$-biadmissible. By Proposition
\ref{generate}, $f:\mu_{z_s}\cdots\mu_{z_1}(\Sigma)\rightarrow
\mu_{f(z_s)}\cdots\mu_{f(z_1)}(\Sigma')$ is a rooted cluster
morphism. It is clear that $f(Y)=\mu_{f(z_s)}\cdots\mu_{f(z_1)}(X')$
and
$f(\widetilde{Y})=\mu_{f(z_s)}\cdots\mu_{f(z_1)}(\widetilde{X}')$.
For any $x\in Y$, by CM3, we have $f(\mu_{x}(x))=\mu_{f(x)}(f(x))$.
Equivalently,
\begin{equation}\label{surj}
f(\frac{\prod\limits_{y\in \widetilde{Y},
c_{xy}>0}y^{c_{xy}}+\prod\limits_{y\in \widetilde{Y},
c_{xy}<0}y^{-c_{xy}}}{x})=\frac{\prod_1+\prod_2}{f(x)},
\end{equation}
 where $\prod_1$ and $\prod_2$ are coprime cluster monomials in
$\mu_{f(z_s)}\cdots\mu_{f(z_1)}(\Sigma')$; hence, there is no
non-trivial divisor in the right hand side of (\ref{surj}). Thus, it
can be seen that for the given $y_1, y_2$ in (1), we have
$c_{xy_1}c_{xy_2}\geq 0$, since otherwise, there is a non-trivial divisor
$f(y_1)^{min\{|c_{xy_1}|,|c_{xy_2}|\}}$. Hence, $\pi_0$ is a surjective.

(ii)~ From (i), $f_1=\pi_0$ is a surjective rooted cluster morphism.

Owing to $f(\widetilde{X})=\widetilde{X'}$, let
$h_1:\mathcal{A}(\Sigma_{\widehat{y_1y_2}})\rightarrow\mathds F(\Sigma')$
be the unique ring homomorphism by defining $h_1(x)=f(x)$
for any $x\in \widetilde{\overline X}\setminus\{\overline y\}$ and
$h_1(\overline y)=f(y_1)=f(y_2)$, where $\overline y$ is the gluing
variable in $\Sigma_{\widehat{y_1y_2}}$. We will see below that
$h_1:\mathcal{A}(\Sigma_{\widehat{y_1y_2}})\rightarrow\mathcal{A}(\Sigma')$
is a surjective rooted cluster morphism satisfying
$h_1(\overline X)=X'$ and $h_1(\widetilde{\overline
X})=\widetilde{X}'$.

It is clear that $h_1(\overline X)=X'$ and $h_1(\widetilde{\overline
X})=\widetilde{X}'$, so CM1 and CM2 hold for $h_1$. For any
$\Sigma_{\widehat{y_1y_2}}$-admissible sequence $(w_1,\cdots,w_s)$,
by Proposition \ref{gluefrozens}, there exists uniquely a
$(f_1,\Sigma,\Sigma_{\widehat{y_1y_2}})$-biadmissible sequence
$(z_1,\cdots,z_s)$ such that $f_1(z_i)=w_i$ for $i=1,\cdots s$.

\textbf{Claim:} $f(z_i)=h_1(w_i)$ for $i=1,\cdots s$ and
$h_1(\mu_{w_s}\cdots\mu_{w_1}(w))=f(\mu_{z_s}\cdots\mu_{z_1}(z))$
for all $w\in \widetilde{\overline X}$, where $z\in \widetilde{X}$
such that $f_1(z)=w$.

In case $s=1$, we have $z_1=w_1$; hence, $f(z_1)=h_1(z_1)$ by
the definition of $h_1$. Without loss of generality, we may assume
that $\overline b_{w_1\overline y}\geq 0$. Equivalently,
$b_{w_1y_1},b_{w_1y_2}\geq 0$. Hence, we have
\[\begin{array}{ccl}h_1(\mu_{w_{1}}(w))
& = & \left\{\begin{array}{lll} \frac{\prod\limits_{t\in
\widetilde{\overline X}, \overline b_{w_1t}>0}h_1 (t^{\overline
b_{w_1t}})+\prod\limits_{t\in \widetilde{\overline X}, \overline
b_{w_1t}<0}h_1(t^{-\overline b_{w_1t}})}{h_1(w_1)},& \text{if}
~~w=w_1;\\h_1(w),& \text{if}~~ w\neq w_1;
\end{array}\right.\\
& = & \left\{\begin{array}{lll} \frac{\prod\limits_{t\in
\widetilde{X}\setminus\{y_1,y_2\}, b_{w_1t}>0} f(t^{
b_{w_1t}})f(y_1)^{b_{w_1y_1}+b_{w_1y_2}}+\prod\limits_{t\in
\widetilde{X}, b_{w_1t}<0}h_1(t^{-b_{w_1t}})}{f(w_1)},& \text{if}
~~w=w_1;\\h_1(w)=f(w),& \text{if}~~ \overline y\neq w\neq w_1;\\
h_1(\overline y)=f(y_1)=f(y_2),& \text{if}~~ w=\overline
y.\end{array}\right.
\end{array}\]
On the other hand,
\[\begin{array}{ccl}f(\mu_{w_{1}}(z))
& = & \left\{\begin{array}{lll} \frac{\prod\limits_{t\in
\widetilde{X}, b_{w_1t}>0} f(t^{ b_{w_1t}})+\prod\limits_{t\in
\widetilde{X},  b_{w_1t}<0}f(t^{- b_{w_1t}})}{f(w_1)},& \text{if}
~~z=w_1;\\f(z),& \text{if}~~ z\neq w_1.
\end{array}\right.
\end{array}\]
Thus, for all $w\in \widetilde{ \overline X}$ and $z\in \widetilde{X}$
such that $h_1(z)=w$, we have $h_1(\mu_{w_1}(w))=f(\mu_{z_1}(z))$.

Assume that the Claim holds for $s-1$. By Fact \ref{fact1},
$hf_1:\mathcal{A}(\mu_{w_1}(\Sigma))\rightarrow
\mathcal{A}((\mu_{w_1}(\Sigma))_{\widehat{y_1y_2}})$ is the
surjective rooted cluster morphism obtained by gluing $y_1$ and
$y_2$, where
$h:\mathcal{A}(\mu_{z_1}(\Sigma_{\widehat{y_1y_2}}))\rightarrow
\mathcal{A}((\mu_{z_1}(\Sigma))_{\widehat{y_1y_2}})$ is the rooted
cluster isomorphism induced by the corresponding seed isomorphism
defined in Lemma \ref{glue}. Similarly, it can be seen that
$h_1h^{-1}:\mathcal{A}((\mu_{z_1}(\Sigma))_{\widehat{y_1y_2}})\rightarrow
\mathds F(\mu_{f(z_1)}(\Sigma'))$ is the ring homomorphism induced
by the surjective rooted cluster morphism
$f:\mathcal{A}(\mu_{z_1}(\Sigma))\rightarrow\mathcal{A}(\mu_{f(w_1)}(\Sigma'))$.

Since $(h(w_2),\cdots,h(w_s))$ is
$(\mu_{z_1}(\Sigma))_{\widehat{y_1y_2}}$-admissible, it can be
lifted to a $(hf_1,\mu_{z_1}(\Sigma),(\mu_{z_1}(\Sigma))_{\widehat{y_1y_2}})$-biadmissible
sequence $(z_2,\cdots,z_s)$. Therefore, by induction assumption, we
have $f(z_i)=(h_1h^{-1})(h(w_i))=h_1(w_i)$ for $2\leq i\leq s$ and
$$h_1(\mu_{w_s}\cdots\mu_{w_2}(w))=(h_1h^{-1})(\mu_{h(w_s)}\cdots\mu_{h(w_2)}(h(w)))=f(\mu_{z_s}\cdots\mu_{z_2}(z))$$
for all $w\in\mu_{w_1}(\widetilde{\overline X})$ and $z\in
\mu_{z_1}(\widetilde{X})$ such that $f_1(z)=w$. For $w\in
\widetilde{\overline X}$ and $z\in \widetilde{X}'$ such that $f_1(z)=w$,
we have $f_1(\mu_{w_1}(z))=\mu_{z_1}(f_1(z))$. Thus, for $1\leq i\leq
s$, $f(z_i)=h_1(w_i)$ and
$h_1(\mu_{w_s}\cdots\mu_{w_1}(w))=f(\mu_{z_s}\cdots\mu_{z_1}(z))$
for all $w\in \widetilde{\overline X}$, where $z\in \widetilde{X}$
such that $f_1(z)=w$. Hence, claim is proved.

Let $(w_1,\cdots,w_s)$ be $\Sigma_{\widehat{y_1y_2}}$-admissible, so
there exists $u_i\in \overline X=X$ such that
$w_i=\mu_{w_{i-1}}\cdots\mu_{w_1}(u_i)$ for each $1\leq i\leq s$.
Let $(z_1,\cdots,z_s)$ be the
$(f_1,\Sigma,\Sigma_{\widehat{y_1y_2}})$-biadmissible sequence such
that $f_1(z_i)=w_i$. Since $f(\widetilde{X})=\widetilde{X}'$, it is
easy to see that $(z_1,\cdots,z_s)$ is
$(f,\Sigma,\Sigma')$-biadmissible. According to the above claim, we have
$$h_1(w_i)=h_1(\mu_{w_{i-1}}\cdots\mu_{w_1}(u_i))=f(\mu_{z_{i-1}}\cdots\mu_{z_1}(u_i))=\mu_{f(z_{i-1})}\cdots\mu_{f(z_1)}(f(u_i)).$$
As $f(u_i)\in X'$, so $f_1(w_i)$ is exchangeable in
$\mu_{f(z_{i-1})}\cdots\mu_{f(z_1)}(\Sigma')$ for each $1\leq i\leq
s$. Hence, $(w_1,\cdots,w_s)$ is
$(h_1,\Sigma_{\widehat{y_1y_2}},\Sigma')$-biadmissible.

Now, we prove that $h_1$ satisfies CM3. From the discussion above,
any $\Sigma_{\widehat{y_1y_2}}$-admissible sequence
$(w_1,\cdots,w_s)$ is
$(h_1,\Sigma_{\widehat{y_1y_2}},\Sigma')$-biadmissible. For any
$w\in \widetilde{\overline X}$, let $z\in \widetilde{X}$ such that
$f_1(z)=w$. By the above claim, we have
$$h_1(\mu_{w_{s}}\cdots\mu_{w_1}(w))=f(\mu_{z_{s}}\cdots\mu_{z_1}(z))=\mu_{f(z_{s})}\cdots\mu_{f(z_1)}(f(z))=\mu_{h_1(w_{s})}\cdots\mu_{h_1(w_1)}(h_1(w)).$$
Thus, CM3 follows.

Second,
$h_1(\mathcal{A}(\Sigma_{\widehat{y_1y_2}}))\subseteq
\mathcal{A}(\Sigma')$ follows immediately from CM3 for
$h_1$ and the fact that any
$\Sigma_{\widehat{y_1y_2}}$-admissible sequence is
$(h_1,\Sigma_{\widehat{y_1y_2}},\Sigma')$-biadmissible.

Third, we show that $h_1$ is surjective. Any cluster variable
$v$ of $\mathcal{A}(\Sigma')$ can be written as $v=\mu_{v_s}\cdots\mu_{v_1}(v_0)$ for $v_0\in \Sigma'$. There
exists a $(f,\Sigma,\Sigma')$-biadmissible sequence
$(z_1,\cdots,z_s)$ and $z_0\in \widetilde{X}$ such that $f(z_i)=v_i$
for $i=0,\cdots,s$. According to Claim 1 in the proof of Proposition
\ref{gluefrozens}, $(z_1,\cdots,z_s)$ is
$(f_1,\Sigma,\Sigma_{\widehat{y_1y_2}})$-biadmissible. Therefore, by the above
Claim, we have
$$h_1(\mu_{f_1(z_{s})}\cdots\mu_{f_1(z_1)}(f_1(z_0)))=f(\mu_{z_{s}}\cdots\mu_{z_1}(z))=\mu_{v_s}\cdots\mu_{v_1}(v_0)=v.$$

Last, for each cluster variable of $\mathcal{A}(\Sigma)$,
$z=\mu_{z_s}\cdots\mu_{z_1}(z_0)$.
According to the above Claim,
$$h_1f_1(z)=h_1f_1(\mu_{z_s}\cdots\mu_{z_1}(z_0))=h_1(\mu_{f_1(z_{s})}\cdots\mu_{f_1(z_1)}(f_1(z_0)))=f(\mu_{z_{s}}\cdots\mu_{z_1}(z_0))=f(z).$$
Thus, $h_1f_1=f$ holds.
\end{proof}

Following these lemmas, we can get the
main conclusion.

\begin{Thm}\label{maindecomp}
 Let $f:\mathcal{A}(\Sigma)\rightarrow\mathcal{A}(\Sigma')$ be a noncontractible
surjective rooted cluster morphism and $s=\#\widetilde{X}-\#\widetilde{X'}$. Then, either $f=g_0$ or
 $f=g_sf_s\cdots f_2f_1, (s\geq 1)$ for a series of
surjective rooted cluster morphisms:
$$\mathcal A(\Sigma)\overset{f_1}{\rightarrow}\mathcal A(\Sigma_1)\overset{f_2}{\rightarrow}\cdots\overset{f_{s-1}}{\rightarrow}\mathcal A(\Sigma_{s-1})\overset{f_s}{\rightarrow}\mathcal A(\Sigma_s)\overset{g_s}{\rightarrow}\mathcal A(\Sigma'),$$
where $g_s$ is a rooted cluster isomorphism and each $f_i$ is just
the canonical morphism on $\mathcal A(\Sigma_{i-1})$ with $\Sigma_i$
the seed given from $\Sigma_{i-1}$ by gluing a pair of frozen cluster
variables with the same images under $f$ for $i=1,\cdots s$ and
$\Sigma=\Sigma_0$.
\end{Thm}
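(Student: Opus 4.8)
The plan is to induct on the defect $s=\#\widetilde{X}-\#\widetilde{X'}$, peeling off one gluing of a pair of frozen variables at each step. All the genuinely hard work is already packaged into Lemma \ref{ind} (which produces the factorization through a canonical gluing morphism) and Lemma \ref{twof} (which locates the frozen pair to be glued), so the argument here is an assembly of these facts. For the base case $s=0$ I would first invoke the Lemma preceding Definition \ref{gluing} to record that, being noncontractible and surjective, $f$ satisfies $f(X)=X'$ and $f(\widetilde{X})=\widetilde{X'}$. Since $\#\widetilde{X}=\#\widetilde{X'}$, the surjection $f|_{\widetilde{X}}\colon\widetilde{X}\to\widetilde{X'}$ of equal-size finite sets is a bijection, and likewise $f|_{X}\colon X\to X'$. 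By Proposition \ref{induceseed} the restricted seed homomorphism $f^{S}\colon\Sigma^{(f)}=\Sigma\to\Sigma'$ exists, and the exponent count from its proof gives $|b'_{f(x)f(y)}|=\sum_{f(u)=f(y)}|b_{xu}|$, which collapses to $|b_{xy}|$ because $f$ is injective on $\widetilde{X}$. Hence $f^{S}$ is a seed isomorphism; by the ``If'' part of Proposition \ref{basiclem} its induced morphism $(f^{S})^{E}$ exists and is a rooted cluster isomorphism, and since $f$ is noncontractible, Proposition \ref{gobackresult}(2) gives $f=(f^{S})^{E}$. Thus $f=g_{0}$ is a rooted cluster isomorphism.

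For the inductive step $s\geq 1$, I would use that $\#\widetilde{X}\gneqq\#\widetilde{X'}$ together with Lemma \ref{twof}(iii) to obtain distinct $y_{1},y_{2}\in X_{fr}$ with $f(y_{1})=f(y_{2})$. By Lemma \ref{ind}(i) the pair $(y_{1},y_{2})$ is glueable, so setting $\Sigma_{1}=\Sigma_{\widehat{y_{1}y_{2}}}$, the canonical morphism $f_{1}=\pi_{0}\colon\mathcal{A}(\Sigma)\to\mathcal{A}(\Sigma_{1})$ is a surjective rooted cluster morphism, and Lemma \ref{ind}(ii) yields a factorization $f=h_{1}f_{1}$ with $h_{1}\colon\mathcal{A}(\Sigma_{1})\twoheadrightarrow\mathcal{A}(\Sigma')$ surjective and satisfying $h_{1}(\overline{X})=X'$, $h_{1}(\widetilde{\overline{X}})=\widetilde{X'}$. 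By the Lemma preceding Definition \ref{gluing}, these two equalities say precisely that $h_{1}$ is noncontractible. Moreover, by the Remark after Definition \ref{gluing}, gluing a pair of frozen variables removes exactly one element from the extended cluster, so $\#\widetilde{\overline{X}}=\#\widetilde{X}-1$ and the defect of $h_{1}$ is $s-1$.

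I would then apply the induction hypothesis to the noncontractible surjective morphism $h_{1}$. If $s=1$ the base case gives $h_{1}=g_{1}$ an isomorphism, so $f=g_{1}f_{1}$; if $s\geq 2$ it gives $h_{1}=g_{s}f_{s}\cdots f_{2}$, a rooted cluster isomorphism composed with $s-1$ canonical gluing morphisms, whence $f=h_{1}f_{1}=g_{s}f_{s}\cdots f_{2}f_{1}$, exactly the asserted decomposition with $s$ gluing steps and the chain $\Sigma=\Sigma_{0},\Sigma_{1},\dots,\Sigma_{s}$. The only bookkeeping point is the phrase ``with the same images under $f$'': at stage $i$ the pair glued is the pair of frozen variables identified by the residual morphism $h_{i-1}$ in the factorization $f=h_{i-1}\circ(f_{i-1}\cdots f_{1})$, and such a pair pulls back through $f_{i-1}\cdots f_{1}$ to frozen variables of $\Sigma$ with equal $f$-image; this is the sense in which each gluing is dictated by $f$.

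The substantive obstacle is not this assembly but confirming that the induction genuinely closes, i.e. that the residual morphism $h_{1}$ is again noncontractible and surjective with a strictly smaller extended cluster. This is exactly what Lemma \ref{ind}(ii) (surjectivity of $h_{1}$ and the equalities $h_{1}(\overline X)=X'$, $h_{1}(\widetilde{\overline X})=\widetilde{X'}$) and the cardinality remark deliver, so no fresh estimate is needed. What remains is purely combinatorial: keeping the seeds $\Sigma_{i}$, the morphisms $f_{1},\dots,f_{s},g_{s}$, and the defect consistent through the recursion, and maintaining the identification above that lets each successive gluing be read off from $f$.
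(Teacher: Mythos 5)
Your proposal is correct and follows essentially the same route as the paper: induction on the defect $s$, with the base case handled by showing $f^{S}$ is a seed isomorphism and invoking Proposition \ref{basiclem}, and the inductive step handled by Lemma \ref{twof} to locate the frozen pair and Lemma \ref{ind} to peel off one canonical gluing morphism. Your explicit verification that the residual morphism $h_{1}$ is again noncontractible (via $h_{1}(\overline{X})=X'$ and $h_{1}(\widetilde{\overline{X}})=\widetilde{X'}$) is a point the paper leaves implicit, but it does not change the argument.
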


\begin{proof} By Lemma \ref{twof} (ii), $s=\#\widetilde{X}-\#\widetilde{X'}\geq 0$.

If $s=0$, then $\#\widetilde{X}=\#\widetilde{X'}$. Therefore, $f^{S}: \widetilde{X}\rightarrow
\widetilde{X'}$ is a bijective map. For any $x\in X$ and
$y\in \widetilde{X}$, by CM3, $f(\mu_{x}(x))=\mu_{f(x)}(f(x))$, that is
$$f(\frac{\prod\limits_{b_{xz>0,z\in\widetilde{X}}}z^{b_{xz}}+\prod\limits_{b_{xz<0},z\in\widetilde{X}}z^{-b_{xz}}}{x})=\frac{\prod\limits_{b'_{f(x)w>0,w\in\widetilde{X'}}}w^{b'_{f(x)w}}+\prod\limits_{b'_{f(x)w<0},w\in\widetilde{X'}}w^{-b'_{f(x)w}}}{f(x)}.$$ Comparing the exponent of $f(y)$ in this equality, we have $|b_{xy}|=|b'_{f(x)f(y)}|$. Hence, by Lemma \ref{twof} (iv) and Definition \ref{seediso}, $f^{S}$ is a seed isomorphism. According to Proposition \ref{basiclem}, $g_0=f$ is a rooted cluster isomorphism.

Fix a positive integer $t$ and assume that the result holds for any $s<t$. Consider the case $s=t$. By Lemma \ref{twof},
there exists $y_1$ and $y_2\in X_{fr}$ with $y_1\not=y_2$ such that $f(y_1)=f(y_2)$. According to Lemma \ref{ind}, $f=h_1f_1$ for the surjective canonical morphism
$f_1:\mathcal{A}(\Sigma)\rightarrow \mathcal{A}(\Sigma_{\widehat{y_1y_2}})$ and another surjective rooted cluster morphism $h_1:\mathcal{A}(\Sigma_{\widehat{y_1y_2}})\rightarrow \mathcal{A}(\Sigma')$.
Since $\#\widetilde{X}_{\widehat{y_1y_2}}=\#\widetilde{X}-1$, we have $\#\widetilde{X}_{\widehat{y_1y_2}}-\#\widetilde{X'}=t-1$. By the inductive assumption, $h_1=g_{t}f_t\cdots f_2$ for a rooted cluster isomorphism $g_t$, and for $2\leq i\leq t$, $f_i$ are
the surjective canonical morphisms on $\mathcal A(\Sigma_{i-1})$ with $\Sigma_i$
the seed given from $\Sigma_{i-1}$ by gluing a pair of cluster
variables with the same images under $f$ for $i=1,\cdots s$ and
$\Sigma=\Sigma_0$.
Thus, the result holds.
\end{proof}

\begin{Cor}
Let $f:\mathcal{A}(\Sigma)\rightarrow \mathcal{A}(\Sigma')$
be a surjective rooted cluster morphism with $0\not\in f(\widetilde{X})$
 and $\mathcal{A}(\Sigma)$ be acyclic. Denote $I_{1}=\{x\in \widetilde{X}|f(x)\in \mathbb{Z}\}$.
Then,

(i)~ A surjective rooted cluster morphism $f':\mathcal{A}(\Sigma)\rightarrow\mathcal{A}(\Sigma')$
can be uniquely constructed from $f$ satisfying $f'(x)=f(x)$ for $x\in \widetilde{X}\setminus I_1$ and $f'(x)=1$ for $x\in I_1$, which is called the {\bf unitary morphism} of $f$ on $I_1$;

(ii)~ $f'=f_0\sigma_{I_1,1}$ with two
surjective rooted cluster morphisms $\sigma_{I_1,1}:\mathcal{A}(\Sigma)\rightarrow
\mathcal{A}(\Sigma_{\emptyset, I_1})$ and
 $f_0:
\mathcal{A}(\Sigma_{\emptyset, I_1})\rightarrow\mathcal{A}(\Sigma')$,
where $\sigma_{I_1,1}$ is the specialisation of $\mathcal{A}(\Sigma)$ at $I_1$ and $f_0$ is the contraction of $f$, which can be decomposed into a composition of
 a rooted cluster isomorphism $g_s$ and a series of
some surjective canonical morphisms $f_1,\cdots,f_s$ obtained step-by-step by gluing a pair of frozen cluster
variables with the same images under $f$ for $i=1,\cdots s$ as given in Theorem \ref{maindecomp}.
\end{Cor}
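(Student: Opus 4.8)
The plan is to produce $f'$ as the composition of the specialisation of $\mathcal A(\Sigma)$ at $I_1$ with the contraction of $f$; this single construction yields both the existence claimed in (i) and the factorisation claimed in (ii) at once. The pivotal preliminary observation is that, because $0\notin f(\widetilde X)$, Remark \ref{c} applies and identifies the contraction seed $\Sigma^{(f)}$ of Definition \ref{reducedseed} with the partial sub-seed $\Sigma_{\emptyset,I_1}=\Sigma\setminus I_1$. This identification is exactly what allows the two otherwise independent machines, namely the acyclic specialisation and the gluing decomposition, to be docked together.

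First I would invoke Theorem \ref{cluster quo.}: since $\mathcal A(\Sigma)$ is acyclic, the specialisation $\sigma_{I_1,1}\colon \mathcal A(\Sigma)\to\mathcal A(\Sigma_{\emptyset,I_1})$ is a surjective rooted cluster morphism. Independently, applying Proposition \ref{inducedhom} to the given surjective $f$ produces its contraction $f_0\colon \mathcal A(\Sigma^{(f)})=\mathcal A(\Sigma_{\emptyset,I_1})\to\mathcal A(\Sigma')$, a surjective rooted cluster morphism determined by $f_0(x)=f(x)$ for $x\in\widetilde X\setminus I_1$ and satisfying $f_0(X^{(f)})=X'$ and $f_0(\widetilde{X^{(f)}})=\widetilde{X'}$. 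For (i) I then set $f':=f_0\circ\sigma_{I_1,1}$; as a composition of two surjective rooted cluster morphisms it is itself a surjective rooted cluster morphism. Its values are read off directly on $\widetilde X$: for $x\in I_1$ one has $\sigma_{I_1,1}(x)=1$, hence $f'(x)=1$, while for $x\in\widetilde X\setminus I_1$ one has $\sigma_{I_1,1}(x)=x$, hence $f'(x)=f_0(x)=f(x)$. Since all these prescribed values are nonzero, uniqueness of $f'$ follows immediately from Lemma \ref{equel}.

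For (ii) the factorisation $f'=f_0\circ\sigma_{I_1,1}$ is already the asserted decomposition, with $\sigma_{I_1,1}$ the specialisation and $f_0$ the contraction. To refine $f_0$ I would note that it is noncontractible: each $f_0(x)=f(x)$ lies in $\widetilde{X'}$ and is never an integer, so the set $I_1$ associated with $f_0$ is empty (equivalently, $f_0(X^{(f)})=X'$ and $f_0(\widetilde{X^{(f)}})=\widetilde{X'}$, by the lemma following Proposition \ref{inducedhom}). Hence Theorem \ref{maindecomp} applies to $f_0$ verbatim and decomposes it as $f_0=g_sf_s\cdots f_1$ with $g_s$ a rooted cluster isomorphism and each $f_i$ a canonical morphism gluing a pair of frozen variables sharing the same image under $f$.

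The argument is assembly rather than invention, so the only genuinely delicate point, and the step I would guard most carefully, is the identification $\Sigma^{(f)}=\Sigma_{\emptyset,I_1}$. It is precisely the hypothesis $0\notin f(\widetilde X)$ that forces the contraction matrix $\widetilde{B^{(f)}}$ of Definition \ref{reducedseed} to retain every entry $b_{xy}$ (no entry is zeroed out by its ``otherwise'' clause), so that $\Sigma^{(f)}$ coincides with the pure partial sub-seed rather than some strictly sparser seed. Without this hypothesis the target of the contraction would change and the chain $f_0\circ\sigma_{I_1,1}$ would no longer recover $f'$, which is why the acyclicity and the condition $0\notin f(\widetilde X)$ are both indispensable.
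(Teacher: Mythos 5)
Your proposal is correct and follows essentially the same route as the paper: identify $\Sigma^{(f)}=\Sigma_{\emptyset,I_1}$ via Remark \ref{c}, obtain $\sigma_{I_1,1}$ from Theorem \ref{cluster quo.} and the contraction $f_0$ from Proposition \ref{inducedhom}, set $f'=f_0\sigma_{I_1,1}$, get uniqueness from Lemma \ref{equel}, and decompose $f_0$ via Theorem \ref{maindecomp}. The only difference is cosmetic (the paper proves (ii) before (i)), so there is nothing to add.
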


\begin{proof} We first prove (ii). Since $\mathcal{A}(\Sigma)$ is acyclic,  by Theorem \ref{cluster quo.}, the specialisation $\sigma_{I_1,1}=\prod\limits_{x\in I_1}\sigma_{x,1}:\mathcal{A}(\Sigma)\rightarrow \mathcal{A}(\Sigma_{\emptyset,I_1})$ is a surjective rooted cluster morphism. Let $f_0$ be the contraction of $f$. Since $0\not\in f(\widetilde{X})$, we have $\Sigma^{(f)}=\Sigma_{\emptyset,I_1}$ by Remark \ref{c}. By Proposition \ref{inducedhom}, $f_0:\mathcal A(\Sigma_{\emptyset, I_1})\rightarrow\mathcal A(\Sigma')$ is a noncontractible surjective rooted cluster morphism.
By Theorem \ref{maindecomp}, $f_0$ can be decomposed as required.
Setting $f'=f_0\sigma_{I_1,1}$, then (ii) follows immediately. We will show that this $f'$ is just that required in (i).

In fact, $f'$ is a  surjective rooted cluster morphism, since $f_0$ and $\sigma_{I_1,1}$ are so.
We have $f'(x)=(f_0\sigma_{I_1,1})(x)=f_0(x)=f(x)$ for all $x\in \widetilde{X}\setminus I_1$ and $f'(x)=1$ for all $x\in I_1$ by the definitions of contraction and specialisation. The uniqueness of $f'$ follows from Lemma \ref{equel}.
\end{proof}

\begin{Ex} For two seeds $\Sigma_1$ and $\Sigma_2$, we have the rooted cluster algebras $\mathcal{A}(\Sigma_1\sqcup
\Sigma_2)$ and  $\mathcal{A}(\Sigma_1\coprod_{\Delta_1,\Delta_2}\Sigma_2)$ from the union seed and the amalgamated sum, respectively. Define a rooted cluster morphism $f:\mathcal{A}(\Sigma_1\sqcup
\Sigma_2)\rightarrow\mathcal{A}(\Sigma_1\coprod_{\Delta_1,\Delta_2}\Sigma_2)$ satisfying $f(x)=x$ for all $x\in (\widetilde{X}_1\cup \widetilde{X}_2)\setminus (\Delta_1\cup\Delta_2)$ and $f(y')=f(y'')=\bar y$ the image variable in $\Delta$ for a pair of corresponding variables $y'\in \Delta_1$ and $y''\in\Delta_2$. Trivially, $f$ is noncontractible and surjective. By Theorem \ref{maindecomp}, we can decompose $f$ into $f=g_sf_s\cdots f_2f_1$ for surjective canonical morphisms $f_i$ and a rooted cluster isomorphism $g_s$. In this case, $g_s=id_{\mathcal{A}(\Sigma_1\coprod_{\Delta_1,\Delta_2}\Sigma_2)}$. Assuming that all pairs of the corresponding variables from $\Delta_1$ and $\Delta_2$ are $(y'_1,y''_1),\cdots,(y'_s,y''_s)$, then $f_i$ can be obtained by gluing $y'_i$ and $y''_i$, i.e. $f(y_i')=f(y_i'')=\bar y_i$ for $i=1,\cdots,s$.
\end{Ex}

{\bf Acknowledgements:}\; This project is supported by the National
Natural Science Foundation of China (No.11271318 and  No.11571173) and the
Zhejiang Provincial Natural Science Foundation of China
(No.LZ13A010001).


\begin{thebibliography}{abcdsfgh}

\bibitem{ADS} I.Assem, G.Dupont, R.Schiffler, On a category of cluster algebras.
              J.Pure Appl.Alg.218(3),553-582,2014.

\bibitem{[1]} A.Berenstein, S.Fomin and A.Zelevinsky, Cluster Algebras III: Upper Bounds and Double Bruhat Cells, Duke Math J. $\mathbf{Vol.126}$,
    $\mathbf{Number 1}$ (2005), 1-52.

\bibitem{BS} E.Brugalle and K.Shaw. A bit of tropical geometry. The American Mathematical Monthly, 121(7):pp. 563¨C589, 2014.

\bibitem{BMRRT} A.Buan, R.Marsh, M.Reineke, I.Reiten, G.Todorov, Tilting theory and cluster combinatorics, Adv. Math. 204 (2) (2006) 572¨C618. MR2249625
(2007f:16033).

\bibitem{CK} P.Caldero and B.Keller, From triangulated categories to cluster algebras, Invent. Math. $\mathbf{Vol.172}$(2008)
169-211.

\bibitem{CZ} W.Chang and B.Zhu, On rooted cluster morphisms and cluster structures in $2$-Calabi-Yau triangulated categories,
arxiv:1410.5702.

\bibitem{[3]} S.Fomin, Total positivity and cluster algebras. (English summary) Proceedings of the International Congress of Mathematicians. Volume II,
    125-145, Hindustan Book Agency, New Delhi, 2010.

\bibitem{fosth} S.Fomin, M.Shapiro, D.Thurston, Cluster
algebras and triangulated surfaces. Part I: Cluster complexes. Acta
Math. 201, 83-146(2008).

\bibitem{fz1} S.Fomin, A.Zelevinsky,  Cluster algebras I:
Foundations. J. Amer. Math. Soc. 15, 497-529(2002).

\bibitem{fz2} S.Fomin, A.Zelevinsky, Cluster algebras II:
Finite type classification. Inven. Math. 154, 63-121(2003).

\bibitem{fz4} S.Fomin, A.Zelevinsky, Cluster algebras IV: Coefficients. Compositio Mathematica, 143(1): 112-164, 2007.

\bibitem{GLS} G.Christof,  B.Leclerc, J.Schroer,  Rigid modules over preprojective algebras, Inventions Math. 165 (2006), 589-632.

\bibitem{GSV} M.Gekhtman, M.Shapiro, A.Vainshtein, Cluster Algebras and Poisson Geometry. Math. Surveys and Monographs Vol.167, Amer. Math. Soc.,
    Providence, Rhode Island (2010).

\bibitem{GHKK} M.Gross, P. Hacking, S. Keel, M. Kontsevich, Canonical bases for cluster algebras,
arXiv:1411.1394v1 [math.AG], 2014.

\bibitem{HN} P.Hell, J.Nesetril, Graphs and
Homomorphisms, Oxford Univ. Press Inc., New York, 2004.

\bibitem{HL} M.Huang, F.Li,  On Structure of cluster algebras of geometric type, II: Green's equivalences and paunched surfaces,
Pure and Applied Mathematics Quarterly, to appear, 2016.

\bibitem{HL2}M.Huang, F.Li, Unfolding of acyclic sign-skew-symmetric cluster algebras and applications to positivity and $F$-polynomials, preprint, 2016, see  http://www.math.zju.edu.cn/teacherintroen.asp?userid=18

\bibitem{K1} B.Keller,
 Cluster algebras, quiver representations and triangulated categories. math.RT,
arXiv: 0807.1960.

\bibitem{KKKO} S.-J.Kang, M.Kashiwara, M.Kim, S.-J.Oh, Monoidal categorification of cluster algebras. math.RT, arxiv:1412.8106v.

\bibitem{LS}  K.Y. Lee and R. Schiffler, Positivity for cluster algebras,  Annals of Mathematics 182 (2015), 73-125

\bibitem{LLY1} F.Li, J.C.Liu, Y.C.Yang, Genuses of cluster quivers of finite mutation type, Pacific Journal of Mathematics, 269(1): 133-148 (2014).

\bibitem{LLY2} F.Li, J.C.Liu, Y.C.Yang, Non-planar cluster quivers from surface,
     Annals of Combinatorics, 18 (2014) 675¨C707.

\bibitem{M. G} G.Muller,  A=U for Locally acyclic Cluster algebras, Symmetry, Integrability
and Geometry: Methods and Applications 10 (2014), 094.

\bibitem{Sa} I.Salch,  Exchange maps of cluster algebras. International Electronic J. of Algebra 16, 1-15 (2014).

\bibitem{Sc} R.Schiffler,  Cluster algebras and cluster
categories. Lecture notes for the XVIII Latin American Algebra
Colloquium, San Pedro Brazil, 2009.

\end{thebibliography}
\end{document}